%% file: arxiv-main.tex
\documentclass[10pt]{amsart}

\input{arxiv-headers}

\begin{document}
\title{
        Solving polynomial inequalities over spaces of convex sets and applications
    }
  
\author{Saugata Basu}
\address{Department of Mathematics,
Purdue University, West Lafayette, IN 47906, U.S.A.}
\email{sbasu@math.purdue.edu}

\author{Hamidreza Amini Khorasgani}
\address{Department of Computer Science,
Purdue University, West Lafayette, IN 47906, U.S.A.}
\email{aminikhhr@gmail.com}

\author{Hemanta K. Maji}
\address{Department of Computer Science,
Purdue University, West Lafayette, IN 47906, U.S.A.}
\email{hmaji@purdue.edu}

\author{Hai H. Nguyen}
\address{Center for Quantum Technologies, 
 National University of Singapore, Singapore}
\email{hai.h.nguyen@nus.edu.sg}

\input{arxiv-abstract}
\maketitle

\tableofcontents

\input{arxiv-intro}

\input{arxiv-system-algebra}

\input{arxiv-laminar}

\input{arxiv-example}

\input{arxiv-ai}

\phantomsection
\bibliographystyle{amsplain}
\bibliography{arxiv-local}

\end{document}

%% file: arxiv-headers.tex
\usepackage[utf8]{inputenc}
\usepackage{amsfonts, color,epsf}
\usepackage{mathbbol}
\usepackage{amsthm} 
\usepackage{amssymb} 
\usepackage{xspace}
\usepackage[margin=1in]{geometry} 
\usepackage{boxedminipage} 
\usepackage{enumerate}
\usepackage{comment} 
\usepackage{todonotes}
\usepackage{stmaryrd} 
\usepackage{tikz}
    \usetikzlibrary[backgrounds, fit, intersections, calc, decorations.markings]

\usepackage{pgfplots}
    \pgfplotsset{compat=1.6}

\usepackage[pdftex,bookmarks=true,pdfstartview=FitH,colorlinks,linkcolor=blue,filecolor=blue,citecolor=blue,urlcolor=blue,pagebackref=true]{hyperref}
\usepackage[lambda,advantage,operators,sets,adversary,landau,probability,notions,logic,ff,mm,primitives,events,complexity,asymptotics,keys]{cryptocode}

\newlength{\oldparindent}
\theoremstyle{plain}
    \newtheorem{theorem}{Theorem}
    \newtheorem{corollary}{Corollary}
    
    \newtheorem{lemma}{Lemma}[section]
    \newtheorem{proposition}[lemma]{Proposition}

\theoremstyle{definition}
    
    \newtheorem{definition}[lemma]{Definition}
    \newtheorem{example}[lemma]{Example}
    \newtheorem{notation}[lemma]{Notation}
    \newtheorem{remark}[lemma]{Remark}

\providecommand{\namedref}[2]{\hyperref[#2]{#1~\ref*{#2}}\xspace}
\providecommand{\theoremref}[1]{\namedref{Theorem}{thm:#1}}

\providecommand{\lemmaref}[1]{\namedref{Lemma}{lem:#1}}
\providecommand{\figureref}[1]{\namedref{Figure}{fig:#1}}
\providecommand{\sectionref}[1]{\namedref{Section}{sec:#1}}

\providecommand{\caratheodory}[0]{Carath\'{e}odory\xspace}

\providecommand{\cC}[0]{\ensuremath{\mathcal C}\xspace}
\providecommand{\cG}[0]{\ensuremath{\mathcal G}\xspace}
\providecommand{\cI}[0]{\ensuremath{\mathcal I}\xspace}

\providecommand{\bP}[0]{\ensuremath{\mathbf P}\xspace}
\providecommand{\bQ}[0]{\ensuremath{\mathbf Q}\xspace}
\providecommand{\bX}[0]{\ensuremath{\mathbf X}\xspace}
\providecommand{\bY}[0]{\ensuremath{\mathbf Y}\xspace}
\providecommand{\bZ}[0]{\ensuremath{\mathbf Z}\xspace}

\providecommand{\ie}[0]{\text{i.e.}\xspace}

\let\leq\leqslant
\let\geq\geqslant

\providecommand{\defeq}[0]{\ensuremath{\;\coloneqq\;}\xspace}

\providecommand{\imm}[0]{\ensuremath{\mathrm{imm}}\xspace}

\providecommand{\p}[1]{\ensuremath{^{\left(#1\right)}}\xspace}

\providecommand{\cplus}[0]{\ensuremath{\hat{\oplus}}\xspace}
\providecommand{\Cplus}[0]{\ensuremath{\mathop{\widehat{\bigoplus}}}\xspace}

\providecommand{\cmult}[0]{\ensuremath{{\,\mathring{\star}\,}}\xspace}
\providecommand{\Cmult}[0]{\ensuremath{\mathop{\mathring{\bigstar}}}\xspace}

\providecommand{\mink}[0]{\ensuremath{+}\xspace}

\providecommand{\scal}[0]{\ensuremath{\cdot}\xspace}

\providecommand{\CL}[0]{\ensuremath{\mathrm{CL}}\xspace}
\providecommand{\conv}[1]{\ensuremath{\mathrm{conv}{\left({#1}\right)}\xspace}}
\providecommand{\convo}[1]{\ensuremath{\mathrm{conv}^o{\left({#1}\right)}\xspace}}
\providecommand{\relint}[1]{\ensuremath{\mathrm{relint}{\left({#1}\right)}\xspace}}
\providecommand{\convset}[0]{\ensuremath{\cC}\xspace}
\providecommand{\sol}{\ensuremath{\mathrm{sol}}\xspace}
\providecommand{\ssol}{\ensuremath{\mathrm{ss}}\xspace}
\providecommand{\evalmap}[0]{\ensuremath{\mathrm{eval}}\xspace}
\providecommand{\itr}[0]{\ensuremath{\mathrm{itr}}\xspace}
\providecommand{\card}[0]{\ensuremath{\mathrm{card}}\xspace}

\providecommand{\mono}[0]{\ensuremath{\mathrm{mono}}\xspace}
\providecommand{\elem}[0]{\ensuremath{\mathrm{supp}}\xspace}

\providecommand{\substitute}[2]{\ensuremath{\left\llbracket {#1} \gets {#2} \right\rrbracket}\xspace}

\let\phi\varphi

\newcommand{\fr}{\mathfrak{r}}
\newcommand{\fm}{\mathfrak{m}}
\newcommand{\fn}{\mathfrak{n}}
\newcommand{\fp}{\mathfrak{p}}
\newcommand{\fq}{\mathfrak{q}}

\usepackage{lineno}

%% file: arxiv-abstract.tex
\begin{abstract}
We develop a symbolic elimination theory for finite systems of recursive
containment inequalities whose unknowns are convex subsets of a
finite-dimensional real vector space. The right-hand sides are formal
expressions generated from variables and parameters by 
convex linear combinations,
finite union, and a positive geometric join encoding strict
convex combinations. We prove that every parameter assignment has a unique
smallest convex-set-valued solution and give a finite
Gaussian-elimination-type procedure that eliminates the unknowns while
preserving this solution and produces parameter-only expressions for its
coordinate sets.

More generally, let \(\mathcal B\) be a family of subsets 
containing \(\emptyset\) and 
closed under finite unions, nonnegative dilation, Minkowski
sums, positive geometric joins, and convex hulls. If all parameter sets lie in
\(\mathcal B\), then every coordinate set of the smallest solution lies in
\(\mathcal B\); when these operations are effective, so is the resulting
description. In particular, if the parameters are finite unions of
hemihedra---where a hemihedron is a bounded convex semi-linear set,
equivalently a convex finite union of relative interiors of polytopes---then
each coordinate set is a hemihedron and admits a quantifier-free semi-linear
description.

We apply this theory to lamination hulls. For
\[
V=U\oplus\bigoplus_{i=1}^{k}W_i,\qquad
\dim W_i=1,\qquad
\Lambda=\bigcup_{i=1}^{k}(U+W_i),
\]
we prove that the lamination hull \(G_{\Lambda}^{(\infty)}(S)\) of every
finite \(S\subset V\) is semi-algebraic and effectively computable by a
quantifier-free formula over the reals. The proof reduces finitely many grid
fibers to a convex-set inequality system and reconstructs the full hull using
gridded witness trees. This remains nontrivial because the defining iterates
need not stabilize and the hull need not be semi-linear.
\end{abstract}

%% file: arxiv-intro.tex
\section{Introduction}

Classical elimination methods, from Gaussian elimination to
Fourier--Motzkin elimination, provide finite procedures for eliminating
scalar variables from systems of linear equations and inequalities
\cite{Ziegler}; more general semi-algebraic elimination is available
through quantifier elimination over the real field \cite{BPRbook2}. The
first contribution of this paper is a different kind of elimination
theory: the unknowns are not real numbers, but convex subsets of a
finite-dimensional real vector space, and the constraints are recursive
containment inequalities between such convex sets. These systems arise
naturally when one tries to describe hulls generated by iterated convex
combinations. However, the usual algebra generated by Minkowski sum,
and union is not stable under the substitutions
needed for elimination, since one must also keep track of the
relative-interior information created by convex combinations. We
therefore introduce a formal calculus of set operations, including a
positive geometric join operation, and prove a
Gaussian-elimination-type theorem for convex-set-valued inequalities:
the procedure eliminates the unknown convex sets while preserving the
smallest convex solution. This yields effective semi-linear
descriptions whenever the parameter sets are finite unions of relative
interiors of polytopes (we call such sets \emph{hemihedra} in the current paper). 

\medskip
The study of algebras generated by convex polytopes has a long history starting from the foundational work of McMullen on polytope algebras \cite{McMullen89}. More recently, driven partially by the connection with tropical combinatorics \cite{Joswigbook} there has been renewed interest in the study of semirings generated by convex polytopes, including studying systems of equations over this semiring \cite{valnere2022basics, Manjunath2024}. 
In the above semiring, a scalar $a$ may be identified with the ray $[a,\infty)$, 
and addition and multiplication become, respectively, the convex hull of the union and the Minkowski sum. 
This one-dimensional picture extends to polyhedra and more generally to convex sets with a common recession cone~\cite{valnere2022basics}.

\medskip
Our work is distinguished from these works in several 
distinct ways. 
We develop a new extended algebra replacing the polyhedral semiring.
Besides dilation, Minkowski sum, and union, we introduce the positive geometric join $A\cmult B$, which encodes strict convex combinations and thus retains boundary-incidence data that are suppressed in the polyhedral semiring
mentioned before.
The basic geometric objects of this calculus, hemihedra -- convex finite unions of relative interiors of polytopes -- do not appear in the semirings
studied before. On the other hand, the boundary data is very important 
to keep track of in many applications, including the application to 
existence of secure protocols in cryptography \cite{FOCS:BKMN22}.
Secondly, to the best of our knowledge, the Gaussian-elimination theory for systems of inequalities (even over polyhedral semirings), and characterizing their minimal solutions, over spaces of convex sets, is new (see Remark~\ref{rem:prior} for a discussion of related works). 
We consider this as one of our main contributions. We believe that this technique will have applications beyond the one (proving semi-algebraicity of lamination hulls) that we give in this paper.  

\medskip
A key application of the elimination theory that we develop in the current paper is in the study of a generalization of the convex hull operator  
called \emph{lamination hulls}  (see Definition~\ref{def:G} below).
These operators form a
constructive geometric counterpart of rank-one convex hulls (which we do not define in this paper). In the
calculus of variations and nonlinear elasticity, rank-one convexity and
laminates are central in the analysis of fine phase mixtures,
microstructure, and relaxation phenomena \cite{Morrey52,Ball-James1987,Dacorogna1989}. 

\medskip
Given
a finite set \(S\) and a set of admissible directions \(\Lambda\), the
lamination hull is obtained by repeatedly adjoining strict convex
combinations of points whose difference lies in \(\Lambda\); in the
rank-one setting this gives a finite-order-laminate inner approximation
to the rank-one convex hull. 
Lamination hulls arise in the geometric approach to
secure two-party computation, where they encode round-complexity
phenomena \cite{FOCS:BKMN22,BKMN23}. 

\medskip
These connections motivate the second
main result of the present paper: for
\[
V=U\oplus\bigoplus_{i=1}^k W_i,\qquad
\dim W_i=1,\qquad
\Lambda=\bigcup_{i=1}^k (U+W_i),
\]
we prove that the lamination hull of every finite set is
semi-algebraic and effectively describable, even though the iterates
defining the hull need not stabilize, and the resulting hull need not
be semi-linear.

\medskip
We note  that
Matou\v{s}ek and Plech\'a\v{c} \cite{Matousek98}
studied 
a related object called the functional separately convex hull.  
Matou\v{s}ek \cite{Matousek2001}, 
and later Fran\v{e}k--Matou\v{s}ek \cite{Matousek-Franek2009}
developed examples and algorithms for 
such hulls,
and recent work of
Meroni--Rai\c{t}\u{a} \cite{Meroni2025} 
proves semi-algebraicity for the special case of rank-one convex
hulls of finite sets of \(2\times 2\) triangular matrices. 

\medskip
Our work is distinguished from the prior works of Matou\v{s}ek \cite{Matousek2001}, Fran\v{e}k--Matou\v{s}ek \cite{Matousek-Franek2009}
and
Meroni--Rai\c{t}\u{a} \cite{Meroni2025} in one very important respect. 
These works mostly study the \emph{functional convex hull},
which is defined in terms of $\Lambda$-convex functions (see \cite{Matousek2001} for definitions). The functional 
convex hull always contains the lamination hull, but is a simpler object (for instance, it is always closed). On the other hand, 
lamination hulls studied in this paper can have much more intricate boundary behavior. Their boundary behavior is key in applications to cryptography \cite{FOCS:BKMN22} and cannot be handled with the techniques of the papers mentioned above.

\medskip
We now fix for the rest of the paper a real finite-dimensional vector space $V$ and define below a 
formal algebra of certain operations on subsets of $V$.

\subsection{A formal algebra of set operations}
\label{subsec:formal-algebra}
The formal algebra will involve four set operations. 
The first three are the standard {\em scalar multiplication}, {\em Minkowski sum}, and {\em union} operators. 
The fourth one, namely, {\em positive geometric join}, defined in \eqref{eqn:pgj-def}, 
is not standard.
We introduce it as it facilitates reasoning about the properties of the systems of inequalities under the transformations we consider.

For any two subsets $A,B \subset V$ we define:
\begin{align}
    \text{Scaling:} && \rho\scal A &\defeq \left\{ \rho\cdot x \;\colon\; x\in A\right\},\text{ for $\rho \in \mathbb{R}$}, \\
    \text{Minkowski sum:} && A \mink B &\defeq \left\{a+b \;\colon\; a\in A, b\in B\right\}, \\
    \text{Union:} && A \cplus B &\defeq \left\{ x \;\colon\; x\in A\text{ or }x\in B\right\},\\
    \text{Positive Geometric Join:} && A\cmult B &\defeq \left\{\lambda\cdot a + (1-\lambda)\cdot b \;\colon\; 0<\lambda < 1,a\in A, b\in B\right\}.\label{eqn:pgj-def}
\end{align}
Here, the $\cdot$, $+$, and $\cplus$ operations denote the standard scaling, Minkowski sum, and union operations. 
As mentioned earlier,
the $\cmult$ is a more specialized operation;
$A\cmult B$ is the union
of the relative interiors of all line segments joining some point $a\in A$ to a point $b\in B$. 
\begin{remark}[Geometric Join]
    Note that the standard geometric join of two non-empty subsets $A, B \subset V$,
        $$A\star B \defeq \left\{\lambda\cdot a + (1-\lambda)\cdot b \;:\; 0\leq \lambda \leq 1, a\in A, b\in B\right\}$$
    can be expressed as $A\star B = A \;\cplus\; A\cmult B \;\cplus\; B$.
\end{remark}

\begin{remark}[Convex hull]
    Notice also that the convex hull, $\conv{A}$,  for any nonempty subset 
    $A \subset V$ can be expressed as
    \[
    \conv A  = \underbrace{A \cmult \cdots \cmult A}_{r+1},
    \]
    where $r =\dim\operatorname{aff}(A)$.
\end{remark}

\subsection{System of Inequalities}
\label{subsec:formal-systems}
We now introduce polynomials and formal systems of polynomial inequalities. We will reuse the set-theoretic operator symbols 
``$+$'',``$\cplus$'', ``$\cmult$'' defined in the last section, 
in the definition of formal polynomials (see below). This should not create confusion since when we define evaluation maps later (see Definition~\ref{def:eval-map}), these symbols will play their 
expected roles. 

We fix two disjoint finite sets $\Omega_v$ and $\Omega_p$. We will call elements of $\Omega_v$ \emph{variables}, and those of $\Omega_p$ \emph{parameters}, and denote 
$\Omega =(\Omega_v,\Omega_p)$.

We denote 
\begin{equation}\label{eqn:cl-omega}
    \CL(\Omega) \defeq \{ \left(
             \lambda_\omega\right)_{\omega\in\Omega_v \cup \Omega_p}, \lambda_\omega\geq0 \text{ and }
            \sum_{\omega\in\Omega_v \cup \Omega_p}\lambda_\omega=1 
        \; \}.
\end{equation}
We will often write elements of $\CL(\Omega)$ as formal 
convex linear combinations
$
\sum_{\omega\in \Omega_v \cup \Omega_p} \lambda_\omega\cdot \omega,
$
of elements of $\Omega_v \cup \Omega_p$,
where 
$
\lambda_\omega\geq0\text{ and }
            \sum_{\omega\in\Omega_v \cup \Omega_p}\lambda_\omega=1 
$.
The reason for preferring the latter notation will be clear once we define the evaluation map below (Subsection~\ref{subsec:eval-map}).

Let $\mathbb{M}[\Omega]$ denote the quotient of the free commutative monoid 
(with the binary monoid operation denoted by $\cmult$) generated by $\CL(\Omega)$ subject to the relations $E = E \cmult E, E \in \CL(\Omega)$.

We call a non-identity element of 
$\mathbb{M}[\Omega]$ a \emph{monomial over $\Omega$}.
In other words, a monomial $M \in \mathbb{M}[\Omega]$ is represented (uniquely up to permutations of the factors) by an expression of the form
\[
E_1\cmult E_2\cmult \dotsi \cmult E_k,
\]
where $k\in\{1,2,\dotsc\}$ and $E_1,E_2,\dotsc, E_k$ distinct elements of $\CL(\Omega)$.

We denote 
\[
\elem(M)\defeq \left\{E_1, E_2, \dotsc, E_k\right\} \text{ (\emph{support of $M$}),}
\]
and 
\[
\deg(M)\defeq k \text{ (\emph{degree of $M$}).} 
\]

We denote by $\mathbb{P}[\Omega]$ the free commutative monoid (with the monoid operation $\cplus$) generated by the set of monomials over $\Omega$,
subject to the relations $M=M\cplus M, M \text{ monomial over $\Omega$}$, and call
elements of $\mathbb{P}[\Omega]$
\emph{polynomials over $\Omega$}.
Each $\varphi \in \mathbb P[\Omega]$ admits (up to permutation of the summands)
a unique expression
\[
M_1 \cplus M_2 \cplus \dotsi \cplus M_k, k \in \mathbb{N},
\]
where $M_1,\ldots, M_k$ are distinct monomials over $\Omega$.
 
Note that we allow $k=0$, and denote the unique polynomial
with $k=0$ (i.e., the identity element of the monoid) by $\mathbb{0}$. 

For 
$
\varphi:= M_1 \cplus M_2 \cplus \dotsi M_k \in \mathbb{P}[\Omega]
$
we will denote by 
\[
\mono(\varphi)\defeq \{M_1, M_2, \dotsc, M_k\}
\]
(the set of all monomials of $\varphi$).
Clearly, 
\[
\mono(\mathbb{0}) = \emptyset.
\]

The following identity holds for any $\varphi \in \mathbb{P}[\Omega]$. 
    $$ \varphi = \Cplus_{M\in\mono(\varphi)} ~ \Cmult_{E\in\elem(M)} ~ E.$$

\begin{remark}
We will define an evaluation map below in which the operation $\cmult$ is interpreted as a positive geometric join. We note here that
the relation $E=E \cmult E$ in the definition $\mathbb{M}[\Omega]$ is a 
purely \emph{formal} idempotence relation. 
While the relation $E=E \cmult E$ might not hold for arbitrary sets $E$,
the equality is obtained if we take convex hulls of both sides.
This will be sufficient for the containment inequalities defined later
(Definition~\ref{def:system-of-inequalities}), 
because all left-hand sides are convex sets. 
Finally, we note that this issue does not arise for the second monoid $\mathbb{P}[\Omega]$, as the monoid operation $\cplus$ is interpreted as the union of sets, which is naturally idempotent as a set operation.
\end{remark}
We are now in a position to define systems of inequalities involving polynomials defined above.

\begin{definition}
\label{def:system-of-inequalities}
A {\em system of inequalities over $\Omega$} 
is a tuple
\begin{equation}
\label{eqn:def:system-of-inequalities}
\left(X_i \geq \varphi_i \right)_{1 \leq i \leq n},
\end{equation}
where each $\varphi_i \in \mathbb{P}[\Omega]$,
and $\Omega_v = \{X_1,\ldots,X_n\}$.
\end{definition}

\begin{remark}
    We will assume without loss of generality that the left-hand side of the inequalities in the system \eqref{eqn:def:system-of-inequalities} are in bijection with $\Omega_v$. Multiple inequalities $(X_i \geq \varphi_{ij})_{1 \leq j \leq m}$ can be replaced by one inequality $X_i \geq \Cplus_j \phi_{ij}$.
    If a variable $X_i$ does not occur on the left-hand side of any inequality, then we add the inequality $X_i \geq \mathbb{0}$ to the system. These modifications do not change the set of solutions of the system defined below.
\end{remark}

\subsection{Evaluation map}
\label{subsec:eval-map}
We will now assign subsets of 
$V$
to the elements of $\Omega_v,\Omega_p$ (the sets of parameters and variables respectively), and 
for each such assignment we will
define an evaluation map 
which will assign to each $\varphi \in \mathbb{P}[\Omega]$ a subset of $V$.

\begin{notation}
For any real vector space $U$, we denote by  $\convset(U)$ the set of all convex subsets of $U$. 
Given any $A\subset U$, its {\em convex hull}, denoted by $\conv A\in\convset(U)$, is the smallest convex set containing it. 
Finally, for any subsets $A,B \subset U$,
we will sometimes write $A\geq B$ instead of $A \supset B$.
\end{notation}

\begin{definition}
\label{def:assignment}
We call a pair $(\bX,\bP) \in \convset(V)^{\Omega_v} \times  (2^V)^{\Omega_p}$ (resp. $\bP \in (2^V)^{\Omega_p}$, $\bX \in \convset(V)^{\Omega_v}$)
an \emph{assignment} of $\Omega$ (resp. $\Omega_p$, $\Omega_v$).
\end{definition}

We now associate to each $\varphi \in \mathbb{P}[\Omega]$ and
assignment $(\bX,\bP)$ of $\Omega$ a subset of $V$ as follows.

\begin{definition}[Evaluation map under the assignment $(\bX,\bP)$]
\label{def:eval-map}

We define for 
\[
E = {\sum_{X \in \Omega_v} \lambda_X \cdot X} + {\sum_{P \in \Omega_p} \lambda_P \cdot P} \in \CL(\Omega),
\]
\[
\evalmap(E;\bX,\bP) \defeq \left(\sum_{X \in \Omega_v, \lambda_X > 0} \lambda_X\cdot\bX(X)\right) + \left(\sum_{P \in \Omega_p, \lambda_P > 0}\lambda_{P}\cdot \bP(P)
\right) \subset V.
\]
(The sum of sets appearing in  the above formula is the Minkowski sum, noting that an empty sum of sets equals $\mathbf{0}$.)

For a monomial $M$
over $\Omega$, we define
\[
\evalmap(M;\bX,\bP) \defeq 
\Cmult_{E \in \elem(M)}\evalmap(E;\bX,\bP),
\]
and finally for a polynomial 
$\varphi$
over $\Omega$
we define
\begin{align*}
\evalmap(\varphi;\bX,\bP) &\defeq \emptyset \text{ if $\varphi = \mathbb{0}$}, \\
\evalmap(\varphi;\bX,\bP) &\defeq 
\bigcup_{M \in \mono(\varphi)} \evalmap(M;\bX,\bP), \text{ else.}
\end{align*}
\end{definition}

\begin{example}
    The following example is instructive. Let $X \in \Omega_v = \{X\}$ and $\Omega_p = \{P\}$, 
    and let $\bX(X) = \emptyset$ and $\bP(P) = \{p\}$. 
    Let  $E,E' \in \CL(\Omega)$ (viewed as degree-one monomials)
    be defined by 
    \begin{align*}
        E &= 0 \cdot X + 1 \cdot P, \\
        E' &= (1/2) \cdot X + (1/2) \cdot P.
    \end{align*}
    Then, 
    \begin{align*}
    \evalmap(E;\bX,\bP) &= \{P\}, \\
    \evalmap(E';\bX,\bP) &= \emptyset.
    \end{align*}
\end{example}

\subsection{Solutions of a system of inequalities and the smallest solution}
\label{subsec:solutions}
We are now in a position to define solutions (as well as the smallest solution) of systems of inequalities of the form \eqref{eqn:def:system-of-inequalities}.

Let $\Omega_v = \{X_1,\ldots,X_n\}$, and for an assignment
$\bX \in \convset(V)^{\Omega_v}$, 
we will denote $\bX(X_i)$ by $\bX_i$ in what follows.
\begin{definition}
\label{def:sol}
Fix an assignment $\bP \in (2^V)^{\Omega_p}$.
Then, 
$\bX 
\in\convset(V)^{\Omega_v}
$ 
is a {\em solution} of a system $I$ of inequalities 
\[
\left(X_i\geq \varphi_i\right)_{i=1}^n,
\]
under the assignment $\bP$
if 
\[
\bX_i\geq \evalmap(\varphi_i;\bX,\bP),
\]
for all $1 \leq i \leq n$. 
\end{definition}

\begin{notation}
\label{not:sol}
We denote by  $\sol(I;\bP)\subset \convset(V)^n$ the set of all solutions of this system $I$ under the assignment $\bP$.
\end{notation}

\begin{remark}
\label{rem:ss}
We will show later that $\sol(I;\bP)$ is always non-empty,
closed under arbitrary (component-wise) intersection (see Proposition~\ref{prop:closure-under-intersection}),
which implies that $\sol(I;\bP)$ has a unique minimum element
(under containment) (see Notation~\ref{not:ss}).
\end{remark}

\subsection{Hemihedra}
We next introduce a class of convex subsets below which will play an important role in the rest of the paper.
\begin{definition}
\label{def:hemihedra}
A hemihedron is a convex set $H \subset V$, for which there
exist finitely many polytopes $Q_1,\ldots,Q_m \subset V$ such that
\[
H = \bigcup_{i=1}^{m} \relint{Q_i}
\]
(by definition $\relint{Q} = Q$ if $Q$ is a point). 
The empty union is allowed.
Equivalently, a hemihedron is a bounded convex
semi-linear subset of $V$.
\end{definition}

Illustrative examples of hemihedra in $V=\mathbb{R}^2$ can be seen in Figure~\ref{fig:hemihedra-eg}.
They arise very naturally in the current paper (see Theorem~\ref{thm:hemihedra} below).

\input{arxiv-fig-hemihedra-eg}

\subsection{The good closure property}
\label{subsec:good-closure-property}
\begin{definition}[Good closure property]
\label{def:property:closure}
Let $\mathcal{B}$ be a set of subsets of $V$ containing the empty set. We say that $\mathcal{B}$ has 
the \emph{good closure property} if it is closed under taking
    finite unions, nonnegative scalar multiplication, Minkowski sums, positive geometric joins, and convex hulls.

Moreover, we say that $\mathcal{B}$ has the \emph{effective good closure property},
if it has the good closure property, and additionally there exists an effective algorithm
that takes as input a description of elements $B_1,\ldots,B_m \in \mathcal{B}$ and $b_1,\ldots,b_m \in \mathbb{R}, b_i \geq 0,1 \leq i \leq m$,
and produces as output descriptions of:
\begin{equation}
\label{eqn:def:good-closure-property}
\Cplus_{i=1}^m B_i, \Sigma_{i=1}^{m} B_i, (b_1 \cdot B_1,\ldots,b_m \cdot B_m),
\Cmult_{i=1}^m B_i, \conv{B_1 \cup \cdots \cup B_m}.
\end{equation}
\end{definition}

\begin{example}
\label{eg:closure}
We prove in Proposition~\ref{prop:closure-under-ops} that the following families of subsets of $V$ (in decreasing order with respect to containment) 
have the good closure property.

\begin{enumerate}[(1)]
\item 
\label{itemlabel:eg:closure:1}
The set $\mathcal{B}_1$ of finite unions of
bounded convex subsets of $V$.
\item 
\label{itemlabel:eg:closure:2}
The set  $\mathcal{B}_2$ of finite unions of
bounded convex subsets of $V$ definable in
some fixed o-minimal expansion of $\mathbb{R}$ \cite{Dries-book}.

\item 
\label{itemlabel:eg:closure:3}
The set $\mathcal{B}_3$ of finite unions of
bounded convex semi-algebraic subsets of $V$.

\item 
\label{itemlabel:eg:closure:4}
The set $\mathcal{B}_4$ of finite unions of
hemihedra of $V$.
\end{enumerate}

Moreover, the families $\mathcal{B}_3$ and $\mathcal{B}_4$,  
have the effective good closure property (see Proposition~\ref{prop:closure-under-ops}). 
\end{example}  

\begin{remark}
\label{rem:BSS}
    By the word ``algorithm''  in the current paper, we will mean 
    an algorithm in the sense of the book \cite{BPRbook2}. The input as well as the output  
    are described by 
    quantifier-free first-order formulas with atoms of the form $P=0, P > 0$, where $P$ is any polynomial with coefficients in $\mathbb{R}$. 
    The size of the input is measured by the number and degrees of the polynomials appearing in the input and the number of variables (which is equal to $\dim V$ in our context).
    The complexity of the algorithm
    is the number of arithmetic operations and comparisons that it performs. 
    Effective quantifier elimination in the theory of reals \cite[Chapter 14]{BPRbook2} (in case $\mathcal{B} = \mathcal{B}_3$, see
     Example~\ref{eg:closure})
    or Fourier-Motzkin elimination
    (in case $\mathcal{B} = \mathcal{B}_4$)
    yields an algorithm producing descriptions of the sets in \eqref{eqn:def:good-closure-property}.
\end{remark}

\begin{remark}
\label{rem:hemihedra}
The family of finite unions of hemihedra 
($\mathcal{B}_4$ in Example~\ref{eg:closure})
is the smallest family having the good closure property and containing every singleton subset of $V$.

Indeed, if \(Q\) is a polytope with vertex set
\(\{v_1,\ldots,v_r\}\), then
\[
\{v_1\}\mathring\star\cdots\mathring\star\{v_r\}
=
\operatorname{relint}(Q).
\]
Thus every family with the good closure property containing all
singletons contains the relative interior of every polytope, and hence
every finite union of hemihedra. The converse follows from
Proposition~\ref{prop:closure-under-ops}.
Moreover, it has the effective good closure property (see Proposition~
\ref{prop:closure-under-ops}). It is this family that will play a central role in the application of our main theorem to the study of lamination hulls discussed later in the paper.
\end{remark}

\subsection{Main Result I}
\label{subsec:main1}
We are now in a position to state the first main result of the paper.

\begin{theorem}
\label{thm:hemihedra}
    Let $\mathcal{B}$ be a set of subsets of $V$ that has the good closure property. Let $I
    = (X_i \geq \phi_i)_{1 \leq i \leq n}$,
    and suppose that $\bP$ is an assignment in which each $\bP(P),P \in \Omega_p $ belongs to $\mathcal{B}$.
    Let $\bX$ be
    the unique smallest solution $\bX \in \sol(I;\bP)$
    (see Remark~\ref{rem:ss}). 
    Then, each $\bX_i$ belongs to $\mathcal{B}$ for $1 \leq i \leq n$.

    Moreover, if $\mathcal{B}$ has the effective good closure property, then
    there exists an effective algorithm which takes as input the system $I$, and descriptions 
    of $\bP(P), P \in \Omega_p$, and produces as output  
    descriptions of the $\bX_i, 1 \leq i \leq n$.
\end{theorem}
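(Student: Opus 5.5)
The approach is a Gaussian-elimination induction on the number $n$ of variables. At each step one variable is removed by solving its own inequality in closed form, and that closed form is substituted into the remaining inequalities. The unique smallest solution is preserved throughout.

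\emph{Existence of the smallest solution.} First I would record, from Proposition~\ref{prop:closure-under-intersection} and Remark~\ref{rem:ss}, that the smallest solution exists. One checks that $\evalmap(\varphi;\bX,\bP)$ is monotone in $\bX$: each of the operations scaling, Minkowski sum, union and $\cmult$ is monotone, and the convention of dropping terms with $\lambda_X=0$ does not break this. Monotonicity gives closure under intersection, and the tuple $(V,\dots,V)$ is a solution, so the minimum exists.

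\emph{Solving a single self-referential inequality.} The heart of the argument is the case $n=1$, i.e.\ an inequality $X\geq\varphi(X)$ in which the other variables are treated as parameters. Every monomial is a $\cmult$-product of terms $\lambda X+(1-\lambda)Q$, where $Q$ is a parameter expression.
\begin{itemize}
\item Terms with $\lambda=1$ contribute only $X$ itself.
\item Terms with $\lambda=0$ are pure parameter terms.
\end{itemize}
I would show that the smallest convex $X$ is $\conv{A}$, where $A$ is an explicit finite $\cplus/\cmult/\mink$ expression in the parameters. Concretely, $A$ is obtained by evaluating the right-hand side at $X=\emptyset$ (only monomials with no $X$-dependence survive), and $X$ is then closed under the convex-combination-type maps $x\mapsto \lambda x+(1-\lambda)q$ and under joins with parameter sets.

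The key lemma is that if $C$ is convex and contains the seed, then $\lambda C+(1-\lambda)Q\subset\conv{C\cup Q}$. Iterating the right-hand side on $\conv$ of the seed therefore stabilizes after one extra convex-hull step, once the parameter pieces $Q$ attached to monomials that become nonempty are adjoined. The monomials that become nonempty are decided by a finite emptiness bookkeeping: which subsets of variables are nonempty.

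This step is where I expect the main obstacle. Strict convex combinations with $X$ generate relative-interior points but not boundary points, so the answer cannot be a plain convex hull, and the positive join must be used to express it exactly. My first attempt would be to prove the formula
\[
X=\conv{A}\;\cplus\;\Cplus_{M}\text{(monomials with $X$ replaced by } \conv{A})
\]
and then verify, using the identity $\conv{B}=B\cmult\cdots\cmult B$ together with the closure properties, that this set is convex, satisfies the inequality, and is minimal.

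\emph{Elimination step.} Given the closed form $X_n=\psi(X_1,\dots,X_{n-1},\bP)$, I would substitute it into $\varphi_1,\dots,\varphi_{n-1}$. This requires an identity rewriting
\[
\lambda X+(1-\lambda)\Bigl(\Cplus \Cmult \cdots\Bigr)
\]
again as a polynomial. I would use distributivity of Minkowski sum and scaling over $\cplus$ and over $\cmult$, the latter holding up to convex hull, which suffices because left-hand sides are convex. One must check that the smallest solution of the reduced $(n-1)$-system, extended by $\psi$, is the smallest solution of the original system; this follows by monotonicity in both directions.

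\emph{Conclusion.} By induction all $\bX_i$ are expressed by finite formulas built from the parameters using the five operations, so each $\bX_i$ lies in $\mathcal B$. When $\mathcal B$ has the effective good closure property, the procedure, including the finite emptiness case analysis, is effective.
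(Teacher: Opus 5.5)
Your overall architecture is the paper's: solve each variable's own inequality in closed form, substitute into the others using distributivity of scaling and Minkowski sum over $\cplus$ and $\cmult$ up to $\sim$, and finally read off membership in $\mathcal B$. The existence and substitution parts are fine. The gap is in the single-variable step, which is exactly where the paper's key idea is needed.

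Two of your claims fail as soon as $X$ occurs in a term $\lambda X+(1-\lambda)Q$ with $0<\lambda<1$: that iterating from the hull of the seed stabilizes after one extra step, and that $X=\conv{A}\cplus\Cplus_M(\dots)$. Take $X\geq P\cplus(\tfrac12 X+\tfrac12 Q)$ with $\bP(P)=\{p\}$, $\bP(Q)=\{q\}$, $p\neq q$.
\begin{itemize}
\item Your formula gives $\{p,\tfrac12(p+q)\}$, which is not convex.
\item Its hull $[p,\tfrac12(p+q)]$ is not a solution, since it misses $\tfrac14p+\tfrac34q$.
\item The iterates are $[p,\,p+(1-2^{1-i})(q-p)]$, which never stabilize; the smallest solution is the half-open segment $[p,q)$.
\item Your key lemma $\lambda C+(1-\lambda)Q\subset\conv{C\cup Q}$ only shows that $[p,q]$ is a solution. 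That is an upper bound, and here it is strictly too large.
\end{itemize}

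\textbf{The missing idea} is Lemma~\ref{lem:pullout-X}: for convex $X$ and $0<\rho<1$, $X\supseteq(\rho X+(1-\rho)A)\cmult B$ if and only if $X\supseteq X\cmult A\cmult B$. Its proof iterates the contraction toward $a$ (interleaved with the join with $B$ in part 2) and uses convexity to fill in every intermediate ratio. This is what turns a fixed-ratio recursion into a ratio-free one.

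Together with $X\cmult X=X$ for convex $X$, it rewrites the inequality as $X\geq\varphi'\cplus X\cmult M_1\cplus\cdots\cplus X\cmult M_k$ with $\varphi'$ and the $M_j$ free of $X$ (Lemma~\ref{lem:rearrangement}). Only then does a one-step closed form hold. The smallest $X$ is $\conv{U\cplus U\cmult V}$, where $U$ and $V$ are the evaluations of $\varphi'$ and of $M_1\cplus\cdots\cplus M_k$. This follows from associativity of $\cmult$ and $V\cmult V\sim V$ (Lemma~\ref{lem:cancel}).

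The outer hull is needed even after rearrangement. With $U=\{p\}$, two terms $X\cmult Q_1$ and $X\cmult Q_2$, $Q_j=\{q_j\}$, and $p,q_1,q_2$ affinely independent, the union $U\cup U\cmult Q_1\cup U\cmult Q_2$ is $p$ plus two open segments, which is not convex.

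Finally, drop the emptiness bookkeeping, for two reasons.
\begin{itemize}
\item The formal expression $U\cplus U\cmult V$ is correct for every assignment, including empty sets. This is what lets it be substituted as a single polynomial while the other variables are still unknown.
\item Deciding emptiness is not among the operations supplied by the effective good closure property. Relying on it would therefore not yield the effective part of the theorem.
\end{itemize}
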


The following corollary of Theorem~\ref{thm:hemihedra} is a key result used in later applications
and deserves special mention.

\begin{corollary}
  \label{cor:hemihedra}
    Let $I = (X_i \geq \phi_i)_{1 \leq i \leq n}$, 
    and suppose that $\bP$ is an assignment in which each $\bP(P),P \in \Omega_p $ is a finite union of hemihedra. Let $\bX$ be the unique smallest solution $\bX \in \sol(I;\bP)$
    (see Remark~\ref{rem:ss}). 
    Then, each $\bX_i$ is a hemihedron for $1 \leq i \leq n$.

    Moreover, there exists an algorithm which takes as input the system $I$, and descriptions 
    of the finite unions of hemihedra $\bP(P), P \in \Omega_p$, and produces as output 
    semi-linear descriptions of the hemihedra $\bX_i, 1 \leq i \leq n$ (i.e., each $\bX_i$ is described by a quantifier-free first-order formula whose atoms are affine equalities or inequalities).  
\end{corollary}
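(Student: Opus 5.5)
The plan is to apply Theorem~\ref{thm:hemihedra} with $\mathcal{B}=\mathcal{B}_4$, the family of finite unions of hemihedra from Example~\ref{eg:closure}, and then pass from ``finite union of hemihedra'' to ``hemihedron'' using convexity of the smallest solution.

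First, by Proposition~\ref{prop:closure-under-ops} (see also Remark~\ref{rem:hemihedra}), $\mathcal{B}_4$ contains $\emptyset$ (the empty union) and has the good closure property. By hypothesis every $\bP(P)$ lies in $\mathcal{B}_4$. Theorem~\ref{thm:hemihedra} therefore shows that each coordinate $\bX_i$ of the smallest solution lies in $\mathcal{B}_4$. By Definition~\ref{def:sol}, solutions are elements of $\convset(V)^{\Omega_v}$, so each $\bX_i$ is also convex.

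The main step is then to see that a convex finite union of hemihedra is a hemihedron. Write $\bX_i=\bigcup_j H_j$ with each $H_j=\bigcup_k \relint{Q_{jk}}$. Then $\bX_i$ is itself a finite union of relative interiors of polytopes, namely the $Q_{jk}$, and it is convex. This is exactly Definition~\ref{def:hemihedra}, so $\bX_i$ is a hemihedron. The same conclusion follows from the equivalent characterization: a finite union of bounded semi-linear sets is bounded and semi-linear, and a convex such set is a hemihedron.

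For the algorithmic part, Example~\ref{eg:closure} and Remark~\ref{rem:BSS} give that $\mathcal{B}_4$ has the effective good closure property. The required operations are carried out by Fourier--Motzkin elimination on semi-linear descriptions, so every description involved is a quantifier-free formula with affine atoms. The effective half of Theorem~\ref{thm:hemihedra} then gives an algorithm that outputs such descriptions of the $\bX_i$, which are semi-linear descriptions of the hemihedra $\bX_i$.

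I expect no real obstacle. The only point to check is that the descriptions produced stay semi-linear throughout, that is, that the effective operations for $\mathcal{B}_4$ use only linear quantifier elimination. This is asserted in Remark~\ref{rem:BSS}.
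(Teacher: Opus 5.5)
Your proposal is correct and matches the paper's proof. You apply Theorem~\ref{thm:hemihedra} to the family $\mathcal{B}_4$ of finite unions of hemihedra, which has the effective good closure property by Proposition~\ref{prop:closure-under-ops}, and then use convexity of the smallest solution together with Definition~\ref{def:hemihedra} to conclude that each $\bX_i$ is a hemihedron, with Fourier--Motzkin elimination keeping the output descriptions semi-linear.
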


\begin{remark}[Fixed-point interpretation]
Note that for \emph{each fixed parameter assignment \(\mathbf P\)}, 
the existence of the
smallest solution is also an immediate consequence of the
Knaster--Tarski fixed-point theorem \cite{Tar55}, applied to the monotone operator
\[
\mathbf X\longmapsto
\left(
\operatorname{conv}
\left(
X_i\cup
\operatorname{eval}(\varphi_i;\mathbf X,\mathbf P)
\right)
\right)_{1\leq i\leq n}
\]
on the complete lattice \(\mathcal C(V)^{\Omega_v}\).
Our main result goes substantially beyond this 
existence statement (see Remark~\ref{rem:Knaster-Tarski} for further elaboration on this point).

\end{remark}

\subsection{Lamination hulls}
\label{subsec:laminar}

The second main result of the paper (Theorem~\ref{thm:laminar} below) states the semi-algebraicity of a particular class of lamination hulls. 
The proof of this result uses Theorem~\ref{thm:hemihedra} as an important tool.

\begin{definition}[$\Lambda$-hull]
\label{def:G}
Let $V$ be a finite-dimensional real vector space, and
$\Lambda \subset V$ be some fixed subset 
containing $\mathbf{0}$.
For any subset $S \subset V$, we denote
\begin{equation}
\label{eqn:G}
G_\Lambda(S) = \{\alpha x + (1-\alpha)y \; \mid \; x,y \in S, x - y \in \Lambda, 0 < \alpha < 1\},
\end{equation}
 and for each $t \in \mathbb{N}$, we denote
\[
G^{(t)}_\Lambda(S) = \underbrace{G_\Lambda \circ \cdots \circ G_{\Lambda}}_{t}(S)
\] 
(by convention $G^{(0)}_\Lambda(S)= S$).

We call the subset 
$G_\Lambda^{(\infty)}(S) = \bigcup_{t \geq 0} G^{(t)}_\Lambda(S) \subset V$ 
the \emph{$\Lambda$-hull of $S$}.
\end{definition}

\begin{remark}
\label{rem:monotonicity}
Note that
since we assume $\mathbf{0} \in \Lambda$, then taking $x = y \in S$ in  \eqref{eqn:G}, we get that $S \subset G_\Lambda(S)$, and more generally $G^{(t)}_\Lambda(S) \subset G^{(t+1)}_\Lambda(S)$ for all
$t \geq 0$.
\end{remark}

\begin{remark}
If $\Lambda = V$, then it follows from Carath\'eodory's theorem that for any non-empty subset $S \subset V$, with 
$r:=\dim\operatorname{aff}(S)$,
\[
G^{(\infty)}_\Lambda(S) = G^{(\lceil \log_2 (r+1) \rceil)}_\Lambda(S) = \mathrm{conv}(S).
\]
\end{remark}

\subsection{Main Result II}
\label{subsec:main2}
The following theorem is the second main result of the paper.

\begin{theorem}
\label{thm:laminar}
    Let $V$ be a finite-dimensional real vector space and let 
    $U, W_i, 1 \leq i \leq k$, subspaces of $V$ such that 
    \[
    V = U \oplus \bigoplus_{i=1}^{k} W_i,
    \]
    and $\dim W_i = 1, 1 \leq i \leq k$.
    Let 
    \begin{equation}
        \label{eqn:thm:laminar}
        \Lambda = \bigcup_{i=1}^k (U + W_i).
    \end{equation}
 Then, for each finite subset $S \subset V$, $G_\Lambda^{(\infty)}(S)$ is a semi-algebraic subset of $V$. 

Moreover, there exists an algorithm that, 
given $S \subset V = \mathbb{R}^d$ as input,
produces a semi-algebraic description 
of $G_\Lambda^{(\infty)}(S)$ by a quantifier-free 
first-order formula in the language of the reals 
as its output (the decomposition
\(V=U\oplus\bigoplus_i W_i\) is regarded as fixed).
\end{theorem}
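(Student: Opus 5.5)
We reduce the hull computation to finitely many convex-set inequality systems and then apply Corollary~\ref{cor:hemihedra}. Write points of $V$ as $v=(u,w_1,\dots,w_k)$ with $u\in U$ and $w_i\in W_i\cong\mathbb R$. A pair $x,y$ is admissible, i.e.\ $x-y\in\Lambda$, exactly when $x$ and $y$ differ in at most one of the $W$-coordinates, with arbitrary difference in $U$. Let $T_i\subset\mathbb R$ be the finite set of $i$-th coordinates of points of $S$. The grid is $\Gamma=\prod_i T_i$, and more generally we use the refined family of cells obtained from the open intervals and points of the partition of each $W_i$ induced by $T_i$.

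The first step is a fiber reduction. For each grid point $g\in\Gamma$ put $F_g=\{u:(u,g)\in G_\Lambda^{(\infty)}(S)\}\subset U$. We show that the family $(F_g)_{g\in\Gamma}$ is the smallest solution of a system of convex-set inequalities in variables $X_g$, one for each $g$. The inequality for $X_g$ is
\[
X_g\geq P_g\cplus\Cplus_{i}\ \Cplus_{a<t_i<b}\Bigl(\tfrac{b-t_i}{b-a}X_{g[i\gets a]}+\tfrac{t_i-a}{b-a}X_{g[i\gets b]}\Bigr)\cplus\Cplus_{i}\ \Cplus_{c\in T_i}X_{g[i\gets c]}\cmult X_g ,
\]
together with the convexity of $X_g$ itself. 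Here $P_g$ is the finite set of $U$-parts of points of $S$ over $g$, which is a finite union of points and hence of hemihedra. The unions over $a<t_i<b$ run over grid values in $T_i$.

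This system is not literally right, because intermediate points may leave the grid. So the plan introduces additional variables $X_c$ for each cell $c$ of the refined cell decomposition. We use the key observation that, along a line in a $W_i$ direction, only the endpoints of a strict convex combination matter. Any point reachable with a non-grid coordinate $w_i$ arises in a segment whose endpoints can be pushed toward grid values. The extra variables will carry coefficients depending on the position within the cell.

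Within an open cell the $U$-fiber is not constant. Fix a cell $c$ and a point $w$ in it. The $U$-fiber over $w$ is then a union, over the combinatorial ``witness trees'', of Minkowski combinations of grid fibers with coefficients that are products of barycentric weights, which are polynomial (rational) in $w$. This is the source of non-semi-linearity.

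The second step is therefore to prove a gridded witness tree lemma. Every point of $G^{(t)}_\Lambda(S)$ has a derivation tree in which each internal node is a one-direction lamination step. We show by a rearrangement argument that such a tree can be normalized to a tree of bounded shape. In the normalized tree, all nodes except those along at most one path per coordinate direction lie over grid points, and the path nodes are combined with weights determined by $w$. Commuting lamination steps in different $W_i$ directions is legitimate because $U$ is shared by all of $\Lambda$. This should yield the formula
\[
G_\Lambda^{(\infty)}(S)=\bigcup_{\text{cells }c}\bigcup_{\tau\in\mathcal T}\Bigl\{(u,w):w\in c,\ u\in\textstyle\sum_{g}\beta_{\tau,g}(w)\,F_g^{\epsilon}\Bigr\}.
\]
In this formula $\mathcal T$ is a finite set of normalized tree shapes, the $\beta_{\tau,g}$ are polynomial weights, and $F_g^\epsilon$ denotes either $F_g$ or a relative-interior variant. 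The variant is needed because the strictness $0<\alpha<1$ affects the boundary. The relevant variants are themselves smallest solutions of auxiliary systems, expressed via $\cmult$.

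The third step is effectivity. By Corollary~\ref{cor:hemihedra}, each $F_g$ and each variant is a hemihedron with computable semi-linear description. The set $\{(u,w): u\in\sum_g\beta_g(w)H_g\}$, with $H_g$ semi-linear and $\beta_g$ polynomial, is semi-algebraic, being defined by existential quantification over the summands. Quantifier elimination \cite{BPRbook2} then produces a quantifier-free formula. Taking the finite union gives the result.

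The main obstacle is the witness tree normalization. One must show that finitely many tree shapes suffice, even though the iterates $G^{(t)}_\Lambda(S)$ do not stabilize. One must also track exactly which boundary pieces, coming from open versus closed segments, are attained. I would first attempt an induction on $k$, using the exchange argument that a $W_i$-step followed by a $W_j$-step on grid-aligned endpoints can be reordered, and absorbing all unbounded repetition into the fixed-point variables $X_g$ solved by Theorem~\ref{thm:hemihedra}.
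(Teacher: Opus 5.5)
Your architecture matches the paper's: the grid $\prod_i T_i$, grid fibers obtained as a smallest solution and handled by Corollary~\ref{cor:hemihedra}, a witness-tree normalization, and then quantifier elimination. However, the bridge to Theorem~\ref{thm:hemihedra} is never correctly built. The displayed system is wrong. For $c\neq t_i$, the term $X_{g[i\gets c]}\cmult X_g$ puts into the fiber over $g$ points that actually lie over the $W_i$-coordinate $\lambda c+(1-\lambda)t_i\neq t_i$. For example, take $k=1$, $U=W_1=\mathbb R$, $S=\{(0,0),(1,1)\}$: the true fiber is $F_0=\{0\}$, but your inequality forces $X_0\supseteq(0,1)$.

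Your proposed repair, one variable per open cell with position-dependent coefficients, is not a system in the sense of Definition~\ref{def:system-of-inequalities}. Coefficients there are fixed reals, and, as you note yourself, the fiber varies across a cell. In fact no extra variables are needed. The grid-only system consisting of $X_{\pi(v)}\geq P_v$ together with your own interpolation inequalities $X_g\geq\frac{b-t_i}{b-a}X_{g[i\gets a]}+\frac{t_i-a}{b-a}X_{g[i\gets b]}$ is exact (Proposition~\ref{prop:reduction-lam-system}). The inclusion ``smallest solution $\subseteq$ fibers'' is direct. The reverse inclusion needs two things: that every point over a grid point has a witness tree all of whose nodes lie over grid points, and Lemma~\ref{lem:new} to break an arbitrary one-direction split with grid children into bracketing pairs.

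Likewise, your variants $F_g^{\epsilon}$ and the family $\mathcal T$ of shapes are unnecessary, because strictness is absorbed into the grid fibers by Theorem~\ref{thm:hemihedra}. Once the normalization holds, the $U$-fiber over an off-grid $w$ is exactly $\sum_\eta\beta_\eta(w)F_{g_\eta}$. Here $g_\eta$ ranges over the $2^\ell$ corners of the box cell of $w$, $\ell$ is the number of off-grid coordinates of $w$, and $\beta_\eta$ are products of one-dimensional barycentric weights. The top $\ell$ levels of the normalized tree form a full binary tree whose $2^\ell-1$ nodes are all off-grid; it is not ``one path per direction''.

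The normalization is the real content of the theorem, and your sketch lacks the idea that makes it work. Commuting a $W_i$-step past a $W_j$-step (Lemma~\ref{lem:permute}) is only legitimate once splits are \emph{immediate}. A split is immediate when a node whose $i$-th coordinate $t$ is off-grid is split into exactly two children over the neighbouring grid values $a<t<b$. Only then is the second-level weight the barycentric coordinate of a common value, independent of the first-level branch.

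The missing step is producing immediacy from an arbitrary split, which may have many children with grid or off-grid $i$-coordinates. Lemma~\ref{lem:root-immediacy-repair} does this in three moves:
\begin{enumerate}
\item decompose the old root weights through the bracketing simplices (Lemma~\ref{lem:new});
\item rewrite them as $\mu^{(1)}\nu^{(1)}+\mu^{(2)}\nu^{(2)}$, where $\nu^{(j)}$ represents the neighbour $a$ or $b$;
\item build each new child by averaging \emph{corresponding} nodes of the old subtrees, level by level.
\end{enumerate}
This averaging is valid only because the lower levels are already immediate, so corresponding nodes share all $W$-coordinates and differ only in $U$.

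This repair sits inside an induction on minimal witness height (Proposition~\ref{prop:gridded-witness}). There is a separate case when the root splits in a direction that is already on the grid; that split must be pushed below the $\ell$ prescribed levels by the same synchronized averaging. This is also where $\dim W_i=1$ is genuinely used: the two-point immediate cell lies in every bracketing simplex. Your ``induction on $k$ with grid-aligned exchanges'' uses neither of these facts and does not handle off-grid children. As it stands, the proposal defers exactly the step on which the theorem rests.
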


\begin{remark}[Connection to cryptography]
The lamination hulls studied here are motivated by a fundamental problem in cryptography: two-party secure computation. 
Alice holds an input $x\in\{0,1\}$, Bob holds an input $y\in\{0,1\}$, and their goal is to securely sample from a prescribed distribution $f(x,y)$ over a finite set $Z$. 
Can this be done by direct communication alone? 
This question goes back to the 1980s~\cite{FOCS:Yao82b, STOC:GolMicWig87} and has recently been reduced to membership problems for lamination hulls with
\[
    k=2,\qquad \dim W_1=\dim W_2=1,\qquad \dim U=|Z|;
\]
see~\cite{FOCS:BKMN22}. 
In this application, $\dim U$ may be arbitrarily large,
a regime not covered by the previous semi-algebraicity results discussed below. 
\end{remark}

\begin{remark}[Connections to prior work]
As discussed above, lamination hulls associated with different choices of the set of admissible directions $\Lambda$ have been studied extensively, in part because of their connections with the calculus of variations and partial differential equations. 
The particular class of sets $\Lambda$ considered in Theorem~\ref{thm:laminar} has also appeared in several earlier works.
For example, in the special case 
\[
        k=D,\qquad \dim W_i=1\ \text{for all }i,\qquad \dim U=0,
\]
Theorem~\ref{thm:laminar} specializes to the $D$-convex setting. 
More precisely, after choosing coordinates so that the spaces $W_i$ are the (independent) coordinate directions, the set $G^{(\infty)}_\Lambda(S)$ is exactly the $D$-convex hull of $S$. 
Thus, Theorem~\ref{thm:laminar} recovers Matou\v{s}ek's semi-algebraicity theorem for $D$-convex hulls for finite subsets of $\mathbb R^D$ \cite{Matousek2001}. 
In the case
\[
k=2,\qquad
\dim U=\dim W_1=\dim W_2=1,
\]
Theorem~\ref{thm:laminar} gives the semi-algebraicity of the lamination hull associated
with the rank-one directions of upper-triangular \(2\times2\) matrices.

To make this correspondence explicit, let $V$ denote the vector space of $2 \times 2$ upper triangular matrices,
and $U,W_1,W_2$ the subspaces of $V$ defined by:
\begin{align*}
U &= \mathrm{span}\left(\begin{bmatrix} 0 & 1 \\0 & 0 \end{bmatrix}\right), &
W_1 &= \mathrm{span}\left(\begin{bmatrix} 1 & 0 \\0 & 0 \end{bmatrix}\right), &
W_2 &= \mathrm{span}\left(\begin{bmatrix} 0 & 0 \\0 & 1 \end{bmatrix}\right), \\
&& \text{and }\Lambda &= (U+W_1) \cup  (U + W_2).
\end{align*}
Then the lamination hull associated with the rank-one directions
of any finite subset $S$ of $V$ is precisely
$G^{(\infty)}_\Lambda(S)$.
Indeed, two upper triangular \(2\times2\) matrices differ by a rank-one
matrix precisely when their difference lies in one of the two subspaces
\(U+W_1\) or \(U+W_2\).
Theorem~\ref{thm:laminar} implies that 
$G^{(\infty)}_\Lambda(S)$ is  a semi-algebraic set.

Theorem~\ref{thm:laminar} thus contains the preceding examples as special cases. 
Its main new feature is that there is no restriction on $\dim U$: previous semi-algebraicity results for this class of lamination hulls covered only the cases $\dim U\leq 1$.
Moreover, Theorem~\ref{thm:laminar} allows arbitrary $k$; semi-algebraicity of the corresponding $\Lambda$-hull was previously known only in the case $\dim U=0$. 
\end{remark}

The proof methods of the prior results do not extend in any obvious way to the more general case tackled in Theorem~\ref{thm:laminar}. We develop a new method utilizing Theorem~\ref{thm:hemihedra} as an intermediate step.
The following remarks encapsulate the key difficulties that need to be overcome in any proof of Theorem~\ref{thm:laminar}. 

\begin{remark}[$(G^{(t)}_\Lambda(S))_{t \geq 0}$ need not stabilize]
\label{rem:obs1}
Firstly, a naive iterative approach, with the hope that the sequence 
$(G^{(t)}_\Lambda(S))_{t \geq 0}$ will eventually stabilize, does not work.
The  
sequence $(G^{(t)}_\Lambda(S))_{t \geq 0}$ need not stabilize.
Let $V = \mathbb{R}^7$, and let 
$U = \mathrm{span}(e_3,\ldots,e_7)$, and $W_i = \mathrm{span}(e_i), i=1,2$,
where the $e_i$ denote the standard basis vectors.
In this case, $\Lambda = (U + W_1) \cup (U+W_2)$ is the union of two coordinate subspaces of codimension one
defined by $X_2 =0$ and $X_1=0$ respectively. 
Consider the following points in $\RR^2$  
$
a_1 = (3/4, 1/4), \;  a_2 = (1/4, 1/2), \; a_3 = (1/2, 1),\; a_4 = (1, 3/4), \;
a_5= (3/4, 1/2). 
$

Let
\begin{align*}
    S = \{P\in \RR^2 \times \RR^5 \colon   \exists\ i\in \{1,2,3,4,5\},\  \pi(P)=a_i \text{ and } \rho(P)=e_i\},
\end{align*}
where $\pi$ and $\rho$ denote the projections onto 
the first two and the last five coordinates, respectively.

With this choice of $\Lambda$ and $S$, 
it is
shown in \cite{BKMN23} that the sequence $(G_\Lambda^{(t)}(S))_{t > 0}$ does not stabilize. 
\end{remark}

\begin{remark}[$G^{(\infty)}_\Lambda(S)$ need not be semi-linear]
\label{rem:obs2}
A second observation is that
even for finite subsets $S \subset V$, $G^{(\infty)}_\Lambda(S)$
need not always be a semi-linear set. 

Take $V = \RR^3$, 
$U = \mathrm{span}(e_3), W_i = \mathrm{span}(e_i), i=1,2$, and 
$\Lambda = (U + W_1) \cup (U + W_2)$.
Let
\[
S =\{(0,0,0),(0,1,0), (1,0,0), (1,1,1)\}.
\]
It is easy to verify that with these choices,
\[
G^{(\infty)}_\Lambda(S) = G^{(2)}_\Lambda(S) = \{(x,y,z) \mid z = xy, 0 \leq x,y \leq 1\},
\]
which is not a semi-linear (though still a semi-algebraic) subset of $\mathbb{R}^3$.
\end{remark}

\begin{remark}[$G^{(\infty)}_\Lambda(S)$ need not be closed]
\label{rem:obs3}
The set $G^{(\infty)}_\Lambda(S)$ need not be closed even though each $G^{(t)}_\Lambda(S)$ is a closed semi-algebraic set for every finite subset $S \subset V$.
For example, $G^{(\infty)}_\Lambda(S)$ is not a closed set,  
for $S$ defined in 
the example in Remark~\ref{rem:obs1} (see \cite{BKMN23}).
To obtain a precise description of  $G^{(\infty)}_\Lambda(S)$ (rather than its closure) is particularly important in cryptographic applications (this is explained in \cite{FOCS:BKMN22}).
\end{remark}

In view of Remarks~\ref{rem:obs1}, \ref{rem:obs2} and \ref{rem:obs3}, we need a different strategy to prove Theorem~\ref{thm:laminar} from the naive iterative one. 

\medskip
\noindent
\textbf{Our strategy:}
    we first define a finite subset $\cG \subset W$ and prove that 
    the union of fibers $G^{(\infty)}_\Lambda(S)_\cG := G^{(\infty)}_\Lambda(S) \cap \pi^{-1}(\cG)$ 
    (denoting by $\pi:V \rightarrow W$ the canonical projection) 
    can be characterized as the smallest convex solution of a special system of inequalities $I_{\cG, S}$ derived from $S$ and $\cG$ (Proposition~\ref{prop:reduction-lam-system}). 
    We then use Theorem~\ref{thm:hemihedra} to prove that each grid fiber
\(G_\Lambda^{(\infty)}(S)_w\), \(w\in G\), is a hemihedron; hence
\(G_\Lambda^{(\infty)}(S)_G\) is a finite union of hemihedra and is
semi-algebraic. This is a key step. Finally, we prove 
    $G^{(\infty)}_\Lambda(S)$ can be recovered from the set of fibers $G^{(\infty)}_\Lambda(S)_\cG$ 
     using effective quantifier elimination in the theory of reals (Proposition~\ref{prop:struc}), proving Theorem~\ref{thm:laminar}.
     This last step explains why we obtain semi-algebraic sets as 
     $G^{(\infty)}_\Lambda(S)$ which may not be semi-linear (cf. Remark~\ref{rem:obs2}).
\begin{remark}
\label{rem:focs}
Some of the results of the current paper were announced in an
extended abstract
that was published in the Proceedings of the Computer Science Conference, Symposium on Foundations of Computer Science (FOCS), 2025 \cite{FOCS:BKMN25}, 
emphasizing the connections to cryptography,
but without complete proofs of the main theorems. In this paper, we
provide complete proofs, generalize some of the results, 
and also correct an error in the FOCS announcement.
\footnote{
In \cite{FOCS:BKMN25}, 
Theorem~\ref{thm:laminar} was claimed (with $k = 2$) but without any restrictions
on $\dim W_i$. In the current paper we handle arbitrary $k$, but with the added assumption that $\dim W_i =1$. Currently we do not know whether Theorem~\ref{thm:laminar} holds without the restriction on $\dim W_i$.
}
\end{remark}

\medskip
The rest of the paper is devoted to the proofs of Theorems~\ref{thm:hemihedra} and \ref{thm:laminar}. 
These theorems will follow from more technical results that might be of independent interest.
Theorem~\ref{thm:hemihedra} will follow from a more technical result giving a Gaussian-elimination-inspired algorithm for reducing a given system of inequalities to an upper triangular one without changing the 
smallest solution (see Theorem~\ref{thm:gaussian-elim} in Section~\ref{sec:system-algebra} below). 
Similarly, the proof of Theorem~\ref{thm:laminar} depends on several intermediate results that reduce it to solving systems of inequalities to which Theorem~\ref{thm:hemihedra} is applicable (see Propositions~\ref{prop:reduction-lam-system} and \ref{prop:gridded-witness} in Section~\ref{sec:laminar} below). 
Finally, in Section~\ref{sec:example}, we give an example of a concrete
calculation illustrating the different steps of the proof of 
Theorem~\ref{thm:gaussian-elim}.

%% file: arxiv-fig-hemihedra-eg.tex
\begin{figure}[htp]\centering

    \pgfdeclarelayer{background}
    \pgfdeclarelayer{face}
    \pgfdeclarelayer{lines}
    \pgfdeclarelayer{vertices}
    \pgfdeclarelayer{main}
    \pgfdeclarelayer{foreground}
    \pgfsetlayers{background, face, lines, vertices, main, foreground}

    \begin{tikzpicture}[scale = 3]
        \foreach \x/\y/\pos [count=\i] in {0/0/north, -1/2/south east, 1/2/south west, -0.5/2/south, 0.5/2/south}{
            \coordinate (p\i) at (\x,\y);
            \node[anchor=\pos, inner sep = 2.5mm, circle] at (p\i) {$P_{\i}$};
        }
        \node at ($(p1)+(0,-0.5)$) {$\begin{matrix}
                \text{(a) }\convo{\left\{P_1, P_2, P_3\right\}} \bigcup \convo{\left\{P_4,P_5\right\}} \\ 
                \bigcup \convo{\left\{P_1\right\}} \bigcup \convo{\left\{P_4\right\}}
            \end{matrix}$};

        \begin{pgfonlayer}{vertices}
            \foreach \i/\c in { 1/gray, 2/white, 3/white, 4/gray, 5/white} {
                \draw[fill=\c, preaction={draw, line width=8pt, white}] (p\i) circle (0.05);
            }
        \end{pgfonlayer}

        \begin{pgfonlayer}{lines}
            \path[draw, line width=8pt, white] (p1) -- (p2) -- (p3) -- cycle; 
            \path[draw=gray, preaction={draw, line width=8pt, white}] (p4) -- (p5); 
        \end{pgfonlayer}

        \begin{pgfonlayer}{face}
            \fill[fill=gray] (p1) -- (p2) -- (p3) -- cycle; 
        \end{pgfonlayer}

        \coordinate (o) at (2.75,1); 
        \foreach \t/\col [count=\i] in {0/white, 45/gray, 90/white, 135/gray, 180/white, 225/gray, 270/white, 315/gray} {
            \path (o) ++(\t:1) coordinate (p\i); 
            \path (o) ++ (\t:1.2) node {$P_\i$};
            \begin{pgfonlayer}{vertices}
                \path[draw, fill=\col, preaction={draw, line width=8pt, white}] (p\i) circle (0.05);
            \end{pgfonlayer}
        }
        \node at ($(p7)+(0,-0.5)$) {$\begin{matrix}
                \text{(b) }\convo{\left\{P_1, P_2, \dotsc,P_8\right\}} \bigcup \convo{\left\{P_2\right\}}  \\ 
                \bigcup \convo{\left\{P_4\right\}} \bigcup \convo{\left\{P_6\right\}} \bigcup \convo{\left\{P_8\right\}}
            \end{matrix}$};

        \begin{pgfonlayer}{lines}
            \path[draw, line width=8pt, white] (p1) -- (p2) -- (p3) -- (p4) -- (p5) -- (p6) -- (p7) -- (p8) -- cycle; 
        \end{pgfonlayer}

        \begin{pgfonlayer}{face}
            \fill[fill=gray] (p1) -- (p2) -- (p3) -- (p4) -- (p5) -- (p6) -- (p7) -- (p8) -- cycle; 
        \end{pgfonlayer}

    \end{tikzpicture}
    \caption{Examples of hemihedra.
        Here $\convo S:= \relint{\conv{S}}$ represents the relative interior of the convex hull of a finite set of points $S$.
        Note that if $S$ is a singleton set, then $\convo S = S$.
        Shaded circles indicate points that are included, and empty ones indicate points not included in the depicted hemihedra.
    }
    \label{fig:hemihedra-eg}
\end{figure}

%% file: arxiv-system-algebra.tex
\section{Proof of Theorem~\ref{thm:hemihedra}}
\label{sec:system-algebra}

In this section, we prove Theorem~\ref{thm:hemihedra}. The outline of the section is as follows.
In Subsection~\ref{subsec:preliminary} we prove some preliminary results about the set operations used in Definition~\ref{def:system-of-inequalities} that will be used in the proof of Theorem~\ref{thm:hemihedra}. In Subsection~\ref{subsec:sol} we deduce some basic properties of solutions
of systems of inequalities appearing in Theorem~\ref{thm:hemihedra}.
In Subsection~\ref{subsec:algebraic-solution} we describe a general {\em Gaussian elimination-inspired algebraic technique} to formally transform a system of inequalities while preserving its smallest solution under any evaluation map.
After completing the transformation, the smallest solution is easily characterized, from which we can easily deduce Theorem~\ref{thm:hemihedra}.
In Subsection~\ref{subsec:op-real} 
we describe an iterative method
to obtain a description of 
the smallest solution,
which will play a role in the proof of Theorem~\ref{thm:laminar}.

We include a working example (see Example~\ref{eg:working-example}) throughout to illustrate the various steps of our constructions.

\subsection{Some preliminary results}
\label{subsec:preliminary}

\begin{lemma}
\label{lem:convOfconvo-contains-convoOfconv}
    Let $A,B\subset V$. Then, $\conv{A}\cmult\conv{B}\subset \conv{A\cmult B}$.
    More generally, if $A_1,\ldots,A_r \subset V$, then 
    \[
    \Cmult_{i=1}^r \conv{A_i} \subset \conv{\Cmult_{i=1}^r A_i}. 
    \]
\end{lemma}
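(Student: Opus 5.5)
We prove the two-set inclusion directly from the definitions and then induct on $r$.

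\textbf{Two sets.} Take $z \in \conv{A}\cmult\conv{B}$. Then $z=\lambda x+(1-\lambda)y$ with $0<\lambda<1$, $x\in\conv{A}$ and $y\in\conv{B}$. Write these as finite convex combinations
\[
x=\sum_{i}\alpha_i a_i,\qquad y=\sum_j \beta_j b_j,
\]
with $a_i\in A$, $b_j\in B$, $\alpha_i,\beta_j\ge 0$ and $\sum_i\alpha_i=\sum_j\beta_j=1$. Using these weights,
\[
z=\sum_{i,j}\alpha_i\beta_j\bigl(\lambda a_i+(1-\lambda)b_j\bigr).
\]
Each point $\lambda a_i+(1-\lambda)b_j$ lies in $A\cmult B$, because $0<\lambda<1$ is exactly the strictness the positive geometric join requires. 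The weights $\alpha_i\beta_j$ are nonnegative and sum to $1$. Hence $z\in\conv{A\cmult B}$. If $A$ or $B$ is empty, both sides are empty and there is nothing to prove.

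\textbf{General $r$.} We induct on $r$; the case $r=1$ is trivial. We need that $\cmult$ is associative and commutative on actual sets, so that $\Cmult_{i=1}^r A_i$ is well defined. This holds because a point lies in $A_1\cmult\cdots\cmult A_r$ exactly when it is $\sum_i\mu_i a_i$ with all $\mu_i>0$, $\sum_i\mu_i=1$ and $a_i\in A_i$. That characterization follows by regrouping: $\lambda a+(1-\lambda)\bigl(\mu b+(1-\mu)c\bigr)$ has strictly positive weights summing to $1$, and conversely any such strictly positive combination can be split in this nested form.

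Assuming the claim for $r-1$, set $B=\Cmult_{i=1}^{r-1}A_i$. The inductive hypothesis gives $\Cmult_{i=1}^{r-1}\conv{A_i}\subset\conv{B}$. Since $\cmult$ is monotone in each argument,
\[
\Cmult_{i=1}^{r}\conv{A_i}\subset \conv{B}\cmult\conv{A_r}.
\]
By the two-set case this is contained in $\conv{B\cmult A_r}=\conv{\Cmult_{i=1}^r A_i}$.

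\textbf{Alternative and main obstacle.} One can avoid the induction. Expand each $x_i\in\conv{A_i}$ as a convex combination of points of $A_i$, and write a point of the left side as $\sum_i\mu_i x_i$ with all $\mu_i>0$. Multiplying out gives a convex combination, with product weights, of the points $\sum_i\mu_i a_{i,j_i}$, and each of these lies in $\Cmult_i A_i$. The only step requiring any care is the justification above that the iterated join equals the set of strictly positive convex combinations. Everything else is bookkeeping.
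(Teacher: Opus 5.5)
Your proof is correct and takes essentially the same route as the paper. Like the paper, you expand the points of $\conv{A}$ and $\conv{B}$ as finite convex combinations, multiply out to obtain a convex combination, with product weights, of points $\lambda a+(1-\lambda)b\in A\cmult B$, and then induct on $r$ using associativity and monotonicity. The paper invokes Carath\'eodory's theorem to obtain the finite representations, which your argument rightly treats as unnecessary.
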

\begin{proof}
Let $z\in\conv{A}\cmult \conv{B}$. It follows from \caratheodory's theorem that there are subsets $A'\subseteq A,B'\subseteq B$ of size at most $(\dim V +1)$ such that $$z=\lambda\cdot\left(\sum_{a\in A'}\lambda_a\cdot a\right) + \overline\lambda\cdot \left(\sum_{b\in B'}\lambda_b\cdot b\right),$$
where $\lambda_a\in [0,1]$ for each $a\in A'$, $\lambda_b\in [0,1]$ for each $b\in B'$, and $\lambda\in (0,1)$; and $\sum_{a\in A'}\lambda_a=1$, $\sum_{b\in B'}\lambda_b=1$, and $\lambda+\overline{\lambda}=1$. Then, since $$\sum_{a\in A', \; b\in B'} \lambda_a\lambda_b=\left(\sum_{a\in A'}\lambda_a\right)\left(\sum_{b\in B'}\lambda_b\right)=1,$$
we have,
\begin{align*}
    z=\lambda\cdot\left(\sum_{a\in A'}\lambda_a\cdot a\right) + \overline\lambda\cdot \left(\sum_{b\in B'}\lambda_b\cdot b\right)=\sum_{a\in A', \; b\in B'} \lambda_a\lambda_b \cdot\left(\lambda\cdot a +\overline\lambda \cdot b\right) \in \conv{A\cmult B}.
\end{align*}
This completes the proof of the first inclusion. The more general statement follows from the first inclusion by induction on $r$. 
\end{proof}

We define an equivalence relation on the set of subsets of $V$.
\begin{notation}
\label{not:sim}
For $A,B\subset V$, 
we write  $A\sim B$  if (and only if) $\conv A=\conv B$. 
\end{notation}
We note the following properties of the equivalence relation $\sim$ that we will use without further mention.

\begin{lemma}[Compatibility of \(\sim\) with the set operations]
\label{lem:compatiblity}
Let \(A,A',B,B'\subset V\), and suppose that
\[
A\sim A',
\qquad
B\sim B'.
\]
Then, for every \(\rho\geq 0\),
\[
\rho A\sim \rho A',
\]
and
\[
A\widehat\oplus B
\sim
A'\widehat\oplus B',
\qquad
A+B\sim A'+B',
\qquad
A\mathring\star B
\sim
A'\mathring\star B'.
\]
More generally, the analogous statements hold for finite unions,
finite Minkowski sums, and finite positive geometric joins.
Moreover, if \(C\subset V\) is convex and \(A\sim A'\), then
\[
C\supset A
\quad\Longleftrightarrow\quad
C\supset A'.
\]
\end{lemma}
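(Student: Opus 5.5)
The plan is to reduce every claim to a small number of elementary identities about convex hulls, and then apply them one operation at a time. The basic fact I will use is that for arbitrary $A,B\subset V$,
\[
\conv{A\cup B}=\conv{\conv A\cup\conv B},\qquad \conv{\rho A}=\rho\,\conv A,\qquad \conv{A+B}=\conv A+\conv B .
\]
The first two are standard. For the third, the inclusion $\subset$ holds because $\conv A+\conv B$ is convex and contains $A+B$. The inclusion $\supset$ follows by writing a point of $\conv A+\conv B$ as $\sum_i\lambda_i a_i+\sum_j\mu_j b_j=\sum_{i,j}\lambda_i\mu_j(a_i+b_j)$, the same bilinear trick used in Lemma~\ref{lem:convOfconvo-contains-convoOfconv}.

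With these identities, scaling, union and Minkowski sum are immediate. If $A\sim A'$, then $\conv{\rho A}=\rho\conv A=\rho\conv{A'}=\conv{\rho A'}$. Union follows from the first identity, and Minkowski sum from the third, in each case by substituting $\conv A=\conv{A'}$ and $\conv B=\conv{B'}$.

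The positive geometric join is the only operation needing care, and I expect it to be the main obstacle, because $\cmult$ is not determined by convex hulls and uses strict coefficients. The aim is to show $\conv{A\cmult B}=\conv{\conv A\cmult\conv B}$ for arbitrary $A,B$; the right-hand side depends only on $\conv A$ and $\conv B$, so this gives compatibility.
\begin{itemize}
\item[$\subset$:] This holds since $A\cmult B\subset\conv A\cmult\conv B$.
\item[$\supset$:] Lemma~\ref{lem:convOfconvo-contains-convoOfconv} gives $\conv A\cmult\conv B\subset\conv{A\cmult B}$. Taking convex hulls yields the reverse inclusion.
\end{itemize}
The empty set needs a separate check. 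If $A=\emptyset$, then $A'=\emptyset$ as well (the convex hull is empty only for the empty set), and both joins are empty, so the claim holds.

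For finitely many operands, I will argue by induction on the number of operands, using associativity of union and Minkowski sum. For $\cmult$, I will use the bracketing $(\cdots(A_1\cmult A_2)\cmult\cdots)\cmult A_r$, which is how iterated joins are read. Each step applies the binary statement with the already-established equivalence of the partial result.

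For the last assertion, let $C$ be convex. Then $C\supset A$ holds if and only if $C\supset\conv A$, because $\conv A$ is the smallest convex set containing $A$. Since $\conv A=\conv{A'}$, this is equivalent to $C\supset\conv{A'}$, and hence to $C\supset A'$.
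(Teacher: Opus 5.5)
Your proof is correct and follows the paper's argument essentially step for step. The same convex-hull identities handle scaling, union and Minkowski sum, and the join is handled by showing $\conv{A\cmult B}=\conv{\conv{A}\cmult\conv{B}}$ via Lemma~\ref{lem:convOfconvo-contains-convoOfconv} plus monotonicity. The finite versions then follow by induction, and the last claim is the sandwich $C\supset\conv{A}=\conv{A'}\supset A'$. Your extra details are fine but not needed: the bilinear proof of $\conv{A+B}=\conv{A}+\conv{B}$, and the separate empty-set check, which the general identity already covers.
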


\begin{proof}
    The proof for scalar multiplication, union, and Minkowski sum follows from
the easily checked facts:
$\conv{\rho \;A} = \rho\; \conv{A},
\conv{A+B}=\conv{A} + \conv{B},
\conv{A \cup B} = \conv{ \conv{A} \cup \conv{B}}$.
For positive geometric joins, 
Lemma~\ref{lem:convOfconvo-contains-convoOfconv}  gives
\[
\conv{A}\cmult \conv{B} \subset \conv{A \cmult B},
\]
while monotonicity gives the reverse inclusion after taking convex hulls. 
Thus
\[
\conv{A \cmult B}=\conv{\conv{A} \cmult \conv{B}}
\]
which depends only on $\conv{A}$ and $\conv{B}$.

The more general statements follow by a simple inductive argument.

Finally, if \(C\) is convex and \(C\supset A\), then
\[
C\supseteq\operatorname{conv}(A)
=\operatorname{conv}(A')
\supset A'.
\]
The converse follows by symmetry.
\end{proof}
 
We now state and prove some basic algebraic properties of the set operations defined before which will be useful later.

\begin{lemma}
\label{lem:prop-set-ops}
    For any subsets $A,B,C\subset V$ and $0<\rho<1$, the following identities hold. 
    \begin{align}
        A \cmult A &\sim A \label{eqn:idempotent-cmult}\\
        A\cmult(B\cmult C) &= (A\cmult B)\cmult C \label{eqn:associative-cmult}\\
        A\cmult(B\cplus C) &= (A\cmult B) \;\cplus\; (A\cmult C) \label{eqn:dist-cmult-cplus}\\
        \rho\cdot (A \cplus B) &= (\rho\cdot A) \;\cplus\; (\rho\cdot B) \label{eqn:dist-scal-cplus},\\
        \rho\cdot (A\cmult B) &= (\rho\cdot A) \;\cmult\; (\rho\cdot B) \label{eqn:dist-scal-cmult}\\
        (A\cmult B) + C &\sim (A+C)\;\cmult\;(B+C) \label{eqn:dist-minkowski-cmult}
    \end{align}

More generally, for $A,A_1,\ldots,A_m, B_1,\ldots,B_n, C\subset V$ and $0 < \rho < 1$,
\begin{align}
       A \cmult \cdots \cmult A  &\sim A \label{eqn:idempotent-cmult-general}\\
        A\cmult\left(\Cplus_{i=1}^n B_i\right) &= \Cplus_{i=1}^n\left(A\cmult B_i\right) \label{eqn:dist-cmult-cplus-general}\\
        \rho\cdot \left(\Cplus_{i=1}^m A_i\right) &= \Cplus_{i=1}^m \left(\rho\cdot A_i\right) \label{eqn:dist-scal-cplus-general},\\
        \rho\cdot \left(\Cmult_{i=1}^m A_i\right) &= \Cmult_{i=1}^m \left(\rho\cdot A_i\right) \label{eqn:dist-scal-cmult-general}\\
        \Cmult_{i=1}^m A_i + C &\sim  \Cmult_{i=1}^m \left(A_i+C\right) \label{eqn:dist-minkowski-cmult-general}
    \end{align}
\end{lemma}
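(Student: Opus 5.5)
The identities split into two kinds: exact set equalities, which I will check by elementwise double inclusion, and the $\sim$ relations, which I will reduce to convex-hull computations using Lemma~\ref{lem:convOfconvo-contains-convoOfconv} and Lemma~\ref{lem:compatiblity}.

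\textbf{Exact equalities.} \eqref{eqn:dist-cmult-cplus}, \eqref{eqn:dist-scal-cplus} and \eqref{eqn:dist-scal-cmult} are immediate from the definitions. In each case an element of one side is produced by choosing either a point of $B$ or a point of $C$, or by multiplying a strict convex combination $\lambda a+(1-\lambda)b$ by $\rho$; both operations commute with the relevant constructions.

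For associativity \eqref{eqn:associative-cmult}, take $x=\lambda a+(1-\lambda)(\mu b+(1-\mu)c)$ with $\lambda,\mu\in(0,1)$. Its coefficients on $a,b,c$ are $\alpha=\lambda$, $\beta=(1-\lambda)\mu$ and $\gamma=(1-\lambda)(1-\mu)$. These are all strictly positive and sum to $1$. Conversely, any triple of strictly positive weights $(\alpha,\beta,\gamma)$ summing to $1$ can be regrouped as $(\alpha+\beta)\bigl(\tfrac{\alpha}{\alpha+\beta}a+\tfrac{\beta}{\alpha+\beta}b\bigr)+\gamma c$, and every coefficient appearing there lies in $(0,1)$. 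So both sides of \eqref{eqn:associative-cmult} equal the set of strictly positive convex combinations of a point of $A$, a point of $B$ and a point of $C$. The general forms \eqref{eqn:dist-cmult-cplus-general}, \eqref{eqn:dist-scal-cplus-general} and \eqref{eqn:dist-scal-cmult-general} then follow by induction on the number of factors, using associativity so that iterated joins are unambiguous.

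\textbf{The $\sim$ relations.} For \eqref{eqn:idempotent-cmult}, the inclusion $A\cmult A\subset\conv A$ is clear. Conversely, each $a\in A$ equals $\tfrac12 a+\tfrac12 a\in A\cmult A$, so $A\subset A\cmult A$, and taking convex hulls gives equality. The same argument gives \eqref{eqn:idempotent-cmult-general}, since $a=\sum\lambda_i a$ for any strictly positive weights $\lambda_i$.

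For \eqref{eqn:dist-minkowski-cmult}, one inclusion is direct: $\lambda a+(1-\lambda)b+c=\lambda(a+c)+(1-\lambda)(b+c)$, so $(A\cmult B)+C\subset (A+C)\cmult(B+C)$. The reverse inclusion fails setwise, because the two copies of $C$ may receive different points $c\neq c'$. I will therefore work with convex hulls. Write $\lambda(a+c)+(1-\lambda)(b+c')$ as the convex combination with weights $\lambda$ and $1-\lambda$ of the two points $[\lambda a+(1-\lambda)b]+c$ and $[\lambda a+(1-\lambda)b]+c'$. Expanding confirms this: the result is $\lambda a+(1-\lambda)b+\lambda c+(1-\lambda)c'$, which is the original point. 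Both auxiliary points lie in $(A\cmult B)+C$, so the original point lies in $\conv{(A\cmult B)+C}$. This gives $(A+C)\cmult(B+C)\subset\conv{(A\cmult B)+C}$, and hence the two sides have the same convex hull.

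The general identity \eqref{eqn:dist-minkowski-cmult-general} uses the same trick with $m$ copies: a point $\sum\lambda_i(a_i+c_i)$ equals $\sum_j\lambda_j\bigl(\sum_i\lambda_i a_i+c_j\bigr)$. Alternatively, it follows by induction from the binary case together with the compatibility in Lemma~\ref{lem:compatiblity}, which lets me replace a set by any $\sim$-equivalent one inside $\cmult$ and $+$.

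The only real obstacle is this decoupling of the independent copies of $C$ in \eqref{eqn:dist-minkowski-cmult}. It is also exactly why the identity holds only up to $\sim$ rather than as an equality of sets.
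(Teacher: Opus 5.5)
Your proposal is correct and follows essentially the same route as the paper: elementwise double inclusion for the exact identities, the strictly-positive-triple description for associativity, $A\subset A\cmult A\subset\conv{A}$ for idempotence, the same decoupling of the two copies of $C$ for \eqref{eqn:dist-minkowski-cmult}, and induction with Lemma~\ref{lem:compatiblity} for the general forms. Your direct identity $\sum_i\lambda_i(a_i+c_i)=\sum_j\lambda_j\bigl(\sum_i\lambda_i a_i+c_j\bigr)$ is a valid one-step alternative for \eqref{eqn:dist-minkowski-cmult-general}, where the paper instead uses induction from the binary case.
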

The proofs of the above equivalences/equalities are not difficult. We 
include them below for the reader's convenience. 

\begin{proof}[Proof of Lemma~\ref{lem:prop-set-ops}]
\noindent
\begin{proof}[Proof of \eqref{eqn:idempotent-cmult}: $A\cmult A \sim A$]
    We will show that $\conv{A\cmult A} = \conv A$. 
    To prove $\conv{A} \subseteq \conv{A\cmult A}$, it suffices to prove that $A\subseteq A\cmult A$. 
    This follows from the fact that, for any $a\in A$, we have
    $a = \frac12\cdot a + \frac12\cdot a \in A\cmult A$. 
    
    To prove $\conv{A \cmult A} \subseteq \conv{A}$, it suffices to prove that $A\cmult A \subseteq \conv A$. 
    Let $\lambda\cdot a + (1-\lambda)\cdot a' \in A\cmult A$, for some  $a,a'\in A$ and $0<\lambda<1$.
    Since $\conv A$ is convex, it is immediate that $\lambda\cdot a + (1-\lambda)\cdot a' \in \conv A$ for any $a,a'\in A\subseteq \conv A$. 
\end{proof}

\begin{proof}[Proof of \eqref{eqn:associative-cmult}: $A\cmult (B\cmult C)=(A\cmult B)\cmult C$] 
We show that both sets $A\cmult (B\cmult C)$ and $(A\cmult B)\cmult C$ are equal to the following set:
 $$ L\defeq \left\{\vphantom{2^{2^2}}\alpha\cdot a + \beta\cdot b + \gamma\cdot c \;\colon\; a\in A, b\in B, c\in C, \text{ and }\alpha,\beta,\gamma>0 \text{ satisfying } \alpha+\beta+\gamma = 1\right\}.$$
We first show that $L=A\cmult (B\cmult C)$.
To show $L\subset A\cmult (B\cmult C)$ let $a\in A, b\in B,c\in C$ and $\alpha,\beta,\gamma\in (0,1)$, with $\alpha+\beta+\gamma=1$. Then, 
\[
\alpha\cdot a + \beta\cdot b + \gamma\cdot c = \alpha\cdot a+(1-\alpha)\cdot \left(\frac{\beta}{1-\alpha} \cdot b + \frac{\gamma}{1-\alpha}\cdot c\right) \in A\cmult (B\cmult C),
\]
since $\alpha, \frac{\beta}{1-\alpha}, \frac{\gamma}{1-\alpha}\in (0,1)$, and $\frac{\beta}{1-\alpha}+\frac{\gamma}{1-\alpha}=1$.

To show
$A\cmult (B\cmult C)\subset L$, let $a\in A,b\in B,c\in C$, and  $\alpha,\lambda\in (0,1)$. Then, we can rewrite the point $\alpha\cdot a+(1-\alpha)\cdot \left(\lambda\cdot b+(1-\lambda)\cdot c\right)\in A\cmult (B\cmult C)$ as
$\alpha\cdot a+(1-\alpha)\lambda \cdot b+(1-\alpha)(1-\lambda)\cdot c$.
The latter clearly belongs to $L$ because $\alpha, (1-\alpha)\lambda, (1-\alpha)(1-\lambda) > 0$ and 
$\alpha+(1-\alpha)\lambda + (1-\alpha)(1-\lambda) = 1$.\newline

\noindent The proof of $L=(A\cmult B)\cmult C$ follows similarly after exchanging the roles of $A$ and $C$.
 \end{proof}

\begin{proof}[Proof of \eqref{eqn:dist-cmult-cplus}: $A\cmult (B\cplus C)=(A\cmult B)\cplus (A\cmult C)$] 
To prove that $A\cmult (B\cplus C)\subset (A\cmult B)\cplus (A\cmult C)$, let $e\in A\cmult (B\cplus C)$. Then, there exist $a\in A, d\in B\cplus C$ and $\lambda\in (0,1)$ such that $e=\lambda \cdot a+(1-\lambda)\cdot d$. Since $d\in B\cplus C$, we have $d\in B$ or $d\in C$. If $d\in B$, then $e\in A\cmult B$, and if $d\in C$, then $e\in A\cmult C$. Thus, we have $e\in (A\cmult B)\cplus (A\cmult C)$.

To prove that $(A\cmult B)\cplus (A\cmult C)\subset A\cmult (B\cplus C)$,  let $e\in A\cmult B$. Then, there exist $a\in A, b\in B$, and $\lambda\in (0,1)$ such that $e=\lambda\cdot a+(1-\lambda)\cdot b$. Since $b\in B\cplus C$, we have $e\in A\cmult (B\cplus C)$. This implies that $A\cmult B\subset A\cmult (B\cplus C)$. Similarly, we can show that $A\cmult C\subset A\cmult (B\cplus C)$. Thus, we have $(A\cmult B)\cplus (A\cmult C)\subset A\cmult (B\cplus C)$.  
\end{proof}

\begin{proof}[Proof of \eqref{eqn:dist-scal-cplus}: $\rho\cdot (A\cplus B)=(\rho\cdot A)\cplus (\rho\cdot B)$] 
To prove that $\rho\cdot (A\cplus B) \subset(\rho\cdot A)\cplus (\rho\cdot B)$,
let $d\in \rho\cdot (A\cplus B)$. Then, there exists $c\in A\cplus B$ such that $d=\rho\cdot c$. It follows from $c\in A\cplus B$ that $c\in A$ or $c\in B$.
If $c\in A$, then we have $d=\rho\cdot c\in \rho\cdot A$, and if $c\in B$, then we have $d=\rho\cdot c\in \rho\cdot B$. Thus, we conclude that $d\in \rho\cdot A\cplus \rho\cdot B$.
We now prove that $(\rho\cdot A)\cplus (\rho\cdot B)\subset \rho\cdot (A\cplus B)$.
    Since $A\subseteq A\cplus B$, we have $\rho\cdot A\subset \rho\cdot (A\cplus B)$. Similarly, we have $\rho\cdot B\subset \rho \cdot (A\cplus B)$. This implies that $(\rho\cdot A)\cplus (\rho\cdot B)\subset \rho\cdot (A\cplus B)$.
\end{proof}

\begin{proof}[Proof of \eqref{eqn:dist-scal-cmult}: $\rho\cdot (A\cmult B) = (\rho\cdot A) \cmult(\rho\cdot B)$] 
To prove that $\rho\cdot (A\cmult B) \subset (\rho\cdot A) \cmult (\rho\cdot B)$,
let $a\in A, b\in B$, and $\lambda\in (0,1)$. Then, 
we can rewrite 
$\rho\cdot (\lambda\cdot a+(1-\lambda)\cdot b)\in \rho\cdot (A \cmult B)$ as $\lambda\cdot (\rho\cdot a)+(1-\lambda)\cdot (\rho\cdot b)\in (\rho\cdot A) \cmult (\rho\cdot B)$. 

To prove that $(\rho\cdot A) \cmult (\rho\cdot B)\subset \rho\cdot (A\cmult B)$, let $a\in A, b\in B$, and $\lambda\in (0,1)$. Then, we can rewrite  $\lambda\cdot (\rho\cdot a)+(1-\lambda)\cdot (\rho\cdot b)\in (\rho\cdot A) \cmult (\rho\cdot B)$ as $\rho\cdot (\lambda\cdot a+(1-\lambda)\cdot b)\in \rho\cdot (A \cmult B)$.
\end{proof}

\begin{proof}[Proof of \eqref{eqn:dist-minkowski-cmult}: $(A\cmult B) + C \sim (A+C)\cmult(B+C)$]
    We need to prove that 
    \[
    \conv{(A\cmult B) + C} = \conv{(A+C)\cmult(B+C)}.
    \]
    
 To prove that $\conv{(A\cmult B) + C}\subset \conv{(A+C)\;\cmult\;(B+C)}$,
it suffices to prove that $(A\cmult B) + C \subset (A+C) \cmult (B+C)$.
Let $(\lambda\cdot a + (1-\lambda)\cdot b) + c \in \left(A\cmult B\right) + C$ for some $a\in A, b\in B, c\in C$, and $0<\lambda<1$. 
    We rewrite this point as $\lambda\cdot(a+c) \;+\; (1-\lambda)\cdot(b+c)$, which clearly belongs to $(A+C) \cmult (B+C)$.

   To prove that $\conv{(A+C)\;\cmult\;(B+C)}\subset \conv{(A\cmult B) + C}$,
    it suffices to prove that $(A+C)\;\cmult\;(B+C) \subset \conv{(A\cmult B) + C}$.
    Let $\lambda\cdot(a+c) \;+\; (1-\lambda)\cdot (b+c')\in (A+C)\;\cmult\;(B+C)$ for some $a\in A, b\in B,$  $c,c'\in C$, and $0<\lambda<1$. We
    rewrite it as $$\lambda\cdot\left((\lambda\cdot a+(1-\lambda)\cdot b)+c\right)+(1-\lambda)\cdot \left((\lambda\cdot a+(1-\lambda)\cdot b)+c'\right)\in \conv{(A\cmult B) + C}. \qedhere$$
\end{proof}
Finally, the more general versions, namely 
\eqref{eqn:idempotent-cmult-general}, \eqref{eqn:dist-cmult-cplus-general}, \eqref{eqn:dist-scal-cplus-general},
\eqref{eqn:dist-scal-cmult-general},
\eqref{eqn:dist-minkowski-cmult-general}
follow by induction from the corresponding statements 
(respectively, \eqref{eqn:idempotent-cmult}, \eqref{eqn:dist-cmult-cplus}, \eqref{eqn:dist-scal-cplus},
\eqref{eqn:dist-scal-cmult},
\eqref{eqn:dist-minkowski-cmult}) proved previously and using Lemma~\ref{lem:compatiblity}.
\end{proof}

\begin{example}
\label{eg:working-example}
Let $\Omega_p = \{P_1,\ldots,P_4\}$ and $\Omega_v = \{X_1,X_2\}$.
We will use the following system of inequalities as a running example
(see Section~\ref{sec:example}).

\begin{equation}
\left\{
\begin{aligned}
    X_1 &\geq P_1  \;\cplus\;  X_1\cmult \left(\frac12\cdot X_2 + \frac12\cdot P_3\right) \\
    X_2 &\geq P_2  \;\cplus\;  X_2\cmult \left(\frac12\cdot X_1 + \frac12\cdot P_4\right) 
\end{aligned}
\right.
    \label{eqn:running-example}
\end{equation}
\end{example}

\subsection{Properties of solutions of systems of inequalities}
\label{subsec:sol}
We first observe that:
\begin{proposition}
\label{prop:sol}
    $\sol(I;\bP)\neq \emptyset$.
\end{proposition}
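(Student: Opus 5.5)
The plan is to exhibit an explicit solution: take $\bX_i \defeq V$ for every $1\leq i\leq n$. The space $V$ is convex, so this $\bX$ lies in $\convset(V)^{\Omega_v}$ and is an assignment in the sense of Definition~\ref{def:assignment}. By Definition~\ref{def:sol}, it then suffices to check that $V\geq \evalmap(\varphi_i;\bX,\bP)$ for each $i$.

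That containment is immediate, because every evaluation is by definition a subset of $V$. Going through Definition~\ref{def:eval-map} layer by layer:
\begin{enumerate}[(i)]
\item for $E\in\CL(\Omega)$, the set $\evalmap(E;\bX,\bP)$ is a Minkowski sum of scaled subsets of $V$ (or $\{\mathbf 0\}$ for an empty sum), hence a subset of $V$;
\item a positive geometric join of subsets of $V$ consists of convex combinations of points of $V$, so it is again a subset of $V$;
\item a finite union of subsets of $V$ (or $\emptyset$ when $\varphi_i=\mathbb 0$) is a subset of $V$.
\end{enumerate}
So $\evalmap(\varphi_i;\bX,\bP)\subset V=\bX_i$ for every $i$, and $\bX\in\sol(I;\bP)$.

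There is no real obstacle. The only point to keep in mind is that nonemptiness must hold for arbitrary parameter sets $\bP(P)\in 2^V$, which may be unbounded or nonconvex. Choosing the whole space $V$ avoids any dependence on $\bP$, so no argument about bounded or convex candidates is needed.
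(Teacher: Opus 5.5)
Your proof is correct. It uses the same strategy as the paper: exhibit one explicit constant assignment and check that every evaluation lands inside it. The difference is the witness. You take the top element $\bX_i = V$, whereas the paper takes $\bX_i = \conv{\bigcup_{P\in\Omega_p}\bP(P)}$ for every $i$.

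Your choice needs nothing beyond the fact that all evaluations are subsets of $V$. The paper's choice needs one small observation: a convex set $C$ absorbs the operations, i.e.\ $\sum_\omega \lambda_\omega A_\omega\subset C$ and $A\cmult B\subset C$ whenever $A_\omega,A,B\subset C$ and the $\lambda_\omega\geq 0$ sum to $1$. In exchange, the paper's witness yields more. Since $\ssol(I;\bP)$ is the intersection of all solutions, it gives the a priori bound $\ssol(I;\bP)_i\subset\conv{\bigcup_{P}\bP(P)}$, so the smallest solution is bounded whenever the parameter sets are.

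Your closing remark slightly overstates the difficulty. The paper's candidate uses only convexity of the hull, not boundedness or convexity of the $\bP(P)$, so it also works for arbitrary $\bP\in(2^V)^{\Omega_p}$.
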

\begin{proof}
    Note that $\bX_1 = \dotsi = \bX_n = U$, where $U$ is the convex hull of $\bP_1 \cplus \dotsi \cplus \bP_m$, is a solution. 
    This is because $U$ contains the evaluation of any element in $\CL(\Omega)$ with assignments that are subsets of $U$. 
    After that, the containment of monomials and polynomials is also immediate. 
\end{proof}
We next observe that the set $\sol(I;\bP)$ is closed under arbitrary intersection.

\begin{proposition}
\label{prop:closure-under-intersection}
   Let $\mathcal{X} \subset \sol(I;\bP)$.
    Then, 
    \[
    \bigcap_{\bX \in \mathcal{X}} \bX \defeq \left(\bigcap_{\bX \in \mathcal{X}}\bX_i\right)_{1\leq i \leq n} \in \sol(I;\bP).
    \]
\end{proposition}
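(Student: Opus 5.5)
The plan is to prove the statement from monotonicity of the evaluation map in the variable assignment. First I will handle the degenerate case $\mathcal{X}=\emptyset$. Here the empty intersection is interpreted componentwise as $V$, and the tuple $(V,\ldots,V)$ is trivially a solution, since every evaluation is a subset of $V$. So assume $\mathcal{X}\neq\emptyset$ and write $\bY=\bigcap_{\bX\in\mathcal{X}}\bX$, so that $\bY_i=\bigcap_{\bX\in\mathcal X}\bX_i$. Each $\bY_i$ is an intersection of convex sets and hence convex, so $\bY\in\convset(V)^{\Omega_v}$ is a legitimate assignment.

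The key step is the following monotonicity claim. If $\bY,\bX\in\convset(V)^{\Omega_v}$ satisfy $\bY_i\subset\bX_i$ for all $i$, then
\[
\evalmap(\varphi;\bY,\bP)\subset\evalmap(\varphi;\bX,\bP)
\]
for every polynomial $\varphi$. I will check this level by level, following Definition~\ref{def:eval-map}.

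\begin{itemize}
\item For $E\in\CL(\Omega)$, the evaluation is a Minkowski sum of the sets $\lambda_X\cdot\bX(X)$ and $\lambda_P\cdot\bP(P)$, taken over the positive coefficients. The index set depends only on $E$, not on the assignment. Scaling and Minkowski sum are both monotone under inclusion in each argument, so this level is monotone.
\item For a monomial, the evaluation is an iterated positive geometric join $\cmult$ of the evaluations of its support elements. Directly from \eqref{eqn:pgj-def}, $A\subset A'$ and $B\subset B'$ imply $A\cmult B\subset A'\cmult B'$, so this level is monotone too.
\item For a polynomial, the evaluation is a finite union (or $\emptyset$ for $\mathbb{0}$), and unions are monotone.
\end{itemize}

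The one subtle point is that Definition~\ref{def:eval-map} drops the terms with $\lambda=0$. This causes no trouble, because the set of terms kept is determined by $E$ alone and is the same for both assignments. Empty sets also cause no trouble: if some $\bY(X)=\emptyset$ with $\lambda_X>0$, the Minkowski sum is empty and the inclusion holds trivially.

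With the claim in hand, fix $i$ and any $\bX\in\mathcal X$. Since $\bY\subset\bX$ componentwise and $\bX$ is a solution,
\[
\evalmap(\varphi_i;\bY,\bP)\subset\evalmap(\varphi_i;\bX,\bP)\subset\bX_i .
\]
Taking the intersection over all $\bX\in\mathcal X$ gives $\evalmap(\varphi_i;\bY,\bP)\subset\bY_i$, which is exactly the solution condition of Definition~\ref{def:sol}. Therefore $\bY\in\sol(I;\bP)$.

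I expect no real obstacle. The only places needing care are the convention for $\mathcal X=\emptyset$ and the zero-coefficient and empty-set conventions in the evaluation map, both handled above. Finally, combining this statement with Proposition~\ref{prop:sol} (nonemptiness), the intersection of all solutions is itself a solution, namely the unique smallest element promised in Remark~\ref{rem:ss}.
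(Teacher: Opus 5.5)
Your proof is correct and follows the same route as the paper: the paper first proves monotonicity of evaluation in the assignment (Lemma~\ref{lem:monotonicity}), then intersects the chain $\evalmap(\varphi_i;\bY,\bP)\subset\evalmap(\varphi_i;\bX,\bP)\subset\bX_i$ over all $\bX\in\mathcal{X}$. You also treat the case $\mathcal{X}=\emptyset$ and check that the componentwise intersection is convex; the paper leaves both of these details implicit.
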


Before proving Proposition~\ref{prop:closure-under-intersection} we need the following lemma.

\begin{lemma}
\label{lem:monotonicity}
    Suppose $\bX,\bY \in \convset(V)^{\Omega_v}$ with $\bX\geq \bY$ and
    $\bP,\bQ \in (2^V)^{\Omega_p}$ with $\bP\geq \bQ$, and  $\varphi \in \mathbb{P}[\Omega]$.
    Then,
        $ \evalmap\left(\varphi ; \bX,\bP\right) \geq \evalmap\left(\varphi ; \bY,\bQ\right) $.
    Furthermore, 
    if $\bX(X) \sim \bY(X)$ for all $X \in \Omega_v$, and
    $\bP(P) \sim \bQ(P)$ for all $P \in \Omega_p$, then
    \[
    \evalmap\left(\varphi ; \bX,\bP\right) \sim \evalmap\left(\varphi ; \bY,\bQ\right).
    \]
\end{lemma}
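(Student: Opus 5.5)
The plan is a structural induction on the formal expression $\varphi$, following the three layers of Definition~\ref{def:eval-map}: first elements $E\in\CL(\Omega)$, then monomials, then polynomials. At each layer I will use only that the four set operations are monotone under inclusion, and, for the $\sim$ statement, that they are compatible with $\sim$ (Lemma~\ref{lem:compatiblity}).

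For the inclusion statement, take $E=\sum_X\lambda_X\cdot X+\sum_P\lambda_P\cdot P$. Its evaluation is a Minkowski sum of the sets $\lambda_X\cdot\bX(X)$ and $\lambda_P\cdot\bP(P)$, taken over the indices with positive coefficient. This index set depends only on $E$, not on the assignment, and that is the point to watch: because of it, the same summands appear on both sides. Scaling by a nonnegative scalar is monotone. Minkowski sum is monotone in each argument: if $A\supset A'$ and $B\supset B'$, then $A+B\supset A'+B'$. Hence $\evalmap(E;\bX,\bP)\supset\evalmap(E;\bY,\bQ)$, including when some of the sets are empty. The positive geometric join is monotone in each argument directly from \eqref{eqn:pgj-def}, so the inclusion passes to monomials. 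It passes to polynomials because union is monotone, and the case $\varphi=\mathbb{0}$ is trivial since both sides are $\emptyset$.

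For the $\sim$ statement I run the same induction, replacing monotonicity by Lemma~\ref{lem:compatiblity}. For each $E$, the sets $\lambda_X\bX(X)\sim\lambda_X\bY(X)$ and $\lambda_P\bP(P)\sim\lambda_P\bQ(P)$. Finite Minkowski sums preserve $\sim$, so $\evalmap(E;\bX,\bP)\sim\evalmap(E;\bY,\bQ)$. Finite positive geometric joins preserve $\sim$, which gives the statement for monomials. Finite unions preserve $\sim$, which gives it for polynomials. Again $\mathbb{0}$ evaluates to $\emptyset$ on both sides.

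The only subtle point is one the argument must not overlook: a summand whose coefficient is positive but whose assigned set is empty makes the whole Minkowski sum empty. On the inclusion side this causes no trouble, since monotonicity handles empty sets automatically. On the $\sim$ side it is harmless because $A\sim\emptyset$ forces $A=\emptyset$, so emptiness is preserved between the two assignments. The compatibility of $\cmult$ with $\sim$ is already contained in Lemma~\ref{lem:compatiblity}, via Lemma~\ref{lem:convOfconvo-contains-convoOfconv}. Beyond this bookkeeping about empty sets, I do not expect a real obstacle.
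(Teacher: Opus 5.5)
Your proof is correct and takes the same route as the paper. The inclusion part is the identical layer-by-layer monotonicity argument. For the $\sim$ part you cite Lemma~\ref{lem:compatiblity}, whereas the paper redoes its content inline: it uses that the convex hull of a Minkowski sum is the sum of the hulls, applies Lemma~\ref{lem:convOfconvo-contains-convoOfconv} to the joins, and concludes by symmetry. Your extra discussion of empty summands is harmless but unnecessary, since those identities already hold when some of the sets are empty.
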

\begin{proof}
    It suffices to prove the result for monomials. 
    Suppose $\bX\geq \bY$, and $\bP\geq \bQ$. Then, $\bY(X)\subset \bX(X)$ for all $X \in \Omega_v$,
    and  $\bQ(P) \subset \bP(P) $ for all $P \in \Omega_p$. 
    
    Let $\varphi$ be a monomial $M$. For each 
    $E = 
\sum_{X \in \Omega_v, \lambda_X >0} \lambda_X \cdot X + \sum_{P \in \Omega_p, \lambda_P > 0} \lambda_P \cdot P    
    \in \elem(M)$, we have, 
\begin{align*}
\evalmap\left(E;\bY,\bQ\right)&=
\sum_{X \in \Omega_v, \lambda_X >0} \lambda_X \cdot \bY(X) + \sum_{P \in \Omega_p, \lambda_P > 0} \lambda_P \cdot \bQ(P) \\
&\subset 
\sum_{X \in \Omega_v, \lambda_X > 0} \lambda_X \cdot \bX(X) + \sum_{P \in \Omega_p, \lambda_P > 0} \lambda_P \cdot \bP(P)\\
&=\evalmap\left(E;\bX,\bP\right).
\end{align*}
Thus, we have,
\begin{align*}
    \evalmap\left(M; \bY,\bQ\right)&=\Cmult_{E\in\elem(M)} ~ \evalmap(E;\bY,\bQ)\\
    &\subset \Cmult_{E\in\elem(M)} ~ \evalmap(E;\bX,\bP)\\
    &=\evalmap\left(M; \bX,\bP\right).
\end{align*}
This implies that $\evalmap\left(M;\bX,\bP\right)\geq \evalmap\left(M;\bY,\bQ\right)$.

Now, suppose that $\bX(X)\sim \bY(X)$ for all $X \in \Omega_v$, and $\bP(P)\sim\bQ(P)$ for all $P \in \Omega_p$.

Thus, $\conv{\bX(X)}=\conv{\bY(X)}$ for all $X \in \Omega_v$, and $\conv{\bP(P)}=\conv{\bQ(P)}$ for all $P \in \Omega_p$.

For each $E = 
\sum_{X \in \Omega_v, \lambda_X > 0} \lambda_X \cdot X + \sum_{P \in \Omega_p, \lambda_P > 0}\lambda_P \cdot P \in \elem(M)$, we have
\begin{align*}
    \evalmap\left(E;\bY,\bQ\right)&\subset \conv{\evalmap(E;\bY,\bQ)}\\
    &=\conv{\sum_{X \in \Omega_v, \lambda_X > 0} \lambda_X \cdot \bY(X) + \sum_{P \in \Omega_p, \lambda_P > 0} \lambda_P \cdot \bQ(P)}\\
    &= \sum_{X \in \Omega_v, \lambda_X > 0} \lambda_X\cdot \conv{\bY(X)} + 
    \sum_{P \in \Omega_p, \lambda_P > 0} \lambda_P \cdot \conv{\bQ(P)} \\
    &=\sum_{X \in \Omega_v, \lambda_X > 0} \lambda_X\cdot \conv{\bX(X)} + 
    \sum_{P \in \Omega_p, \lambda_P > 0} \lambda_P \cdot \conv{\bP(P)} \\
    &=\conv{\sum_{X \in \Omega_v, \lambda_X > 0} \lambda_X \cdot \bX(X) + \sum_{P \in \Omega_p, \lambda_P > 0} \lambda_P \cdot \bP(P)}\\
    &=\conv{\evalmap(E; \bX,\bP)}.
\end{align*}
This implies that
    \begin{align*}
        \evalmap\left(M; \bY,\bQ\right)&=\Cmult_{E\in\elem(M)} ~ \evalmap(E;\bY,\bQ)\\
        &\subset \Cmult_{E\in\elem(M)} ~ \conv{\evalmap(E;\bX,\bP)}\\ \tag{By Lemma~\ref{lem:convOfconvo-contains-convoOfconv}}
        &\subset \conv{\Cmult_{E\in\elem(M)} ~ \evalmap(E;\bX,\bP)}\\
        &= \conv{\evalmap\left(M; \bX,\bP\right)}.
    \end{align*}
    Thus, we have
$$\conv{\evalmap\left(M; \bY,\bQ\right)}\subset \conv{\evalmap\left(M; \bX,\bP\right)}.$$
Similarly, we can show that $\conv{\evalmap\left(M; \bX,\bP\right)}\subset \conv{\evalmap\left(M; \bY,\bQ\right)}$. Thus, we have $$\conv{\evalmap\left(M; \bX,\bP\right)}= \conv{\evalmap\left(M; \bY,\bQ\right)},$$ as desired. 
\end{proof}

\begin{proof}[Proof of Proposition~\ref{prop:closure-under-intersection}]
We have for each $\bX \in \mathcal{X}$
\begin{align*}
    \bX_i &\geq \evalmap\left(\varphi_i;\bX,\bP\right) & \text{ (since $\bX \in \mathcal{X} \subset \sol(I;\bP)$)}\\
    & \geq  \evalmap\left(\varphi_i;\mathop{\bigcap}\limits_{\bX \in \mathcal{X}}\bX ,\bP\right) & \text{ (using Lemma~\ref{lem:monotonicity} and $\bX \geq \mathop{\bigcap}\limits_{\bX \in \mathcal{X}}\bX$)}.
\end{align*}

Thus, we conclude that $\mathop{\bigcap}\limits_{\bX \in \mathcal{X}}\bX_i\geq \evalmap\left(\varphi_i;\mathop{\bigcap}\limits_{\bX \in \mathcal{X}}\bX ,\bP\right)$. Therefore, we have the following for every $i\in \{1,2,\dots,n\}$: $$\left(\mathop{\bigcap}\limits_{\bX \in \mathcal{X}}\bX\right)_i=\mathop{\bigcap}\limits_{\bX \in \mathcal{X}}\bX_i\geq \evalmap\left(\varphi_i;\mathop{\bigcap}\limits_{\bX \in \mathcal{X}}\bX ,\bP\right),$$
which implies that $\mathop{\bigcap}\limits_{\bX \in \mathcal{X}}\bX \in \sol(I;\bP)$. 
\end{proof}

Taking $\mathcal{X} = \sol(I;\bP)$ in Proposition~\ref{prop:closure-under-intersection} we get that the intersection of all solutions in
$\sol(I;\bP)$ is also an element of $\sol(I;\bP)$ -- the {\em smallest solution} of $I$. 

\begin{notation}
\label{not:ss}
We denote the smallest solution of $I$ by 
\begin{equation}
\label{eqn:smallest-sol-def}
    \ssol(I;\bP) \defeq \bigcap\limits_{\bX\in\sol(I;\bP)} \bX.
\end{equation}
\end{notation}

\paragraph{\emph{Intuition behind the $\cmult$ operation}.}
First, let us elaborate on the evaluation of an expression $A\cmult B$, where $A, B\subset V$. 
When $A$ and $B$ are singleton sets, $A\cmult B$ represents the relative interior of the line segment joining the two points. 
Likewise, for singleton sets $A, B,\dotsc, C\subset V$, the set $A\cmult B \cmult \dotsi \cmult C$ is equal to the relative interior of the convex hull $\conv{A\cplus B \cplus \dotsi \cplus C}$. 

In general (when $A$ and $B$ are not singleton sets), the set $A\cmult B$ is the set of all points that can be expressed as $\lambda\cdot a + (1-\lambda)\cdot b$ for some $a\in A$ and $b\in B$. 
Intuitively, these points are in the relative interior of the line segment $\overline{ab}$ for some $a\in A$ and $b\in B$. 
Clearly, $A\cmult B$ is contained in $\conv{A\cplus B}$.
We do not know how to characterize this set using other elementary set operators precisely. 
However, the set $A\cmult B\cmult \dotsi \cmult C$ is semi-algebraic if the sets $A, B,\dotsc, C$ are semi-algebraic, using standard quantifier elimination (see, for example, \cite[Chapter 14]{BPRbook2}).
These sets will be crucial to {\em characterizing the smallest solution to our systems with a succinct closed-form expression}. 

We note that even if the original system does not have $\cmult$ in the inequalities, its smallest solutions may contain $\cmult$.
For example, the following system of equations, which does not use the $\cmult$ operator in its inequalities, has a solution set identical to that of the working example system \eqref {eqn:running-example} we have been considering. 

\begin{equation}
\left\{
\begin{aligned}
    X_1 &\geq P_1  \;\cplus\;  \left(\frac12\cdot X_1 + \frac14\cdot X_2 + \frac14\cdot P_3\right)\\
    X_2 &\geq P_2  \;\cplus\;  \left(\frac12\cdot X_2 + \frac14\cdot X_1 + \frac14\cdot P_4\right)
\end{aligned}
\right.
\end{equation}
This fact follows from the property that $X\in \convset(V)$ satisfies $X\geq \rho\cdot X + (1-\rho)\cdot A$ if (and only if) $X\geq X\cmult A$, for any $A\subset V$ and $0<\rho<1$ 
and the following lemma.

\begin{lemma}
\label{lem:pullout-X}
Consider convex $X\in\convset(V)$, arbitrary sets $A,B\subset V$, and $0<\rho<1$.
\begin{enumerate}[1.]
\item
\label{itemlabel:lem:pullout-X:1}
$X\geq (\rho\cdot X + (1-\rho)\cdot A)$ if and only if $X\geq X\cmult A$
\item 
\label{itemlabel:lem:pullout-X:2}
$X\geq (\rho\cdot X + (1-\rho)\cdot A) \;\cmult\; B$ if and only if $X\geq X\cmult A\cmult B$
\end{enumerate}
\end{lemma}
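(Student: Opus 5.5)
For Part~\ref{itemlabel:lem:pullout-X:1}, the direction ``$\Leftarrow$'' is immediate from the definitions. Every point $\rho x+(1-\rho)a$ with $x\in X$, $a\in A$ is by definition a point of $X\cmult A$, since $0<\rho<1$. Hence $\rho\cdot X+(1-\rho)\cdot A\subset X\cmult A\subset X$.

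For ``$\Rightarrow$'', fix $x\in X$, $a\in A$ and $\lambda\in(0,1)$; we must show $\lambda x+(1-\lambda)a\in X$. (If $X$ or $A$ is empty, both sides are empty and there is nothing to prove.) Consider the affine map $T(y)=\rho y+(1-\rho)a$. The hypothesis says $T(X)\subset X$, so by induction all iterates lie in $X$:
\[
T^k(x)=\rho^k x+(1-\rho^k)a\in X .
\]
Choose $k$ with $\rho^k\leq\lambda$. Then $\lambda x+(1-\lambda)a$ lies on the segment between $x$ (coefficient $1$ on $x$) and $T^k(x)$ (coefficient $\rho^k$ on $x$). Both endpoints are in $X$, so convexity of $X$ gives the claim.

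For Part~\ref{itemlabel:lem:pullout-X:2}, I use the description $L$ of a triple join from the proof of \eqref{eqn:associative-cmult}: $X\cmult A\cmult B$ is the set of combinations $\alpha x+\beta a+\gamma b$ with $\alpha,\beta,\gamma>0$ and $\alpha+\beta+\gamma=1$. The direction ``$\Leftarrow$'' is again direct. A point of $(\rho X+(1-\rho)A)\cmult B$ has the form
\[
\mu\rho\, x+\mu(1-\rho)\, a+(1-\mu)\, b,\qquad 0<\mu<1,
\]
which is a strictly positive convex combination, so it lies in $X\cmult A\cmult B\subset X$.

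For ``$\Rightarrow$'', fix $x\in X$, $a\in A$, $b\in B$, and work in barycentric coordinates on the triangle spanned by $x,a,b$. Let $K$ be the set of triples $t=(t_x,t_a,t_b)$ in the closed simplex with $t_x x+t_a a+t_b b\in X$. The set $K$ has three properties:
\begin{itemize}
\item it contains $e_x=(1,0,0)$, since $x\in X$;
\item it is convex, because $X$ is convex and the map from triples to points is affine;
\item it is invariant under $F_\mu(t)=\mu\bigl(\rho t+(1-\rho)e_a\bigr)+(1-\mu)e_b$ for every $\mu\in(0,1)$. This holds because $\rho y+(1-\rho)a\in\rho X+(1-\rho)A$ whenever $y\in X$, and the hypothesis then puts its join with $b$ inside $X$.
\end{itemize}

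Now take a target triple $(\alpha,\beta,\gamma)$ with all entries positive. I produce points of $K$ arbitrarily close to $e_a$ and $e_b$:
\begin{itemize}
\item Taking $\mu$ close to $1$ and iterating $k$ times from $e_x$ gives a point close to $\rho^k e_x+(1-\rho^k)e_a$. Choosing $k$ large and then $\mu$ close enough to $1$, this point $p\in K$ is within any prescribed $\varepsilon$ of $e_a$.
\item A single application $q=F_\mu(e_x)$ with $\mu$ small is within $\varepsilon$ of $e_b$.
\end{itemize}
The target lies in the interior of the simplex with vertices $e_x,e_a,e_b$. Hence, for $\varepsilon$ small enough, it lies in the triangle $\conv{\{e_x,p,q\}}$, because barycentric coordinates depend continuously on the vertices. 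By convexity of $K$ the target then lies in $K$, i.e.\ $\alpha x+\beta a+\gamma b\in X$.

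The main obstacle is exactly this last covering step. Making the perturbation argument precise requires care when $x,a,b$ are affinely dependent. I would avoid that issue by doing all of the above in the abstract simplex and only mapping to $V$ at the end, since the map from triples to points is affine regardless of degeneracy. Degenerate cases, such as empty $A$ or $B$, are trivial because both sides are then empty.
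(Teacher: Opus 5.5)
Your proof is correct. Part~1 and both ``if'' directions coincide with the paper's argument. The genuine difference is in the ``only if'' direction of Part~2.

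The paper fixes a target $u x+(1-u)\bigl(va+(1-v)b\bigr)$ and iterates a single map. In your notation this is $F_\mu$ with the one value $\mu=v/\bigl(v+(1-v)(1-\rho)\bigr)$. For this choice $\mu(1-\rho):(1-\mu)=v:(1-v)$, so $F_\mu(t)=\mu\rho\,t+(1-\mu\rho)\bigl(v e_a+(1-v)e_b\bigr)$ is a homothety centred at the point corresponding to $va+(1-v)b$. The orbit of $e_x$ therefore stays on the single segment from $x$ to $va+(1-v)b$. The one-dimensional argument of Part~1 then applies verbatim, using nothing more than $(\mu\rho)^i\to 0$.

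You instead use the whole family $F_\mu$. You manufacture points of $K$ near $e_a$ and $e_b$, and conclude by convexity of $K$ plus continuity of barycentric coordinates. The paper's route is exact and explicit, but it depends on spotting the right join weight. Yours needs no such guess: it uses only that $K$ is convex, contains $e_x$, and is invariant under every $F_\mu$. The price is a soft approximation step in place of an explicit formula.

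The affine-dependence issue you single out as the main obstacle is already handled by your setup. $K$ lives in the abstract simplex, and its convexity uses only that $t\mapsto t_x x+t_a a+t_b b$ is affine, so nothing further is needed.
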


\begin{proof}
We prove both parts below. \\

\noindent
Proof of Part~\eqref{itemlabel:lem:pullout-X:1}: 
\paragraph{Proof of `if.'} By definition, we have 
$\rho\cdot X+(1-\rho)\cdot A\subseteq X\cmult A$ when 
$\rho\in (0,1)$. Therefore, $X\geq X\cmult A$ implies 
$X\geq (\rho\cdot X + (1-\rho)\cdot A)$. 

\paragraph{Proof of `only if.'} Suppose that $X\geq (\rho\cdot X + (1-\rho)\cdot A)$. Let $x\in X$, and $a\in A$. It follows from the hypothesis that $\rho\cdot x+(1-\rho)\cdot a\in X$.
 We now show that if $\rho\cdot x+(1-\rho)\cdot a\in X$ then $\lambda\cdot x+(1-\lambda)\cdot a\in X$, for all $\lambda\in (0,1)$.

Define $x\p 0 \defeq x \in X$. 
    Then, inductively for $i\in\{0,1,2,\dotsc\}$, the point $x\p{i+1} \defeq \rho\cdot x\p i + (1-\rho)\cdot a$ also belongs to $X$ using the fact that $X \geq \rho\cdot X + (1-\rho)\cdot A$.
    By convexity of $X$, the line segment joining the points $x$ and $x\p i$ is a subset of $X$. 
    
    Note that $x\p i = \rho^i\cdot x + (1-\rho^i)\cdot a$. 
    Let $\lambda\in(0,1)$ and any $i_\lambda\in\{0,1,2,\dotsc\}$ satisfying 
    $\rho^{i_\lambda} \leq \lambda$.
    Since $\rho^i \rightarrow 0$, such an $i_\lambda$ exists for every
    $\lambda \in (0,1)$.
    Then, the point $\lambda\cdot x  +(1-\lambda)\cdot a$ is on the line segment joining $x$ and $x\p{i_\lambda}$, which is a subset of $X$.

    This shows  that $\lambda\cdot X+(1-\lambda)\cdot A\subseteq X$, for any $\lambda\in (0,1)$; in turn, implying that $X\geq X\cmult A$. 
    Thus, $X\geq (\rho\cdot X + (1-\rho)\cdot A)$ implies $X\geq X\cmult A$.

\medskip
\noindent
Proof of Part~\eqref{itemlabel:lem:pullout-X:2}:
\paragraph{Proof of `if.'} By definition, we have 
$\left(\rho\cdot X+(1-\rho)\cdot A\right)\cmult B\subseteq \left(X\cmult A\right) \cmult B= X\cmult A\cmult B$ when $\rho\in (0,1)$. Therefore, $X\geq X\cmult A\cmult B$ implies $X\geq (\rho\cdot X+(1-\rho)\cdot A)\cmult B$. 

\paragraph{Proof of `only if.'} Suppose $X\geq (\rho\cdot X + (1-\rho)\cdot A)\cmult B$. 
Let $x\in X, a\in A$, and $b\in B$, and 
$u,v\in (0,1)$. Let $\overline{u}=1-u, \overline{v}=1-v$. We show that the point $p\defeq u\cdot x+ \overline{u}\cdot (v\cdot a+\overline{v}\cdot b)\in X\cmult A\cmult B$ is 
also in $X$. 

Inductively define a sequence of points $x\p i$, for $i\in \{0,1,2,\dots\}$. To begin, define $x\p 0\defeq x$, and define
$$x\p {i+1}\defeq \frac{v}{v+\overline{v}\;\overline{\rho}}\cdot \left(\rho\cdot x\p i+\overline{\rho}\cdot a\right)+\frac{\overline{v}\;\overline{\rho}}{v+\overline{v}\;\overline{\rho}}\cdot b,
$$
where $\overline{\rho}=1-\rho$. Note that, inductively, if $x\p i\in X$, then $x\p {i+1}\in (\rho\cdot X + (1-\rho)\cdot A)\cmult B$, so 
$x\p {i+1}\in X$ according to the hypothesis. Hence, $\{x\p 0,x\p 1,\dots\}\subseteq X$. 

We will prove that $x\p i$ can be written in the following form: 
    $$x\p i=\mu \p i\cdot x+v(1-\mu \p i)\cdot a+\overline{v}(1-\mu \p i)\cdot b.$$
For (base case) $i=0$, we know $\mu\p 0=1$. By the recursive definition, we have:
$$\mu \p {i+1}=\frac{v\rho}{v+\overline{v}\;\overline{\rho}}\cdot \mu\p i$$
Let $\mu=\frac{v\rho}{v+\overline{v}\;\overline{\rho}}$. Then 
for $i\in \{0,1,2,\dots\}$, we have:
$$x\p i=\mu ^ i\cdot x+v(1-\mu ^ i)\cdot a+\overline{v}(1-\mu ^ i)\cdot b.$$

Observe that $0 < \mu <1$. 
Let $i_u\in\{0,1,2,\dotsc\}$ be an index such that $\mu^{i_u}<u$. 
Then, the point $p$ belongs to the line segment joining the points $x$ and $x\p{i_u}$. 
By convexity of $X$, we conclude that $p\in X$.
\end{proof}

\paragraph{Working example.}
For illustrative purposes, consider 
the case $V = \RR^2$.
Here, 
$\cC(\RR^2)$
denotes the set of all convex subsets of $\RR^2$. 
Fix an arbitrary assignment $\bP$ to the constants. 
The semantics of the first equation in our system is 
\begin{align*}
    X_1 &\text{ contains the set } \bP_1 \text{, and }\\
    X_1 &\text{ contains the set } X_1 \cmult \left(\frac12\cdot X_2 + \frac12\cdot \bP_3\right).
\end{align*}
The semantics of the second equation is analogous. 
The smallest solution of our example system has the following closed-form expression. 

\begin{align}
\label{eqn:example:ss:1}
    \ssol(I;\bP)_1 &= \conv{ \;  
                            \bP_1  
                            \;\cplus\; \bP_1\cmult\left(\frac12\cdot \bP_2 + \frac12\cdot\bP_3\right) 
                            \;\cplus\; \bP_1\cmult\left(\frac12\cdot \bP_2 + \frac12\cdot\bP_3\right) \cmult \left(\frac23\cdot \bP_3+ \frac13\cdot \bP_4\right)
                        \;}\\
    \label{eqn:example:ss:2}
    \ssol(I;\bP)_2 &= \conv{ \;  
                            \bP_2  
                            \;\cplus\; \bP_2\cmult\left(\frac12\cdot \bP_1 + \frac12\cdot\bP_4\right) 
                            \;\cplus\; \bP_2\cmult\left(\frac12\cdot \bP_1 + \frac12\cdot\bP_4\right) \cmult \left(\frac23\cdot \bP_4+ \frac13\cdot \bP_3\right)
                        \;}
\end{align}

This expression is not unique by any means and is different from
the one produced by the algorithm that we describe later for obtaining such expressions (see Theorem~\ref{thm:gaussian-elim} and the explicit calculation in Section~\ref{sec:example}).

Note also that the formal expression for the smallest solution on the RHS is independent of the specific constant assignment $\bP$ used; the expression holds for any constant assignment. 
Our algebraic approach to identifying the smallest solution of a system will also be independent of the specific constant assignment.
Determining the evaluation of the smallest solution will need $\bP$. 
The next section presents a finite procedure to obtain their succinct closed-form expression.

Consider singleton sets $\bP_1, \bP_2, \bP_3, \bP_4$ to illustrate the smallest solution; refer to \figureref{our-sol} for an example. 
The set $\ssol(I;\bP)_1$ is the smallest convex set containing:
\begin{enumerate}
    \item the point in $\bP_1$, 
    \item the relative interior of the line segment joining the two points in $\bP_1$ and $\frac12\cdot \bP_2 + \frac12\cdot\bP_3$, and 
    \item the relative interior of the triangle formed by the three points in $\bP_1$, $\frac12\cdot \bP_2 + \frac12\cdot\bP_3$, and $\frac23\cdot \bP_3+ \frac13\cdot \bP_4$.
\end{enumerate}
Note that the union of the three sets above is convex in this case. 
The set $\ssol(I;\bP)_2$ is similarly defined. 
As we will see later, our Gaussian elimination-inspired solution methodology will recover these solutions, albeit possibly with slightly different descriptions.

\begin{remark}[Solutions Restricted to Polytopes]
    Consider the objective of restricting solutions to polytopes (instead of allowing arbitrary convex sets). 
    In this case, our positive geometric join operator $\cmult$ is not needed to represent the smallest solution
        because the smallest polytope containing the set $A\cmult B$ is identical to the polytope containing $A\cplus B$. 
    Thus, ``linear'' polynomials (i.e., polynomials with only degree-1 monomials) can express the constraints for polytope solutions.  
\end{remark}

\subsection{Algebraic Characterization of the Smallest Solution}
\label{subsec:algebraic-solution}
In this section, we introduce a Gaussian elimination-inspired algorithm to algebraically characterize the smallest solution of a system $I$ of inequalities. 

We now define a notion of substitution.
\subsubsection{Substitution}
\label{subsubsec:algebraic-solution:prelim}
Let $X \in \Omega_v$, and let $\varphi_X \in \mathbb{P}[\Omega']$ 
where $\Omega' = (\Omega_v - \{X\}, \Omega_p)$.

Given an assignment $(\bX, \bP) \in \convset(V)^{\Omega_v} \times (2^V)^{\Omega_p}$, 
the assignment $(\bX,\bP)\left\llbracket X\gets \varphi_X\right\rrbracket \in\convset(V)^{\Omega_v}$ is defined as follows:
\begin{equation}
    (\bX,\bP)\left\llbracket X\gets \varphi_X\right\rrbracket(Y) = \begin{cases}
        \conv{\;\evalmap\left(\varphi_X ;\bX,\bP\right)\;\vphantom{2^{2^2}}}, &\text{ if } Y=X.\\
        \bX(Y),&\text{ otherwise.}
    \end{cases}
\end{equation}

Next, for a polynomial $\varphi \in \mathbb{P}[\Omega]$ and $\varphi_X \in \mathbb{P}[\Omega']$
we first define a polynomial $\varphi\left\llbracket X\gets \varphi_X\right\rrbracket \in \mathbb{P}[\Omega']$ as follows.
(Note that
formally substituting every symbol $X$ in $\varphi$ with the polynomial $\varphi_X$ does not yield a polynomial.) 

For $E=\left(\vphantom{2^{2^2}}\rho\cdot X + (1-\rho)\cdot E'\right) \in\CL(\Omega)$, 
    where $0\leq \rho\leq 1$ and 
    $E'\in\CL(\Omega')$, 
    we define:
    \begin{equation}\label{eqn:element-substitute-def}
        E\left\llbracket X\gets \varphi_X\right\rrbracket \defeq 
        \begin{cases}
        E',\text{ if $\rho = 0$},\\
        \phi_X, \text{ if $\rho = 1$},\\
        \Cplus_{N\in\mono(\varphi_X)} ~ 
            \Cmult_{F\in\elem(N)} ~ 
                \underbrace{\left(\rho\cdot F + (1-\rho)\cdot E'\vphantom{2^{2^2}}\right)}_{E\substitute XF},  \text{ if $0 < \rho < 1$ and $\varphi_X \neq \mathbb{0}$},\\
            \mathbb{0}, \text{ if $0 < \rho < 1$ and $\varphi_X = \mathbb{0}$}.
        \end{cases}
    \end{equation}
Note that the $E \longmapsto E\substitute XF$ maps $\CL(\Omega)$ to 
$\mathbb{P}[\Omega']$.

When $E\in\CL(\Omega')$, this map is the identity map. 
For each monomial $M$ over $\Omega$, we define 
an element of $\mathbb{P}[\Omega']$ as follows.

Let $D(M) = \{E \in \elem(M) \;:\; \text{ coefficient of $X$ is positive in $E$}\}$.

    \begin{equation}\label{eqn:monomial-substitute-def}
        M\left\llbracket X\gets \varphi_X\right\rrbracket \defeq 
            \begin{cases}
            M \text{ if $D(M)  = \emptyset$,}\\
            \mathbb{0} \text{ if $D(M) \neq \emptyset$ and $\varphi_X = \mathbb{0}$}, \\
            \psi \text{ otherwise,}
                \end{cases}
    \end{equation}
where 
\[
\psi = \Cplus_{\vec N \in \mono(\varphi_X)^{D(M)}} ~
              \left( \left( \Cmult_{E\in\elem(M) \setminus D(M)} E\right) ~ \cmult ~
                \Cmult_{E\in D(M)} ~
                    \left(\Cmult_{F\in\elem(\,\vec N(E)\,)} E\left\llbracket X\gets F\right\rrbracket \right)
                \right).
\]

Finally, for each $\varphi \in \mathbb{P}[\Omega]$, we define 
    \begin{equation}\label{eqn:poly-substitute-def}
        \varphi\left\llbracket X\gets \varphi_X\right\rrbracket \defeq 
            \Cplus_{M\in \mono(\varphi)} ~ M\left\llbracket X\gets \varphi_X\right\rrbracket \in \mathbb{P}[\Omega'].         \end{equation}
We will prove the following property of the substituted polynomial.
\begin{lemma}[Substituted Polynomial]
\label{lem:substitute-poly-property}
    Let $\varphi \in \mathbb{P}[\Omega]$, $X\in\Omega_v$, and  $\varphi_X \in \mathbb{P}[\Omega']$, where $\Omega' = (\Omega_v - \{X\},\Omega_p)$.
    
    For all assignments $(\bX,\bP) \in \convset(V)^{\Omega_v} \times (2^V)^{\Omega_p}$ of $\Omega$, the following 
    equivalence holds

        $$ \evalmap\left(\varphi ; (\bX,\bP)\left\llbracket X\gets \varphi_X\right\rrbracket,\bP \right) \sim 
            \evalmap\left(\varphi\left\llbracket X\gets \varphi_X \right\rrbracket ; \bX', \bP\right),$$
            where we denote by $\bX' = \bX\restriction_{\Omega_v - \{X\}}$.

\end{lemma}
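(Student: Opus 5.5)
We reduce to the level of a single element $E\in\CL(\Omega)$ and then climb to monomials and polynomials using the compatibility of $\sim$ with the operations (Lemma~\ref{lem:compatiblity}) and the identities of Lemma~\ref{lem:prop-set-ops}. Throughout write $\bY=(\bX,\bP)\substitute X{\varphi_X}$, so $\bY(X)=\conv{\evalmap(\varphi_X;\bX',\bP)}$ and $\bY$ agrees with $\bX$ elsewhere. Let $\Phi=\evalmap(\varphi_X;\bX',\bP)$, and note that $\bY(X)\sim\Phi$.

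\textbf{Step 1 (elements).} Let $E=\rho\cdot X+(1-\rho)\cdot E'$ with $E'\in\CL(\Omega')$, and set $A=\evalmap(E';\bX',\bP)$. The cases $\rho=0$ and $\rho=1$ are immediate, using $\bY(X)\sim\Phi$ for $\rho=1$. If $0<\rho<1$ and $\varphi_X=\mathbb{0}$, both sides are empty, since $\bY(X)=\emptyset$ kills the Minkowski sum. The case $A=\emptyset$ is handled the same way.

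For $0<\rho<1$ with $\varphi_X\neq\mathbb{0}$, we have $\evalmap(E;\bY,\bP)=\rho\,\conv{\Phi}+(1-\rho)A\sim\rho\Phi+(1-\rho)A$. We write $\Phi=\bigcup_N\Cmult_{F\in\elem(N)}\evalmap(F)$. Minkowski sum and scaling distribute over finite unions exactly. By \eqref{eqn:dist-scal-cmult-general}, $\rho\cdot\Cmult_F \evalmap(F)=\Cmult_F \rho\,\evalmap(F)$. By \eqref{eqn:dist-minkowski-cmult-general}, adding $(1-\rho)A$ gives something $\sim\Cmult_F(\rho\,\evalmap(F)+(1-\rho)A)=\Cmult_F\evalmap(E\substitute XF)$. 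Taking the union over $N$ and invoking Lemma~\ref{lem:compatiblity} yields $\evalmap(E;\bY,\bP)\sim\evalmap(E\substitute X{\varphi_X};\bX',\bP)$.

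\textbf{Step 2 (monomials).} Let $M$ be a monomial. If $D(M)=\emptyset$ the statement is trivial. Otherwise $\evalmap(M;\bY,\bP)=\Cmult_{E\in\elem(M)}\evalmap(E;\bY,\bP)$. By Step~1 and compatibility of $\sim$ with finite positive geometric joins, each factor with $E\in D(M)$ may be replaced by $\bigcup_{N}\Cmult_{F\in\elem(N)}\evalmap(E\substitute XF)$. We then expand the join of unions using distributivity \eqref{eqn:dist-cmult-cplus-general} together with commutativity and associativity \eqref{eqn:associative-cmult}. This produces a union, indexed by choices $\vec N\in\mono(\varphi_X)^{D(M)}$, of the joins appearing in $\psi$. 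If $\varphi_X=\mathbb{0}$, some factor is empty, so the join is empty, matching $\mathbb{0}$.

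One formal subtlety arises here: the join in $\psi$ is taken in the monoid $\mathbb{M}[\Omega']$. Repeated factors are therefore merged by the formal idempotence $E\cmult E=E$, whereas the set-level join keeps them. We handle this with \eqref{eqn:idempotent-cmult-general}, namely $A\cmult\cdots\cmult A\sim A$, together with compatibility, so the discrepancy disappears under $\sim$.

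\textbf{Step 3 (polynomials).} The polynomial case is a union over $\mono(\varphi)$ of the monomial statements, combined with compatibility of $\sim$ with finite unions.

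I expect the main obstacle to be Step~1, namely justifying the passage from $\rho\,\conv{\Phi}$ back to $\rho\Phi$ and the use of the Minkowski/join distributivity, which holds only up to $\sim$. The careful point is that every replacement must go through Lemma~\ref{lem:compatiblity}, so that equivalences can be substituted inside joins and unions; that lemma is exactly what makes this legitimate. A secondary bookkeeping nuisance is the index-set expansion in Step~2 and the treatment of empty sets, which I would dispatch by noting that empty factors annihilate every operation on both sides.
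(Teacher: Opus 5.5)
Your proposal is correct and follows the paper's proof essentially step for step. It handles the degree-one case via \eqref{eqn:dist-scal-cmult-general} and \eqref{eqn:dist-minkowski-cmult-general}, the monomial case via distributivity of $\cmult$ over $\cplus$ plus associativity, and the polynomial case by taking unions, with the same case distinctions on $\rho$, $D(M)$ and $\varphi_X=\mathbb{0}$. Your explicit treatment of factors merged by the formal relation $E=E\cmult E$ is a point the paper leaves implicit in its final $\sim$ steps. The same care is needed, in your write-up as in the paper's, at the identification $\rho\,\evalmap(F)+(1-\rho)A=\evalmap(E\substitute XF)$: this holds only up to $\sim$ when a parameter $P$ occurs in both $F$ and $E'$, because $aP+bP$ can strictly contain $(a+b)P$ for nonconvex $P$. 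Since $\sim$ is all the argument requires, this does not affect the conclusion.
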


\begin{proof}[Proof of Lemma~\ref{lem:substitute-poly-property}]
It suffices to prove the result when $\varphi$ is a monomial.
As a warmup, it is instructive to prove the result for a monomial having degree one.

\paragraph{Warmup.} 
Suppose $\varphi = E= \left(\rho\cdot X + (1-\rho)\cdot E'\right)$, where $E' \in \CL(\Omega')$. 
There are three cases. 
\begin{enumerate}[1.]
\item Case $\rho = 0$. This is immediate.
\item Case $\rho = 1$. In this case 
$$ \evalmap\left(X ; (\bX,\bP)\left\llbracket X\gets \varphi_X\right\rrbracket, \bP\right) =
\conv{\evalmap(\phi_X; \bX',\bP)}
\sim \evalmap(\phi_X; \bX',\bP).
$$
\item Case $0 < \rho < 1$. In this 
case, we use properties of our set operations presented in Lemma~\ref{lem:prop-set-ops} for the following derivation in $\dagger$ and $\ddagger$ steps. 
\allowdisplaybreaks
        \begin{align*}
            \evalmap&\left(\varphi ; (\bX,\bP)\left\llbracket X\gets\varphi_X\right\rrbracket,\bP \right) \\
            &= 
            \evalmap\left(\rho\cdot X + (1-\rho)\cdot E' ; (\bX,\bP)\left\llbracket X\gets\varphi_X\right\rrbracket,\bP \right)
                    \tag{By the definition of $\varphi$}\\
            &=
            \rho\cdot \evalmap\left(X ; (\bX,\bP)\left\llbracket X\gets\varphi_X\right\rrbracket \right) 
                +(1-\rho)\cdot \evalmap\left(E'; (\bX,\bP)\left\llbracket X\gets\varphi_X\right\rrbracket,\bP \right)
                    \tag{By the definition of the evaluation map}\\
            &\sim
            \rho\cdot \evalmap\left(\varphi_X ; \bX',\bP\right) 
                +(1-\rho)\cdot \evalmap\left(E'; (\bX,\bP)\left\llbracket X\gets\varphi_X\right\rrbracket,\bP\right) 
                    \tag{By the definition of $\bX\left\llbracket X\gets\varphi_X\right\rrbracket$ and the fact that $\varphi_X \in \mathbb{P}[\Omega']$}\\ 
            &= \rho\cdot \evalmap\left(\varphi_X ; \bX',\bP\right) +(1-\rho)\cdot \evalmap\left(E';\bX',\bP\right)
                    \tag{Because $E'\in\CL(\Omega')$}\\
            &= \rho\cdot \evalmap\left( \mathop{\cplus}_{M\in\mono(\varphi_X)} \mathop{\cmult}_{F \in \elem(M)} F ; \bX',\bP\right) +(1-\rho)\cdot \evalmap\left(E';\bX',\bP\right)
                    \tag{By the definition of the polynomial $\varphi_X$}\\
            &= 
            \rho\cdot\left( \mathop{\cplus}_{M\in\mono(\varphi_X)} \mathop{\cmult}_{F \in \elem(M)} \evalmap\left(F ; \bX',\bP\right) \right) +(1-\rho)\cdot \evalmap\left(E';\bX',\bP\right)
                    \tag{By the definition of the evaluation map}\\
            &\stackrel\dagger=  \left(\mathop{\cplus}_{M\in\mono(\varphi_X)} \mathop{\cmult}_{F \in \elem(M)} \rho\cdot \evalmap\left(F ; \bX',\bP\right) \right) +(1-\rho)\cdot \evalmap\left( E' ; \bX',\bP\right) 
                    \tag{Because scalar multiplication distributes over $\cplus$ and $\cmult$ (Lemma~\ref{lem:prop-set-ops},\eqref{eqn:dist-scal-cplus-general} and \eqref{eqn:dist-scal-cmult-general})}\\
            &\stackrel\ddagger\sim \mathop{\cplus}_{M\in\mono(\varphi_X)} \mathop{\cmult}_{F \in \elem(M)} \left( \; \rho\cdot \evalmap\left(F ; \bX',\bP\right) +(1-\rho)\cdot \evalmap\left( E' ; \bX',\bP\right) \; \vphantom{2^{2^2}}\right)
                    \tag{Because Minkowski sum distributes over $\cplus$ and $\cmult$ (Lemma~\ref{lem:prop-set-ops}  \eqref{eqn:dist-minkowski-cmult-general})}\\
            &= \Cplus_{M\in\mono(\varphi_X)} \Cmult_{F \in \elem(M)} \evalmap\left(\;  E\llbracket X\gets F\rrbracket ; \bX',\bP \;\right)
                    \tag{By the definition of $E\llbracket X\gets F\rrbracket$}\\
            &= \evalmap\left(\mathop{\cplus}_{M\in\mono(\varphi_X)} \mathop{\cmult}_{F \in \elem(M)} E\llbracket X\gets F\rrbracket ; \bX',\bP\right)
                    \tag{By the definition of the evaluation map}\\
            &= \evalmap\left( \varphi\left\llbracket X \gets \varphi_X\right\rrbracket ; \bX',\bP \right).
                    \tag{By the definition of the polynomial $\varphi\left\llbracket X \gets \varphi_X\right\rrbracket$ in $\mathbb{P}[\Omega']$}
        \end{align*}
\end{enumerate}
        This completes the proof of the warmup case.

\paragraph{Primary case: $\varphi= M$ is a monomial.}
        The full proof is similar to the warmup proof. \allowdisplaybreaks
There are three cases. 
\begin{enumerate}[1.]
\item Case $D(M) = \emptyset$.
In this case 
\begin{align*}
\evalmap\left(M ; (\bX,\bP)\llbracket X\gets \varphi_X\rrbracket,\bP\right)
&= \evalmap\left(M ; \bX',\bP\right) \\
&= \evalmap\left(M\llbracket X\gets \varphi_X\rrbracket;\bX',\bP\right)
\end{align*}
using \eqref{eqn:monomial-substitute-def} for the second equality.

\item Case $D(M) \neq \emptyset$, $\varphi_X = \mathbb{0}$.
In this case, 
\begin{align*}
\evalmap\left(M ; (\bX,\bP)\llbracket X\gets \varphi_X\rrbracket,\bP\right)
&= \emptyset \\
&= \evalmap\left(\mathbb{0} ; \bX',\bP\right) \\
&= \evalmap\left(M\llbracket X\gets \varphi_X\rrbracket;\bX',\bP\right)
\end{align*}
using \eqref{eqn:monomial-substitute-def} for the last equality.

\item Case $D(M) \neq \emptyset$, $\varphi_X \neq \mathbb{0}$. In this case

\end{enumerate}
        \begin{align*}
            \evalmap&\left(M ; (\bX,\bP)\llbracket X\gets \varphi_X\rrbracket,\bP\right) \\
            &= 
            \evalmap\left(\left(\Cmult_{E\in \elem(M)\setminus D(M)} E \right) ~\cmult~ \left(\Cmult_{E\in D(M)} E ; (\bX,\bP)\llbracket X\gets \varphi_X\rrbracket,\bP \right) \right)
                    \tag{By the definition of $M$}\\
            &= 
            \left(\Cmult_{E\in \elem(M) \setminus D(M)}
            \evalmap\left( E ; \bX',\bP\right) \right)~ \cmult ~
            \left(\Cmult_{E\in D(M)} \evalmap\left( E ; (\bX,\bP)\llbracket X\gets \varphi_X\rrbracket, \bP \right)\right)
                    \tag{By the definition of the evaluation map}\\
            &\sim 
           \left( \Cmult_{E\in \elem(M) \setminus D(M)}
            \evalmap\left( E ; \bX',\bP\right)\right) ~ \cmult ~
            \Cmult_{E\in D(M)} \left( \Cplus_{M'\in\mono(\varphi_X)} \Cmult_{F\in\elem(M')} \evalmap\left( \; E\llbracket X\gets F\rrbracket ; \bX',\bP\right)  \;\right)
                    \tag{By the derivation in the warmup case to the step after the $\ddagger$ step}\\
                                &= \Cplus_{\vec N\in \mono(\varphi_X)^{D(M)}} \left( \Cmult_{E\in\elem(M) \setminus D(M)} \evalmap (E; \bX',\bP)\right) ~ \cmult ~
                                \Cmult_{E\in D(M)} \left( \; 
                                \Cmult_{F\in\elem(\vec N(E))}   \evalmap\left( \; E\llbracket X\gets F\rrbracket ; \bX',\bP\right) \; \right)
                    \tag{Because $\cmult$ distributes over $\cplus$ (Lemma~\ref{lem:prop-set-ops} \eqref{eqn:dist-cmult-cplus}}\\
            &\sim \evalmap\left( \; \Cplus_{\vec N\in \mono(\varphi_X)^{D(M)}} 
            \left( \left( \Cmult_{E\in\elem(M) \setminus D(M)} E\right) ~ \cmult ~
            \Cmult_{E\in D(M)} \left( \; \Cmult_{F\in\elem(\vec N(E))}    E\llbracket X\gets F\rrbracket\right) ; \bX',\bP \; \right)
        \right)
                    \tag{By the definition of the evaluation map}\\
            &\sim
            \evalmap\left( M\left\llbracket X \gets \varphi_X\right\rrbracket ; \bX',\bP \right).
                    \tag{By the definition of the polynomial $M\left\llbracket X \gets \varphi_X\right\rrbracket 
                    \in \mathbb{P}[\Omega']$}
        \end{align*}
        This completes the proof of \lemmaref{substitute-poly-property}.
\end{proof}

\subsubsection{Gaussian Elimination Algorithm}
\begin{figure}[htp]
    \centering
    \noindent\begin{boxedminipage}{\linewidth}
    \begin{enumerate}[1.] 
        \item Initialize $I\p 0= I$ 
        \item For $j\in \{1,2,\dotsc,n\}$: 
        \begin{enumerate}[(a)] 
            \item Suppose $I\p{j-1}$ is the system $ \left\{ X_i \geq \varphi_i\p{j-1}\right\}_{i=1}^n$, each $\varphi_i\p{j-1}$ is a polynomial over $(\{X_j, \dotsc, X_n\}, \{P_1, \dotsc, P_m\})$
            \item {\bfseries Canceling $X_j$ step.} 
                Use the rearrangement lemma (Lemma~\ref{lem:rearrangement}) and cancellation lemma (Lemma~\ref{lem:cancel}) to obtain a polynomial $\widetilde\varphi$ over $(\Omega_v\setminus\{X_1,X_2,\dotsc,X_j\}, \Omega_p)$. 
                Define the new system $I'$ identical to $I\p{j-1}$ except that the inequality $X_j\geq \varphi_j\p{j-1}$ is replaced by $X_j\geq \widetilde\varphi$. 
            \item {\bfseries Substituting $X_j$ step.} 
                Define the new system $I\p j$ as the system $\left\{ X_i \geq \varphi\p j_i\right\}_{i=1}^n$, where
                    $$\varphi_i\p j \defeq \begin{cases}
                        \widetilde\varphi, &\text{ if } i=j\\ 
                        \\ 
                        \varphi_i\p{j-1}\left\llbracket X_j\gets \widetilde\varphi\right\rrbracket,&\text{ otherwise.}
                    \end{cases}$$
        \end{enumerate}
        \item {\bfseries Characterizing the smallest solution for a constant assignment.} 
            For a constant assignment $\bP$, output $\bX\in\convset(V)^n$, where $\bX_i=\conv{ \evalmap\left(\varphi_i\p n; \bP\right) }$ for $i\in\{1,2,\dotsc,n\}$. 
    \end{enumerate}
    \end{boxedminipage}
    \caption{Our Gaussian elimination-inspired algorithm to solve the system of inequalities $I$.}
    \label{fig:gaussian-elim}
\end{figure}

\begin{theorem}
    \label{thm:gaussian-elim}
    Let $\Omega_v = \{X_1,\dotsc,X_n\}$ and 
     $\Omega_p = \{P_1,\dotsc,P_m\}$.
    Let $I$ be the system of inequalities $(X_i\geq \varphi_i)_{i=1}^n$, where $\varphi_1,\dotsc,\varphi_n \in \mathbb{P}[\Omega]$. Then
    \figureref{gaussian-elim} presents a finite procedure to compute $\varphi^*_1,\dotsc,\varphi^*_n \in \mathbb{P}[(\emptyset,\Omega_p)]$ 
    with the guarantee that
    $\ssol(I;\bP)(X_j) = \conv{ \;\evalmap(\varphi^*_j;\bP)\; }$ for every $j\in\{1,2,\dotsc,n\}$ and assignment $\bP \in (2^V)^{\Omega_p}$.   
\end{theorem}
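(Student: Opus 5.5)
The plan is to prove, by induction on $j$, the invariant
\[
\ssol(I\p j;\bP)=\ssol(I;\bP)\quad\text{for every assignment }\bP,
\]
together with the syntactic fact that every $\varphi_i\p j$ is a polynomial over $(\{X_{j+1},\dotsc,X_n\},\Omega_p)$. Once $j=n$, every right-hand side $\varphi_i^*\defeq\varphi_i\p n$ involves only parameters. The smallest solution of such a system is then simply $\bX_i=\conv{\evalmap(\varphi_i^*;\bP)}$, because the left-hand sides are required to be convex. Finiteness of the procedure is clear, since each step manipulates finitely many formal monomials. Each iteration has two steps, and I treat them separately.

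\textbf{Canceling step.} The goal is to replace $X_j\geq\varphi_j\p{j-1}$ by $X_j\geq\widetilde\varphi$, with $\widetilde\varphi$ free of $X_j$, without changing the set of solutions restricted to convex $X_j$; a fortiori the smallest solution is unchanged.

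I would first rearrange (the rearrangement lemma). Every factor $E=\rho X_j+(1-\rho)E'$ with $0<\rho<1$ is rewritten via Lemma~\ref{lem:pullout-X}: the constraint $X_j\geq(\rho X_j+(1-\rho)E')\cmult B$ is equivalent to $X_j\geq X_j\cmult E'\cmult B$. Factors with $\rho=1$ are pure $X_j$. Several pure $X_j$ factors collapse to one, using $X_j\cmult X_j\sim X_j$ and the final clause of Lemma~\ref{lem:compatiblity}. A monomial equal to $X_j$ itself is dropped, since $X_j\geq X_j$ is trivial. After this, the constraint takes the form
\[
X_j\geq A\;\cplus\;\Cplus_k X_j\cmult B_k,
\]
where $A$ and the $B_k$ are free of $X_j$.

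Next comes cancellation (the cancellation lemma). For convex $X_j$, this constraint should have the same smallest solution in $X_j$ as
\[
X_j\geq A\cplus\Cplus_{S}A\cmult\Cmult_{k\in S}B_k ,
\]
where $S$ ranges over the nonempty subsets of indices. I set $\widetilde\varphi$ equal to the right-hand side; it expands to a formal polynomial by distributivity. One direction is that any solution contains $A$, and hence, by iterating, contains all these joins. For the other direction I must show that $C\defeq\conv{\text{RHS}}$ is closed under $x\mapsto x\cmult B_k$. The key point, for $x=\mu a+(1-\mu)b'$, is to rewrite $\lambda x+(1-\lambda)b$ as a convex combination of points with the same $A$-weight. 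This is exactly the computation in the proof of Lemma~\ref{lem:convOfconvo-contains-convoOfconv}.

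I expect this cancellation lemma to be the main obstacle. It requires careful bookkeeping of the strict coefficients, since every $A$-weight must stay strictly between $0$ and $1$. It also has to handle the case where the $B_k$ are arbitrary (nonconvex) sets, which is why I use the full subset-joins rather than only $A\cmult B_k$.

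\textbf{Substituting step.} Let $I'$ be the system after canceling and $I\p j$ the system after substitution.

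First, a solution $\bY$ of $I'$ is a solution of $I\p j$. Since $\bY_j\supseteq\conv{\evalmap(\widetilde\varphi;\bY,\bP)}$, monotonicity (Lemma~\ref{lem:monotonicity}) gives
\[
\evalmap(\varphi_i;\bY,\bP)\supseteq\evalmap\big(\varphi_i;(\bY,\bP)\llbracket X_j\gets\widetilde\varphi\rrbracket,\bP\big).
\]
The right side is $\sim\evalmap(\varphi_i\llbracket X_j\gets\widetilde\varphi\rrbracket;\bY',\bP)$ by Lemma~\ref{lem:substitute-poly-property}, so each $\bY_i$ satisfies the substituted inequality by the last clause of Lemma~\ref{lem:compatiblity}.

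Conversely, given a solution $\bY$ of $I\p j$, set $\bZ=\bY$ except $\bZ_j=\conv{\evalmap(\widetilde\varphi;\bY,\bP)}\subseteq\bY_j$. Since $\widetilde\varphi$ does not involve $X_j$, the same two lemmas show that $\bZ$ solves $I'$. Hence the minimal elements of $\sol(I';\bP)$ and $\sol(I\p j;\bP)$ coincide.

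Finally, substitution removes $X_j$ from all other equations, and $\widetilde\varphi$ is free of $X_1,\dotsc,X_j$. The syntactic invariant therefore holds, which completes the induction.
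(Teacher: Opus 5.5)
Your proposal follows the paper's proof essentially step for step. Both argue by induction on $j$ with the invariant $\ssol(I^{(j)};\bP)=\ssol(I;\bP)$, rearrange via Lemma~\ref{lem:pullout-X} and idempotence, cancel, and justify substitution by monotonicity and Lemma~\ref{lem:substitute-poly-property}. Your ``trim $\bY_j$ to $\conv{\evalmap(\widetilde\varphi;\bY,\bP)}$'' converse is the same minimality argument as Part~2 of Lemma~\ref{lem:substitute}. Two points should be corrected.

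\textbf{The canceling step does not preserve the solution set.} Cancellation preserves only the smallest solution; the paper's proof notes explicitly that $\sol(I^{(j)};\bP)$ may differ from $\sol(I;\bP)$. For example, take $V=\mathbb{R}^2$, $A=\{(0,0)\}$ and a single $B=\{(1,0)\}$. The convex set
\[
X_0=\{(x,y): x\geq 0,\ y\geq 0,\ x+y<1\}\cup\{(0,1)\}
\]
contains $A\cup (A\cmult B)$, so it satisfies the canceled inequality. But $(\tfrac12,\tfrac12)\in X_0\cmult B\setminus X_0$, so it fails the original one. Your ``a fortiori'' therefore rests on a false premise.

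Fortunately, your two directions prove exactly what is needed. The first shows that every solution of the old inequality satisfies the new one. The second shows that $\conv{\widetilde\varphi}$ satisfies the old one. Together these are the paper's Lemma~\ref{lem:cancel}: $\sol(I;\bP)\subseteq\sol(I';\bP)$ and $\ssol(I';\bP)\in\sol(I;\bP)$. To finish, add the system-level observation that the $X_j$-coordinate of $\ssol(I';\bP)$ equals $\conv{\evalmap(\widetilde\varphi;\ssol(I';\bP),\bP)}$. This holds because shrinking $X_j$ to that set keeps the other inequalities valid by monotonicity.

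\textbf{The subset joins are unnecessary.} Nonconvexity of the $B_k$ is harmless, because the $A$-weight can be split. For $a\in A$, $b_1\in B_1$, $b_2\in B_2$ and positive $\alpha+\beta+\gamma=1$,
\[
\alpha a+\beta b_1+\gamma b_2
=\Bigl(\tfrac{\alpha}{2}+\beta\Bigr)\frac{\tfrac{\alpha}{2}a+\beta b_1}{\tfrac{\alpha}{2}+\beta}
+\Bigl(\tfrac{\alpha}{2}+\gamma\Bigr)\frac{\tfrac{\alpha}{2}a+\gamma b_2}{\tfrac{\alpha}{2}+\gamma},
\]
so $A\cmult B_1\cmult B_2\subseteq\conv{(A\cmult B_1)\cup(A\cmult B_2)}$. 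The paper packages this as $U\cmult V\cmult V\sim U\cmult V$ with $V=\bigcup_k B_k$. It therefore uses only single joins, $\widetilde\varphi=\Cplus_{M\in\mono(A)}\bigl(M\cplus\Cplus_k M\cmult B_k\bigr)$, which is linear rather than exponential in the number of $B_k$.

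Your $\widetilde\varphi$ has the same convex hull, so it is not wrong. However, the theorem concerns the procedure of Figure~\ref{fig:gaussian-elim}, which uses Lemma~\ref{lem:cancel}'s polynomial. You should run your closure check for that polynomial. It goes through using the identity above together with your same-$A$-weight rewriting.
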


\begin{remark}[Connection to Knaster--Tarski theorem]
\label{rem:Knaster-Tarski}
As mentioned previously, for a fixed assignment \(\bP\) of the parameters, the existence
of the smallest solution can also be viewed as an instance of the
Knaster--Tarski fixed-point theorem.

Indeed, let
\[
\mathcal{L}:=\convset(V)^{\Omega_v},
\]
ordered component-wise by inclusion. This is a complete lattice: arbitrary
meets are given by component-wise intersections, while arbitrary joins are
given by
\[
\bigvee_{\alpha}\mathbf X^{(\alpha)}
=
\left(
\conv{
\bigcup_{\alpha}X_i^{(\alpha)}
}
\right)_{1\leq i\leq n}.
\]
For a fixed parameter assignment \(\bP\), define
\[
\mathcal F_{\mathbf P}\colon\mathcal{L}\longrightarrow\mathcal{L}
\]
by
\[
\mathcal F_{\mathbf P}(\mathbf X)_i
:=
\operatorname{conv}
\left(
X_i
\cup
\operatorname{eval}
\bigl(
\varphi_i;\mathbf X,\mathbf P
\bigr)
\right),
\qquad 1\leq i\leq n.
\]
By the monotonicity of evaluation, \(\mathcal F_{\mathbf P}\) is
monotone. Moreover,
\[
\mathcal F_{\mathbf P}(\mathbf X)=\mathbf X
\]
if and only if \(\mathbf X\) is a solution of \(I\) under
\(\mathbf P\). Thus the Knaster--Tarski theorem implies that
\(\mathcal F_{\mathbf P}\) has a least fixed point, which is precisely
\[
\operatorname{ss}(I;\mathbf P).
\]

Later, in Lemma~\ref{lem:itr-char},
we will also give an iterative construction,
which is a concrete Kleene-type realization of
this least fixed point, which will be used in the proof of Theorem~\ref{thm:laminar}.

Note that we can package all parameter assignments pointwise into a
single function lattice and thereby obtain the full map
\[
\bP \mapsto \ssol(I;\bP)
\]
as one least fixed point. 
Knaster–Tarski alone, however, does not produce this map by a finite expression in the parameter symbols. 
Theorem~\ref{thm:gaussian-elim} is stronger in precisely this syntactic
sense: before any parameter assignment is chosen,
it produces polynomials
\[
\varphi_1^*,\ldots,\varphi_n^*
\in
P[(\emptyset,\Omega_p)],
\]
depending only on the original system \(I\), such that
\[
\operatorname{ss}(I;\mathbf P)(X_i)
=
\operatorname{conv}
\left(
\operatorname{eval}
\bigl(
\varphi_i^*;\mathbf P
\bigr)
\right)
\]
for every \(1\leq i\leq n\) and simultaneously for every assignment
\[
\mathbf P\in (2^V)^{\Omega_p}.
\]

Thus the unknown convex sets are eliminated once and for all, and the
result is a finite parameter-only expression valid under every
interpretation of the parameter symbols. 
When the parameter sets belong to a class with the effective good
closure property (Definition~\ref{def:property:closure}), these uniform expressions yield effective descriptions
of the coordinate sets of the smallest solution. In the semi-algebraic
and semi-linear specializations, these descriptions can be converted
into quantifier-free formulas.
\end{remark}

\begin{remark}[Prior and related works]
\label{rem:prior}
There have been some prior and related works dealing with
finite elimination in Kleene algebras, as well as on algebraic structures on convex or polyhedral sets, that we discuss below.

The most closely related work coming from the area of formal language theory is that
of Hopkins and Kozen \cite{HK99} who prove that a finite system of polynomial inequalities
$\left(f_i(x_1,\ldots,x_n) \leq x_i\right)_{1 \leq i \leq n}$
over a commutative Kleene algebra has a unique least solution and that each coordinate of this
solution is represented by a finite expression in the coefficients \cite[Theorem 1.1, p. 394]{HK99}.  The ambient algebra that we consider in this paper is different and most importantly unlike that in \cite{HK99} does not already include a Kleene star operator. 

In the more geometric setting of algebras of convex sets, 
Iwano and Steiglitz \cite{IS90}  define a closed semiring of convex polygons with Minkowski sum as multiplication
and convex hull of union as addition, and they use a Kleene-closure construction algorithmically --  but their emphasis is graph-theoretic, and not aligned with the problem considered in this paper.
Chen \cite{Chen99}
studies Minkowski algebra, which includes both closed convex sets and relatively open convex sets from the Euler-characteristic calculus viewpoint, and
Manjunath \cite{Manjunath2024} studies equations over polyhedral semirings
with convex-hull addition and Minkowski multiplication, including local solutions and a local–global principle. UL

Nevertheless, to the best of our knowledge, no prior result gives a finite parameter-only representation
of least solutions for recursive convex-set containments in the calculus of convex
linear combinations, ordinary union, and positive geometric join,
considered in the present paper,
without adjoining a fixed-point or Kleene-star operator.  
\end{remark}

\subsubsection{Preparation for the proof of Theorem~\ref{thm:gaussian-elim}}
\begin{notation}
We extend the evaluation map to Boolean predicates of the following form: 
    $\evalmap(X\geq \varphi ;\bX,\bP)$ is true if (and only if) $\evalmap(X;\bX,\bP) \geq \evalmap(\varphi;\bX,\bP)$, where $X$ is an unknown and $\varphi$ is a polynomial over $\CL(\Omega)$.
\end{notation}

To prove \theoremref{gaussian-elim}, we will need the following preliminary results. 

\paragraph{Overview of the preliminary results.}
\begin{enumerate}[1.] 
    \item Rearrangement lemma (Lemma~\ref{lem:rearrangement}): 
        Given an inequality $X\geq \varphi$, this lemma rewrites it as an ``equivalent'' inequality with a very specific structure: 
            $$X\geq\varphi' \;\cplus\; X\cmult M_1 \;\cplus\; \dotsi \;\cplus\; X\cmult M_k, $$
        where $\varphi'$ is a polynomial and $M_1, \dotsc, M_k$ are monomials over $\Omega'$, where as before $\Omega' = (\Omega_v \setminus \{X\}, \Omega_p)$ 
        Here, two inequalities are considered equivalent when both are simultaneously true, or both are simultaneously false for all assignments. 

    \item Cancellation lemma (Lemma~\ref{lem:cancel}): 
        Consider a system where the inequality for $X$ has the structure promised by the rearrangement lemma above. 
        The Cancellation lemma produces a polynomial $\widetilde{\varphi} \in  \mathbb{P}[\Omega']$ such that replacing the structured inequality with $X\geq \widetilde\varphi$ preserves the smallest solution for all assignments. 
        Together with the rearrangement lemma above, the cancellation lemma eliminates $X$ from the RHS of the inequality for the unknown $X$. 

    \item Substitution lemma (Lemma~\ref{lem:substitute}): 
        Consider a system with inequalities $X\geq \varphi_X$ and $Y\geq\varphi_Y$, where $\varphi_X \in \mathbb{P}[\Omega']$. 
        Our objective is to construct a new system where $Y\geq \varphi_X$ is replaced by the inequality $Y\geq \varphi_Y\left\llbracket X\gets\varphi_X\right\rrbracket$. 
        The substitution lemma will prove that the new system's smallest solution is identical to the smallest solution of the original system. 
        We can iteratively use this lemma for all unknowns $Y\in \Omega_v \setminus\{X\}$ to remove the dependence on the unknown $X$ from every polynomial in the system. 
\end{enumerate}

\paragraph{\emph{Rearrangement and Cancellation Lemmas.}}

\begin{lemma}[Rearrangement Lemma]\label{lem:rearrangement}
    For each $X\in\Omega_v$, and $\varphi \in \mathbb{P}[\Omega]$, there  exists $\varphi' \in \mathbb{P}[\Omega']$ and monomials $M_1, M_2, \dotsc, M_k$ over $\Omega'$, where $k\geq 0$, such that (for any assignment $(\bX,\bP)$) the following identity holds. 
        $$\evalmap(X\geq \varphi ; \bX,\bP) = \evalmap\left(X\geq\varphi' \;\cplus\; X\cmult M_1 \;\cplus\; \dotsi \;\cplus\; X\cmult M_k ;\bX,\bP\right).$$
        (We follow the same notation as before and
denote $\Omega' = (\Omega_v - \{X\},\Omega_p)$).

\end{lemma}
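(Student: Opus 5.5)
The plan is to reduce to a single monomial and handle each factor in turn. Since $\evalmap(\varphi;\bX,\bP)$ is the union of the evaluations of its monomials, $\bX(X)\geq\evalmap(\varphi;\bX,\bP)$ holds iff $\bX(X)$ contains each monomial's evaluation. It therefore suffices to show that for every monomial $M$ over $\Omega$ there is a polynomial $\psi_M$ that is either in $\mathbb{P}[\Omega']$ or of the form $X\cmult N$ with $N$ a monomial over $\Omega'$, and such that for every convex $\bX(X)$
\[
\bX(X)\geq \evalmap(M;\bX,\bP)\iff \bX(X)\geq\evalmap(\psi_M;\bX,\bP).
\]
We then collect the $\Omega'$-parts into $\varphi'$ and the $X\cmult N$ parts into the $M_i$, using idempotence of $\cplus$. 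Throughout, we use the last clause of Lemma~\ref{lem:compatiblity}: containment by a convex set is insensitive to $\sim$. This lets us freely apply the $\sim$-identities of Lemma~\ref{lem:prop-set-ops}.

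Write $M=\Cmult_{E\in\elem(M)}E$ and split $\elem(M)$ into $D(M)$, the factors in which $X$ has positive coefficient, and the rest. If $D(M)=\emptyset$, then $M\in\mathbb{P}[\Omega']$ and we take $\psi_M=M$. Otherwise each $E\in D(M)$ has the form $\rho_E X+(1-\rho_E)E'$ with $\rho_E\in(0,1]$ and $E'\in\CL(\Omega')$; if $\rho_E=1$ we simply have $E=X$. We handle these factors one at a time, repeatedly using Lemma~\ref{lem:pullout-X}, Part~\ref{itemlabel:lem:pullout-X:2}, with $B$ the evaluation of the remaining product. This gives
\[
X\geq (\rho X+(1-\rho)A)\cmult B\iff X\geq X\cmult A\cmult B.
\]
Each factor $E\in D(M)$ with $\rho_E<1$ is thus replaced by the two factors $X$ and $E'$. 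The $\cmult$-product is associative and commutative by \eqref{eqn:associative-cmult}, so the order of factors does not matter. Several copies of $X$ then collapse to one. For convex $\bX(X)$ we have $\bX(X)\cmult\bX(X)=\bX(X)$ exactly: containment $\subseteq$ holds by convexity, and $\supseteq$ holds by taking $a=a'$. After this step $M$ is equivalent to $X\cmult N$, where $N=\Cmult E'\cmult\Cmult_{E\notin D(M)}E$ is a monomial over $\Omega'$; duplicate factors in $N$ are merged by the formal relation $E=E\cmult E$, which is harmless up to $\sim$.

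Two degenerate cases need separate treatment. First, when $N$ is empty, i.e.\ $M=X$ alone, we have $\bX(X)\geq \bX(X)$ trivially, so we take $\psi_M=\mathbb{0}$. Second, Lemma~\ref{lem:pullout-X} must also cover the cases where $B$ is absent (Part~\ref{itemlabel:lem:pullout-X:1}) or where the surrounding set is empty. If some evaluated factor is empty, then both sides are empty and the equivalence holds vacuously. The same reasoning applies when $\bX(X)=\emptyset$, because then $X\cmult N$ also evaluates to $\emptyset$.

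The step I expect to be the main obstacle is making the appeal to Lemma~\ref{lem:pullout-X} rigorous when $B$ is itself a product involving $X$. The lemma is stated for arbitrary sets $A,B$, so $B$ may contain $\bX(X)$. What needs care is that the equivalence is an iff for the fixed convex set $\bX(X)$, so chaining it factor by factor is legitimate. Because each application only rewrites the right-hand side while keeping the same left-hand convex set, an induction on $|D(M)|$ goes through.
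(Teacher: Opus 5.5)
Your proposal is correct and follows essentially the same route as the paper. Both reduce to monomials, discard the monomial $X$, put the $X$-free monomials into $\varphi'$, and turn each $X$-dependent monomial into $X\cmult M_i$ by applying Lemma~\ref{lem:pullout-X} to every factor with $0<\rho<1$ and then collapsing the copies of $X$ by idempotence for convex sets. Your extra bookkeeping (the induction on $|D(M)|$, the exact identity $\bX(X)\cmult\bX(X)=\bX(X)$, and the empty-set cases) simply spells out what the paper's terse step $(\dagger)$ leaves implicit.
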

\begin{proof}
    If $\varphi=\mathbb{0}$, then $\varphi'=\mathbb{0}$ and $k=0$. 
    We can also assume without loss of generality that the monomial 
    $X \not\in\mono(\varphi)$.
    If $X \in \mono(\varphi)$, we can replace $\varphi$ by another polynomial
    obtained from $\varphi$ by dropping the monomial $X$. This does not change the set $\evalmap(X\geq \varphi; \bX,\bP)$.

    Otherwise, suppose $\varphi = N_1 \;\cplus\; \dotsi \;\cplus\; N_\ell$ and $\ell\geq 1$. 
    We say that a monomial $M=E_1\cmult E_2\cmult \dotsi \cmult E_u$ {\em depends on $X$} if there is $i\in\{1,2,\dotsc, u\}$ such that $E_i = \rho\cdot X + (1-\rho)\cdot E'$, where $0<\rho\leq 1$ and $E'\in\CL(\Omega')$.
    If the polynomial $\varphi$ has no monomial depending on $X$, then $\varphi'=\varphi$ and $k=0$. 

    Otherwise, $I\subset \{1,2,\dotsc,\ell\}$ be the subset of indices $i$ such that the monomial $N_i$ does not depend on $X$.  
    The complement $J =\{1,2,\dotsc,\ell\}\setminus I$ is the subset of indices $i$ such that the monomial $N_i$ depends on $X$. 
    Without loss of generality, let $J = \{1,2,\dotsc,k\}$, where $k\geq 1$, and $I =\{k+1,\dotsc,\ell\}$. 
    For index $i\in J$, let
    $$N_i = \left(\rho_1\cdot X + (1-\rho_1)\cdot E_1\right) 
                    \cmult \dotsi \cmult
                \left(\rho_{v_i}\cdot X + (1-\rho_{v_i})\cdot E_{v_i}\right) \cmult E_{v_i+1} \cmult \dotsi \cmult E_{u_i},$$ 
    such that $1\leq v_i\leq u_i$, $E_1, \dotsc, E_{u_i} \in \CL(\Omega')$, and $\rho_1, \rho_2,\dotsc, \rho_{v_i}\in(0,1]$.

    Moreover, suppose without loss of generality that 
    $\rho_1 = \cdots = \rho_{w_i} =1$ for some $0 \leq w_i\leq v_i$,
    and $\rho_{w_i +1}, \ldots,\rho_{v_i} < 1$.
    
    Define the following monomial over $\CL(\Omega')$. 
        $$ M_i \defeq E_{w_i+1} \cmult \dotsi \cmult E_{v_i} \cmult E_{v_i+1}\cmult \dotsi \cmult E_{u_i}.$$
    Define the following polynomial over $\CL(\Omega')$. 
        $$\varphi' \defeq N_{k+1} \;\cplus\; \dotsi \;\cplus\; N_\ell.$$
    
    Now, for any assignment $\bX$ and $\bP$, we have the following argument.
    \begin{align*}
        \evalmap\left(X \geq \varphi ; \bX,\bP \right) &= 
            \evalmap\left(X \geq N_1 \;\cplus\; \dotsi \;\cplus\; N_\ell ; \bX,\bP\right)\\
            &= \mathop{\bigwedge}_{i=1}^\ell \evalmap\left(X \geq N_i ; \bX,\bP\right) \\
            &= \left(\mathop{\bigwedge}_{1\leq i\leq k} \evalmap\left(X \geq N_i ; \bX,\bP\right)\right) 
                \wedge \left(\mathop{\bigwedge}_{k+1\leq i\leq \ell} \evalmap\left(X \geq N_i ; \bX,\bP\right)\right)\\
            &= \evalmap\left(X \geq \varphi' ; \bX,\bP\right) \wedge \left(\mathop{\bigwedge}_{1\leq i\leq k} \evalmap\left(X \geq N_i ; \bX,\bP\right)\right)\\
            &\stackrel{\dagger}{=} \evalmap\left(X \geq \varphi' ; \bX,\bP\right) \wedge \left(\mathop{\bigwedge}_{1\leq i\leq k} \evalmap\left(X \geq X\cmult M_i ; \bX,\bP\right)\right)\\
            &= \evalmap\left(X \geq \varphi' \;\cplus\; X\cmult M_1 \;\cplus\; \dotsi \;\cplus\; X\cmult M_k ; \bX,\bP\right)
    \end{align*}
    The explanation of ($\dagger$) is that $\evalmap\left(X \geq N_i ; \bX,\bP\right) = \evalmap\left(X \geq X\cmult M_i ; \bX,\bP\right) $ by using Lemma~\ref{lem:pullout-X} on 
    $E_{w_i+1}, \dotsc, E_{v_i}$
    and, finally, using the idempotence $X=X\cmult X$ (when $X\in \convset$) from \eqref{eqn:idempotent-cmult}. 
\end{proof}

\begin{lemma}[Cancellation Lemma]\label{lem:cancel}
    Consider a system $I$ with an inequality
        $$ X \geq \left(\mathop{\cplus}_{i=1}^{k'} M'_i\right) \;\cplus\; \left(\mathop{\cplus}_{j=1}^k X\cmult M_j\right),$$
    where $M_1, \dotsc, M_k,M'_1, \dotsc, M'_{k'}$ are monomials over $\Omega'$ where $\Omega' = (\Omega_v - \{X\},\Omega_p)$. 
    Let $I'$ be a new system identical to $I$ except that the inequality above is replaced by 
        $$ X \geq \mathop{\cplus}_{i=1}^{k'} \left( \; M_i' \;\cplus\; \left( \mathop{\cplus}_{j=1}^k M'_i\cmult M_j\right) \;\right)$$
    Then, $\ssol(I;\bP)=\ssol(I';\bP)$ for all constant assignments $\bP$. 
\end{lemma}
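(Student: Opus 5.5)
We will prove the Cancellation Lemma by showing the two solution sets coincide on the relevant side: every solution of $I$ is a solution of $I'$, and the smallest solution of $I'$ is a solution of $I$. Since both solution sets are closed under intersection (Proposition~\ref{prop:closure-under-intersection}), this gives $\ssol(I;\bP)\geq\ssol(I';\bP)$ and $\ssol(I';\bP)\geq \ssol(I;\bP)$, hence equality. Throughout we fix $\bP$ and write $\bY$ for the values of the remaining variables. Set $A_i=\evalmap(M'_i;\bY,\bP)$ and $B_j=\evalmap(M_j;\bY,\bP)$. Note that $M_i'$ and $M_j$ do not involve $X$, so these sets do not depend on the value assigned to $X$.

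\textbf{Step 1 (solutions of $I$ solve $I'$).} Let $\bX$ be a solution of $I$, and write $\bX(X)=C$, a convex set. Then $C\supset A_i$ and $C\supset C\cmult B_j$. By monotonicity of $\cmult$ (Lemma~\ref{lem:monotonicity}), we get
\[
A_i\cmult B_j\subset C\cmult B_j\subset C.
\]
Hence $C$ contains the right-hand side of the new inequality. All other inequalities are unchanged, so $\bX\in\sol(I';\bP)$. It follows that $\ssol(I';\bP)\leq\ssol(I;\bP)$.

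\textbf{Step 2 (the smallest solution of $I'$ solves $I$).} Let $\bZ=\ssol(I';\bP)$ and $C=\bZ(X)$. All inequalities of $I$ other than the one for $X$ hold for $\bZ$, since they are identical in $I'$. It remains to show $C\supset C\cmult B_j$ for each $j$.

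First we identify $C$. Minimality should force
\[
C=\conv{\textstyle\bigcup_i \bigl(A_i\cup \bigcup_j A_i\cmult B_j\bigr)}.
\]
Indeed, replacing $C$ by this convex hull keeps a solution of $I'$. The $X$-inequality holds by construction. The other inequalities still hold by monotonicity, because the hull lies inside $C$, since $C$ must contain the right-hand side. By minimality, $C$ equals this hull. Write $R$ for the set inside the convex hull.

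Now take $c\in C$, $b\in B_j$ and $\lambda\in(0,1)$, and consider $\lambda c+(1-\lambda)b$. Write $c=\sum_s\mu_s r_s$ as a convex combination with $r_s\in R$. Then
\[
\lambda c+(1-\lambda)b=\sum_s\mu_s\bigl(\lambda r_s+(1-\lambda)b\bigr).
\]
So it suffices to show $r\cmult B_j\subset C$ for each $r\in R$. We split into two cases.

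If $r\in A_i$, then $\lambda r+(1-\lambda)b\in A_i\cmult B_j\subset R\subset C$.

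If $r=\mu a+(1-\mu)b'$ with $a\in A_i$, $b'\in B_{j'}$ and $\mu\in(0,1)$, then
\[
\lambda r+(1-\lambda)b=\lambda\mu\, a+\lambda(1-\mu)\,b'+(1-\lambda)\,b .
\]
This is a point of $A_i\cmult(B_{j'}\cmult B_j)$ (by associativity \eqref{eqn:associative-cmult}), where both $B_{j'}$ and $B_j$ have positive weight. We need to show it lies in $C$.

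\textbf{Main obstacle.} This second case is the crux: we must show that $A_i\cmult B_{j'}\cmult B_j\subset C$ when $C$ is only the convex hull of $A_i$ and the sets $A_i\cmult B_{j''}$. The natural approach is to use the fact that the weights on $B_{j'}$ and $B_j$ are both positive, and decompose the point as a convex combination of two points. Put $\alpha=\lambda\mu$, $\beta=\lambda(1-\mu)$, $\gamma=1-\lambda$, so the point is $p=\alpha a+\beta b'+\gamma b$. Then
\[
p=\frac{\beta}{\beta+\gamma}\Bigl((\beta+\gamma)\,b'+\alpha\,a\Bigr)+\frac{\gamma}{\beta+\gamma}\Bigl((\beta+\gamma)\,b+\alpha\,a\Bigr).
\]
Since $\alpha+\beta+\gamma=1$, each bracketed expression is a genuine convex combination. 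The first bracket is a point of $A_i\cmult B_{j'}$, because its weights $\alpha$ and $\beta+\gamma$ are both strictly between $0$ and $1$. Likewise the second bracket is a point of $A_i\cmult B_j$. Both points lie in $R$, so their convex combination $p$ lies in $C$. This closes the induction, establishing $C\supset C\cmult B_j$.

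The two degenerate cases are immediate. If some $A_i$ or $B_j$ is empty, the corresponding terms simply vanish. If $k'=0$, then $C=\emptyset$ and the inequality $C\supset C\cmult B_j$ holds trivially. With Step 2 complete, $\bZ\in\sol(I;\bP)$, so $\ssol(I;\bP)\leq\bZ$. Combined with Step 1, this gives equality.
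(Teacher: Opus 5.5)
Your proof is correct and follows the paper's two-part structure: every solution of $I$ solves $I'$, and the smallest solution of $I'$, whose $X$-coordinate is forced by minimality to be the convex hull of the new right-hand side, solves $I$. The only difference is in the final check. You verify $A_i\cmult B_{j'}\cmult B_j\subseteq\conv{R}$ pointwise, by splitting $\alpha a+\beta b'+\gamma b$ into a point of $A_i\cmult B_{j'}$ and a point of $A_i\cmult B_j$; the paper gets the same containment from the $\sim$-calculus (idempotence $V\cmult V\sim V$ together with compatibility of $\sim$ with $\cmult$).
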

\begin{proof}
    Our proof will have two components. 
    For arbitrary constant assignments $\bP$, we have: 
    \begin{enumerate}
        \item $\sol(I;\bP) \subseteq \sol(I';\bP)$. 
        \item $\ssol(I';\bP) \in \sol(I;\bP)$.
    \end{enumerate}
    These two results imply that $\ssol(I;\bP)=\ssol(I';\bP)$.

    \paragraph{Part 1.}
    For this part, it suffices to prove that
        $$ \evalmap\left( X \geq \left(\mathop{\cplus}_{i=1}^{k'} M'_i\right) \;\cplus\; \left(\mathop{\cplus}_{j=1}^k X\cmult M_j\right) ; \bX,\bP\right)$$
    implies $\evalmap\left(X \geq M'_i ; \bX, \bP\right)$ and $\evalmap\left(X \geq M'_i\cmult M_j ; \bX, \bP\right)$, for all $i\in\{1,2,\dotsc,k'\}$ and $j\in\{1,2,\dotsc,k\}$.
    Note that the implication $\evalmap\left(X \geq M'_i ; \bX, \bP\right)$ is obvious. 
    Next, observe that we also have the implication $\evalmap\left(X \geq X\cmult M_j ; \bX, \bP\right)$, which (in turn) implies $\evalmap\left(X \geq M'_i\cmult M_j ; \bX, \bP\right)$. 
    This concludes the proof of the first part. 

    \paragraph{Part 2.}
    Define 
        $$A_\bP \defeq \evalmap\left( \; \mathop{\cplus}_{i=1}^{k'} \left( \; M_i' \;\cplus\; \left( \mathop{\cplus}_{j=1}^k M'_i\cmult M_j\right) \;\right); \ssol(I';\bP)\restriction_{\Omega_v \setminus \{X\}},\bP\right).$$
    Here, we are using the fact that $M'_i$ and $M_j$ are monomials over $\Omega'$.
    Note that $\ssol(I';\bP)(X) = \conv{ A_\bP} $; otherwise, replacing $\ssol(I';\bP)(X)$ by  $\conv{A_\bP}$ (and leaving the other unknown assignments identical) creates a smaller solution in $\sol(I';\bP)$.

    After this, to prove $\ssol(I';\bP)\in\sol(I;\bP)$, it suffices to prove that
        $$\evalmap\left( \; X \geq \left(\mathop{\cplus}_{i=1}^{k'} M'_i\right) \;\cplus\; \left(\mathop{\cplus}_{j=1}^k X\cmult M_j\right) ; \ssol(I';\bP), \bP\right)\text{ is true.}$$
    It is equivalent to proving
        $$\evalmap\left( \; \conv{A_\bP} \geq \left(\mathop{\cplus}_{i=1}^{k'} M'_i\right) \;\cplus\; \left(\mathop{\cplus}_{j=1}^k \conv{A_\bP}\cmult M_j\right) ; \ssol(I';\bP)\restriction_{\Omega_v \setminus \{X\}}, \bP\right)\text{ is true.} $$

    For brevity, let us introduce some notation. 
    Define
        \begin{align*}
            U_\bP &\defeq \evalmap\left( \; \mathop{\cplus}_{i=1}^{k'} M'_i ; \ssol(I';\bP)\restriction_{\Omega_v \setminus \{X\}}, \bP \right) \\
            V_\bP &\defeq \evalmap\left( \; \mathop{\cplus}_{j=1}^{k} M_j ; \ssol(I';\bP)\restriction_{\Omega_v \setminus \{X\}}, \bP \right).
        \end{align*}
    Note that $\conv{A_\bP} =\conv{U_\bP \;\cplus\; U_\bP\cmult V_\bP}$.
    Using this new notation, we need to prove that
        \begin{align*} 
            && \conv{U_\bP \;\cplus\; U_\bP\cmult V_\bP} &\geq U_\bP \;\cplus\; \conv{U_\bP \cplus U_\bP\cmult V_\bP}\cmult V_\bP\\
            \iff && \conv{U_\bP \;\cplus\; U_\bP\cmult V_\bP} &\geq U_\bP \;\cplus\; \left({U_\bP \;\cplus\; U_\bP\cmult V_\bP}\right)\cmult V_\bP\\
            \iff && \conv{U_\bP \;\cplus\; U_\bP\cmult V_\bP} &\geq U_\bP \;\cplus\; U_\bP\cmult V_\bP  \;\cplus\; U_\bP\cmult V_\bP\cmult V_\bP \\
            \iff && \conv{U_\bP \;\cplus\; U_\bP\cmult V_\bP} &\geq U_\bP \;\cplus\; U_\bP\cmult V_\bP  \;\cplus\; U_\bP\cmult \conv{V_\bP}
                \tag{By \eqref{eqn:idempotent-cmult}}\\ 
            \iff && \conv{U_\bP \;\cplus\; U_\bP\cmult V_\bP} &\geq U_\bP \;\cplus\; U_\bP\cmult V_\bP  \;\cplus\; U_\bP\cmult {V_\bP} \\
            \iff && \conv{U_\bP \;\cplus\; U_\bP\cmult V_\bP} &\geq U_\bP \;\cplus\; U_\bP\cmult V_\bP,  
        \end{align*}
    which is trivially true, completing the proof of part 2.
\end{proof}

\paragraph{\emph{Substitution Lemma.}}

\begin{lemma}[Substitution Lemma]\label{lem:substitute}
Consider a system $I'$ containing two inequalities $X\geq \varphi_X$ and $Y\geq \varphi_Y$, where $\varphi_X \in \mathbb{P}[\Omega']$ and $\varphi_Y \in \mathbb{P}[\Omega]$. 
Define a new system $I''$ identical to $I'$ except that the inequality 
$Y\geq \varphi_Y$ is replaced by $Y\geq \varphi_Y\left\llbracket X\gets\varphi_X \right\rrbracket$. 
Then, $\ssol(I';\bP)=\ssol(I'';\bP)$ for all assignments $\bP \in (2^V)^{\Omega_p}$. 
\end{lemma}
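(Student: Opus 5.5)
The plan is to mirror the two-part structure used for the Cancellation Lemma (Lemma~\ref{lem:cancel}). Fix an assignment $\bP$. I will show (1) $\sol(I';\bP)\subseteq\sol(I'';\bP)$ restricted appropriately, more precisely that $\ssol(I';\bP)\in\sol(I'';\bP)$, and (2) $\ssol(I'';\bP)\in\sol(I';\bP)$. Since both are smallest solutions, each is then contained in the other, giving equality.

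\textbf{Key observation.} Let $\bX$ be any solution of a system containing the inequality $X\geq\varphi_X$ with $\varphi_X\in\mathbb{P}[\Omega']$. Then $\bX(X)\supseteq \conv{\evalmap(\varphi_X;\bX',\bP)}$, where $\bX'=\bX\restriction_{\Omega_v\setminus\{X\}}$. So, writing $\bY:=(\bX,\bP)\llbracket X\gets\varphi_X\rrbracket$, we have $\bX\geq\bY$ componentwise. Moreover, for the smallest solution we get equality $\bX(X)=\conv{\evalmap(\varphi_X;\bX',\bP)}$. Otherwise, replacing $\bX(X)$ by this smaller convex set still gives a solution: the $X$-inequality holds by construction, and the other inequalities still hold by monotonicity (Lemma~\ref{lem:monotonicity}). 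This contradicts minimality. This argument is identical to the one in Part~2 of Lemma~\ref{lem:cancel}, and it applies to both $I'$ and $I''$, since both contain $X\geq\varphi_X$ unchanged.

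\textbf{Step (1).} Let $\bX=\ssol(I';\bP)$, so $\bX=\bY$. Lemma~\ref{lem:substitute-poly-property} gives
\[
\evalmap(\varphi_Y\llbracket X\gets\varphi_X\rrbracket;\bX',\bP)\sim\evalmap(\varphi_Y;\bY,\bP)=\evalmap(\varphi_Y;\bX,\bP)\subseteq\bX(Y).
\]
Since $\bX(Y)$ is convex, the compatibility statement of Lemma~\ref{lem:compatiblity} lets us pass from $\sim$ to containment. Hence the new $Y$-inequality holds. All other inequalities of $I''$ coincide with those of $I'$, and $\varphi_Y\llbracket X\gets\varphi_X\rrbracket$ does not involve $X$, so evaluating it at $\bX$ or at $\bX'$ agrees. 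Therefore $\bX\in\sol(I'';\bP)$.

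\textbf{Step (2).} This is symmetric. Let $\bZ=\ssol(I'';\bP)$; the key observation gives $\bZ=(\bZ,\bP)\llbracket X\gets\varphi_X\rrbracket$. Then
\[
\evalmap(\varphi_Y;\bZ,\bP)\sim\evalmap(\varphi_Y\llbracket X\gets\varphi_X\rrbracket;\bZ',\bP)\subseteq\bZ(Y),
\]
and convexity of $\bZ(Y)$ again upgrades $\sim$ to containment. The other inequalities are unchanged, so $\bZ\in\sol(I';\bP)$.

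\textbf{Main obstacle.} The delicate point is the case $Y=X$, which the statement implicitly excludes, together with the correct use of equality (rather than mere containment) for the $X$-coordinate. Lemma~\ref{lem:substitute-poly-property} only controls evaluation at the substituted assignment $\bY$, not at an arbitrary solution. This is exactly why I work with the smallest solutions rather than arbitrary ones, and why I first establish $\bX(X)=\conv{\evalmap(\varphi_X;\bX',\bP)}$ via minimality.
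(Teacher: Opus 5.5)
Your proof is correct and follows essentially the same route as the paper: the same two-part argument through Lemma~\ref{lem:substitute-poly-property}, the minimality trick forcing the $X$-coordinate of the smallest solution to equal $\conv{\evalmap(\varphi_X;\bX',\bP)}$, and convexity of the $Y$-coordinate to upgrade $\sim$ to containment. The one difference is in your Step (1), where the paper proves the stronger inclusion $\sol(I';\bP)\subseteq\sol(I'';\bP)$ for all solutions, using the componentwise bound $\bX\geq(\bX,\bP)\llbracket X\gets\varphi_X\rrbracket$ from your key observation together with monotonicity (Lemma~\ref{lem:monotonicity}); so minimality is genuinely needed only in Step (2), contrary to what your closing remark suggests.
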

\begin{proof}
    Our proof will have two components. 
    For arbitrary constant assignments $\bP$, we have:
    \begin{enumerate}
        \item $\sol(I';\bP) \subseteq \sol(I'';\bP)$. 
        \item $\ssol(I'';\bP) \in \sol(I';\bP)$.
    \end{enumerate}
    These two results imply that $\ssol(I';\bP)=\ssol(I'';\bP)$. 

    \paragraph{Part 1.}
    For this part, it suffices to prove that 
    \[
    \evalmap\left(X\geq \varphi_X; \bX,\bP\right)
    \]
    and $\evalmap\left(Y\geq \varphi_Y; \bX,\bP\right)$ implies 
    \[
    \evalmap\left(Y\geq \varphi_Y\left\llbracket X\gets\varphi_X \right\rrbracket; \bX,\bP\right)
    \]
    for every $\bX \in \convset(V)^{\Omega_v}$.
    
    Note that (read the derivation left to right).
        $$\evalmap\left(\varphi_Y\left\llbracket X\gets\varphi_X \right\rrbracket; \bX,\bP\right) 
                \stackrel{*}{\sim} \evalmap\left(\varphi_Y ; (\bX,\bP)\left\llbracket X\gets \varphi_X\right\rrbracket \right) 
                \stackrel\dagger\leq \evalmap\left(\varphi_Y ; \bX,\bP\right) 
                \stackrel\ddagger\leq \evalmap\left(Y ; \bX,\bP\right),$$
    which completes the proof. 
    The explanations for the derivation steps are below.
    \begin{enumerate}
        \item Step $*$ is true by the definition of substituted polynomial, see \lemmaref{substitute-poly-property}
        \item Step $\dagger$ holds because $\evalmap\left(X\geq \varphi_X; \bX,\bP\right)$
        \item Step $\ddagger$ holds because $\evalmap\left(Y\geq \varphi_Y; \bX,\bP\right)$
    \end{enumerate}

    \paragraph{Part 2.}
    It will suffice to prove that 
        $\evalmap\left( Y\geq \varphi_Y ; \ssol(I'';\bP), \bP \right)$. 

    We first claim that $\ssol(I'';\bP)_X \sim \evalmap\left( \varphi_X ; \ssol(I'';\bP), \bP \right)$; otherwise, we will find a smaller solution of $I''$, which is a contradiction. 
    Suppose not; \ie, $\ssol(I'';\bP)(X) \in\convset(V)^{\Omega_v}$ is a proper superset of $A_\bP\defeq \conv{\evalmap(\varphi_X;\ssol(I'';\bP);\bP)}$. 
    Recall that $\varphi_X \in\mathbb{P}[\Omega']$. 
    Thus, replacing $\ssol(I'';\bP)(X)$ by $A_\bP$ in the smallest solution creates a smaller solution. 

    As a result of the claim, for any $\varphi \in \mathbb{P}[\Omega]$, we have
    \[\evalmap\left(\varphi ; \ssol(I'';\bP),\bP\right) \sim \evalmap\left(\varphi ; (\ssol(I'';\bP),\bP)\left\llbracket X\gets\varphi_X\right\rrbracket\right).
    \]
    In particular, 
        $$\evalmap\left(\varphi_Y ; (\ssol(I'';\bP),\bP)\left\llbracket X\gets\varphi_X\right\rrbracket \right) \sim \evalmap\left(\varphi_Y ; \ssol(I'';\bP),\bP\right).$$
    By the definition of the polynomial $\varphi_Y\left\llbracket X\gets\varphi_X \right\rrbracket \in \mathbb{P}[\Omega']$ (see \lemmaref{substitute-poly-property}), we have 
        $$\evalmap\left(\varphi_Y ; (\ssol(I'';\bP),\bP)\left\llbracket X\gets\varphi_X\right\rrbracket \right) 
            \sim \evalmap\left(\varphi_Y\left\llbracket X\gets\varphi_X \right\rrbracket ; \ssol(I'';\bP),\bP\right).$$
    Consequently, we have 
        $$ \evalmap\left(\varphi_Y ; \ssol(I'';\bP),\bP\right) \sim \evalmap\left(\varphi_Y\left\llbracket X\gets\varphi_X \right\rrbracket ; \ssol(I'';\bP),\bP\right).$$
    So evaluations of $\varphi_Y$ and $\varphi_Y\left\llbracket X\gets\varphi_X \right\rrbracket $ have the same convex hull. 

    Recall that $\ssol(I'';\bP)\in\convset(V)^{\Omega_v}$ is a solution of $I''$ and (as a result)  
    \[
    \evalmap\left(Y\geq \varphi_Y\left\llbracket X\gets\varphi_X \right\rrbracket ; \ssol(I'';\bP),\bP\right)
    \]
    holds. 
    Therefore, 
    \[
    \evalmap\left(Y\geq \varphi_Y ; \ssol(I'';\bP),\bP\right)
    \]
    also holds, because $\varphi_Y$ and $\varphi_Y\left\llbracket X\gets\varphi_X \right\rrbracket $ have the same convex hull. 
    This completes the proof of part 2. 
\end{proof}

\begin{remark}
    The procedure above eliminates unknowns $X_1, X_2, \dotsc, X_n$ from the polynomials, one at a time. 
    Changing the elimination order may change the description of the smallest solution. 
\end{remark}

\begin{remark}
    The transformation steps above may result in $\widetilde\varphi=\mathbb{0}$ inside the loop, which can lead to zero polynomials on the RHS of the last system $I\p n$. 
    This occurrence depends on the structure of the initial system $I$, not on the specific constant assignment $\bP$.
\end{remark}

\begin{proof}[Proof of \theoremref{gaussian-elim}]
For $0 \leq j \leq n$, we denote by $\Omega_v^{(j)} = \{X_{j+1},\ldots,X_n\} \subset \Omega_v$, and we denote by $\Omega^{(j)} = (\Omega_v^{(j)},\Omega_p)$.

Beginning with the system $I\p 0 = I$, the algorithm inductively constructs new systems of inequalities $I\p j$,
$\left(X_i \geq \varphi_i^{(j)}\right)_{1 \leq i \leq n}$, with 
$\phi_i^{(j)} \in \mathbb{P}[\Omega^{(j)}]$, maintaining the property that
\[
\ssol\left(I\p 0;\bP\right) =\ssol\left(I\p j;\bP\right)
\]
for any assignment $\bP \in (2^V)^{\Omega_p}$. 
Note that it is possible that 
$\sol\left(I\p 0;\bP\right) \neq \sol\left(I\p j;\bP\right)$.
The system $I\p n$ is $\left\{X_i\geq \varphi_i^*\right\}_{i=1}^n$ and every $\varphi^*_i$ is a polynomial over $(\emptyset,\Omega_p)$. 
It follows that 
\[
\ssol\left(I\p n ; \bP\right)(X_i) = 
\conv{ \, \evalmap(\varphi^{(n)}_i ; \bP) \, }\text{ for every }i\in\{1,2,\dotsc,n\}.
\]

Consider the inner loop $j\in\{1,2,\dotsc,n\}$. 
Note that the system $I\p{j-1}$ will have polynomials 
belonging to $\mathbb{P}[\Omega^{(j-1)}]$.

We consider the $j$-th inequality in this system: $ X_j \geq \varphi\p{j-1}_j$. 
Lemma~\ref{lem:rearrangement}  and Lemma~\ref{lem:cancel} give an explicit polynomial $\widetilde\varphi \in \mathbb{P}[\Omega^{(j)}]$ 
with the property that 
    replacing the inequality $ X_j \geq \varphi\p{j-1}_j$ with the inequality $X_j\geq \widetilde\varphi$ preserves the smallest solution. 
Let $I'$ represent this new system. 
Next, in the system $I'$, the algorithm substitutes every instance of $X_j$ with the polynomial $\widetilde\varphi$ in the polynomials 
    $$\left\{ \; \varphi_\ell\p{j-1} \;\colon\; \ell\in\{1,\dotsc, j-1,j+1,\dotsc, n\} \; \right\}$$
These are the polynomials $\varphi_\ell\p{j-1}\left\llbracket X_j\gets \widetilde\varphi \right\rrbracket$ defined according to \eqref{eqn:poly-substitute-def}. 
The substitution lemma (Lemma~\ref{lem:substitute}) proves that these substitutions preserve the smallest solution for any assignment $\bP \in (2^V)^{\Omega_p}$.

Note that $\widetilde\varphi$ and the $\varphi_\ell\p{j-1}  \left\llbracket X_j\gets \widetilde\varphi \right\rrbracket$ 
belong to $\mathbb{P}[\Omega^{(j)}]$.
Therefore, at the end of the $j$-th loop, the unknowns $X_1, \dotsc, X_j$ are eliminated from the RHS of every inequality. 
After the $n$-th iteration of the loop, the polynomials in our system will
belong to $\mathbb{P}[\Omega^{(n)}]$. Notice that
$\Omega^{(n)} = (\emptyset,\Omega_p)$.
The smallest solution of the system $(X_j \geq \varphi\p{n}_j)_{1 \leq j \leq n}$ is then given by
$\bX_j = \conv{\evalmap(\varphi\p{n}_j;\bP)}, 1 \leq j \leq n$.
We let $\varphi_j^* = \varphi\p{n}_j$, and clearly 
$\varphi^*_1,\ldots,\varphi_n^*$ satisfies the conditions of the theorem.
\end{proof}

\begin{proposition}
\label{prop:closure-under-ops}
If $\mathcal{B}$ is one of the families in Example~\ref{eg:closure}, then $\mathcal{B}$ has the good closure property. Moreover, if $\mathcal{B}$ is one of the families 
~\eqref{itemlabel:eg:closure:3} and \eqref{itemlabel:eg:closure:4} in Example~\ref{eg:closure},  then $\mathcal{B}$ has the effective good closure property. 
\end{proposition}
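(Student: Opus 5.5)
The plan is to verify each closure operation family by family. Because every family consists of finite unions of basic convex pieces, the work reduces to a few structural identities.

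\textbf{Reducing to convex pieces.} Write elements as $B=\bigcup_i C_i$ and $B'=\bigcup_j C'_j$ with $C_i,C'_j$ basic convex pieces. The paper's earlier identities give
\[
B+B'=\bigcup_{i,j}(C_i+C'_j),\qquad \rho\cdot B=\bigcup_i \rho\cdot C_i,\qquad B\cmult B'=\bigcup_{i,j}C_i\cmult C'_j.
\]
The last of these is \eqref{eqn:dist-cmult-cplus-general}, applied in both arguments using commutativity of $\cmult$. Finite unions are trivially closed. For the convex hull of a finite union of convex sets $C_1,\dots,C_m$, I will use the standard formula
\[
\conv{C_1\cup\dots\cup C_m}=\bigcup_{\emptyset\neq S\subset[m]} \Cmult_{i\in S} C_i .
\]
The inclusion $\supseteq$ is clear. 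For $\subseteq$, group the terms of a convex combination by which $C_i$ each point comes from, and use convexity of each $C_i$. The terms with zero weight are simply dropped, which produces the subset $S$, and $\Cmult$ over $S$ is exactly the set of strictly positive combinations by associativity \eqref{eqn:associative-cmult}. So in every case it suffices to show that, for basic convex pieces $C,C'$ and $\rho\geq0$, the sets $\rho C$, $C+C'$ and $C\cmult C'$ are again basic pieces, or finite unions of them.

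\textbf{Bounded convex, o-minimal, and semi-algebraic pieces.} For $\mathcal B_1$: $\rho C$ and $C+C'$ are convex and bounded. $C\cmult C'$ is convex, because it is the image of the convex set $(0,1)\times C\times C'$ under a map that is not linear, so it needs a direct argument. By the displayed formula it equals $\conv{C\cup C'}\setminus(\text{points only expressible with }\lambda\in\{0,1\})$, but I will argue more directly that it is the set of $\lambda a+(1-\lambda)b$ with $\lambda\in(0,1)$. Given two such points, take a convex combination of them. Rewrite it as $\mu a''+(1-\mu)b''$, where $a''$ is a convex combination of the two $a$'s and $b''$ of the two $b$'s, using weights proportional to $t\lambda$ and $(1-t)\lambda'$; then $\mu=t\lambda+(1-t)\lambda'\in(0,1)$. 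Boundedness is clear.

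For $\mathcal B_2$ and $\mathcal B_3$, definability follows because each operation is given by a first-order formula, using closure of definable sets under projection (Tarski--Seidenberg in the semi-algebraic case). Effectiveness for $\mathcal B_3$ comes from applying effective quantifier elimination \cite[Chapter 14]{BPRbook2} to those formulas.

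\textbf{Hemihedra.} For $\mathcal B_4$, I will use the characterization of hemihedra as bounded convex semi-linear sets, stated in Definition~\ref{def:hemihedra}. The same formulas then have linear matrices once $\lambda$ is handled. Scaling and Minkowski sum are projections of linear sets, so Fourier--Motzkin elimination keeps them semi-linear, and they stay convex and bounded.

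The main obstacle is $C\cmult C'$, because the condition $x=\lambda a+(1-\lambda)b$ is bilinear in $\lambda$ and the points. To linearize it I substitute $a'=\lambda a$ and $b'=(1-\lambda)b$. This works because $C$ is a bounded semi-linear convex set, presented as a finite union of sets $\{Ay\le c,\ A'y<c'\}$ (or with equalities). Then $\lambda C$, for $\lambda>0$, is cut out by the homogenized inequalities $Aa'\le \lambda c$ and $A'a'<\lambda c'$, which are linear in $(a',\lambda)$. Hence
\[
C\cmult C'=\{a'+b' : 0<\lambda<1,\ a'\in\lambda C,\ b'\in(1-\lambda)C'\}
\]
is a projection of a semi-linear set and is therefore semi-linear, effectively by Fourier--Motzkin. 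It is convex by the argument above and bounded, so it is a hemihedron. Combining this with the reduction to convex pieces proves that $\mathcal B_4$ has the good closure property and, through Fourier--Motzkin, the effective good closure property.
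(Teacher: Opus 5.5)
Your proof is correct. It shares the paper's skeleton: you reduce to convex pieces via distributivity, and you linearize the join by homogenizing. Your substitution $a'=\lambda a$, with $Aa'\le\lambda c$ and $A'a'<\lambda c'$, is exactly the paper's positive homogenization $A^+$ written out for a disjunctive normal form. The route differs in two places.

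First, you prove convexity of $C\cmult C'$ by a direct two-point computation, which is sound (the $b$-weights are proportional to $t(1-\lambda)$ and $(1-t)(1-\lambda')$). The paper instead observes that $C\cmult C'$ is a linear image of the convex set $\{(t,s,u,v): t+s=1,\ (t,u)\in C^+,\ (s,v)\in (C')^+\}$. The sentence before your computation is not an argument and should be deleted: the one about $(0,1)\times C\times C'$ under a non-linear map, and the one about $\mathrm{conv}(C\cup C')$ minus points expressible only with $\lambda\in\{0,1\}$.

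Second, and more substantively, you handle convex hulls through the identity $\mathrm{conv}(C_1\cup\dots\cup C_m)=\bigcup_{\emptyset\neq S\subseteq[m]}\Cmult_{i\in S}C_i$ for convex $C_i$. The paper instead applies Carath\'eodory's theorem to $d+1$ copies of $A^+\cup\{(0,0)\}$ and projects once more.

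Your identity buys you several things. The convex-hull clause of the good closure property becomes automatic for any family of finite unions of convex sets that is closed under unions and $\cmult$. It needs neither Carath\'eodory nor new quantified variables. The effective description of a hull then comes directly out of the join algorithm.

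The price is that you need an explicit decomposition into convex pieces. This is available here by definition, or via disjunctive normal form in the semi-linear case. Your formula also has $2^m-1$ join terms in the number $m$ of pieces. The paper's Carath\'eodory formula, by contrast, applies to any bounded definable or semi-linear set given by a single formula, regardless of how that set splits into convex pieces.
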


\begin{proof}
Denote by
\begin{align*}
\mathcal{C}_1 &= \text{ family of bounded convex subsets of $V$} \\
\mathcal{C}_2 &= \text{ family of  bounded convex definable (in some fixed o-minimal expansion of $\mathbb{R}$) subsets of $V$} \\
\mathcal{C}_3 &= \text{ family of bounded convex semi-algebraic subsets of $V$} \\
\mathcal{C}_4 &= \text{ family of  hemihedral subsets of $V$}, \\
\end{align*}
and for $i=1,\ldots,4$, by
\begin{align*}
\mathcal{B}_i &= \text{family of finite unions of elements of $\mathcal{C}_i$}.
\end{align*}
First observe that the families 
$\mathcal{B}_1,\ldots, \mathcal{B}_4$, 
are all closed under taking finite unions and
images under linear maps.

Suppose $A =  \bigcup_{r=1}^{m} A_i$ and $B = \bigcup_{s=1}^n B_j$, where each
$A_r,B_s$ belong to $\mathcal{C}_i$.

Then, 
\begin{align*}
    A + B &= \bigcup_{r,s} (A_r+B_s), \\
    A \cmult B &= \bigcup_{r,s} (A_r \cmult B_s), \\ 
    \conv{A\cup B} &= \conv{\bigcup_{r} A_r \cup \bigcup_{s} B_s}.
\end{align*}

Because of the above identities, it suffices to prove that for any two sets $A, B \in \mathcal{C}_i$, 
$c \cdot A, c \geq 0, A+B, A \cmult B$ also belong to $\mathcal{C}_i$.

Closure under scalar multiplication is closure under linear images. 

Closure under Minkowski sum follows because
\[
A+B=\{z\in V:\exists a\in A,\exists b\in B,\ z=a+b\}
\]
is the image under a projection.

It remains to treat the positive geometric join. 
For $A \in \mathcal{C}_i$, define its positive homogenization
\[
A^+ :=
\{(t,u)\in \mathbb R_{>0}\times V:\ u=t a \text{ for some } a\in A\}.
\]

It is clear that if $A$ is convex (and definable in an o-minimal structure), then
$A^+$ is also convex (and definable).

We now prove that if $A$ is semi-linear then so is $A^+$.

If \(A\) is described by a Boolean combination of affine atoms
\(\ell(x)\ \bowtie\ 0\), where \(\bowtie\in\{=,\leq,<,\geq,>\}\), then
\(A^+\) is described by the corresponding atoms
\[
t>0,\qquad t\,\ell(u/t)\ \bowtie\ 0.
\]
Since \(\ell\) is affine, \(t\,\ell(u/t)\) is affine in \((t,u)\). Hence
\(A^+\) is semi-linear. 

Now, given $A,B \in \mathcal{C}_i$

\[
A\cmult B
=
\left\{
z\in V:
\begin{array}{l}
\exists t,s>0,\ \exists u,v\in V,\\
t+s=1,\ (t,u)\in A^+,\ (s,v)\in B^+,\\
z=u+v
\end{array}
\right\}.
\]
Since the right-hand side is the image under projection of a convex
set, \(A\mathring\star B\) is convex. It is definable, semi-algebraic,
or semi-linear whenever \(A\) and \(B\) have the corresponding
property. It is also bounded, since it is contained in
\(\operatorname{conv}(A\cup B)\). Hence
\(A\mathring\star B\in\mathcal C_i\).

Finally, to prove that $\conv{A \cup B}$ is again a set in $\mathcal{C}_i$,
it suffices to show that if $A$ is a bounded definable (resp. semi-linear) set,
then $\conv{A}$ is a bounded convex definable (resp. bounded, convex, semi-linear set).

Define
\[
A^{\geq 0}:=A^+\cup\{(0,0)\}\subset \mathbb R_{\geq 0}\times V.
\]

By Carath\'eodory's theorem, with \(d=\dim V\),
\[
\operatorname{conv}(A)
=
\left\{
z\in V:
\begin{array}{l}
\exists (t_0,u_0),\ldots,(t_d,u_d)\in A^{\geq 0},\\
\sum_{i=0}^d t_i=1,\quad z=\sum_{i=0}^d u_i
\end{array}
\right\}.
\]
Using the closure under linear projection, we again obtain that $\conv{A} \in \mathcal{C}_i$.

The effectivity claim follows from the fact that the displayed formulas 
are effectively constructed, 
and 
effective quantifier-elimination in the theory of the reals \cite[Chapter 14]{BPRbook2}
for the family $\mathcal{B}_3$, and
Fourier--Motzkin elimination \cite{Ziegler} for the family $\mathcal{B}_4$,
effectively eliminates the existentially quantified variables.
\end{proof}

\begin{proof}[Proof of Theorem~\ref{thm:hemihedra}]
It follows from  the definition of $\mathbb{P}[\cdot]$ and good closure property of $\mathcal{B}$
    that
    for any polynomial $\varphi \in \mathbb{P}[(\emptyset,\Omega_p)]$, and any assignment $\bP$ of $\Omega_p$
    such that each $\bP(P) \in \mathcal{B}$ 
    $\evalmap(\varphi;\bP) \in \mathcal{B}$.
    Indeed, a polynomial in \(P[(\emptyset,\Omega_p)]\) is obtained from the
parameter sets by finitely many applications of precisely the operations
covered 
in Definition~\ref{def:property:closure}.
    Theorem~\ref{thm:hemihedra} now follows from Theorem~\ref{thm:gaussian-elim}.
\end{proof}

\begin{proof}[Proof of Corollary~\ref{cor:hemihedra}]
By Theorem~\ref{thm:hemihedra}, 
and the fact that the family $\mathcal{B}$ of finite unions of hemihedra has the effective good closure property by Proposition~\ref{prop:closure-under-ops},
each \(\bX_i\) is a finite union of hemihedra and hence is
a bounded semi-linear set. Since \(\bX_i\) is convex, Definition~\ref{def:hemihedra}
implies that \(\bX_i\) is a hemihedron.
\end{proof}

\subsection{Iterative Realization of the Smallest Solution}
\label{subsec:op-real}
This section presents an alternative characterization of the smallest solution of a system of inequalities. 

We fix in this section 
$\Omega = (\Omega_v,\Omega_p)$ and $\Omega_v = \{X_1,\ldots,X_n\}$.
Consider a system $I$ of inequalities 
$\left( X_j \geq \varphi_j \right)_{j=1}^n$ and an 
arbitrary assignment 
$\bP \in (2^V)^{\Omega_p}$.

\begin{definition}
    \label{def:itr}
    \begin{enumerate}
        \item Let $\bX\p 0 
        \in\convset(V)^n$.
        
        \item For $i\in\{0,1,\dotsc\}$, define $\bX\p{i+1}\in\convset(V)^n$ as follows:
            For all $j\in \{1,2,\dotsc,n\}$, let 
                
                $$\bX\p{i+1}_j \defeq \conv{\bX\p{i}_j \;\cplus\; \evalmap\left(\varphi_j ; \bX\p i,\bP \right) \; }.$$
    \end{enumerate}
    We denote $\itr(i,I;\bX^{(0)},\bP) \defeq \bX\p i$. If
    $\bX\p 0= (\emptyset,\emptyset,\dotsc,\emptyset)$, then we will denote
    $\itr(i,I;\bX^{(0)},\bP)$ by $\itr(i,I;\bP)$.
\end{definition}

\begin{lemma}
    \label{lem:itr1}
    $\bX\p{i+1}\geq \bX\p i$ for $i\in \{0,1,\dotsc\}$.
\end{lemma}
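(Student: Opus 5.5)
Since $\bX\p{i+1}_j$ is defined as $\conv{\bX\p{i}_j \;\cplus\; \evalmap(\varphi_j ; \bX\p i,\bP)}$, the inclusion $\bX\p{i+1}_j \supset \bX\p i_j$ is immediate from the definition, for every $j\in\{1,\dotsc,n\}$ and every $i\geq 0$. The steps are as follows.

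We have $\bX\p i_j \subset \bX\p i_j \cup \evalmap(\varphi_j;\bX\p i,\bP)$, because a set is contained in its union with any other set. Taking convex hulls, which is a monotone operation and satisfies $C\subset\conv{C}$, we get
\[
\bX\p i_j \subset \conv{\bX\p{i}_j \;\cplus\; \evalmap(\varphi_j ; \bX\p i,\bP)} = \bX\p{i+1}_j .
\]
Since this holds for each coordinate $j$, and the order on $\convset(V)^n$ is component-wise containment, we conclude $\bX\p{i+1}\geq \bX\p i$.

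No induction on $i$ is needed, and monotonicity of evaluation (Lemma~\ref{lem:monotonicity}) is not required, because the previous iterate appears explicitly inside the union in Definition~\ref{def:itr}. This also holds for an arbitrary starting point $\bX\p 0\in\convset(V)^n$, not only for $\bX\p 0=(\emptyset,\dotsc,\emptyset)$. There is no real obstacle here. The only point to check is that each $\bX\p{i+1}_j$ is convex, so that $\bX\p{i+1}$ is a legitimate element of $\convset(V)^n$; this holds by construction as a convex hull.
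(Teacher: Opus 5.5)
Your proof is correct and is the same argument the paper has in mind; the paper simply records the lemma as ``Obvious.'' The point is that the previous iterate $\mathbf{X}^{(i)}_j$ appears inside the union whose convex hull defines $\mathbf{X}^{(i+1)}_j$, so the inclusion holds coordinatewise with no induction needed.
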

\begin{proof}
Obvious.
\end{proof}

For arbitrary sets $A_1, \dotsc, A_n,B_1,\dotsc, B_n\subseteq \RR^d$, define
    $$(A_1, A_2, \dotsc, A_n) \cup (B_1, B_2, \dotsc, B_n) \defeq \left( A_1\cup B_1, A_2\cup B_2, \dotsc, A_n\cup B_n\right).$$
Finally,  we define
\begin{definition}
\label{def:itr2}
    \begin{equation}
        \label{eqn:itr}
        \itr(I;\bX^{(0)},\bP) \defeq \bigcup\limits_{i\geq 0} \; \itr(i,I;\bX^{(0)},\bP).
    \end{equation}
    
    If
    $\bX\p 0= (\emptyset,\emptyset,\dotsc,\emptyset)$, then we will denote
    $\itr(I;\bX^{(0)},\bP)$ by $\itr(I;\bP)$.
\end{definition}
    
Since each $\itr(i,I;\bX^{(0)},\bP)\in\convset(V)^n$
it follows immediately that
$\itr(I;\bX^{(0)},\bP) \in \convset(V)^n$ for any $\bX^{(0)} \in \convset(V)^n$.

We now prove:
\begin{lemma}[Iterative Construction of the Smallest Solution]
\label{lem:itr-char}
\label{lem:itr2}
    Let $I = \left(X_i \geq \varphi_i\right)_{i=1}^n $ be a  system of inequalities and $\bP \in (2^V)^{\Omega_p}$. Then, for any $\bX^{(0)} \in \convset(V)^n$,
     $\itr(I;\bX^{(0)},\bP)$ is the smallest solution of $I$ containing $\bX\p 0$. 
    In particular (setting $\bX^{(0)} = (\emptyset,\ldots,\emptyset)$) $\itr(I;\bP) = \ssol(I;\bP)$.
\end{lemma}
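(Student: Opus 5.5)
We argue in two steps: first that $\bY \defeq \itr(I;\bX^{(0)},\bP)$ is a solution of $I$ containing $\bX^{(0)}$, and then that every solution $\bZ \in \sol(I;\bP)$ with $\bZ \geq \bX^{(0)}$ contains $\bY$. Together these say exactly that $\bY$ is the smallest solution containing $\bX^{(0)}$. The special case $\bX^{(0)}=(\emptyset,\ldots,\emptyset)$ then gives $\itr(I;\bP)=\ssol(I;\bP)$, because every solution contains the empty tuple.

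\emph{Minimality.} Fix such a $\bZ$ and show $\bZ \geq \bX^{(i)}$ by induction on $i$. The base case is the hypothesis $\bZ\geq\bX^{(0)}$. For the inductive step, Lemma~\ref{lem:monotonicity} gives
\[
\evalmap(\varphi_j;\bX^{(i)},\bP)\subset\evalmap(\varphi_j;\bZ,\bP)\subset \bZ_j .
\]
Also $\bX^{(i)}_j\subset\bZ_j$ by induction. Since $\bZ_j$ is convex, it contains $\conv{\bX^{(i)}_j\cup\evalmap(\varphi_j;\bX^{(i)},\bP)}=\bX^{(i+1)}_j$. Taking the union over $i$ gives $\bZ\geq\bY$. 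We already know $\bY\in\convset(V)^n$, since it is an increasing union of convex sets by Lemma~\ref{lem:itr1}.

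\emph{$\bY$ is a solution.} This is the main obstacle: we need $\evalmap(\varphi_j;\bY,\bP)\subset\bY_j$, i.e. a continuity (finitary) property of evaluation along increasing chains. It suffices to treat a single monomial $M$. The key observation is that every point of $\evalmap(M;\bY,\bP)$ is witnessed by finitely many points of the $\bY_k$.

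\begin{enumerate}[1.]
\item Take a point $z$ of $\evalmap(M;\bY,\bP)$. By the definition of $\cmult$ and of $\evalmap(E;\cdot)$ as a finite Minkowski sum, $z$ is a strict convex combination of points $z_E\in\evalmap(E;\bY,\bP)$, $E\in\elem(M)$. Each $z_E$ is a finite sum of scaled points of the sets $\bY_k$ (those with $\lambda_{X_k}>0$) and of the sets $\bP(P)$.
\item Only finitely many points $y_1,\ldots,y_N$ of the $\bY_k$ occur in this representation. Each $y_\ell$ lies in some $\bX^{(i_\ell)}$. By monotonicity (Lemma~\ref{lem:itr1}) they all lie in $\bX^{(i)}$ with $i=\max_\ell i_\ell$.
\item The same representation therefore exhibits $z\in\evalmap(M;\bX^{(i)},\bP)$. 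Hence $z\in\bX^{(i+1)}_j\subset\bY_j$.
\end{enumerate}

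The empty-set conventions need no separate treatment. The representation in step 1 uses only the sets indexed by positive coefficients, and if one of these is empty in $\bY$ it is empty at every stage, so both sides are empty. Finally, $\bY\geq\bX^{(0)}$ holds trivially, which completes the argument.
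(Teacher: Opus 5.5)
Your proposal is correct and follows essentially the same route as the paper. Minimality is proved by induction using monotonicity of evaluation (Lemma~\ref{lem:monotonicity}) and convexity of the components of the solution, and the solution property comes from the finite-witness argument, which the paper isolates as Lemma~\ref{lem:finitary-continuity} (proving equality) whereas you prove inline only the inclusion that is actually needed.
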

We will need the following lemma in the proof of Lemma~\ref{lem:itr-char}.

\begin{lemma}
\label{lem:finitary-continuity}
    Suppose $\bX^{(0)} \subset \bX^{(1)} \subset \cdots$ be an increasing sequence in $\convset(V)^{\Omega_v}$,
    and $\varphi \in \mathbb{P}[\Omega]$. Then, for all $\bP \in (2^V)^{\Omega_p}$
    \[
    \evalmap(\varphi; \bigcup_{i \geq 0} \bX^{(i)},\bP) = \bigcup_{i \geq 0} \evalmap(\varphi;  \bX^{(i)},\bP)
    \]
\end{lemma}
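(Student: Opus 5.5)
We will prove Lemma~\ref{lem:finitary-continuity} by structural induction on $\varphi$, reducing from polynomials to monomials to elements of $\CL(\Omega)$. Write $\bX^{(\infty)} \defeq \bigcup_{i\geq 0}\bX^{(i)}$, taken component-wise. Since the sequence is increasing and each $\bX^{(i)}$ is convex, $\bX^{(\infty)}$ is again convex, so it is a legitimate assignment. The inclusion $\supseteq$ is immediate from Lemma~\ref{lem:monotonicity}, because $\bX^{(\infty)}\geq\bX^{(i)}$ for every $i$. The whole content is therefore the inclusion $\subseteq$, which is a finitary (compactness-type) statement: every point of $\evalmap(\varphi;\bX^{(\infty)},\bP)$ is witnessed by finitely many points of the $\bX^{(\infty)}(X)$, and each of these already lies in a single $\bX^{(i)}$.

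For polynomials, $\evalmap$ is a finite union of monomial evaluations, so if $\varphi=\mathbb{0}$ both sides are empty, and otherwise it suffices to treat a monomial $M$. For an element $E=\sum_X\lambda_X X+\sum_P\lambda_P P$, a point $z\in\evalmap(E;\bX^{(\infty)},\bP)$ has the form $z=\sum_{\lambda_X>0}\lambda_X x_X+\sum_{\lambda_P>0}\lambda_P p_P$ with $x_X\in\bX^{(\infty)}(X)$. Each $x_X$ lies in some $\bX^{(i_X)}(X)$. Since only finitely many variables occur, we can take $i=\max_X i_X$, and monotonicity of the sequence gives $z\in\evalmap(E;\bX^{(i)},\bP)$. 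The parameter part is unchanged.

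For a monomial $M=E_1\cmult\cdots\cmult E_k$, associativity \eqref{eqn:associative-cmult}, or rather the explicit description used in its proof and extended by induction, shows that $z\in\evalmap(M;\bX^{(\infty)},\bP)$ means $z=\sum_{r=1}^k\alpha_r e_r$ with all $\alpha_r>0$, $\sum_r\alpha_r=1$, and $e_r\in\evalmap(E_r;\bX^{(\infty)},\bP)$. (For $k=1$ there is nothing to do.) By the previous step each $e_r$ lies in $\evalmap(E_r;\bX^{(i_r)},\bP)$, and with $i=\max_r i_r$ monotonicity (Lemma~\ref{lem:monotonicity}) gives $e_r\in\evalmap(E_r;\bX^{(i)},\bP)$ for all $r$. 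Hence $z\in\evalmap(M;\bX^{(i)},\bP)$.

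The only point needing care is that every witness involves finitely many indices, so a single maximum index exists. This holds because monomials have finite support and elements of $\CL(\Omega)$ involve finitely many symbols. There is no real obstacle beyond bookkeeping.
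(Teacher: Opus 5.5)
Your proposal is correct and follows essentially the paper's argument. The inclusion $\supseteq$ comes from monotonicity (Lemma~\ref{lem:monotonicity}). The inclusion $\subseteq$ comes from noting that a membership witness uses only finitely many points of the variable sets, all of which lie in a single $\bX^{(N)}$ because the sequence is increasing.

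You organize that finite-witness step more explicitly, level by level: elements, then monomials via the strict-convex-combination description of iterated $\cmult$, then polynomials. You also check that the componentwise union is convex, so that it is a legitimate assignment; the paper leaves this implicit.
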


\begin{proof}
    For each $i \geq 0$, the inclusion
    $ \evalmap(\varphi;  \bX^{(i)},\bP) \subset \evalmap(\varphi; \bigcup_{i \geq 0} \bX^{(i)},\bP)$
    follows from Lemma~\ref{lem:monotonicity} which implies that
    \[
    \bigcup_{i \geq 0} \evalmap(\varphi;  \bX^{(i)},\bP) \subset 
    \evalmap(\varphi; \bigcup_{i \geq 0} \bX^{(i)},\bP).
    \]

To prove the opposite inclusion    
let
\[
\mathbf{x} \in
\evalmap
\left(
\varphi;
\bigcup_{i\geq 0}\mathbf X^{(i)},
\mathbf P
\right).
\]
A witness for this membership involves one monomial of \(\varphi\),
finitely many factors of that monomial, and finitely many points chosen
from the variable sets occurring in those factors. Each of these
finitely many points belongs to \(\bX^{(i)}(X_j)\) for some
\(i\). Since the sequence of assignments is increasing, there is a
single index \(N\) for which all the witnessing points belong to the
corresponding sets \(\bX^{(N)}(X_j)\). Therefore
\[
\mathbf{x} \in
\evalmap
\bigl(
\varphi;\bX^{(N)},\bP
\bigr).
\]
\end{proof}

\begin{proof}[Proof of Lemma~\ref{lem:itr-char}]

For \(i\geq 0\), write
\[
\mathbf X^{(i)}
:=
\itr
\bigl(i,I;\mathbf X^{(0)},\mathbf P\bigr),
\]
and set
\[
\mathbf X^{(\infty)}
:=
\bigcup_{i\geq 0}\mathbf X^{(i)}
=
\itr
\bigl(I;\mathbf X^{(0)},\mathbf P\bigr),
\]
where the union is taken component-wise. We prove the following two
claims:
\begin{enumerate}
\item
\(\mathbf X^{(\infty)}\) is a solution of \(I\);
\item
every solution of \(I\) containing \(\mathbf X^{(0)}\) also contains
\(\mathbf X^{(\infty)}\).
\end{enumerate}

We first prove the first claim. By Lemma~\ref{lem:finitary-continuity}, for every
\(j\in\{1,\ldots,n\}\),
\[
\evalmap
\bigl(
\varphi_j;\mathbf X^{(\infty)},\mathbf P
\bigr)
=
\bigcup_{i\geq 0}
\evalmap
\bigl(
\varphi_j;\mathbf X^{(i)},\mathbf P
\bigr).
\]
On the other hand, by the definition of the iterates,
\[
\mathbf X^{(i+1)}_j
=\conv{
\left(
\mathbf X^{(i)}_j
\mathbin{\cplus}
\evalmap
\bigl(
\varphi_j;\mathbf X^{(i)},\mathbf P
\bigr)
\right)}.
\]
Consequently,
\[
\evalmap
\bigl(
\varphi_j;\mathbf X^{(i)},\mathbf P
\bigr)
\subseteq
\mathbf X^{(i+1)}_j
\subseteq
\mathbf X^{(\infty)}_j
\]
for every \(i\geq 0\). Taking the union over \(i\), we obtain
\[
\evalmap
\bigl(
\varphi_j;\mathbf X^{(\infty)},\mathbf P
\bigr)
\subseteq
\mathbf X^{(\infty)}_j.
\]
Thus \(\mathbf X^{(\infty)}\) satisfies every inequality of \(I\), and
hence
\[
\mathbf X^{(\infty)}\in\operatorname{sol}(I;\mathbf P).
\]

We next prove the second claim. Let
\[
\mathbf Y\in\operatorname{sol}(I;\mathbf P)
\]
be a solution satisfying
\[
\mathbf Y\geq \mathbf X^{(0)}.
\]
We prove by induction on \(i\) that
\[
\mathbf Y\geq \mathbf X^{(i)}
\]
for every \(i\geq 0\).

The assertion holds for \(i=0\) by assumption. Suppose that
\(\mathbf Y\geq\mathbf X^{(i)}\). Since \(\mathbf Y\) is a solution of
\(I\), for every \(j\in\{1,\ldots,n\}\),
\[
Y_j
\supseteq
\evalmap
\bigl(
\varphi_j;\mathbf Y,\mathbf P
\bigr).
\]
By the monotonicity of evaluation (Lemma~\ref{lem:monotonicity}) the
induction hypothesis gives
\[
\evalmap
\bigl(
\varphi_j;\mathbf Y,\mathbf P
\bigr)
\supseteq
\evalmap
\bigl(
\varphi_j;\mathbf X^{(i)},\mathbf P
\bigr).
\]
Therefore,
\[
Y_j
\supseteq
\evalmap
\bigl(
\varphi_j;\mathbf X^{(i)},\mathbf P
\bigr).
\]
We also have
\[
Y_j\supseteq \mathbf X^{(i)}_j.
\]
Since \(Y_j\) is convex, it follows that
\[
\begin{aligned}
Y_j
&\supseteq
\operatorname{conv}
\left(
\mathbf X^{(i)}_j
\mathbin{\cplus}
\evalmap
\bigl(
\varphi_j;\mathbf X^{(i)},\mathbf P
\bigr)
\right) \\
&=
\mathbf X^{(i+1)}_j.
\end{aligned}
\]
This holds for every \(j\), and hence
\[
\mathbf Y\geq \mathbf X^{(i+1)}.
\]
The induction is complete.

It follows that
\[
\mathbf Y
\geq
\bigcup_{i\geq 0}\mathbf X^{(i)}
=
\mathbf X^{(\infty)}.
\]
Thus every solution containing \(\mathbf X^{(0)}\) contains
\(\mathbf X^{(\infty)}\). Together with the first claim, this proves
that
\[
\itr
\bigl(I;\mathbf X^{(0)},\mathbf P\bigr)
\]
is the smallest solution of \(I\) containing \(\mathbf X^{(0)}\).

Finally, if
\[
\mathbf X^{(0)}=(\emptyset,\ldots,\emptyset),
\]
then every solution contains \(\mathbf X^{(0)}\). Therefore,
\[
\itr(I;\mathbf P)
=
\ssol(I;\mathbf P).
\]
\end{proof}

\begin{remark}
    Starting with $\bX\p0 = (\emptyset,\dotsc,\emptyset)\in\convset(V)^n$ and an assignment where $\bP_1,\dotsc,\bP_t$ are polytopes, note that each $\bX\p i_j$ is a convex set. 
    The complexity of describing them can become increasingly complicated 
    with $i\in\{0,1,2,\dotsc\}$ 
(i.e., be unbounded as a function of $i$); for example, see Section~\ref{subsec:itr-sol-fig}. 
    However, their infinite union, the set $\itr(I;\bP)_j$, has a finite algebraic complexity. 
\end{remark}

%% file: arxiv-laminar.tex
\section{Proof of Theorem~\ref{thm:laminar}}
\label{sec:laminar}
We introduce in Subsection~\ref{subsec:grid} the notion of a finite grid
which is a finite subset of $W$. In Subsection~\ref{subsec:witness-tree} we 
introduce the notion of witness trees, and then in 
Subsection~\ref{subsec:interpolation} use it to prove (see Proposition~\ref{prop:struc}) that for any finite subset $S \subset V$, if there exists a grid $\cG$ having a certain property (Definition~\ref{def:fine-grid}), then $G^{(\infty)}_\Lambda(S)$ can be constructed semi-algebraically from the set of fibers  $G^{(\infty)}_\Lambda(S)_{\cG}$.
In Subsection~\ref{subsec:reduction}
we prove (Proposition~\ref{prop:reduction-lam-system}) 
that under the same assumption on $\cG$,
the tuple of fibers $G^{(\infty)}_\Lambda(S)_{\cG}$ (see Notation~\ref{not:fibers} below)
is the smallest solution of a certain system of inequalities. Theorem~\ref{thm:hemihedra} then implies that each fiber is a hemihedron, which together with Proposition~\ref{prop:struc}
implies that $G^{(\infty)}_\Lambda(S)_{\cG}$, being a finite (disjoint) union of hemihedra,
is a semi-algebraic set. Finally, in Subsection~\ref{subsec:existence-of-grid} we prove that (Proposition~\ref{prop:existence-of-grid}) if $\dim W_i = 1, 1 \leq i \leq k$, then there always exists a grid satisfying the required property. Together with Proposition~\ref{prop:laminar}, this suffices to prove Theorem~\ref{thm:laminar}.

\subsection{Grid Points}
\label{subsec:grid}

We fix a finite dimensional real vector space $V$ and subspaces $U,W_1,\ldots,W_k$ such that 
\[
V = U \oplus W,
\]
where $W = W_1 \oplus \cdots \oplus W_k$.
We will denote by 
\[
\pi_i:V \rightarrow V, 
\]
the canonical projection to $W_i$ for $1 \leq i \leq k$, and denote 
\[
\pi = \pi_1+\cdots+\pi_k.
\]

\begin{notation}
\label{not:fibers}
For $w \in W$, $S \subset V$, we will denote by 
$S_w = \pi^{-1}(w) \cap S$, and more generally for any subset 
$\cG \subset W$, we will denote by $S_\cG = S \cap \pi^{-1}(\cG)$.
\end{notation}

We now fix 
\[
\Lambda = \bigcup_{i=1}^k (U+W_i).
\]

Instead of directly working with $G^{(t)}_\Lambda(S)$, we will define a new (related) sequence of recursively defined sets, $\widetilde{G}^{(t)}_\Lambda(S)$. 
\begin{definition}
\label{def:T}
   \begin{enumerate}[1.] 
    \item 
        $$ \widetilde{G}^{(0)}_\Lambda(S) \defeq G^{(0)}_\Lambda(S) =S.$$
    \item 
        For $t\in\{0,1,2,\dotsc\}$, 
        $$ \widetilde{G}^{(t+1)}_\Lambda(S) \defeq \left\{
            \sum_{j=1}^p \lambda_j\cdot v_j \;\colon\; \begin{matrix}
                1 \leq p \leq \dim U + \dim W_i +1 \\ \lambda_1,\lambda_2,\dotsc, \lambda_p > 0 \\
                \lambda_1 + \lambda_2 + \dotsi + \lambda_p = 1 \\
                v_1, v_2, \dotsc, v_p \in \widetilde{G}^{(t)}_\Lambda(S)\\
                \text{$\exists i, 1\leq i \leq k$ such that for all $h \neq i, 1 \leq h \leq k$,} \\
                \pi_h(v_1) = \cdots =\pi_h(v_p)
            \end{matrix}
        \right\}$$
    \item  
        Finally, we define
        $$\widetilde{G}^{(\infty)}_\Lambda(S) \defeq \bigcup\limits_{t\geq 0}\widetilde{G}^{(t)}_\Lambda(S).$$
\end{enumerate} 
\end{definition}

We have:
\begin{lemma}
\label{lem:S-T-same}
    $\widetilde{G}^{(\infty)}_\Lambda(S)= G^{(\infty)}_\Lambda(S)$.
\end{lemma}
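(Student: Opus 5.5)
The plan is to prove the two inclusions separately, each by induction on $t$. The key observation links the condition $x-y\in\Lambda$ to the projections $\pi_h$. Since $V=U\oplus W_1\oplus\cdots\oplus W_k$, a difference $x-y$ lies in $U+W_i$ if and only if $\pi_h(x)=\pi_h(y)$ for all $h\neq i$. So $x-y\in\Lambda$ if and only if there exists an $i$ such that $x$ and $y$ agree under every $\pi_h$ with $h\neq i$. This is exactly the constraint in Definition~\ref{def:T}.

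\emph{Inclusion $G^{(\infty)}_\Lambda(S)\subset\widetilde G^{(\infty)}_\Lambda(S)$.} I would show $G^{(t)}_\Lambda(S)\subset\widetilde G^{(t)}_\Lambda(S)$ by induction on $t$; the case $t=0$ is trivial. Suppose $z\in G^{(t+1)}_\Lambda(S)$, so $z=\alpha x+(1-\alpha)y$ with $x,y\in G^{(t)}_\Lambda(S)\subset\widetilde G^{(t)}_\Lambda(S)$, $0<\alpha<1$ and $x-y\in U+W_i$ for some $i$. If $x\neq y$, take $p=2$, $v_1=x$, $v_2=y$, $\lambda_1=\alpha$, $\lambda_2=1-\alpha$. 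If $x=y$, take $p=1$. In both cases $p\le \dim U+\dim W_i+1$ holds, since this bound is at least $2$ when $\dim W_i=1$, and in general whenever $x\ne y$ forces $\dim(U+W_i)\ge 1$. The $\pi_h$-agreement condition holds by the observation above, so $z\in\widetilde G^{(t+1)}_\Lambda(S)$.

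\emph{Inclusion $\widetilde G^{(\infty)}_\Lambda(S)\subset G^{(\infty)}_\Lambda(S)$.} I would first prove an auxiliary claim. Let $A\subset V$, let $v_1,\dots,v_p\in A$ satisfy $\pi_h(v_1)=\cdots=\pi_h(v_p)$ for all $h\ne i$, and let $\lambda_j>0$ with $\sum_j\lambda_j=1$. Then $\sum_j\lambda_jv_j\in G^{(p-1)}_\Lambda(A)$.

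The claim is proved by induction on $p$. For $p=1$ it is trivial. For $p\ge2$, write
\[
\sum_j\lambda_jv_j=\lambda_1v_1+(1-\lambda_1)w,\qquad w=\sum_{j\ge2}\tfrac{\lambda_j}{1-\lambda_1}v_j .
\]
By induction, $w\in G^{(p-2)}_\Lambda(A)$. By Remark~\ref{rem:monotonicity}, $v_1\in A\subset G^{(p-2)}_\Lambda(A)$. Since each $\pi_h$ is linear, $\pi_h(w)=\pi_h(v_1)$ for all $h\ne i$, so $v_1-w\in U+W_i\subset\Lambda$. Because $0<\lambda_1<1$, it follows that $\sum_j\lambda_jv_j\in G_\Lambda(G^{(p-2)}_\Lambda(A))=G^{(p-1)}_\Lambda(A)$.

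Now set $D=\dim U+\max_i\dim W_i$. I would prove $\widetilde G^{(t)}_\Lambda(S)\subset G^{(tD)}_\Lambda(S)$ by induction on $t$. For the inductive step, an element of $\widetilde G^{(t+1)}_\Lambda(S)$ is a combination of $p\le D+1$ points of $\widetilde G^{(t)}_\Lambda(S)\subset G^{(tD)}_\Lambda(S)$. By the auxiliary claim applied with $A=G^{(tD)}_\Lambda(S)$, it lies in
\[
G^{(p-1)}_\Lambda\bigl(G^{(tD)}_\Lambda(S)\bigr)=G^{(tD+p-1)}_\Lambda(S)\subset G^{((t+1)D)}_\Lambda(S),
\]
where the last inclusion uses the monotonicity in Remark~\ref{rem:monotonicity}. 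Taking unions over $t$ completes this inclusion.

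The only delicate point is the auxiliary claim: one must check that every intermediate partial combination $w$ keeps the common $\pi_h$-values, so that each binary step uses a difference in $\Lambda$. Linearity of $\pi_h$ gives this immediately. The remaining care is just bookkeeping of iterate indices, using monotonicity of $t\mapsto G^{(t)}_\Lambda(S)$ to absorb the varying $p$.
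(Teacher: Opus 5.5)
Your proposal is correct and follows the paper's route: you get the forward inclusion by taking $p=2$ (or $p=1$ when $x=y$), and the reverse inclusion by realizing each $p$-fold combination as a chain of binary $G_\Lambda$ steps. Your version is more careful than the paper's sketch in three places: you verify the bound $p\le\dim U+\dim W_i+1$, you check that the intermediate partial combinations $w$ keep the common $\pi_h$-values (the paper only notes that the differences $v_r-v_s$ lie in $U+W_i$), and you obtain the quantitative containment $\widetilde G^{(t)}_\Lambda(S)\subset G^{(tD)}_\Lambda(S)$ instead of the paper's ``there is $m$'' argument.
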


\begin{proof}
First, $G^{(\infty)}_\Lambda(S)\subseteq \widetilde G^{(\infty)}_\Lambda(S)$.  Indeed, if $x,y\in \widetilde G^{(t)}_\Lambda(S)$ and $x-y\in U+W_i$, then $\pi_h(x)=\pi_h(y)$ for all $h\neq i$, and the binary point $\alpha x+(1-\alpha)y$ is allowed in the definition of $\widetilde G^{(t+1)}_\Lambda(S)$ with $p=2$; if $x=y$, use $p=1$.

Conversely, let
\[
 z=\sum_{r=1}^p\lambda_r v_r\in \widetilde G^{(t+1)}_\Lambda(S)
\]
be obtained using coordinate $i$. By induction, all $v_r$ lie in $G^{(\infty)}_\Lambda(S)$.  Since the sequence $G^{(m)}_\Lambda(S)$ is increasing because $\mathbf{0}\in\Lambda$, there is $m$ with $v_1,\ldots,v_p\in G^{(m)}_\Lambda(S)$.  Every difference $v_r-v_s$ lies in $U+W_i\subseteq\Lambda$. A binary convex-combination tree therefore constructs $z$ from the $v_r$ in finitely many further $G_\Lambda$ operations. Hence $z\in G^{(\infty)}_\Lambda(S)$.
\end{proof}

\subsection{Witness trees}
\label{subsec:witness-tree}
A given point $v \in \widetilde{G}^{(\infty)}_\Lambda(S)$ is obtained from $S$ by a
finite procedure of taking convex positions of certain other points in each step. Note that this sequence is not unique, and the same point can be obtained
through more than one such sequence. It is convenient to use the following notion of \emph{witness trees} to keep track of these sequences.

\begin{notation}[Witness trees]
Each witness tree $T$ has a unique root node denoted $\mathrm{root}(T)$. 
We denote the set of nodes of $T$ by $\mathrm{nodes}(T)$.
Each node $\mathfrak{n} \in \mathrm{nodes}(T)$
has a finite set of children denoted $\mathrm{children}(\mathfrak{n}) \subset \mathrm{nodes}(T)$. 
Nodes with an empty set of children will be called leaves of $T$, and the set of leaves will be denoted by $\mathrm{Leaves}(T)$. 
\end{notation}

\begin{notation}[Height of a tree]
The height of $\mathfrak{n} \in \mathrm{nodes}(T)$,
denoted $\mathrm{ht}_T(\mathfrak{n})$, 
is the largest integer $h$ (if it exists) such that there exists 
$\mathfrak{n}_0 = \mathfrak{n}, \mathfrak{n}_1,\ldots,\mathfrak{n}_h$,
with $\mathfrak{n}_h \in \mathrm{Leaves}(T)$, and for each $i, 0 < i \leq h$,
$\mathfrak{n}_i \in \mathrm{children}(\mathfrak{n}_{i-1})$. If no such $h$ exists we set $\mathrm{ht}_T(\mathfrak{n}) = \infty$. 
We call $\mathrm{ht}_T(\mathrm{root}(T))$ to be the height of $T$.
\end{notation}

\begin{notation}
We also define for each $\mathfrak{n} \in \mathrm{nodes}(T)$, its depth
denoted $\mathrm{depth}_T(\mathfrak{n})$, inductively as follow:

\begin{enumerate}[Depth of a node]
    \item $\mathrm{depth}_T(\mathrm{root}(T)) = 0$, 
    \item $\mathrm{depth}_T(\mathfrak{n}') = \mathrm{depth}_T(\mathfrak{n})+1$
    for all $\mathfrak{n}' \in \mathrm{children}(\mathfrak{n})$.
\end{enumerate}
\end{notation}

\begin{definition}[Annotations of the nodes of a tree]
Each $\mathfrak{n} \in \mathrm{nodes}(T)$ of the tree contains:
\begin{enumerate}
    \item a point  $v(\mathfrak{n}) \in V$,
    
    \item if $\mathfrak{n} \neq \mathrm{root}(T)$,
a weight $\alpha = \alpha(\mathfrak{n}), 0< \alpha \leq 1$, 
satisfying for every $\mathfrak{n} \in \mathrm{nodes}(T) \setminus \mathrm{Leaves}(T)$,
\[
\sum_{\mathfrak{n}' \in \mathrm{children}(\mathfrak{n})} \alpha(\mathfrak{n}') = 1,
\]
    
    \item and if $\mathfrak{n} \not\in \mathrm{Leaves}(T)$,
an \emph{index}  $i = i(\mathfrak{n}), 1 \leq i \leq k$.
\end{enumerate}
\end{definition}

\begin{remark}
\label{rem:subtree}
Observe that for trees of finite height, if $\mathfrak{n'} \in \mathrm{children}(\mathfrak{n})$, then $\mathrm{ht}_T(\mathfrak{n}') \leq  \mathrm{ht}_T(\mathfrak{n}) - 1$.

Also observe that, each $\mathfrak{n} \in \mathrm{nodes}(T)$ 
determines a subtree $T'$
with $\mathrm{root}(T') = \mathfrak{n}$ in an obvious way, and it follows from the above observation that if $\mathfrak{n} \neq \mathrm{root}(T)$, then 
\[
\mathrm{ht}(T') < \mathrm{ht}(T). 
\]
\end{remark}

\begin{definition}[Witness trees associated to $v \in G^{(\infty)}_\Lambda(S)$]
\label{def:witness-tree}
    Let $v \in G^{(\infty)}_\Lambda(S)$. We say that a tree $T$  is a \emph{witness tree for $v$}  if it satisfies the following:
    \begin{enumerate}[1.]
        \item $\mathrm{ht}(T)$ is finite;
        \item $v(\mathrm{root}(T)) = v$;
        \item For each $\mathfrak{n} \in \mathrm{nodes}(T) \setminus \mathrm{Leaves}(T)$:
        \begin{enumerate}
         \item (Projection condition) for all $j, 1\leq j \leq k, j \neq i(\mathfrak{n})$ and $\mathfrak{n}' \in \mathrm{children}(\mathfrak{n})$, $\pi_j(v(\mathfrak{n}')) = \pi_j(v(\mathfrak{n}))$.
        \item (Convexity condition)
            \[
            v(\mathfrak{n}) = \sum_{\mathfrak{n}' \in \mathrm{children}(\mathfrak{n})} \alpha(\mathfrak{n}')\cdot v(\mathfrak{n}').
            \]
        \end{enumerate}
        \item For each $\mathfrak{n} \in \mathrm{Leaves}(T)$, $v(\mathfrak{n}) \in S$.
    \end{enumerate}
\end{definition}

\begin{lemma}
\label{lem:every-point-has-a-witness-tree}
A point $v \in G^{(\infty)}_\Lambda(S)$ if and only if $v$ admits a witness tree.
\end{lemma}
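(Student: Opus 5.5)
We prove both directions by passing through the auxiliary hull of Definition~\ref{def:T} and using Lemma~\ref{lem:S-T-same}, which gives $\widetilde{G}^{(\infty)}_\Lambda(S)=G^{(\infty)}_\Lambda(S)$. The correspondence is direct. A point of $\widetilde{G}^{(t+1)}_\Lambda(S)$ is a strict convex combination of points of $\widetilde{G}^{(t)}_\Lambda(S)$ that agree in every $\pi_h$-coordinate except one index $i$. This is exactly the data at an internal node of a witness tree: the index $i(\mathfrak n)$, the weights $\alpha$, the projection condition and the convexity condition.

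\emph{Only if.} We show by induction on $t$ that every $v\in\widetilde{G}^{(t)}_\Lambda(S)$ has a witness tree of height at most $t$.
\begin{itemize}
\item For $t=0$ we have $v\in S$, and the one-node tree whose root is a leaf with $v(\mathrm{root})=v$ works.
\item For the inductive step, write $v=\sum_{j=1}^p\lambda_j v_j$ as in Definition~\ref{def:T}, with index $i$ and $v_j\in\widetilde{G}^{(t)}_\Lambda(S)$. By induction each $v_j$ has a witness tree $T_j$.
\item Build $T$ with a new root annotated by $v$ and index $i$. Its children are the roots of disjoint copies of $T_1,\dots,T_p$, and the child rooted at $T_j$ gets weight $\alpha=\lambda_j\in(0,1]$.
\end{itemize}
We then check the conditions. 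The weights sum to $1$. The convexity condition at the root is the defining identity for $v$. For the projection condition, we have $\pi_h(v_1)=\dots=\pi_h(v_p)$ for $h\neq i$, and $\pi_h$ is linear with $\sum\lambda_j=1$, so $\pi_h(v)=\sum_j\lambda_j\pi_h(v_j)=\pi_h(v_j)$ for each $j$. The conditions at the other nodes are inherited from the $T_j$, and the height is at most $t+1<\infty$.

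\emph{If.} We induct on the height of a witness tree $T$ for $v$, and show $v\in\widetilde{G}^{(\mathrm{ht}(T))}_\Lambda(S)$.
\begin{itemize}
\item At height $0$ the root is a leaf, so $v\in S$.
\item Otherwise, take a child $\mathfrak n'$ of the root. By Remark~\ref{rem:subtree}, the subtree rooted at $\mathfrak n'$ is a witness tree for $v(\mathfrak n')$ of smaller height. By induction and the monotonicity $\widetilde G^{(s)}\subseteq\widetilde G^{(s+1)}$, each $v(\mathfrak n')$ lies in $\widetilde{G}^{(h-1)}_\Lambda(S)$, where $h=\mathrm{ht}(T)$.
\item The monotonicity holds by taking $p=1$ in Definition~\ref{def:T}.
\item The convexity and projection conditions at the root, with index $i=i(\mathrm{root})$, show that $v$ is a convex combination with positive weights of points that share all $\pi_h$-coordinates for $h\neq i$.
\end{itemize}

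The main obstacle is the bound $p\le\dim U+\dim W_i+1$ in Definition~\ref{def:T}, because a witness-tree node may have arbitrarily many children. We handle it with Carath\'eodory's theorem inside the affine subspace $v_0+U+W_i$. This subspace contains all children, since their differences lie in $U+W_i$. Carath\'eodory in its \emph{positive} form gives the following: a point that is a strictly positive convex combination of finitely many points in an affine space of dimension $m$ lies in the relative interior of their convex hull. It is therefore a strictly positive combination of at most $m+1$ of them, by triangulating the hull and taking the open simplex, or a face of it, that contains the point.

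Alternatively, we can avoid this entirely by using Lemma~\ref{lem:S-T-same}'s second half directly. That argument builds $v$ from the children by binary $G_\Lambda$ steps, since all pairwise differences lie in $U+W_i\subseteq\Lambda$. This places $v$ in $G^{(\infty)}_\Lambda(S)$ without needing any bound on $p$. We adopt this simpler route for the ``if'' direction. We use Definition~\ref{def:T} only in the ``only if'' direction, where $p$ is already bounded.
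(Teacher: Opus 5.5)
Your proof is correct. Your ``only if'' direction matches what the paper does implicitly: build the tree from a $\widetilde G$-representation, then invoke the inclusion $G^{(\infty)}_\Lambda(S)\subseteq\widetilde G^{(\infty)}_\Lambda(S)$ of Lemma~\ref{lem:S-T-same}.

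The real difference is in the ``if'' direction. The paper first normalizes an arbitrary witness tree. Every node of index $i$ with too many children is compressed by ordinary Carath\'eodory in the translate of $U+W_i$ containing its children, discarding zero weights, so that it has at most $\dim(U+W_i)+1$ children. After this, a witness tree is essentially a certificate of membership in $\widetilde G^{(\infty)}_\Lambda(S)$ (using monotonicity of the $\widetilde G^{(s)}$), and Lemma~\ref{lem:S-T-same} finishes.

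You instead bypass $\widetilde G$ and its branching bound in this direction. You induct on height directly into $G^{(\infty)}_\Lambda(S)$ and realize each node by successive binary $G_\Lambda$-steps, so no Carath\'eodory is needed. One point deserves to be stated explicitly: each partial average $u_r$ of the first $r$ children differs from the next child by a vector of $U+W_i$. This holds because $U+W_i$ is a linear subspace, so affine combinations of the children stay in one translate of it. That is what makes every binary step a legitimate $G_\Lambda$-step, and all the partial weights lie in $(0,1)$ because the original weights are positive.

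Your route is more elementary and turns the paper's ``it is now obvious'' into an actual argument. The paper's route additionally shows that witness trees may be assumed to have bounded branching. That bounded-children form is the one quantified over in Proposition~\ref{prop:struc}, where the compression is redone on the fly.

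Finally, your ``positive form'' of Carath\'eodory via relative interiors and triangulation is unnecessary even on the route you discarded. Plain Carath\'eodory followed by dropping zero coefficients already gives strictly positive weights on at most $m+1$ points.
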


\begin{proof}
First note that if a node $\fn$  in a witness tree with index $i(\fn) = i$ has more than \(\dim(U+W_i)+1\) children, its convex combination
can be compressed, by Carath\'eodory's theorem in the affine space which is a translate of \(U+W_i\) containing the points associated with the children of $\fn$,
to a convex combination of at most \(\dim(U+W_i)+1\) children, without
changing $v(\fn)$ or the projection condition.
 It is now obvious from Definitions~\ref{def:T} and \ref{def:witness-tree} that
 $v \in \widetilde{G}^{(\infty)}(S)$ if and only if $v$ admits a witness tree.
 The lemma now follows from Lemma~\ref{lem:S-T-same}.
\end{proof}

For each $i, 1 \leq i \leq k$, we fix a finite set 
$\cG^{(i)} \subset W_i$,
and denote by $\cG = \cG^{(1)} \times \cdots \times \cG^{(k)} \subset W$.

\begin{notation}
For $v \in V$, we denote
\[
\cI_{\cG}(v) = \{i \;:\; 1 \leq i \leq k, \pi_i(v) \not\in \cG_i \}.
\]
\end{notation}

Let $v \in V$, $\cI_{\cG}(v) = \{i_1,\ldots,i_\ell\}$, and
$\Pi$ the ordered tuple  $(i_1,\ldots,i_\ell)$.

\begin{definition}[$(\Pi,\cG)$-gridded witness trees]
\label{def:gridded-witness-tree}
    We say that a tree $T$ is a
    \emph{$(\Pi,\cG)$-gridded witness tree for $v$}
    if $v(\mathrm{root}(T)) = v$, and 
    for each $\mathfrak{n} \in \mathrm{nodes}(T)$ with $d = \mathrm{depth}_T(\mathfrak{n})  
    $,
    \begin{enumerate}[1.]
    
        \item for 
        all $i, 1 \leq i \leq k, i \not\in \{i_{d+1},\ldots, i_\ell\}$,
        $
        \pi_{i}(v(\mathfrak{n})) \in \cG_{i},
        $ and
         \item $\mathfrak{n} \not\in \mathrm{Leaves}(T)$ and
    $i(\mathfrak{n}) = i_{d+1}$, if $d \leq \ell - 1$.
    (For \(d\ge \ell\), the set \(\{i_{d+1},\ldots,i_\ell\}\) is interpreted as empty.
Thus all \(W_i\)-coordinates are required to be grid-valued at depths \(d\ge \ell\).)
    \end{enumerate}
    
 We say that \emph{$v$ admits a $\cG$-gridded witness tree} if 
 $v$ admits a $(\Pi,\cG)$-gridded witness tree for some permutation
 $\Pi$ of the  $\cI_{\cG}(v)$.
\end{definition}

\begin{definition}
\label{def:fine-grid}
Let $S \subset V$ be a finite subset.
We say that \(G\) is sufficiently fine for \(S\) if
\(\pi(S)\subset G\) and every
\(v\in G_\Lambda^{(\infty)}(S)\) admits a
\(G\)-gridded witness tree.
\end{definition}

\subsection{Reduction to a System of Inequalities}
\label{subsec:reduction}

We assume in this section that there exists a sufficiently fine grid
$\cG = \cG^{(1)} \times \cdots \times \cG^{(k)}$ for $S$ which we fix for the rest of the section.
Our objective is to design a system of linear inequalities over convex sets so that its smallest solution corresponds to the restrictions 
$G^{(\infty)}_\Lambda(S)_w, w \in \mathcal{G}$.

\begin{definition}
\label{def:cI}
    Let $X_w$ denote a variable for each $w \in \mathcal{G}$, and let $P_v$ denote a parameter for each $v \in S$.  $I_{\cG,S}$ denote 
    the set consisting of the following inequalities:
    \begin{enumerate}
        \item For each $v \in S$, 
        \begin{equation}
        \label{eqn:I.1}
            X_{\pi(v)} \geq P_v.
        \end{equation}.
        \item For each $i, 1 \leq i \leq k$, and 
        $w_{i}^{(0)}, \ldots, w_{i}^{(p)} \in  \cG^{(i)}, 2 \leq p \leq \dim W_i+1$, 
        such that $w_{i}^{(1)},\ldots,w_{i}^{(p)}$ are affinely independent,
        and $w_j \in \cG^{(j)}, 1 \leq j \leq k, j \neq i$, 
        and $\alpha^{(1)},\ldots,\alpha^{(p)} > 0$ with
        $\alpha^{(1)}+\cdots+\alpha^{(p)}= 1$ and 
        \begin{equation}
        \label{eqn:def:cI}
         w_{i}^{(0)} = \sum_{h=1}^{p} \alpha^{(h)} \cdot w_{i}^{(h)},    
        \end{equation}
         
        \begin{equation}
        \label{eqn:I.2}
        X_{w_{i}^{(0)}+\sum_{j\neq i} w_j} \geq \sum_{h=1}^{p} \alpha^{(h)} \cdot X_{w_{i}^{(h)}+\sum_{j\neq i} w_j}.
        \end{equation}
    \end{enumerate}
\end{definition}

\begin{remark}
    Notice that in Definition~\ref{def:cI}, since $w_{i}^{(1)},\ldots,w_{i}^{(p)}$ 
    are assumed to be affinely independent and 
    $
    w_{i}^{(0)} \in \convo{\{w_{i}^{(1)},\ldots,w_{i}^{(p)}\}}
    $
    (resp. 
    $
    w_{i}^{(0)} \in \conv{\{w_{i}^{(1)},\ldots,w_{i}^{(p)}\}}
    $),
    there exists for each tuple
    $(w_{i}^{(0)},w_{i}^{(1)},\ldots,w_{i}^{(p)})$
    a unique $\alpha^{(1)},\ldots,\alpha^{(p)} > 0$ 
    (resp. $\alpha^{(1)},\ldots,\alpha^{(p)} \geq 0$)
    satisfying
    \eqref{eqn:def:cI}.
    
    Moreover, there are clearly only finitely many tuples
    \[
    (w_{i}^{(0)},w_{i}^{(1)},\ldots,w_{i}^{(p)}, w_1,\ldots,\widehat{w_i},\ldots,w_k)
    \]
    with $w_{i}^{(0)}, \ldots,w_{i}^{(p)} \in \cG^{(i)}$ $2 \leq p \leq \dim W_i+1$, and
    $w_j \in \cG^{(j)}$ for all $j \neq i, 1 \leq j \leq k$.
    
    Together, the above facts imply that $\card(I_{\cG,S}) < \infty$.
\end{remark}

\begin{proposition}[Reduction to Solving System of Linear Inequalities over Convex Sets]
\label{prop:reduction-lam-system}
    Let 
    $\left( \; \bX_w\p* \;\colon\; w\in\cG \; \right) = \ssol(I_{\cG,S};\bP)$,
    where $\bP(P_v) = \{v\}, v \in S$.
    Then, $G^{(\infty)}_\Lambda(S)_w = \bX_w\p*$ for every $w\in\cG$. 
\end{proposition}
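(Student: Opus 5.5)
We prove the two inclusions separately. Write $H:=G^{(\infty)}_\Lambda(S)$ and $\bY_w := H_w$ for $w\in\cG$.

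\emph{First inclusion: $\bX^{(*)}_w \subset H_w$.} We show that $\bY=(\bY_w)_{w\in\cG}$ lies in $\sol(I_{\cG,S};\bP)$, so the smallest solution is contained in it. This needs three checks.

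(a) Each $\bY_w$ is convex. Every $y\in\pi^{-1}(w)$ has the same $\pi_h$-coordinates for all $h$, so any two points of $H_w$ differ by an element of $U\subset\Lambda$. Hence $H_w$ is closed under open segments. Since $H_w$ is a fiber of an increasing union, a segment joining two of its points already lies in $H$, and so $H_w$ is convex.

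(b) The inequalities \eqref{eqn:I.1} hold, because $v\in S\subset H$ and $\pi(v)\in\cG$.

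(c) For \eqref{eqn:I.2}, take $y_h\in H_{w_i^{(h)}+\sum_{j\ne i}w_j}$ for $h=1,\dots,p$. These points have equal $\pi_j$ for every $j\neq i$. By Lemma~\ref{lem:S-T-same} and Definition~\ref{def:T}, with $p\le \dim W_i+1\le \dim U+\dim W_i+1$, the point $\sum_h\alpha^{(h)}y_h$ lies in $\widetilde G^{(\infty)}_\Lambda(S)=H$. Its projection is $w_i^{(0)}+\sum_{j\ne i}w_j$, so it lies in the required fiber. Evaluating the degree-one monomial gives exactly this Minkowski sum of scaled sets, so the inequality holds. Note that \eqref{eqn:I.2} has a single linear monomial, so $\cmult$ plays no role here.

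\emph{Second inclusion: $H_w\subset \bX^{(*)}_w$.} Here we use fineness. Let $v\in H_w$ with $w\in\cG$. Then $\cI_\cG(v)=\emptyset$, and a $\cG$-gridded witness tree for $v$ is a witness tree in which every node $\fn$ satisfies $\pi(v(\fn))\in\cG$. We argue by induction on the height of the subtree rooted at $\fn$ that $v(\fn)\in\bX^{(*)}_{\pi(v(\fn))}$.

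For a leaf, $v(\fn)\in S$, and \eqref{eqn:I.1} gives the claim.

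For an internal node with index $i$, all children share the $\pi_j$-coordinates of $\fn$ for $j\ne i$. The children's $W_i$-coordinates $c_1,\dots,c_q\in\cG^{(i)}$ are not yet in the form \eqref{eqn:I.2}. Two things may fail: they may not be affinely independent (and may repeat), and some weights are attached to points at the same grid value.

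The first repair is to group children by their $W_i$-value. Since $\bX^{(*)}$ of that fiber is convex, the group's weighted average lies in $\bX^{(*)}$ of that fiber. This reduces to distinct grid values $c_1,\dots,c_q$ with positive weights.

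The second repair handles dependence. Because $\dim W_i=1$, Carath\'eodory applied to the target $\pi_i(v(\fn))$ lets us write it with at most two distinct grid values. That would change the children's $U$-parts, however, and it is unclear how to do it directly.

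To avoid this, I would instead decompose the vector $(v(\fn)$ as a combination of child points$)$ by a triangulation argument. Choose a triangulation (in the 1-dimensional case, simply the intervals between consecutive sorted $c_r$) of $\conv{c_1,\dots,c_q}$. Then repeatedly combine pairs using \eqref{eqn:I.2} with $p=2$. For consecutive values $c_r<c_s$ and any grid point $c$ strictly between them, the inequality $X_{c}\ge \alpha X_{c_r}+(1-\alpha)X_{c_s}$ yields points in intermediate fibers. Iteratively merging the extreme children toward the target while staying on grid values then produces $v(\fn)$. The target $\pi_i(v(\fn))$ is itself a grid point, so the final merge uses the equation with $w_i^{(0)}$ equal to the target.

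The main obstacle is this combination step. A general convex combination $\sum\alpha_r y_r$ with many children must be rewritten as an iteration of two-point combinations whose \emph{intermediate} $W_i$-values are grid points. With arbitrary weights the intermediate values need not be on the grid.

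The plan is to merge in a different order. Pair one child on the left of the target $t=\pi_i(v(\fn))$ with one on the right, and form the combination landing exactly at $t$; this point is in $\bX^{(*)}_{t\text{-fiber}}$ by \eqref{eqn:I.2}. Then write $v(\fn)$ as a convex combination of such $t$-fiber points together with children already at $t$. This is a standard decomposition of a measure with barycenter $t$ into two-point measures with barycenter $t$. Convexity of $\bX^{(*)}_{\pi(v(\fn))}$ then finishes the inductive step.
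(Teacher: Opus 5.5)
\paragraph{Overall assessment.}
Your argument is essentially the paper's when every \(\dim W_i=1\), but the proposition is stated for \(W_i\) of arbitrary dimension, so as written it proves only a special case.

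\paragraph{What matches the paper.}
You organize the proof the same way the paper does. The inclusion \(\bX^{(*)}_w\subset G^{(\infty)}_\Lambda(S)_w\), obtained by checking that the fibers form a solution, is fine: you verify convexity, \eqref{eqn:I.1}, and \eqref{eqn:I.2}. For the reverse inclusion, your final plan has the same ingredients as the paper's. You group children by \(W_i\)-value using convexity. You then split the weights into pieces, each an instance of \eqref{eqn:I.2} with \(w_i^{(0)}=\pi_i(v(\fn))\), and finish by convexity. Working directly with the solution \(\bX^{(*)}\), rather than with the iterates \(\mathrm{itr}(r,I_{\cG,S};\bP)\) as the paper does, is a harmless simplification.

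\paragraph{The gap.}
Your inductive step explicitly uses \(\dim W_i=1\): ``pair one child on the left of the target with one on the right.'' This proposition has no such hypothesis. In Section~\ref{subsec:reduction} only the fineness of \(\cG\) is assumed, and the \(W_i\) have arbitrary dimension. That is exactly why \eqref{eqn:I.2} allows \(2\le p\le \dim W_i+1\) affinely independent grid points. The restriction \(\dim W_i=1\) enters the paper only in Proposition~\ref{prop:existence-of-grid}, which shows that a fine grid exists. Proposition~\ref{prop:laminar} uses the present result for an arbitrary fine grid.

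\paragraph{How to close it.}
What is missing is the higher-dimensional version of your ``two-point measures with barycenter \(t\)'' decomposition, which is Lemma~\ref{lem:new}.
\begin{enumerate}
\item After grouping, you have distinct grid values \(a\in A\subset\cG^{(i)}\) with weights \(\lambda_a>0\) and \(\sum_a\lambda_a a=t:=\pi_i(v(\fn))\). The weight vector \((\lambda_a)\) lies in the polytope \(\{\mu\in\mathbb{R}^A_{\ge 0}:\sum_a\mu_a=1,\ \sum_a\mu_a a=t\}\).
\item The vertices of this polytope are exactly the barycentric vectors of \(t\) with respect to affinely independent \(\sigma\subset A\) with \(t\in\mathrm{relint}\,\mathrm{conv}(\sigma)\). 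A point whose support is affinely dependent can be perturbed in both directions along the dependence, so it is not a vertex.
\item Write \((\lambda_a)\) as a convex combination of these vertices. This splits \(v(\fn)=\sum_a\lambda_a u_a\) into points \(\sum_{a\in\sigma}b_{\sigma,a}u_a\).
\item Each such point is either \(u_t\) (when \(\sigma=\{t\}\)), or lies in \(\bX^{(*)}_{\pi(v(\fn))}\) by the instance of \eqref{eqn:I.2} with \(p=|\sigma|\le\dim W_i+1\).
\item Convexity of \(\bX^{(*)}_{\pi(v(\fn))}\) then finishes the step.
\end{enumerate}
In dimension one the admissible \(\sigma\) are \(\{t\}\) and pairs \(\{a,b\}\) with \(a<t<b\), which is your left/right pairing. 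So nothing else in your proof needs to change, and the abandoned triangulation and merging idea is not needed either.
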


Before proving Proposition~\ref{prop:reduction-lam-system} we need an auxiliary lemma.

\begin{lemma}
\label{lem:new}
Let \(v_1,\ldots,v_n \in 
V\), and let
\[
v=\sum_{i=1}^n \alpha_i v_i,
\qquad 
\alpha_i\geq 0,
\qquad
\sum_{i=1}^n \alpha_i=1.
\]
Let \(\mathcal S\) denote the collection of all subsets
\[
\sigma \subseteq \{1,\ldots,n\}
\]
such that the points \(\{v_i : i\in \sigma\}\) are affinely independent and
\[
v \in \operatorname{relint}
\operatorname{conv}\{v_i : i\in \sigma\}.
\]
For each \(\sigma\in\mathcal S\), let \(I_\sigma=(I_{\sigma,1},\ldots,I_{\sigma,n})\in\mathbb{R}^n\) be the coefficient vector defined by
\[
I_{\sigma,i}=0 \quad\text{if } i\notin \sigma,
\]
and, for \(i\in \sigma\), by the unique barycentric coordinates satisfying
\[
v=\sum_{i\in\sigma} I_{\sigma,i}v_i,
\qquad
\sum_{i\in\sigma} I_{\sigma,i}=1,
\qquad
I_{\sigma,i}>0.
\]
Then every convex representation of \(v\) as a convex combination of
\(v_1,\ldots,v_n\) is a convex combination of the vectors \(I_\sigma\).
In particular, there exist numbers \(t_\sigma\geq 0\), with
\[
\sum_{\sigma\in\mathcal S} t_\sigma=1,
\]
such that
\[
(\alpha_1,\ldots,\alpha_n)
=
\sum_{\sigma\in\mathcal S} t_\sigma I_\sigma.
\]
Equivalently,
\[
\alpha_i
=
\sum_{\sigma\in\mathcal S} t_\sigma I_{\sigma,i}
\qquad
\text{for every } i=1,\ldots,n.
\]
\end{lemma}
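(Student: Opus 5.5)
Consider the polytope
\[
\Delta_v=\Bigl\{\beta\in\mathbb R^n:\ \beta_i\geq 0,\ \sum_i\beta_i=1,\ \sum_i\beta_i v_i=v\Bigr\},
\]
the set of all convex representations of \(v\). It is nonempty (it contains \(\alpha\)) and bounded, and it is cut out by finitely many linear equalities and inequalities. Hence it is a polytope, and by Minkowski's theorem (Krein--Milman for polytopes) every point of \(\Delta_v\), in particular \(\alpha\), is a convex combination of the vertices of \(\Delta_v\). So the whole statement reduces to one claim: the vertices of \(\Delta_v\) are exactly the vectors \(I_\sigma\) with \(\sigma\in\mathcal S\). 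In fact only the inclusion "every vertex is some \(I_\sigma\)" is needed, together with the observation that each \(I_\sigma\) lies in \(\Delta_v\); the latter is immediate from the definition.

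The key step is to characterize the vertices. Let \(\beta\) be a vertex and let \(\sigma=\{i:\beta_i>0\}\) be its support. If the points \(\{v_i:i\in\sigma\}\) were affinely dependent, there would be a nonzero \(\gamma\) supported on \(\sigma\) with \(\sum_i\gamma_i=0\) and \(\sum_i\gamma_iv_i=0\). Then \(\beta\pm\epsilon\gamma\in\Delta_v\) for small \(\epsilon>0\), because all coordinates of \(\beta\) on \(\sigma\) are strictly positive. This exhibits \(\beta\) as the midpoint of two distinct points of \(\Delta_v\), contradicting that it is a vertex. So the points \(\{v_i:i\in\sigma\}\) are affinely independent.

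Next, \(v=\sum_{i\in\sigma}\beta_iv_i\) with all \(\beta_i>0\) and \(\sum_{i\in\sigma}\beta_i=1\). Strictly positive barycentric coordinates with respect to affinely independent points place \(v\) in \(\operatorname{relint}\operatorname{conv}\{v_i:i\in\sigma\}\), so \(\sigma\in\mathcal S\). By uniqueness of barycentric coordinates with respect to affinely independent points, \(\beta=I_\sigma\).

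Putting these together, \(\alpha\in\Delta_v=\operatorname{conv}(\text{vertices})\subseteq\operatorname{conv}\{I_\sigma:\sigma\in\mathcal S\}\). This yields the weights \(t_\sigma\geq0\) with \(\sum_\sigma t_\sigma=1\); where a vertex is hit by several \(\sigma\), choose weights on any one of them and set the rest to zero. The main point of care is only the perturbation argument for the vertex characterization; everything else is standard. Degenerate cases are covered automatically: if \(\alpha\) is itself a vertex, its support is already some \(\sigma\in\mathcal S\); and a singleton \(\sigma\) with \(v_i=v\) counts as affinely independent with relative interior \(\{v\}\).
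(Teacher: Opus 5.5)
Your proof is correct and follows the paper's argument essentially verbatim. The paper also works with the polytope of all convex representations of \(v\), uses the same \(\beta\pm\varepsilon\gamma\) perturbation to show that every extreme point has affinely independent support and hence equals some \(I_\sigma\), and concludes because a polytope is the convex hull of its extreme points. The paper additionally proves the converse, that each \(I_\sigma\) is extreme, which, as you note, is not needed. Your remark about a vertex being "hit by several \(\sigma\)" is vacuous, since \(I_\sigma\) has support exactly \(\sigma\), so distinct \(\sigma\) give distinct vectors.
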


\begin{proof}[Proof of Lemma~\ref{lem:new}]
Let
\[
\Delta_{n-1}
=
\left\{
\lambda=(\lambda_1,\ldots,\lambda_n)\in \mathbb R^n :
\lambda_i\geq 0,\ \sum_{i=1}^n \lambda_i=1
\right\}
\]
be the standard simplex. Consider the fiber over \(v\):
\[
P_v
=
\left\{
\lambda\in \Delta_{n-1} :
\sum_{i=1}^n \lambda_i v_i=v
\right\}.
\]
This is a compact convex polytope, since it is the intersection of the
simplex \(\Delta_{n-1}\) with an affine subspace.

We will show that the extreme points of \(P_v\) are precisely the vectors
\(I_\sigma\), where \(\sigma\in\mathcal S\). It will then follow that
\[
P_v=\operatorname{conv}\{I_\sigma : \sigma\in\mathcal S\},
\]
because every compact convex polytope is the convex hull of its extreme
points.

First, let \(\lambda\in P_v\) be an extreme point. Define its support by
\[
\operatorname{supp}(\lambda)
=
\{i\in\{1,\ldots,n\}:\lambda_i>0\}.
\]
We claim that the points
\[
\{v_i : i\in \operatorname{supp}(\lambda)\}
\]
are affinely independent.

Suppose not. Then there exist scalars \(c_i\), not all zero, indexed by
\(i\in \operatorname{supp}(\lambda)\), such that
\[
\sum_{i\in \operatorname{supp}(\lambda)} c_i=0
\]
and
\[
\sum_{i\in \operatorname{supp}(\lambda)} c_i v_i=0.
\]
Extend \(c\) to a vector in \(\mathbb R^n\) by setting \(c_i=0\) for
\(i\notin \operatorname{supp}(\lambda)\). Since \(\lambda_i>0\) for all
\(i\in \operatorname{supp}(\lambda)\), we may choose \(\varepsilon>0\)
small enough so that
\[
\lambda_i+\varepsilon c_i\geq 0
\qquad\text{and}\qquad
\lambda_i-\varepsilon c_i\geq 0
\]
for all \(i=1,\ldots,n\).

Now define
\[
\lambda^+=\lambda+\varepsilon c,
\qquad
\lambda^-=\lambda-\varepsilon c.
\]
Then
\[
\sum_{i=1}^n \lambda_i^+
=
\sum_{i=1}^n \lambda_i+\varepsilon\sum_{i=1}^n c_i
=
1,
\]
and similarly
\[
\sum_{i=1}^n \lambda_i^-=1.
\]
Also,
\[
\sum_{i=1}^n \lambda_i^+ v_i
=
\sum_{i=1}^n \lambda_i v_i
+
\varepsilon\sum_{i=1}^n c_i v_i
=
v,
\]
and similarly
\[
\sum_{i=1}^n \lambda_i^- v_i=v.
\]
Thus \(\lambda^+,\lambda^-\in P_v\). They are distinct because \(c\neq 0\),
and
\[
\lambda=\frac{1}{2}\lambda^+ + \frac{1}{2}\lambda^-.
\]
This contradicts the assumption that \(\lambda\) is an extreme point of
\(P_v\). Therefore \(\{v_i : i\in \operatorname{supp}(\lambda)\}\) must be
affinely independent.

Let
\[
\sigma=\operatorname{supp}(\lambda).
\]
Since \(\lambda\in P_v\), we have
\[
v=\sum_{i\in\sigma} \lambda_i v_i,
\qquad
\sum_{i\in\sigma}\lambda_i=1,
\qquad
\lambda_i>0 \text{ for all } i\in\sigma.
\]
Because the points \(\{v_i:i\in\sigma\}\) are affinely independent, these
are the barycentric coordinates of \(v\) with respect to that simplex.
Moreover, all barycentric coordinates are strictly positive, so
\[
v\in \operatorname{relint}\operatorname{conv}\{v_i:i\in\sigma\}.
\]
Hence \(\sigma\in\mathcal S\), and by uniqueness of barycentric
coordinates,
\[
\lambda=I_\sigma.
\]
Thus every extreme point of \(P_v\) is one of the vectors \(I_\sigma\).

Conversely, fix \(\sigma\in\mathcal S\). We show that \(I_\sigma\) is an
extreme point of \(P_v\). Suppose that
\[
I_\sigma=t\mu+(1-t)\nu
\]
for some \(\mu,\nu\in P_v\) and some \(t\in(0,1)\). If \(j\notin\sigma\),
then
\[
0=I_{\sigma,j}=t\mu_j+(1-t)\nu_j.
\]
Since \(\mu_j,\nu_j\geq 0\), it follows that
\[
\mu_j=\nu_j=0
\qquad
\text{for every } j\notin\sigma.
\]
Therefore both \(\mu\) and \(\nu\) are convex representations of \(v\)
supported on \(\sigma\). That is,
\[
v=\sum_{i\in\sigma} \mu_i v_i
=
\sum_{i\in\sigma} \nu_i v_i,
\qquad
\sum_{i\in\sigma}\mu_i
=
\sum_{i\in\sigma}\nu_i
=
1.
\]
Since \(\{v_i:i\in\sigma\}\) is affinely independent, the barycentric
coordinates of \(v\) with respect to this simplex are unique. Hence
\[
\mu=I_\sigma
\qquad\text{and}\qquad
\nu=I_\sigma.
\]
Thus \(I_\sigma\) is an extreme point of \(P_v\).

We have therefore proved that
\[
\operatorname{ext}(P_v)
=
\{I_\sigma:\sigma\in\mathcal S\}.
\]
Since \(P_v\) is a compact convex polytope, it is the convex hull of its
extreme points. Hence
\[
P_v
=
\operatorname{conv}\{I_\sigma:\sigma\in\mathcal S\}.
\]
Finally, the coefficient vector
\[
\alpha=(\alpha_1,\ldots,\alpha_n)
\]
belongs to \(P_v\), because
\[
\alpha_i\geq 0,\qquad \sum_{i=1}^n\alpha_i=1,
\qquad
\sum_{i=1}^n\alpha_i v_i=v.
\]
Therefore there exist coefficients \(t_\sigma\geq 0\), with
\[
\sum_{\sigma\in\mathcal S} t_\sigma=1,
\]
such that
\[
\alpha=\sum_{\sigma\in\mathcal S} t_\sigma I_\sigma.
\]
This is precisely the desired decomposition.
\end{proof}

\begin{proof}[Proof of Proposition~\ref{prop:reduction-lam-system}]
Let
\[
        H_w:=G_\Lambda^{(\infty)}(S)_w,\qquad w\in \cG.
\]
Let
\[
        (\bX_w^{(r)})_{w\in \cG}:=\operatorname{itr}(r,I_{\cG,S};\bP)
\]
denote the iterates of the system \(I_{\cG,S}\), starting from
\[
        \bX_w^{(0)}=\emptyset,\qquad w\in \cG.
\]
By Lemma~\ref{lem:itr-char},
\[
        \bX_w^{(*)}=\bigcup_{r\geq 0}\bX_w^{(r)}.
\]

We prove the equality
\[
        H_w=\bX_w^{(*)}
\]
in two inclusions.

\medskip
\noindent
\textbf{First inclusion: \(H_w\subset \bX_w^{(*)}\).}

For \(t\geq 0\), let \(\bZ_w^{(t)}\subset H_w\) be the set of all points
\(v\in H_w\) admitting a \(\cG\)-gridded witness tree of height at most
\(t\).  Since \(\cG\) is sufficiently fine,
\[
        H_w=\bigcup_{t\geq 0}\bZ_w^{(t)}.
\]
We prove by induction on \(t\) that
\[
        \bZ_w^{(t)}\subset \bX_w^{(t+1)}
        \qquad\text{for every }w\in \cG.
\]

For \(t=0\), a witness tree of height \(0\) consists of a single leaf.
Thus, if \(v\in \bZ_w^{(0)}\), then \(v\in S\) and \(\pi(v)=w\).  The
inequality
\[
        \bX_{\pi(v)}\geq P_v
\]
from Definition~\ref{def:cI} \eqref{eqn:I.1} implies that
\[
        v\in \bX_w^{(1)}.
\]
This proves the base case.

Assume now that the claim has been proved for \(t-1\), and let
\[
        v\in \bZ_w^{(t)}.
\]
If \(v\in \bZ_w^{(t-1)}\), then the induction hypothesis gives
\[
        v\in \bX_w^{(t)},
\]
and Lemma~\ref{lem:itr1} gives
\[
        \bX_w^{(t)}\subset \bX_w^{(t+1)}.
\]
Thus \(v\in \bX_w^{(t+1)}\), and we are done in this case.

We may therefore assume that \(v\notin \bZ_w^{(t-1)}\).  Choose a
\(\cG\)-gridded witness tree \(T\) for \(v\) of height at most \(t\).
Since \(v\notin \bZ_w^{(t-1)}\), the root of \(T\) is an internal node.
Let its splitting direction be \(i\).  Denote the children of the root
by \(\fm\in M\), and write
\[
        a_\fm:=\alpha(\fm),\qquad v_\fm:=v(\fm).
\]
Then
\[
        v=\sum_{\fm\in M}a_\fm \cdot v_\fm,\qquad
        a_\fm>0,\qquad \sum_{\fm\in M}a_\fm=1.
\]
The split is in direction \(i\), so for every \(\fm\in M\),
\[
        \pi_q(v_\fm)=\pi_q(v)\qquad(q\neq i).
\]
Since \(T\) is \(\cG\)-gridded, the \(W_i\)-coordinates of the root
children are grid points. Put
\[
        y_\fm:=\pi_i(v_\fm)\in \cG^{(i)}.
\]
Then
\[
        \pi(v_\fm)=w-\pi_i(w)+y_\fm,
\]
and the subtree rooted at \(\fm\) is a \(\cG\)-gridded witness tree of
height at most \(t-1\).  Hence
\[
        v_\fm\in \bZ_{w-\pi_i(w)+y_\fm}^{(t-1)}.
\]
By the induction hypothesis,
\[
        v_\fm\in \bX_{w-\pi_i(w)+y_\fm}^{(t)}.
\]

We now compress equal \(W_i\)-coordinates. Let
\[
        A:=\{y_\fm:\fm\in M\}\subset \cG^{(i)}.
\]
For each \(a\in A\), define
\[
        \lambda_a:=\sum_{\fm:y_\fm=a}a_\fm.
\]
Thus \(\lambda_a>0\) and \(\sum_{a\in A}\lambda_a=1\).  Define
\[
        u_a:=\frac{1}{\lambda_a}
        \sum_{\fm:y_\fm=a}a_\fm \cdot v_\fm.
\]
Since each \(\bX_{w-\pi_i(w)+a}^{(t)}\) is convex, and each
\(v_\fm\) with \(y_\fm=a\) lies in this set, we have
\[
        u_a\in \bX_{w-\pi_i(w)+a}^{(t)}.
\]
Moreover,
\[
        v=\sum_{a\in A}\lambda_a u_a,
\]
and, after applying \(\pi_i\),
\[
        \pi_i(w)=\sum_{a\in A}\lambda_a \cdot a.
\]

Apply Lemma~\ref{lem:new} to the convex representation
\[
        \pi_i(w)=\sum_{a\in A}\lambda_a  \cdot a
\]
in the vector space \(W_i\).  Let \(\mathcal A\) be the collection of
all subsets \(\sigma\subset A\) such that the points of \(\sigma\) are
affinely independent and
\[
        \pi_i(w)\in 
        \convo{\sigma}.
\]
For \(\sigma\in\mathcal A\), let \(b_{\sigma,a}\), \(a\in\sigma\), be
the corresponding barycentric coordinates of \(\pi_i(w)\).  Lemma~\ref{lem:new}
gives coefficients \(\tau_\sigma\geq 0\), \(\sigma\in\mathcal A\), with
\[
        \sum_{\sigma\in\mathcal A}\tau_\sigma=1
\]
and
\[
        \lambda_a
        =
        \sum_{\substack{\sigma\in\mathcal A\\ a\in\sigma}}
        \tau_\sigma b_{\sigma,a}
        \qquad(a\in A).
\]
Hence
\[
\begin{aligned}
        v
        &=
        \sum_{a\in A}\lambda_a \cdot u_a                                      \\
        &=
        \sum_{\sigma\in\mathcal A}
        \tau_\sigma \cdot
        \left( \sum_{a\in\sigma}b_{\sigma,a}u_a \right).
\end{aligned}
\]

We claim that for every \(\sigma\in\mathcal A\),
\[
        \sum_{a\in\sigma}b_{\sigma,a} \cdot u_a\in \bX_w^{(t+1)}.
\]

First suppose \(\operatorname{card}(\sigma)=1\).  Then
\[
        \sigma=\{\pi_i(w)\}.
\]
Thus
\[
        \sum_{a\in\sigma}b_{\sigma,a} \cdot u_a
        =
        u_{\pi_i(w)}.
\]
But
\[
        u_{\pi_i(w)}
        \in \bX_{w-\pi_i(w)+\pi_i(w)}^{(t)}
        =
        \bX_w^{(t)}
        \subset \bX_w^{(t+1)}.
\]

Now suppose \(\operatorname{card}(\sigma)\geq 2\).  Since \(\sigma\) is
affinely independent in \(W_i\), we have
\[
        2\leq \operatorname{card}(\sigma)\leq \dim W_i+1.
\]
The grid points \(a\in\sigma\), the barycentric coordinates
\(b_{\sigma,a}>0\), and the identity
\[
        \pi_i(w)=\sum_{a\in\sigma}b_{\sigma,a} \cdot a
\]
therefore define one of the inequalities \eqref{eqn:I.2} of Definition~\ref{def:cI}, namely
\[
        X_w
        \geq
        \sum_{a\in\sigma}
        b_{\sigma,a}\,
        X_{w-\pi_i(w)+a}.
\]
Using this inequality in the iterative construction of Definition~\ref{def:itr},
we obtain
\[
        \sum_{a\in\sigma}
        b_{\sigma,a}\,
        \bX_{w-\pi_i(w)+a}^{(t)}
        \subset
        \bX_w^{(t+1)}.
\]
Since
\[
        u_a\in \bX_{w-\pi_i(w)+a}^{(t)}
        \qquad(a\in\sigma),
\]
it follows that
\[
        \sum_{a\in\sigma}b_{\sigma,a} \cdot u_a\in X_w^{(t+1)}.
\]

Thus every point
\[
        \sum_{a\in\sigma}b_{\sigma,a} \cdot u_a
\]
belongs to \(\bX_w^{(t+1)}\).  Since \(\bX_w^{(t+1)}\) is convex and
\[
        v=
        \sum_{\sigma\in\mathcal A}
        \tau_\sigma \cdot
        \left(\sum_{a\in\sigma}b_{\sigma,a} \cdot u_a \right),
\]
we conclude that
\[
        v\in \bX_w^{(t+1)}.
\]
This completes the induction and proves
\[
        H_w\subset \bX_w^{(*)}.
\]

\medskip
\noindent
\textbf{Second inclusion: \(\bX_w^{(*)}\subset H_w\).}

It is enough to show that the tuple
\[
        (H_w)_{w\in G}
\]
is a solution of the system \(I\). Since \(\bX^{(*)}\) is the smallest
solution of \(I\), this will imply
\[
        \bX_w^{(*)}\subset H_w
        \qquad(w\in \cG).
\]

First we show that each \(H_w\) is convex. Let
\[
        v,v'\in H_w.
\]
Then there exist \(t,t'\in\mathbb N\) such that
\[
        v\in G_\Lambda^{(t)}(S)_w,
        \qquad
        v'\in G_\Lambda^{(t')}(S)_w.
\]
Assume without loss of generality that \(t\geq t'\). Since
\(0\in\Lambda\), the sets \(G_\Lambda^{(r)}(S)\) are increasing, and
therefore
\[
        v'\in G_\Lambda^{(t)}(S)_w.
\]
Moreover,
\[
        \pi(v)=\pi(v')=w,
\]
so
\[
        v-v'\in U\subset\Lambda.
\]
Thus every strict convex combination of \(v\) and \(v'\) belongs to
\(G_\Lambda^{(t+1)}(S)_w\), and the endpoints already belong to
\(G_\Lambda^{(t)}(S)_w\).  Hence
\[
        [v,v']\subset H_w.
\]
So \(H_w\) is convex.

We now verify the inequalities of \(I\).

For inequalities of type \eqref{eqn:I.1}, let \(v\in S\) and put
\[
        w=\pi(v).
\]
Then
\[
        v\in G_\Lambda^{(0)}(S)\subset G_\Lambda^{(\infty)}(S),
\]
and therefore
\[
        v\in H_w.
\]
Thus \(H_{\pi(v)}\geq v\).

Now consider an inequality of type \eqref{eqn:I.2}.  Thus we have a direction
\(i\), grid points
\[
        w_i^{(0)},w_i^{(1)},\ldots,w_i^{(p)}\in \cG^{(i)},
\]
grid points \(w_j\in \cG^{(j)}\) for \(j\neq i\), and coefficients
\[
        \alpha^{(1)},\ldots,\alpha^{(p)}>0,\qquad
        \sum_{h=1}^p\alpha^{(h)}=1,
\]
such that
\[
        w_i^{(0)}
        =
        \sum_{h=1}^p\alpha^{(h)}w_i^{(h)}.
\]
Put
\[
        w^{(0)}:=w_i^{(0)}+\sum_{j\neq i}w_j,
        \qquad
        w^{(h)}:=w_i^{(h)}+\sum_{j\neq i}w_j.
\]
We must show that
\[
        \sum_{h=1}^p\alpha^{(h)}H_{w^{(h)}}
        \subset
        H_{w^{(0)}}.
\]

Choose arbitrary
\[
        v^{(h)}\in H_{w^{(h)}},
        \qquad h=1,\ldots,p.
\]
For each \(h\), choose \(t_h\) such that
\[
        v^{(h)}\in G_\Lambda^{(t_h)}(S)_{w^{(h)}}.
\]
Let
\[
        t:=\max_h t_h.
\]
Since \(\mathbf{0} \in\Lambda\), we have
\[
        v^{(h)}\in G_\Lambda^{(t)}(S)_{w^{(h)}}
        \qquad(h=1,\ldots,p).
\]
All the points \(v^{(h)}\) have the same \(W_j\)-coordinates for
\(j\neq i\).  Therefore, for any \(h,h'\),
\[
        v^{(h)}-v^{(h')}\in U+W_i\subset\Lambda.
\]
It follows by repeated applications of the definition of
\(G_\Lambda\) that every convex combination of
\[
        v^{(1)},\ldots,v^{(p)}
\]
belongs to \(G_\Lambda^{(\infty)}(S)\).  In particular,
\[
        v:=\sum_{h=1}^p\alpha^{(h)}v^{(h)}
        \in G_\Lambda^{(\infty)}(S).
\]
Moreover,
\[
\begin{aligned}
        \pi(v)
        &=
        \sum_{h=1}^p\alpha^{(h)} \cdot \pi(v^{(h)})        \\
        &=
        \sum_{h=1}^p\alpha^{(h)} \cdot 
        \left(
        w_i^{(h)}+\sum_{j\neq i}w_j
        \right)                                      \\
        &=
        w_i^{(0)}+\sum_{j\neq i}w_j
        =
        w^{(0)}.
\end{aligned}
\]
Thus
\[
        v\in H_{w^{(0)}}.
\]
This proves
\[
        \sum_{h=1}^p\alpha^{(h)}H_{w^{(h)}}
        \subset
        H_{w^{(0)}}.
\]
So the tuple \((H_w)_{w\in \cG}\) satisfies all inequalities of type
\eqref{eqn:I.2}.

Hence \((H_w)_{w\in \cG}\) is a solution of \(I\).  Since
\((\bX_w^{(*)})_{w\in \cG}\) is the smallest solution of \(I\),
\[
        \bX_w^{(*)}\subset H_w
        \qquad(w\in \cG).
\]

Combining the two inclusions gives
\[
        G_\Lambda^{(\infty)}(S)_w=H_w=\bX_w^{(*)}
        \qquad(w\in \cG).
\]
\end{proof}

\subsection{Construction of $G^{(\infty)}_\Lambda(S)$ from $G^{(\infty)}_\Lambda(S)_{\cG}$}
\label{subsec:interpolation}

\begin{proposition}[$G^{(\infty)}_\Lambda(S)$ can be reconstructed from $G^{(\infty)}_\Lambda(S)_{\cG}$ for any sufficiently fine grid $\cG$]
\label{prop:struc}
 Let $\cG$ be a sufficiently fine grid for a finite subset $S \subset V$. 
Then for each subset 
$\cI \subset [1,k]$ 
the set
\[
\{v \in G^{(\infty)}_\Lambda(S) \;:\; \cI_\cG(v) = \cI \}
\]
is a semi-algebraic subset of $V$.
In particular, 
\[
G^{(\infty)}_\Lambda(S) = \bigcup_{\cI \subset [1,k]} \{v \in G^{(\infty)}_\Lambda(S) \;:\; \cI_\cG(v) = \cI \}
\]
is a semi-algebraic subset of $V$.

Moreover, there exists an algorithm which, given a semi-algebraic description of $G^{(\infty)}_\Lambda(S)_{\cG}$, produces as output a semi-algebraic description of $G^{(\infty)}_\Lambda(S)$. 
\end{proposition}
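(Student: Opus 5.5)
We will prove Proposition~\ref{prop:struc} by induction on $\ell=\card(\cI)$, using the $(\Pi,\cG)$-gridded witness trees supplied by the sufficiently-fine hypothesis (Definition~\ref{def:fine-grid}). For an ordered tuple $\Pi=(i_1,\ldots,i_\ell)$ of distinct indices, let $H_\Pi$ denote the set of $v\in V$ that admit a $(\Pi,\cG)$-gridded witness tree. Every such tree has leaves in $S$ and satisfies the projection and convexity conditions, so by Lemma~\ref{lem:every-point-has-a-witness-tree} we have $H_\Pi\subset G^{(\infty)}_\Lambda(S)$. By the sufficiently-fine hypothesis,
\[
\{v\in G^{(\infty)}_\Lambda(S):\cI_\cG(v)=\cI\}=\bigcup_{\Pi}\bigl(H_\Pi\cap\{v:\cI_\cG(v)=\cI\}\bigr),
\]
where $\Pi$ ranges over the orderings of $\cI$. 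The condition $\cI_\cG(v)=\cI$ is a finite Boolean combination of the linear conditions $\pi_i(v)\in\cG^{(i)}$. Hence it suffices to give a semi-algebraic description of $H_\Pi$ that is constructed effectively.

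The base case $\ell=0$ is immediate. Here $H_{()}$ consists of the points all of whose $W$-coordinates lie on the grid, that is, $G^{(\infty)}_\Lambda(S)_\cG$. This set is given as input; by Proposition~\ref{prop:reduction-lam-system} and Corollary~\ref{cor:hemihedra} it is a finite union of hemihedra.

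For the inductive step, let $\Pi=(i_1,\ldots,i_\ell)$ and $\Pi'=(i_2,\ldots,i_\ell)$. The root of a $(\Pi,\cG)$-gridded tree splits in direction $i_1$. Its children $v_\fm$ then satisfy two conditions: $\pi_{i_1}(v_\fm)\in\cG^{(i_1)}$, and $\pi_j(v_\fm)=\pi_j(v)$ for every $j\ne i_1$. The subtree rooted at each child is a $(\Pi',\cG)$-gridded tree, once depths are shifted by one. Conversely, gluing such subtrees under a new root gives a $(\Pi,\cG)$-gridded tree. This yields the recursive description
\[
H_\Pi=\Bigl\{\textstyle\sum_{r}\lambda_r u_r:\ \lambda_r>0,\ \sum_r\lambda_r=1,\ u_r\in H_{\Pi'},\ \pi_{i_1}(u_r)\in\cG^{(i_1)},\ \pi_j(u_r)=\pi_j(u_1)\ \forall j\ne i_1\Bigr\}.
\]
By Carath\'eodory's theorem, as in the proof of Lemma~\ref{lem:every-point-has-a-witness-tree}, the number of summands can be bounded by $\dim U+\dim W_{i_1}+1$. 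The right-hand side is then a first-order formula in the language of the reals, with existential quantifiers over finitely many points $u_r$ and weights $\lambda_r$. By induction $H_{\Pi'}$ is semi-algebraic, so $H_\Pi$ is the projection of a semi-algebraic set. Effective quantifier elimination \cite[Chapter 14]{BPRbook2} then produces a quantifier-free description. The nonlinear terms $\lambda_r u_r$ are exactly what makes the result semi-algebraic rather than semi-linear, consistent with Remark~\ref{rem:obs2}.

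The main obstacle is the bookkeeping in the correspondence between gridded trees and this recursion. The subtree at a child must satisfy the depth-shifted gridding condition, which requires its own coordinates in $\{i_2,\ldots,i_\ell\}$ to be off-grid only where the definition permits. We handle this by letting $H_{\Pi'}$ include all points admitting a $(\Pi',\cG)$-gridded tree, with no restriction on which coordinates are off-grid. The subtree of a child at depth $1$ then has its nodes at relative depth $d$ satisfying the conditions of original depth $d+1$, which are exactly the $(\Pi',\cG)$ conditions. The extra constraint $\pi_{i_1}(u_r)\in\cG^{(i_1)}$ is the condition at depth $1$ for the coordinate $i_1$, which no longer appears in $\Pi'$.

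Finally, we take the union over all $\cI\subset[1,k]$ and all orderings $\Pi$. This finite union is semi-algebraic and is computed effectively from the description of $G^{(\infty)}_\Lambda(S)_\cG$, which proves both the semi-algebraicity and the algorithmic claim.
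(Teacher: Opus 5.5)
Your proof is correct and follows essentially the paper's argument. Both proceed by induction on $\card(\cI)$, peeling off the root split of a $(\Pi,\cG)$-gridded witness tree and bounding the number of children by Carath\'eodory in a translate of $U+W_{i_1}$; the base case is $G^{(\infty)}_\Lambda(S)_\cG$, and effective quantifier elimination is applied at each step. The only difference is cosmetic: you drop the condition $\cI_\cG(v)=\cI$ from $H_\Pi$ and intersect with it at the end, whereas the paper keeps it inside $Z_{\cI,\Pi}$. One consequence is that your extra constraint $\pi_{i_1}(u_r)\in\cG^{(i_1)}$ is redundant, because the depth-$0$ gridding condition for a $(\Pi',\cG)$-tree already puts every coordinate outside $\Pi'$ on the grid.
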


\begin{proof}
    We denote 
    \[
    Z_{\cI} = \{v \in G^{(\infty)}_\Lambda(S) \;:\; \cI_\cG(v) = \cI \}
    \]

Let $\cI = \{i_1,\ldots,i_\ell\}$. 
For each permutation $\Pi$ of $\cI$, denote by 
\[
Z_{\cI,\Pi} = 
\{v \in V \; : \; \cI_\cG(v) = \cI \text{ and there exists $(\Pi,\cG)$-gridded witness tree for $v$}\}.
\]
Then, 
\[
Z_{\cI} = \bigcup_{\Pi} Z_{\cI,\Pi}.
\]

We now prove by induction on $\card(\cI)$ that each $Z_{\cI,\Pi}$ is a semi-algebraic set, which will
imply that $Z_{\cI}$ is semi-algebraic as well.

If $\card(\cI)= 0$, then
    the set 
    \[
Z_{\emptyset} = \bigcup_{w \in \cG} G^{(\infty)}_\Lambda(S)_w.
\] 
 It follows from Proposition~\ref{prop:reduction-lam-system} and Theorem~\ref{thm:hemihedra} that for each $w \in \cG$,
 $G^{(\infty)}_\Lambda(S)_w$ is a hemihedron and hence a semi-algebraic set.
 Since $\cG$ is a finite set, it follows that
 $\bigcup_{w \in \cG} G^{(\infty)}_\Lambda(S)_w$ is also a semi-algebraic set.

Assume now that $Z_{\cI',\Pi'}$ is semi-algebraic for all $\cI'$ with 
$\card(\cI') < \card(\cI)$ and all orderings $\Pi'$ of $\cI'$.

Without loss of generality assume that $\cI = \{1,\ldots,\ell\}$ and 
$\Pi = (1,\ldots,\ell)$. Note that each $v \in Z_{\cI,\Pi}$
admits a $(\Pi,\cG)$-gridded witness tree and 
since the root split of a $(\Pi,\cG)$-gridded witness tree is in direction 1, in this tree every child $\mathfrak{n}'$ of the root satisfies 
$\pi_q(v(\mathfrak{n}')) = \pi_q(v), 2 \leq q \leq k$. This implies in particular, that $\cI_\cG(v(\mathfrak{n}')) = \{2,\ldots,\ell\}$, and
that the subtree rooted at $\mathfrak{n}'$ is a $(\Pi',\cG)$-gridded witness tree for $v(\mathfrak{n}')$ where $\Pi' = (2,\ldots,\ell)$. 
Moreover, $v$ is a convex combination of the $v(\mathfrak{n}')$'s and hence
by Caratheodory's theorem there exists at most $\dim(U+W_1) +1 = \dim U + \dim W_1 +1$ many children such that $v$ is a convex combination of 
$v(\cdot)$ of these children.

We deduce that: 
$v \in Z_{\cI,\Pi}$, if and only if,  $\cI_\cG(v) = \cI$ and there exists $\lambda_1,\ldots,\lambda_N \geq 0$,
and $v_1,\ldots, v_N \in Z_{\cI',\Pi'}$,
where $N = \dim U + \dim W_1 +1$,
$\cI' = \{2,\ldots,\ell\}$ and $ \Pi'=(2,\ldots,\ell)$,
such that
\[
v = \sum_{i=1}^{N} \lambda_i  \cdot v_i,
\]
\[
\sum_{i=1}^N \lambda_i  = 1,
\]
and 
\[
\pi_q(v) = \pi_q(v_j), 2 \leq q \leq k, 1 \leq j \leq N.
\]

Note that we can discard all indices $j$ for which $\lambda_j = 0$.
and use the remaining $v_j$'s 
as the children of the root.
At least two positive coefficients necessarily remain. If only one positive coefficient remained, then $v=v_j$, but $\cI_\cG(v) =I$, $\cI_\cG(v_j) = \cI - \{1\}$ which is impossible.

It follows from the induction hypothesis that 
$Z_{\cI',\Pi'}$ is semi-algebraic. 
Observe that the predicate $\cI_\cG(v) = \cI$ can be expressed as a first-order formula involving linear equalities (using the fact that $\cG$ is a finite set). 
It now follows from the Tarski-Seidenberg theorem \cite{BPRbook2} that $Z_{\cI,\Pi}$ is semi-algebraic as well.

Moreover, it follows from effective quantifier elimination in the theory of reals (see for example \cite[Chapter 14]{BPRbook2}),  
that a semi-algebraic description of the various $Z_{\cI,\Pi}$ can be computed 
given semi-algebraic descriptions of the finite union
\[
Z_\emptyset=G_\Lambda^{(\infty)}(S)_\cG
=
\bigcup_{w\in \cG}G_\Lambda^{(\infty)}(S)_w,
\]
or equivalently of the individual hemihedral fibers.
This proves the proposition.
\end{proof}

\begin{proposition}
\label{prop:laminar}
 
If \(S\subset V\) is finite and admits a sufficiently fine grid \(\cG\),
then
\(G_\Lambda^{(\infty)}(S)\) is semi-algebraic.
Moreover, there exists an algorithm that, given $S$, as well
as a sufficiently fine grid $\cG$ containing $\pi(S)$ as input, 
produces a semi-algebraic description of $G_\Lambda^{(\infty)}(S)$ as its output.
\end{proposition}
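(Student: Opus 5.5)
The plan is to chain Proposition~\ref{prop:reduction-lam-system}, Corollary~\ref{cor:hemihedra} and Proposition~\ref{prop:struc}.

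\textbf{Step 1: build the system.} Given $S$ and a sufficiently fine grid $\cG$ with $\pi(S)\subset\cG$, write down the finite system $I_{\cG,S}$ of Definition~\ref{def:cI}. Its variables are the $X_w$, $w\in\cG$, and its parameters are the $P_v$, $v\in S$. Each inequality of type \eqref{eqn:I.2} is determined by a finite tuple of grid points in $W_i$ together with the unique positive barycentric coefficients $\alpha^{(h)}$. These coefficients are found by solving a small linear system over the grid coordinates, so the whole finite list of inequalities can be produced effectively, by enumerating tuples and testing affine independence and positivity of the solution.

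\textbf{Step 2: the grid fibers are hemihedra.} Assign $\bP(P_v)=\{v\}$. Each singleton is a hemihedron (the relative interior of a $0$-dimensional polytope), so the hypotheses of Corollary~\ref{cor:hemihedra} hold. That corollary, via the Gaussian elimination of Theorem~\ref{thm:gaussian-elim}, computes quantifier-free semi-linear descriptions of the coordinates of $\ssol(I_{\cG,S};\bP)$ and shows each is a hemihedron. By Proposition~\ref{prop:reduction-lam-system}, these coordinates are exactly the fibers $G^{(\infty)}_\Lambda(S)_w$, $w\in\cG$.

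\textbf{Step 3: reassemble the hull.} The union $G^{(\infty)}_\Lambda(S)_\cG=\bigcup_{w\in\cG}G^{(\infty)}_\Lambda(S)_w$ is a finite union of semi-linear sets, so it has an effective semi-algebraic description. Feeding this into Proposition~\ref{prop:struc} gives semi-algebraicity of $G^{(\infty)}_\Lambda(S)$ together with an algorithm producing its quantifier-free description. That algorithm runs the induction over pairs $(\cI,\Pi)$ using quantifier elimination.

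\textbf{Main obstacle.} Most of the difficulty is already absorbed by earlier results, so the remaining work is bookkeeping of effectivity. The one point to check carefully is that the elimination in Theorem~\ref{thm:gaussian-elim} produces parameter-only polynomials $\varphi^*_w$ depending only on the system $I_{\cG,S}$. Evaluating them on the singleton assignment must then require only the effective good closure operations on $\mathcal B_4$ (Proposition~\ref{prop:closure-under-ops}). It must also be checked that the predicate $\cI_\cG(v)=\cI$ used in Proposition~\ref{prop:struc} is expressible by finitely many linear atoms, which holds because $\cG$ is finite and given explicitly.
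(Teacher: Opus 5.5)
Your proposal is correct and is essentially the paper's proof: Proposition~\ref{prop:reduction-lam-system} identifies the grid fibers with the smallest solution of $I_{\cG,S}$. The hemihedral case of Theorem~\ref{thm:hemihedra} (your Corollary~\ref{cor:hemihedra}) then gives effective semi-linear descriptions of these fibers, and Proposition~\ref{prop:struc} reconstructs $G_\Lambda^{(\infty)}(S)$ from them, with your extra effectivity remarks (building $I_{\cG,S}$, evaluating $\varphi^*_w$ inside $\mathcal{B}_4$) being bookkeeping the paper leaves implicit.
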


\begin{proof}
By Proposition~\ref{prop:reduction-lam-system}, the grid fibers \(G_\Lambda^{(\infty)}(S)_w\), \(w\in \cG\),
are the coordinate sets of the smallest solution of the finite system \(I_{\cG, S}\).
By Theorem~\ref{thm:hemihedra}, these fibers are hemihedra and effective semi-linear
descriptions of them can be computed. Proposition~\ref{prop:struc} then reconstructs
\(G_\Lambda^{(\infty)}(S)\) semi-algebraically from these fiber descriptions.
\end{proof}

\subsection{Existence of sufficiently fine grids}
\label{subsec:existence-of-grid}
We are unable to prove the existence of sufficiently fine grids in general.
However, in the special case when $\dim W_i = 1$ for each $i, 1\leq i \leq k$
we have the following proposition.

\begin{proposition}
    \label{prop:existence-of-grid}
    Suppose $\dim W_i = 1$ for each $i, 1\leq i\leq k$, and let $S$ be a finite subset
    of $V$. Then $\cG = \pi_1(S) \times \cdots \times \pi_k(S)$ is a sufficiently
    fine grid for $S$.
\end{proposition}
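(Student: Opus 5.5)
The plan is to show that the set
\[
Y:=\{v\in V:\ v \text{ admits a } \cG\text{-gridded witness tree}\},\qquad \cG=\pi_1(S)\times\cdots\times\pi_k(S),
\]
contains $G^{(\infty)}_\Lambda(S)$. The condition $\pi(S)\subset\cG$ holds by construction. For each $i$ we identify $W_i\cong\mathbb R$, so that $\cG^{(i)}=\pi_i(S)$ is a finite set of reals $g_1<\cdots<g_r$. By Lemma~\ref{lem:every-point-has-a-witness-tree}, every $v\in G^{(\infty)}_\Lambda(S)$ has a witness tree of finite height. I would argue by induction on that height that $v\in Y$, and in fact prove the stronger statement that $v$ admits a $(\Pi,\cG)$-gridded witness tree for \emph{every} ordering $\Pi$ of $\cI_\cG(v)$. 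Two preliminary observations are needed.

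\emph{Range.} For every internal node of a witness tree, and every index $i$, the value $\pi_i(v(\fn))$ lies in $[g_1,g_r]$, because $\pi_i$ of a convex combination of leaves from $S$ lies in $\conv{\pi_i(S)}$.

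\emph{Consecutive-neighbour normalisation.} This is where $\dim W_i=1$ is used. Suppose $v\in G^{(\infty)}_\Lambda(S)$ has $\pi_i(v)=c\notin\cG^{(i)}$, and let $g^-<c<g^+$ be the consecutive grid neighbours of $c$. I will show that $v=\mu u^++(1-\mu)u^-$, where $\mu$ depends only on $c,g^\pm$, the points $u^\pm$ lie in $G^{(\infty)}_\Lambda(S)$ with $\pi_i(u^\pm)=g^\pm$, and $\pi_j(u^\pm)=\pi_j(v)$ for $j\neq i$. To get this, take a root split of $v$ in direction $i$ with children $v_m$ (all sharing the $W_j$-coordinates of $v$ for $j\neq i$). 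Group the children into those with $i$-coordinate $\leq g^-$ and those with $i$-coordinate $\geq g^+$; children strictly between $g^-$ and $g^+$ are handled recursively. Replace each segment joining a child below with a child above by its crossing points at levels $g^-$ and $g^+$. These crossing points lie in $G^{(\infty)}_\Lambda(S)$, since segments with difference in $U+W_i$ stay inside the hull. Finally average, using convexity of the fibre over the common value of the other coordinates: this fibre structure lies in $U+W_i$ and is convex by the argument in the second inclusion of Proposition~\ref{prop:reduction-lam-system}. After this step $u^\pm$ have strictly fewer non-grid coordinates than $v$.

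\emph{Inductive step.} Fix the ordering $\Pi=(i_1,\dots,i_\ell)$ of $\cI_\cG(v)$. I apply the normalisation in direction $i_1$, giving $v=\mu u^++(1-\mu)u^-$. Then $\cI_\cG(u^\pm)=\cI_\cG(v)\setminus\{i_1\}$, and it remains to give $u^\pm$ trees gridded with respect to $(i_2,\dots,i_\ell)$. The difficulty is that the witness tree of $v$ may split in direction $j\neq i_1$ above the $i_1$-splits. Here I use a commutation (swap) argument. If the root splits in direction $j$ with children $v_m$, all of which share the $i_1$-coordinate $c$, then by induction each $v_m=\mu u_m^++(1-\mu)u_m^-$ with the \emph{same} $\mu$, because $\mu$ is determined by $c$ and $g^\pm$. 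Hence
\[
v=\mu\Bigl(\sum_m\alpha_m u_m^+\Bigr)+(1-\mu)\Bigl(\sum_m\alpha_m u_m^-\Bigr),
\]
and each bracket is again a $j$-split of points at $i_1$-level $g^\pm$. The bracketed points therefore lie in $G^{(\infty)}_\Lambda(S)$ with smaller non-grid set, and the outer induction applies to them. This reorders the splits so that direction $i_1$ sits at the root, with grid-valued children, as required.

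The main obstacle will be making the normalisation and the swap rigorous together, with a well-founded induction measure. I would use the pair (number of non-grid coordinates, height), ordered lexicographically. The delicate point is the recursive treatment of children lying strictly between $g^-$ and $g^+$ inside the normalisation, and checking that the grid condition at all depths $d\ge\ell$ is inherited. The one-dimensionality of $W_i$ is exactly what guarantees that $\mu$ is independent of the child, which is what makes the swap possible.
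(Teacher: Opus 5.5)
Your normalisation step is sound: its $i$-split case is the weight-splitting computation of Lemma~\ref{lem:root-immediacy-repair}, and its $j$-split case is the averaging in Case~2 of Proposition~\ref{prop:gridded-witness}. The gap is in the induction that is supposed to assemble gridded trees from it. As set up, it is not well founded, and the hardest case is never treated.

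Definition~\ref{def:gridded-witness-tree} requires every node at depth $d\geq\ell$ to be grid-valued in \emph{all} $W$-coordinates. So a point $v$ with $\cI_\cG(v)=\emptyset$ needs a witness tree every node of which lies over $\cG$. Its given tree may well pass through off-grid points, for example a root split in direction $j$ whose children have $\pi_j$ strictly between two grid values. Your outer recursion bottoms out at exactly such points and says nothing about them. With your measure (number of non-grid coordinates, height), count first, you cannot recurse into those off-grid children: they have measure $(1,h-1)$, which is lexicographically larger than $(0,h)$. Nor can you fall back on height. The points $u^\pm$ delivered by normalisation come with no bound on the height of their witness trees, since you only know $u^\pm\in G^{(\infty)}_\Lambda(S)$ via convexity of fibres. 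So when you normalise such a child, or reach grid-valued $u^\pm$ at the end of the outer recursion, neither component of the measure has decreased.

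The missing ingredient is height control: if $v$ has a witness tree of height at most $h$, then $u^\pm$ admit witness trees of height at most $h$. Your construction gives this, provided you build $u^\pm$ directly as convex combinations rather than by appeal to fibre convexity.
\begin{itemize}
\item In the $i$-split case, $u^\pm=\sum_r\nu^\pm_r z_r$ is a single direction-$i$ split over the children and over the inductively normalised replacements (height $\le h-1$) of the in-between children. The pairing comes from Lemma~\ref{lem:new}.
\item In the $j$-split case, $u^\pm=\sum_m\alpha_m u^\pm_m$ is a single direction-$j$ split over normalised children of height $\le h-1$.
\end{itemize}
Then induct on the pair (height, number of non-grid coordinates), with height first.
\begin{itemize}
\item For $\ell\ge1$, normalise in direction $i_1$; this keeps the height and lowers $\ell$.
\item For $\ell=0$ with a root split in direction $j$, replace each off-grid child by its grid-valued $u^\pm_m$, which has height $\le h-1$, and recurse.
\end{itemize}

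Once patched, your route genuinely differs from the paper's. The paper inducts on the minimal height $t_v$ with the stronger hypothesis of \emph{immediate} gridded trees for every ordering, and rebuilds whole subtrees by synchronized averaging (Lemmas~\ref{lem:root-immediacy-repair} and~\ref{lem:permute}). Your argument needs neither immediacy nor the swap lemma.
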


For the rest of this section, we fix 
\[
\cG = \pi_1(S) \times \cdots \times \pi_k(S).
\]

In order to prove Proposition~\ref{prop:existence-of-grid} we need to prove that every $v \in G^{(\infty)}_\Lambda(S)$ admits a $\cG$-gridded witness.

\begin{notation}
\label{not:immediate}
Let \(\Delta^{(i)}\) denote the set of all non-empty affinely independent
subsets of \(G^{(i)}\).
    For $w_i \in \conv{\pi_i(S)}$, we denote by $\imm_i(w_i) \in \Delta^{(i)}$ the unique element of $\Delta^{(i)}$ having the property that $w_i \in \convo{\imm_i(w_i)}$ and 
    $\convo{\imm_i(w_i)} \subset \convo{\sigma^{(i)}}$ for any
    $\sigma^{(i)} \in \Delta^{(i)}$ with $w_i \in \convo{\sigma^{(i)}}$.
    \end{notation}
\begin{remark}
\label{rem:immediate}
   Notice that $\imm_i(w_i) = \{w_i\}$ if $w_i \in \cG^{(i)}$ and 
    $\card(\imm_i(w_i)) = 2$ if $w_i \not\in \cG^{(i)}$. 
\end{remark}

\begin{definition}[$i$-immediate]
    Let $T$ be a witness tree and $\mathfrak{n} \in \mathrm{nodes}(T)$ with
    $i(\mathfrak{n}) = i$ and $\pi_i(v(\mathfrak{n})) \not\in \cG^{(i)}$. 
    We say that $\mathfrak{n}$ satisfies the $i$-immediacy property (or $\mathfrak{n}$ is $i$-immediate) if
    for $\mathfrak{n}' \in \mathrm{children}(\mathfrak{n})$, 
 \[
 \pi_{i}(v(\mathfrak{n}')) \in \imm_{i} (\pi_i(v(\mathfrak{n}))),
 \]
and the map
\[
\mathrm{children}(\mathfrak{n}) \rightarrow \imm_{i}(\pi_i(v(\mathfrak{n}))),\;
\mathfrak{n}' \mapsto \pi_{i}(v(\mathfrak{n}'))
\]
is bijective.
\end{definition}

\begin{definition}[$(\Pi,\cG)$-gridded immediate witness trees]
Let $v \in \cG^{(\infty)}_\Lambda(S)$ and $\Pi=(i_1,\ldots,i_\ell)$, where $\cI_\cG(v) = \{i_1,\ldots,i_\ell\}$.
 We say that a witness tree $T$ for $v$ is a $(\Pi,\cG)$-gridded,  
 immediate witness tree for $v$,
 if $T$ is a $(\Pi,\cG)$-gridded witness tree for $v$, and
 for each $j, 0 \leq j \leq \ell-1$,
 each $\mathfrak{n} \in \mathrm{nodes}(T)$ with $\mathrm{depth}_T(\mathfrak{n}) = j$, and 
  $\mathfrak{n}' \in \mathrm{children}(\mathfrak{n})$, 
 \[
 \pi_{i_{j+1}}(v(\mathfrak{n}')) \in \imm_{i_{j+1}} (\pi_{i_{j+1}}(v(\mathfrak{n}))),
 \]
and the map
\[
\mathrm{children}(\mathfrak{n}) \rightarrow \imm_{i_{j+1}}(\pi_{i_{j+1}}(v(\mathfrak{n}))),\;
\mathfrak{n}' \mapsto \pi_{i_{j+1}}(v(\mathfrak{n}'))
\]
is bijective
(in other words $\mathfrak{n}$ satisfies the $i_{j+1}$-immediacy property). 
\end{definition}

We need the following lemmas.
Lemma~\ref{lem:root-immediacy-repair} repairs immediacy at the root while preserving the already immediate lower levels. Lemma~\ref{lem:permute} permits adjacent transpositions of the prescribed coordinate order. Proposition~\ref{prop:gridded-witness} combines these two operations with induction on minimal witness-tree height.

\begin{lemma}[Root-immediacy repair]
\label{lem:root-immediacy-repair}
Assume that $\dim W_i=1$ for all $1\leq i\leq k$.
Let $\Pi=(i_1,\ldots,i_\ell)$, and let $T$ be a
$(\Pi,\cG)$-gridded witness tree for a point $v$.
Assume that every node $n\in \mathrm{nodes}(T)$ with
\[
  0<d:=\mathrm{depth}_T(n)\leq \ell-1
\]
is $i_{d+1}$-immediate.  Then there exists a
$(\Pi,\cG)$-gridded immediate witness tree $\widetilde T$ for $v$.
\end{lemma}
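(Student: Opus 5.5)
The plan is to keep the root split direction $i:=i_1$ and replace the children of the root by exactly two new children. We place them in the two $W_i$-fibres over the grid points of $\imm_i(\pi_i(v))$, and give each a tree with the same ``immediate'' shape below the root. Since $i\in\cI_\cG(v)$, we have $\pi_i(v)\notin\cG^{(i)}$. Because $\dim W_i=1$, Remark~\ref{rem:immediate} gives $\imm_i(\pi_i(v))=\{a,b\}$, where $a<b$ are consecutive points of $\cG^{(i)}$ (after identifying $W_i\cong\mathbb R$) with $a<\pi_i(v)<b$. By the gridded condition at depth $1$, every root child $\fm$ has $y_\fm:=\pi_i(v(\fm))\in\cG^{(i)}$, so $y_\fm\leq a$ or $y_\fm\geq b$. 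All root children agree with $v$ in every $W_q$-coordinate, $q\neq i$. Hence their subtrees are $(\Pi',\cG)$-gridded immediate trees with $\Pi'=(i_2,\ldots,i_\ell)$, and at each depth they split in the same direction.

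\textbf{Step 1 (merging lemma).} Let $x,x'$ be two points with $\pi_q(x)=\pi_q(x')$ for all $q\neq i$, each carrying a $(\Pi',\cG)$-gridded tree whose nodes at depths $<\ell-1$ are immediate. Let $s\in[0,1]$. I will show by induction on $\ell-1$ that $sx+(1-s)x'$ has a tree of the same kind, built as follows.

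Because the $i_2$-coordinates of $x$ and $x'$ coincide and $\dim W_{i_2}=1$, both root splits use the same set $\imm_{i_2}(\cdot)=\{c,d\}$. Both also use the same barycentric weights, which are forced by $\pi_{i_2}$. So the children can be paired as $x_c,x'_c$ and $x_d,x'_d$, and the pairs again satisfy the hypothesis. We therefore set
\[
sx+(1-s)x'=\mu\bigl(sx_c+(1-s)x'_c\bigr)+(1-\mu)\bigl(sx_d+(1-s)x'_d\bigr),
\]
and recurse. The recursion stops at depth $\ell-1$, the last level below the root's removed direction. There each original node has all $W$-coordinates in the grid, and the merged node has $W_i$-coordinate $s\pi_i(x)+(1-s)\pi_i(x')$. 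When this value is a grid point, we make the merged node an $i$-split node whose two children are the original nodes, with their original subtrees attached. This is legitimate: those children lie at depth $\ge\ell$, where only grid-valuedness is required and any split index is allowed. The projection condition holds because the two nodes differ only in $U+W_i$.

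\textbf{Step 2 (building $u_a,u_b$).} Apply Lemma~\ref{lem:new} in $W_i$ to $\pi_i(v)=\sum_\fm\alpha(\fm)\,y_\fm$; this first requires grouping children with equal $y_\fm$, which is done with the merging lemma. Since $\dim W_i=1$, the affinely independent supports containing $\pi_i(v)$ in their relative interior are exactly the pairs $\{y,y'\}$ with $y\leq a<b\leq y'$. Thus $v=\sum_\sigma\tau_\sigma z_\sigma$, where each $z_\sigma$ lies on a segment $[v_\fm,v_{\fm'}]$ and $\pi_i(z_\sigma)=\pi_i(v)$.

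On that segment, let $p^\sigma_a$ and $p^\sigma_b$ be the points with $\pi_i=a$ and $\pi_i=b$. Then $z_\sigma=\gamma p^\sigma_a+(1-\gamma)p^\sigma_b$, where $\gamma=(b-\pi_i(v))/(b-a)$ depends only on $\pi_i(v)$ and not on $\sigma$. Each $p^\sigma_a$ is a merge of $v_\fm$ and $v_{\fm'}$, so by Step 1 it carries a tree of the required kind; the final grid condition holds since $a\in\cG^{(i)}$. Setting $u_a:=\sum_\sigma\tau_\sigma p^\sigma_a$ and iterating the merge (the $\tau_\sigma$-average of finitely many such points) gives $u_a$ such a tree as well, and likewise for $u_b$. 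Then $v=\gamma u_a+(1-\gamma)u_b$, and the new root with these two children is $i$-immediate. The degenerate cases $y_\fm\in\{a,b\}$ correspond to $s\in\{0,1\}$ in Step 1.

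\textbf{Main obstacle.} The delicate point is Step 1. One must check that the paired children really share the same $\imm$ set and the same weights at every level. This uses $\dim W_{i_j}=1$ and the fact that immediacy makes the weights depend only on the projections. One must also check that the depth bookkeeping of Definition~\ref{def:gridded-witness-tree} is respected when the extra $i$-split is inserted at depth $\ell-1$ and not earlier. I expect the induction on depth to be routine once this pairing is established, but care is needed at the boundary level $d=\ell-1$.
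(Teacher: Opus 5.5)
Your proposal is correct and is essentially the paper's proof. Lemma~\ref{lem:new} splits the root weights into pairs bracketing $\pi_i(v)$, and each pair is pushed onto the two immediate grid endpoints: your $u_a=\sum_\sigma\tau_\sigma p^\sigma_a$ is exactly the paper's $\sum_{\fm}\nu^{(1)}_{\fm}v(\fm)$, and your $\gamma$ is its $\mu^{(1)}$. The new children are then realized by synchronized averaging of the already-immediate levels, closed off by a $W_i$-split at depth $\ell$ into the old grid-valued nodes. The only difference is packaging: you iterate a binary merging lemma, whereas the paper takes a single $\nu^{(j)}$-weighted average of all root subtrees at once. Two small points: your merge needs $s\pi_i(x)+(1-s)\pi_i(x')\in\cG^{(i)}$ as an explicit hypothesis at every level, not only the last (all your uses satisfy it), and the preliminary grouping of equal $y_{\fm}$ is unnecessary because Lemma~\ref{lem:new} already allows repeated points.
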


\begin{proof}
We give the proof after relabelling the coordinates so that
\[
  \Pi=(1,\ldots,\ell).
\]
The general case is identical, replacing the coordinate $q$ below by
$i_q$ everywhere.

If $\ell=0$, then $T$ is already $(\Pi,\cG)$-gridded immediate, since
there are no prescribed levels at which immediacy must be checked.
Thus we may assume $\ell\geq 1$.

Let
\[
  \fr=\mathrm{root}(T), \qquad N=\mathrm{children}_T(\fr).
\]
Write
\[
  x_\fm=v(\fm),\qquad a_\fm=\alpha(\fm),\qquad \fm\in N.
\]
Since $T$ is a witness tree and $i(\fr)=1$, we have
\[
  v=\sum_{\fm\in N} a_\fm x_\fm,
\]
and for every $q\neq 1$ and every $\fm\in N$,
\[
  \pi_q(x_\fm)=\pi_q(v).
\]
Since $T$ is $(\Pi,\cG)$-gridded, every root child lies at
depth $1$, and therefore its first coordinate is grid-valued:
\[
  \pi_1(x_\fm)\in \cG^{(1)}\qquad (\fm\in N).
\]

Set
\[
  w_1:=\pi_1(v).
\]
Because $1\in \cI_{\cG}(v)$, we have $w_1\notin \cG^{(1)}$. Hence, since
$\dim W_1=1$, the immediate cell
\[
  \imm_1(w_1)
\]
has two elements. Write
\[
  \imm_1(w_1)=\{w_1^{(1)},w_1^{(2)}\}.
\]
There are unique positive numbers $\mu^{(1)},\mu^{(2)}$ such that
\[
  w_1=\mu^{(1)} w_1^{(1)}+\mu^{(2)}w_1^{(2)},\qquad
  \mu^{(1)}+\mu^{(2)}=1.
\]

We now decompose the old root weights conditionally over the two
immediate endpoints. Let
\[
  y_\fm:=\pi_1(x_\fm)\in \cG^{(1)}.
\]
Since
\[
  w_1=\sum_{\fm\in N} a_\fm y_\fm,
\]
we may apply Lemma~\ref{lem:new} to this convex representation.  Let
$\mathcal A$ be the collection of non-empty subsets
$\sigma\subseteq N$ such that the labeled set
$\{y_m:m\in\sigma\}$ is affinely independent and
\[
  w_1\in 
  \convo{\{y_\fm \;:\; \fm\in\sigma\}}.
\]
For $\fm\notin\sigma$, set $b_\sigma(\,\cdot\,,y_\fm)=0$.  Lemma~\ref{lem:new}  gives
numbers $t_\sigma\geq 0$, $\sigma\in\mathcal A$, satisfying
\[
  \sum_{\sigma\in\mathcal A}t_\sigma=1
\]
and, for each $\fm\in N$,
\begin{equation}\label{eq:old-weight-decomp}
  a_\fm
  =
  \sum_{\sigma\in\mathcal A}
  t_\sigma\, b_\sigma(w_1,y_\fm).
\end{equation}

For every $\sigma\in\mathcal A$, the points $w_1^{(1)}$ and
$w_1^{(2)}$ belong to
\[
  \conv{\{y_\fm \;:\; \fm\in\sigma\}}.
\]
Indeed, $\sigma$ is a one-dimensional simplex whose grid vertices
bracket $w_1$, and $w_1^{(1)},w_1^{(2)}$ are the two immediate grid
points surrounding $w_1$.

For $j=1,2$ and $m\in N$, define
\begin{equation}\label{eq:nu-def}
  \nu_\fm^{(j)}
  :=
  \sum_{\sigma\in\mathcal A}
  t_\sigma\, b_\sigma(w_1^{(j)},y_\fm).
\end{equation}
Then $\nu_\fm^{(j)}\geq 0$ and
\begin{equation}\label{eq:nu-sums}
  \sum_{\fm\in N}\nu_m^{(j)}=1,\qquad
  \sum_{\fm\in N}\nu_\fm^{(j)}y_\fm=w_1^{(j)}.
\end{equation}
Moreover, by affine linearity of barycentric coordinates and the
identity
\[
  w_1=\mu^{(1)}w_1^{(1)}+\mu^{(2)}w_1^{(2)},
\]
we have, for each $\fm\in N$,
\begin{align}
  a_\fm
  &=
  \sum_{\sigma\in\mathcal A}
  t_\sigma\,b_\sigma(w_1,y_\fm) \notag\\
  &=
  \sum_{\sigma\in\mathcal A}
  t_\sigma\,
  b_\sigma\bigl(\mu^{(1)}w_1^{(1)}+\mu^{(2)}w_1^{(2)},y_\fm\bigr)
  \notag\\
  &=
  \mu^{(1)}
  \sum_{\sigma\in\mathcal A}
  t_\sigma\,b_\sigma(w_1^{(1)},y_\fm)
  +
  \mu^{(2)}
  \sum_{\sigma\in\mathcal A}
  t_\sigma\,b_\sigma(w_1^{(2)},y_\fm) \notag\\
  &=
  \mu^{(1)}\nu_\fm^{(1)}+\mu^{(2)}\nu_\fm^{(2)}.
  \label{eq:a-nu}
\end{align}

We now construct the new tree $\widetilde T$.

First create a new root $\widetilde{\fr}$ with
\[
  v(\widetilde{\fr})=v,\qquad i(\widetilde{\fr})=1.
\]
Its two children are denoted $\widetilde{\fp}^{(1)}_{\emptyset}$ and
$\widetilde{\fp}^{(2)}_{\emptyset}$.  We set
\[
  \alpha(\widetilde{\fp}^{(j)}_{\emptyset})=\mu^{(j)}
\]
and
\begin{equation}\label{eq:root-child-value}
  v(\widetilde{\fp}^{(j)}_{\emptyset})
  :=
  \sum_{\fm\in N}\nu_\fm^{(j)}v(\fm),
  \qquad j=1,2.
\end{equation}
Then the new root convex identity holds:
\begin{align*}
  \sum_{j=1}^2 \alpha(\widetilde{\fp}^{(j)}_{\emptyset})
  v(\widetilde{\fp}^{(j)}_{\emptyset})
  &=
  \sum_{j=1}^2 \mu^{(j)}
  \sum_{\fm\in N}\nu_\fm^{(j)}v(\fm)  \\
  &=
  \sum_{\fm\in N}
  \bigl(\mu^{(1)}\nu_\fm^{(1)}+\mu^{(2)}\nu_\fm^{(2)}\bigr)v(\fm) \\
  &=
  \sum_{\fm\in N}a_\fm \cdot v(\fm)=v.
\end{align*}
Also, by \eqref{eq:nu-sums},
\[
  \pi_1(v(\widetilde{\fp}^{(j)}_{\emptyset}))=w_1^{(j)}.
\]
For $q\neq 1$, since all root children $m$ agree with $v$ in the
$W_q$-coordinate,
\[
  \pi_q(v(\widetilde{\fp}^{(j)}_{\emptyset}))
  =
  \sum_{\fm\in N}\nu_\fm^{(j)}\pi_q(v(\fm))
  =
  \pi_q(v).
\]
Thus the new root is a valid witness-tree split in direction $1$.
Furthermore, the map
\[
  \{\widetilde{\fp}^{(1)}_{\emptyset},\widetilde{\fp}^{(2)}_{\emptyset}\}
  \longrightarrow
  \imm_1(w_1),
  \qquad
  \widetilde{\fp}^{(j)}_{\emptyset}\longmapsto
  \pi_1(v(\widetilde{\fp}^{(j)}_{\emptyset}))
\]
is bijective. Hence the new root is $1$-immediate.

It remains to construct the subtrees below
$\widetilde{\fp}^{(1)}_{\emptyset}$ and $\widetilde{\fp}^{(2)}_{\emptyset}$.  This is done
synchronously by averaging corresponding nodes in the old subtrees
rooted at the old children $m\in N$.

We define, recursively, a family of words $\mathcal W_d$ for
$1\leq d\leq \ell$.  Let
\[
  \mathcal W_1:=\{\emptyset\},
\]
and for $\fm\in N$ set
\[
  \fm_{\emptyset}:=\fm.
\]
Suppose $\mathcal W_d$ has been defined for some $1\leq d<\ell$, and
let $\eta\in\mathcal W_d$.  The node $m_\eta$ has depth $d$ in the
old tree $T$.  Since $d\geq 1$ and $d\leq \ell-1$, the hypothesis
implies that $m_\eta$ is $(d+1)$-immediate.

We first observe the following invariant:

\[
  \text{for fixed }d,\eta,\text{ the points }v(\fm_\eta),\ \fm\in N,
  \text{ agree in every }W_q\text{-coordinate with }q\neq 1.
\]

For $d=1$, this is exactly the projection condition for the original
root split.  If the invariant holds for $d$, then all $\fm_\eta$ have
the same $W_{d+1}$-coordinate.  Hence the immediate cell
\[
  E_{d+1}(\eta):=
  \imm_{d+1}\bigl(\pi_{d+1}(v(\fm_\eta))\bigr)
\]
is independent of $\fm$.  Since $\fm_\eta$ is $(d+1)$-immediate, for each
$e\in E_{d+1}(\eta)$ there is a unique child of $\fm_\eta$, denoted
$m_{\eta,e}$, such that
\[
  \pi_{d+1}(v(\fm_{\eta,e}))=e.
\]
Define
\[
  \mathcal W_{d+1}
  :=
  \{(\eta,e):\eta\in\mathcal W_d,\ e\in E_{d+1}(\eta)\}.
\]
The invariant is preserved at level $d+1$: if $q=d+1$, then the
$W_q$-coordinate of all $v(\fm_{\eta,e})$ is the common endpoint $e$;
if $q\neq 1,d+1$, then the old witness-tree projection condition for
the split $\fm_\eta\to \fm_{\eta,e}$ gives
\[
  \pi_q(v(\fm_{\eta,e}))=\pi_q(v(\fm_\eta)),
\]
which is independent of $\fm$ by the induction hypothesis.

Now, for each $j=1,2$, each $1\leq d\leq \ell$, and each
$\eta\in\mathcal W_d$, introduce a new node $\widetilde{\fp}^{(j)}_\eta$ and define
\begin{equation}\label{eq:z-def}
  v(\widetilde{\fp}^{(j)}_\eta)
  :=
  \sum_{\fm\in N}\nu_\fm^{(j)}\cdot v(\fm_\eta).
\end{equation}
This agrees with \eqref{eq:root-child-value} when $d=1$ and
$\eta=\emptyset$.

If $d<\ell$, then $\widetilde{\fp}^{(j)}_\eta$ is declared to be an internal node
with
\[
  i(\widetilde{\fp}^{(j)}_\eta)=d+1.
\]
Its children are the nodes
\[
  \widetilde{\fp}^{(j)}_{\eta,e},
  \qquad e\in E_{d+1}(\eta).
\]
Let $\beta_{\eta,e}$ be the barycentric coordinate of the common point
$\pi_{d+1}(v(\fm_\eta))$ with respect to the immediate cell
$E_{d+1}(\eta)$.  These numbers are independent of $m$, and they are
precisely the weights of the old immediate split
\[
  \fm_\eta\longrightarrow \{\fm_{\eta,e}:e\in E_{d+1}(\eta)\}.
\]
Therefore
\[
  v(\fm_\eta)
  =
  \sum_{e\in E_{d+1}(\eta)}
  \beta_{\eta,e}v(\fm_{\eta,e})
\]
for every $\fm\in N$.  We set
\[
  \alpha(\widetilde{\fp}^{(j)}_{\eta,e})=\beta_{\eta,e}.
\]
Then
\begin{align*}
  \sum_{e\in E_{d+1}(\eta)}
  \alpha(\widetilde{\fp}^{(j)}_{\eta,e})v(\widetilde{\fp}^{(j)}_{\eta,e})
  &=
  \sum_{e\in E_{d+1}(\eta)}
  \beta_{\eta,e}
  \sum_{\fm\in N}\nu_\fm^{(j)}v(\fm_{\eta,e})       \\
  &=
  \sum_{\fm\in N}\nu_\fm^{(j)}
  \sum_{e\in E_{d+1}(\eta)}
  \beta_{\eta,e}v(\fm_{\eta,e})                 \\
  &=
  \sum_{\fm\in N}\nu_\fm^{(j)}v(\fm_\eta)            \\
  &=
  v(\widetilde{\fp}^{(j)}_\eta).
\end{align*}
Thus the convex identity holds at $\widetilde{\fp}^{(j)}_\eta$.

The projection condition also holds. Let $q\neq d+1$.  Then, for
each $m$ and each $e\in E_{d+1}(\eta)$,
\[
  \pi_q(v(\fm_{\eta,e}))=\pi_q(v(\fm_\eta)).
\]
Averaging over $\fm$ gives
\[
  \pi_q(v(\widetilde{\fp}^{(j)}_{\eta,e}))
  =
  \pi_q(v(\widetilde{\fp}^{(j)}_\eta)).
\]
So the split at $\widetilde{\fp}^{(j)}_\eta$ is a valid witness-tree split in
direction $d+1$.

Moreover,
\[
  \pi_{d+1}(v(\widetilde{\fp}^{(j)}_{\eta,e}))=e.
\]
Hence the map
\[
  \mathrm{children}(\widetilde{\fp}^{(j)}_\eta)\longrightarrow E_{d+1}(\eta),
  \qquad
  \widetilde{\fp}^{(j)}_{\eta,e}\longmapsto
  \pi_{d+1}(v(\widetilde{\fp}^{(j)}_{\eta,e}))
\]
is bijective.  Thus $\widetilde{\fp}^{(j)}_\eta$ is $(d+1)$-immediate.

After carrying out this construction for $d=1,\ldots,\ell-1$, we have
constructed all prescribed levels.  The nodes $\widetilde{\fp}^{(j)}_\eta$ with
$\eta\in\mathcal W_\ell$ lie at depth $\ell$.  They have all
$W$-coordinates in the grid:
coordinate $1$ is $w_1^{(j)}\in G^{(1)}$, coordinates
$2,\ldots,\ell$ are immediate endpoints, and all coordinates outside
$\{1,\ldots,\ell\}$ were already grid-valued in the original
$(\Pi,\cG)$-gridded tree.

We now attach the old lower subtrees. Fix $j\in\{1,2\}$ and
$\eta\in\mathcal W_\ell$.  Let
\[
  N_j:=\{\fm\in N:\nu_\fm^{(j)}>0\}.
\]
Declare $\widetilde{\fp}^{(j)}_\eta$ to be an internal node with index $1$, and give
it children $\widetilde{\fq}^{(j)}_{\eta,\fm}$ indexed by $\fm\in N_j$.  Set
\[
  \alpha(\widetilde{\fq}^{(j)}_{\eta,\fm})=\nu_\fm^{(j)},\qquad
  v(\widetilde{\fq}^{(j)}_{\eta,\fm})=v(\fm_\eta).
\]
Then
\[
  v(\widetilde{\fp}^{(j)}_\eta)
  =
  \sum_{\fm\in N_j}\nu_\fm^{(j)}v(\fm_\eta).
\]
For fixed $\eta$, the old nodes $\fm_\eta$, $\fm\in N$, agree in every
coordinate except possibly $W_1$.  Therefore this is a valid
witness-tree split in direction $1$.

Below each $\widetilde{\fq}^{(j)}_{\eta,\fm}$ we copy the old subtree of $T$ rooted
at $m_\eta$, with the only change that the incoming weight of the root
copy is now $\nu_\fm^{(j)}$. All outgoing weights, node values, node
indices, and descendant subtrees are copied from $T$. Leaves are
therefore still labeled by points of $S$.

This completes the construction of $\widetilde T$.

We verify that $\widetilde T$ has the required properties.

First, $\widetilde T$ is a witness tree. The root convex identity and
projection condition were checked above.  The same checks were carried
out for every synchronized node $\widetilde{\fp}^{(j)}_\eta$ at depths
$1,\ldots,\ell-1$.  The bottom attachment is a valid split in
coordinate $1$, and all lower copied subtrees inherit the witness-tree
identities from $T$.  The tree is finite because $T$ is finite and the
construction introduces only finitely many new averaged nodes.

Second, $\widetilde T$ is $(\Pi,\cG)$-gridded.  At depth $0$, this
holds because $\cI_\cG(v)=\{1,\ldots,\ell\}$.  At depth $d$ with
$1\leq d\leq \ell$, the construction has already fixed coordinates
$1,\ldots,d$ to grid endpoints, and all coordinates outside
$\{1,\ldots,\ell\}$ were grid-valued from the original gridded tree.
Thus every coordinate outside the remaining set
\[
  \{d+1,\ldots,\ell\}
\]
is grid-valued. For $d\leq \ell-1$, the node index is $d+1$ by
construction. At depths $d\geq \ell$, all coordinates are
grid-valued, either by the construction above or because the lower
subtree was copied from nodes of the original gridded tree at depth at
least $\ell$.

Third, $\widetilde T$ is immediate at all prescribed levels. The root
is $1$-immediate by construction.  For each $1\leq d\leq \ell-1$, the
children of every node $\widetilde{\fp}^{(j)}_\eta$ at depth $d$ are indexed
bijectively by the immediate cell in coordinate $d+1$.  Hence every
node at depth $d$ is $(d+1)$-immediate.

Therefore $\widetilde T$ is a $(\Pi,\cG)$-gridded immediate witness
tree for $v$.
\end{proof}

\begin{lemma}[Swap Lemma]
    \label{lem:permute}
    Suppose that $\dim W_i = 1, 1 \leq i \leq k$ and $S \subset V$ a finite subset.
    Let $v \in G^{(\infty)}_\Lambda(S)$ such that $\cI_\cG(v) =\{i_1,\ldots,i_\ell\}$ and let $\Pi=(i_1,\ldots,i_\ell)$.
    Suppose there exists a $(\Pi,\cG)$-gridded immediate witness tree for $v$. Then there exists a $(\Pi',\cG)$-gridded immediate witness tree for $v$ for every ordered tuple $\Pi'= (i_{\pi(1)},\ldots,i_{\pi(\ell)})$, where $\pi \in \mathfrak{S}_\ell$ (where $\mathfrak{S}_\ell$ denotes the symmetric group on $[1,\ell]$).
\end{lemma}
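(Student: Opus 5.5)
Since adjacent transpositions generate $\mathfrak{S}_\ell$, it suffices to show: from a $(\Pi,\cG)$-gridded immediate witness tree $T$ for $v$, with $\Pi=(i_1,\ldots,i_\ell)$, we can build a $(\Pi',\cG)$-gridded immediate witness tree for $v$, where $\Pi'$ swaps $i_r$ and $i_{r+1}$ for a single $r$. The idea is to exploit the very rigid shape of immediate trees in the top $\ell$ levels. Because $\dim W_i=1$, a node at depth $d\leq \ell-1$ has exactly two children (Remark~\ref{rem:immediate}). Their weights are the barycentric coordinates of $\pi_{i_{d+1}}(v(\fn))$ in the cell $\imm_{i_{d+1}}(\pi_{i_{d+1}}(v(\fn)))$. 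The children differ from the parent only in the $W_{i_{d+1}}$-coordinate, which is replaced by an endpoint of that cell.

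The swap is local. Fix a node $\fn$ at depth $r-1$, and set $a=i_r$, $b=i_{r+1}$. The coordinates $\pi_a(v(\fn))$ and $\pi_b(v(\fn))$ are both off-grid, since neither has yet been gridded. Write $\imm_a(\pi_a(v(\fn)))=\{e_1,e_2\}$ with weights $\beta_1,\beta_2$. The children $\fn_{e}$ all have the same $W_b$-coordinate $\pi_b(v(\fn))$, so they share the cell $\imm_b=\{f_1,f_2\}$ with weights $\gamma_1,\gamma_2$, independent of $e$. Below them are four grandchildren $\fn_{e,f}$, each with
\[
v(\fn)=\sum_{e,f}\beta_e\gamma_f\, v(\fn_{e,f}),
\]
and each with $W_a$-coordinate $e$, $W_b$-coordinate $f$, and all other coordinates equal to those of $v(\fn)$. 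The new depth-$r$ nodes are $\fm_f:=\sum_e \beta_e v(\fn_{e,f})$ for $f=1,2$. Each $\fm_f$ agrees with $v(\fn)$ except in $W_b$, where it equals $f$, so the split $\fn\to\{\fm_f\}$ with weights $\gamma_f$ is a valid $b$-immediate split. Next, $\fm_f\to\{\fn_{e,f}\}_e$ with weights $\beta_e$ is a valid split in direction $a$: the grandchildren differ only in $W_a$, and $\pi_a(\fm_f)=\pi_a(v(\fn))$, so its cell is again $\{e_1,e_2\}$, which gives $a$-immediacy. The subtrees under the $\fn_{e,f}$ are kept unchanged. This works because $\beta$ does not depend on $f$ and $\gamma$ does not depend on $e$, which is exactly what immediacy plus the one-dimensionality of the $W_i$ guarantee.

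The remaining checks are then routine. At depth $r$, the new nodes must have every coordinate outside $\{i_{r+1}',\ldots,i_\ell'\}$ grid-valued. Under $\Pi'$ that remaining set is $\{a,i_{r+2},\ldots\}$, and $\fm_f$ has $W_b$ on the grid, which is what is required. All other depths are untouched: the grandchildren and everything below them keep their coordinates. The only real work, and the main obstacle, is confirming that $\beta$ and $\gamma$ factor as a product, and that the new intermediate nodes are legitimate witness-tree nodes. In particular their $W_q$-coordinates for $q\neq a,b$ must stay fixed, which follows from averaging points that agree in those coordinates. The construction is applied simultaneously at every node of depth $r-1$, and composing such adjacent swaps yields any permutation.
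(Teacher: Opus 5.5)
Your proposal is correct and follows essentially the same route as the paper: reduce to adjacent transpositions, then at each node $\fn$ of depth $r-1$ replace the two-level block ($a$-split, then $b$-split) by a $b$-split into the averaged nodes $\sum_e \beta_e v(\fn_{e,f})$, each followed by an $a$-split onto the unchanged grandchildren $\fn_{e,f}$. As in the paper, the key point is that one-dimensionality plus immediacy make the $b$-weights independent of $e$.
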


\begin{proof}
It is enough to prove the statement when $\Pi'$ is obtained from $\Pi$
by one adjacent transposition. Indeed, adjacent transpositions generate
the symmetric group $\mathfrak{S}_\ell$.

After relabelling the coordinates, assume
\[
        \Pi=(1,\ldots,\ell).
\]
Fix $p$, $1\le p<\ell$, and set
\[
        \Pi'=(1,\ldots,p-1,p+1,p,p+2,\ldots,\ell).
\]
Let $T$ be a $(\Pi,\mathcal G)$-gridded immediate witness tree for $v$.
We construct a $(\Pi',\mathcal G)$-gridded immediate witness tree $T'$.

The tree $T'$ is identical to $T$ above depth $p-1$. We now describe
the replacement of the two-level block below each node of depth $p-1$.
Let $\mathfrak{n}\in \operatorname{nodes}(T)$ be a node with
\[
        \operatorname{depth}_T(\fn)=p-1.
\]
Since $T$ is $(\Pi,\mathcal G)$-gridded, 
\[
i(\mathfrak{n}) = p.
\]

Since all earlier splits are in the coordinates
$1,\ldots,p-1$, they do not change the $W_p$- or $W_{p+1}$-coordinates.
Thus
\[
        \pi_p(v(\mathfrak{n}))=\pi_p(v) \not\in \cG^{(p)},\qquad
        \pi_{p+1}(v(\mathfrak{n}))=\pi_{p+1}(v) \not\in \cG^{(p+1)}
\]
(since $p,p+1\in I_{\mathcal G}(v)$).  

Hence, 
using Remark~\ref{rem:immediate}
\[
        \operatorname{imm}_p(\pi_p(v(\fn)))=\{a_1,a_2\},
        \qquad
        \operatorname{imm}_{p+1}(\pi_{p+1}(v(\fn)))=\{b_1,b_2\}.
\]

Because $\mathfrak{n}$ is $p$-immediate, its two children may be denoted
\[
        \mathfrak{n}^{(1)},\mathfrak{n}^{(2)}
\]
so that
\[
        \pi_p(v(\mathfrak{n}^{(j)}))=a_j,\qquad j=1,2.
\]
Let
\[
        A_j:=\alpha_T(\mathfrak{n}^{(j)}),\qquad j=1,2.
\]
Thus
\[
        A_1+A_2=1,
        \qquad
        v(\mathfrak{n})=A_1 \cdot v(\mathfrak{n}^{(1)})+A_2\cdot v(\mathfrak{n}^{(2)}).
\]

For each $j=1,2$, the $i(\mathfrak{n}^{(j)}) = p+1$, and 
$\mathfrak{n}^{(j)}$
is $(p+1)$-immediate.  Let its two children be
\[
        \mathfrak{n}^{(j,1)},\mathfrak{n}^{(j,2)},
\]
where
\[
        \pi_{p+1}(v(\mathfrak{n}^{(j,h)}))=b_h,\qquad h=1,2.
\]
Write
\[
        B_{j,h}:=\alpha_T(\mathfrak{n}^{(j,h)}).
\]
The numbers $B_{j,h}$ are independent of $j$. Indeed, the split
$\mathfrak{n}\to \mathfrak{n}^{(j)}$ is in coordinate $p$, 
so all $\mathfrak{n}^{(j)}$'s have the same
$W_{p+1}$-coordinate, namely $\pi_{p+1}(v(\mathfrak{n}))$.  Since each
$\mathfrak{n}^{(j)} \to\{\mathfrak{n}^{(j,1)},\mathfrak{n}^{(j,2)}\}$ is an immediate split in the one-dimensional coordinate $p+1$, the weights are the unique barycentric coordinates of
$\pi_{p+1}(v(\mathfrak{n}))$ with respect to $\{b_1,b_2\}$.  Hence there are
numbers $B_1,B_2>0$, $B_1+B_2=1$, such that
\[
        \alpha_T(\mathfrak{n}^{(j,h)})=B_h
        \qquad
        \text{for all }j,h\in\{1,2\}.
\]

We now replace the two-level block
\[
        \mathfrak{n} \longrightarrow \mathfrak{n}^{(j)} \longrightarrow \mathfrak{n}^{(j,h)}
\]

by the swapped two-level block
\[
        \widetilde{\mathfrak{n}} \longrightarrow \widetilde{\mathfrak{n}}^{(h)} \longrightarrow \widetilde{\mathfrak{n}}^{(j,h)}.
\]

Define a new node $\widetilde{\mathfrak{n}}$ by
\[
        v(\widetilde{\mathfrak{n}})=v(\mathfrak{n}),\qquad i(\widetilde{\mathfrak{n}})=p+1.
\]
If $\mathfrak{n}$ is not the root, set
\[
        \alpha_{T'}(\widetilde{\mathfrak{n}})=\alpha_T(\mathfrak{n}).
\]
If $\mathfrak{n}$ is the root, then $\widetilde{\mathfrak{n}}$ is the root of $T'$ and has no
incoming weight.

The children of $\widetilde{\mathfrak{n}}$ are two new nodes 
$\widetilde{\mathfrak{n}}^{(1)},\widetilde{\mathfrak{n}}^{(2)}$.  Define
\[
        \alpha_{T'}(\widetilde{\mathfrak{n}}^{(h)})=B_h,\qquad h=1,2,
\]
and
\[
        v(\widetilde{\mathfrak{n}}^{(h)})=\sum_{j=1}^2 A_j\,v(\widetilde{\mathfrak{n}}^{(j,h)}).
\]
We declare
\[
        i(\widetilde{\mathfrak{n}}^{(h)})=p.
\]

For each $h=1,2$, the children of $\widetilde{\mathfrak{n}}^{(h)}$ are two nodes
\[
        \widetilde{\mathfrak{n}}^{(1,h)}, \widetilde{\mathfrak{n}}^{(2,h)}.
\]
They are copies of the old nodes $\mathfrak{n}^{(1,h)},\mathfrak{n}^{(2,h)}$, with the incoming weights changed to
\[
        \alpha_{T'}(\widetilde{\mathfrak{n}}^{(j,h)})=A_j.
\]
Set
\[
        v(\widetilde{\mathfrak{n}}^{(j,h)})=v(\mathfrak{n}^{(j,h)}).
\]
Below $\widetilde{\mathfrak{n}}^{(j,h)}$, copy the whole subtree of $T$ rooted at $\mathfrak{n}^{(j,h)}$.
Equivalently, $\widetilde{\mathfrak{n}}^{(j,h)}$ replaces 
$\mathfrak{n}^{(j,h)}$ as the root of that copied
subtree; all descendants, node values, outgoing weights, and node
indices are copied from $T$.  In particular, if $p+1<\ell$, then
$i(\widetilde{\mathfrak{n}}^{(j,h)}) = p+2$, as in the original tree.  If $p+1=\ell$,
then no prescribed next coordinate remains at this depth, so no
additional index assignment is imposed beyond copying the old subtree
data when $\widetilde{\mathfrak{n}}^{(j,h)}$ is non-leaf.

We now verify that the replacement block is a valid witness-tree block.

First, the convex identity at $\widetilde{\mathfrak{n}}$ holds:
\[
\begin{aligned}
        \sum_{h=1}^2 \alpha_{T'} (\widetilde{\mathfrak{n}}^{(h)}) \cdot v(\widetilde{\mathfrak{n}}^{(h)})
        &=
        \sum_{h=1}^2 B_h \cdot 
        \left( \sum_{j=1}^2 A_j \cdot v(\widetilde{\mathfrak{n}}^{(j,h)}) \right)      \\
        &=
        \sum_{j=1}^2 A_j \cdot 
        \left( \sum_{h=1}^2 B_h  \cdot v(\mathfrak{n}^{(j,h)}) \right)      \\
        &=
        \sum_{j=1}^2 A_j \cdot v(\mathfrak{n}^{(j)})           \\
        &=
        v(\fn) \\
        &=v(\widetilde{\mathfrak{n}}).
\end{aligned}
\]
The projection condition at $\widetilde{\mathfrak{n}}$ also holds. Since this new
split is meant to be in direction $p+1$, we must check equality of all
coordinates except $p+1$.

Let $q\neq p,p+1$.  The old splits in directions $p$ and $p+1$ do not
change the $W_q$-coordinate, so
\[
        \pi_q(v(\mathfrak{n}^{(j,h)}))=\pi_q(v(\mathfrak{n}))
\]
for all $j,h$.  Hence
\[
        \pi_q(v(\widetilde{\mathfrak{n}}^{(h)}))=\pi_q(v(\mathfrak{n}))=\pi_q(v(\widetilde{\mathfrak{n}})).
\]
For $q=p$, the old split $\mathfrak{n}^{(j)} \to \mathfrak{n}^{(j,h)}$ is in direction $p+1$, so
\[
        \pi_p(v(\mathfrak{n}^{(j,h)}))=\pi_p(v(\mathfrak{n}^{(j)})).
\]
Therefore
\[
        \pi_p(v(\widetilde{\mathfrak{n}}^{(h)}))
        =
        \sum_{j=1}^2 A_j \cdot \pi_p(v(\mathfrak{n}^{(j)}))
        =
        \pi_p(v(\mathfrak{n}))
        =
        \pi_p(v(\widetilde{\mathfrak{n}})).
\]
Finally,
\[
        \pi_{p+1}(v(\widetilde{\mathfrak{n}}^{(h)}))
        =
        \sum_{j=1}^2 A_j \cdot \pi_{p+1}(v(\mathfrak{n}^{(j,h)}))
        =
        b_h.
\]
Thus the map
\[
        \operatorname{children}_{T'}(\widetilde{\mathfrak{n}})
        \longrightarrow
        \operatorname{imm}_{p+1}(\pi_{p+1}(v(\widetilde{\mathfrak{n}}))),
        \qquad
        \widetilde{\mathfrak{n}}^{(h)} \longmapsto \pi_{p+1}(v(\widetilde{\mathfrak{n}}^{(h)})
\]
is a bijection. Hence $\widetilde{\mathfrak{n}}$ is $(p+1)$-immediate.

Second, fix $h\in\{1,2\}$.  We verify the witness-tree conditions at
$\widetilde{\mathfrak{n}}^{(h)}$.  The convex identity is immediate from the definition:
\[
        v(\widetilde{\mathfrak{n}}^{(h)}
        )=\sum_{j=1}^2 A_j \cdot v(\mathfrak{n}^{(j,h)})
              =\sum_{j=1}^2 \alpha_{T'}(\widetilde{\mathfrak{n}}^{(j,h)}) \cdot v(\widetilde{\mathfrak{n}}^{(j,h)}).
\]

This split is meant to be in direction $p$. Let $q\neq p$.  If
$q=p+1$, then
\[
        \pi_{p+1}(v(\widetilde{\mathfrak{n}}^{(j,h)}))=b_h=\pi_{p+1}(v(\widetilde{\mathfrak{n}}^{(h)})).
\]
If $q\neq p,p+1$, then
\[
        \pi_q(v(\widetilde{\mathfrak{n}}^{(j,h)}))=\pi_q(v(\mathfrak{n}^{(j,h)}))=\pi_q(v(\mathfrak{n})),
\]
and the same equality holds for $v(\widetilde{\mathfrak{n}}^{(h)})$.  Thus
\[
        \pi_q(v(\widetilde{\mathfrak{n}}^{(j,h)}))=\pi_q(v(\widetilde{\mathfrak{n}}^{(h)}))
        \qquad(q\neq p).
\]
Moreover,
\[
        \pi_p(v(\widetilde{\mathfrak{n}}^{(j,h)}))=\pi_p(v(\mathfrak{n}^{(j,h)}))=\pi_p(v(\mathfrak{n}^{(j)})=a_j.
\]
Therefore the map
\[
        \operatorname{children}_{T'}(\widetilde{\mathfrak{n}}^{(h)})
        \longrightarrow
        \operatorname{imm}_{p}(\pi_p(v(\widetilde{\mathfrak{n}}^{(h)}))),
        \qquad
        \widetilde{\mathfrak{n}}^{(j,h)}\longmapsto \pi_p(v(\widetilde{\mathfrak{n}}^{(j,h)}))
\]
is a bijection. Hence each $\widetilde{\mathfrak{n}}^{(h)}$ is $p$-immediate.

All nodes below the $\widetilde{\mathfrak{n}}^{(j,h)}$ are copied from $T$, so all witness-tree
identities and leaf labels below these nodes remain valid. Therefore
the local replacement produces a witness tree.

We perform this replacement independently for every node of depth
$p-1$ in $T$. Since the subtrees rooted at distinct depth-$(p-1)$
nodes are disjoint, these replacements are compatible. The resulting
tree is denoted $T'$.

It remains to verify that $T'$ is $(\Pi',\cG)$-gridded and
immediate.

At all depths $<p-1$, the tree is unchanged, and the required order is
also unchanged.  At depth $p-1$, the new splitting direction is $p+1$,
as required by $\Pi'$, and we have proved $(p+1)$-immediacy.  At depth
$p$, the new splitting direction is $p$, as required by $\Pi'$, and we
have proved $p$-immediacy.

The grid-coordinate condition is also preserved. At depth $p-1$, the
set of remaining coordinates for $\Pi'$ is
\[
        \{p+1,p,p+2,\ldots,\ell\},
\]
which is the same set as
\[
        \{p,p+1,p+2,\ldots,\ell\}.
\]
Thus the grid condition is unchanged at that level. At depth $p$, the
coordinate $p+1$ has already been set to one of the grid points
$b_1,b_2$, and all coordinates outside
\[
        \{p,p+2,\ldots,\ell\}
\]
are therefore grid-valued. At depth $p+1$, both coordinates $p$ and
$p+1$ have been set to grid points. Below depth $p+1$, the copied
subtrees have the same remaining prescribed order
\[
        (p+2,\ldots,\ell)
\]
as they had in the original tree after the old two-level block. Hence
the $(\Pi',\cG)$-gridded condition holds at every node.

Thus $T'$ is a $(\Pi',\cG)$-gridded immediate witness tree for
$v$.

Since every permutation is a product of adjacent transpositions,
iterating the construction proves the lemma for an arbitrary ordered
tuple
\[
        \Pi'=(i_{\pi(1)},\ldots,i_{\pi(\ell)}).
\]
\end{proof}

\begin{proposition}
\label{prop:gridded-witness}
    Suppose that $\dim W_i = 1, 1 \leq i \leq k$ and $S \subset V$ a finite subset.
    Let $v \in G^{(\infty)}_\Lambda(S)$.
    Then for every ordered tuple $\Pi=(i_1,\ldots,i_\ell)$, $1 \leq i_j \leq k$,
    such that $\cI_\cG(v)  =  \{i_1,\ldots,i_\ell\}$,  
    there exists a $(\Pi,\cG)$-gridded 
    immediate witness tree for $v$. 
    \end{proposition}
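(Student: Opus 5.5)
We argue by induction on the minimal height $h$ of an (ordinary) witness tree for $v$; such trees exist by Lemma~\ref{lem:every-point-has-a-witness-tree}. For $h=0$ we have $v\in S$, so $\pi(v)\in\cG$, $\cI_\cG(v)=\emptyset$, and the one-node tree is trivially a $((),\cG)$-gridded immediate witness tree. By the Swap Lemma (Lemma~\ref{lem:permute}) it suffices, in the inductive step, to produce a $(\Pi,\cG)$-gridded immediate witness tree for $v$ for \emph{one} ordering $\Pi$ of $\cI_\cG(v)$.

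For the inductive step, let $T$ be a witness tree of minimal height $h$ for $v$. Let $i=i(\mathrm{root}(T))$ and let $v_\fm$, with weights $a_\fm$, be the root children. Each $v_\fm$ has a witness tree of height $<h$. The projection condition gives $\pi_q(v_\fm)=\pi_q(v)$ for $q\neq i$, hence $\cI_\cG(v_\fm)\setminus\{i\}=\cI_\cG(v)\setminus\{i\}=:J$. Fix an ordering $\Pi'$ of $J$. By induction, each $v_\fm$ has a gridded immediate tree for the ordering $(i,\Pi')$ if $i\in\cI_\cG(v_\fm)$, and for $\Pi'$ otherwise. In the first case we replace $v_\fm$ by the two children of its root. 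This exhibits $v$ as a convex combination, split in direction $i$, of finitely many points $x_r$ with the following properties:
\begin{itemize}
\item $\pi_i(x_r)\in\cG^{(i)}$ and $\pi_q(x_r)=\pi_q(v)$ for $q\neq i$;
\item $\cI_\cG(x_r)=J$;
\item each $x_r$ carries a $(\Pi',\cG)$-gridded immediate tree $T_r$.
\end{itemize}

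\emph{Case $i\in\cI_\cG(v)$.} Build a new root with value $v$, index $i$, and children $x_r$, and hang $T_r$ below $x_r$. This is a $((i,\Pi'),\cG)$-gridded witness tree, immediate at every node of depth $1,\dots,\ell-1$. Lemma~\ref{lem:root-immediacy-repair} then repairs the root.

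\emph{Case $i\notin\cI_\cG(v)$.} Now $\pi_i(v)\in\cG^{(i)}$, and the $i$-split must be pushed below depth $\ell=|J|$. Here I would reuse the synchronous-averaging construction from the proof of Lemma~\ref{lem:root-immediacy-repair}. All $x_r$ share the same $W_q$-coordinates for $q\neq i$, and the trees $T_r$ are immediate along the same order $\Pi'$. By induction on depth, corresponding nodes therefore have identical $W_q$-coordinates for $q\neq i$, identical immediate cells, and identical (unique, since $\dim W_q=1$) barycentric weights. So we form averaged nodes $\sum_r \lambda_r v(\fm^r_\eta)$, with $\lambda_r$ the weights of the $x_r$ in $v$. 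These give an immediate $(\Pi',\cG)$-gridded prefix of depth $\ell$ for $v$. Its $i$-coordinate stays equal to $\pi_i(v)$, which is a grid point. At each depth-$\ell$ averaged node we attach a split in direction $i$ into the corresponding nodes $\fm^r_\eta$, which differ only in $U$ and $W_i$, and below those we copy the old subtrees of the $T_r$. The convexity and projection checks are the same computations as in Lemma~\ref{lem:root-immediacy-repair}.

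The main obstacle is this second case: verifying that the trees $T_r$ are structurally synchronized, so that the averaging is well defined and preserves both immediacy and the gridded condition at every depth. The rest is bookkeeping already carried out in the two preceding lemmas.
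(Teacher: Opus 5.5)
Your proposal is correct and follows the paper's proof. Both use induction on minimal witness-tree height, reduce to a single ordering via the Swap Lemma, and split on whether the root direction $i$ lies in $\cI_\cG(v)$: flattening plus Lemma~\ref{lem:root-immediacy-repair} in the first case, synchronized averaging with a final $i$-split at depth $\ell$ in the second. The one small variation is in the second case, where the paper gives children with $i\in\cI_\cG(v_\fm)$ the order $(\Pi',i)$ and absorbs their $i$-split at depth $\ell$ (treating $N_0$ and $N_1$ separately there). You instead give them the order $(i,\Pi')$ and flatten them up front, so the same preprocessing serves both cases and the averaging runs over trees of a single type.
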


\begin{proof}
  For $v \in G^{(\infty)}_\Lambda(S)$, we denote by $t_v$ the smallest $t \in \mathbb{N}$, such that 
there exists a witness tree $T$ for $v$ with 
height $t$.
Note that it follows from Lemma~\ref{lem:every-point-has-a-witness-tree} that for every $v \in G^{(\infty)}_\Lambda(S)$, $t_v < \infty$.

  We will prove the proposition using induction on $t_v$.
  
  \begin{enumerate}[1.]
    \item 
    \label{itemlabel:proof:prop:gridded-witness:1}
    Base case for the induction: $t_v = 0$. In this case, $\cI_\cG(v) = \emptyset$, and the tree $T$ with one node $\mathrm{root}(T) = \mathfrak{n}$, with $v(\mathfrak{n}) = v$,
    and $\mathrm{children}(\mathfrak{n}) = \emptyset$, is a $(\Pi=(),\cG)$-gridded immediate witness tree for $v$.

    \item 
    \label{itemlabel:proof:prop:gridded-witness:2}
    Induction hypothesis:  
    suppose that the statement of the proposition holds for all $v' \in G^{(\infty)}_\Lambda(S)$, with $t_{v'} < t_v$.

    \item 
    \label{itemlabel:proof:prop:gridded-witness:3}    
    Inductive step. Without loss of generality, we can assume that 
            \[
            \cI_\cG(v)  = \{1,\ldots,\ell\}, 0 \leq \ell \leq k,
            \]
            and  
            we can also assume without loss of generality that 
            $\Pi=(1,\ldots,\ell)$. This is because after we construct a 
            $(\Pi,\cG)$-gridded immediate witness tree, we can use
            Lemma~\ref{lem:permute} to obtain a $(\Pi',\cG)$-gridded immediate witness tree for any permutation $\Pi' \in \mathfrak{S}_\ell$.
            (Note that we allow $\ell = 0$ which corresponds to the case
            $\cI_\cG(v) = \emptyset$.)
            
            Let $T$ be a witness tree for $v$ with $\mathrm{height}(T) = t_v$, and let $\mathfrak{n} = \mathrm{root}(T)$.
            There are now two cases.
            \begin{enumerate}[\ref{itemlabel:proof:prop:gridded-witness:3}.a]
                \item 
                \label{itemlabel:proof:prop:gridded-witness:1.a.i}
                Case 1, $i = i(\mathfrak{n}) \leq \ell$.
                There exists a disjoint partition of $\mathrm{children}(\mathfrak{n})$ into two subsets $C_0,C_1$, where
                \begin{eqnarray*}
                    C_0 &=& \{\mathfrak{n}' \in \mathrm{children}(\mathfrak{n}) \;:\; i \not\in \cI_\cG(v(\mathfrak{n}'))\}, \\
                    C_1 &=& \{\mathfrak{n}' \in \mathrm{children}(\mathfrak{n}) \;:\; i \in \cI_\cG(v(\mathfrak{n}'))\}.
                \end{eqnarray*}
                For $\mathfrak{n}' \in C_0$, using the induction hypothesis, there exists a $(\Pi',\cG)$-gridded immediate witness tree, $T_{\mathfrak{n}'}$, for $v(\mathfrak{n}')$,
                where $\Pi'= (1,\ldots,\widehat{i}, \ldots, \ell)$ (where $\widehat{\cdot}$ denotes omission).
                For $\mathfrak{n}' \in C_1$, using the induction hypothesis, there exists a $(\Pi,\cG)$-gridded immediate witness tree, $T_{\mathfrak{n}'}$, for $v(\mathfrak{n}')$,
                and for each $\mathfrak{n}'' \in \mathrm{children}(\mathfrak{n}')$,
                the subtree of $T_{\mathfrak{n}'}$ rooted at $\mathfrak{n}''$ is a $(\Pi',\cG)$-gridded, immediate witness tree for $v(\mathfrak{n}'')$, where $\Pi' = (1,\ldots,\widehat{i}, \ldots, \ell) $.
                
                To simplify notation, we will assume below (without loss of generality) that $i=1$ and $\Pi' = (2,\ldots,\ell)$. 
                
                We now define a new witness tree $T'$ for $v$ as follows. 

For each \(\fn'\in C_0\), let \(\mathfrak{r}_{\fn'}\) denote the root of the
inductively constructed tree \(T_{n'}\). Attach \(\mathfrak{r}_{\fn'}\) to the new
root and set
\[
\alpha_{T'}(\mathfrak{r}_{\fn'})=\alpha_T(\fn').
\]

For each \(\fn'\in C_1\) and each
\[
\fn''\in
\operatorname{children}_{T_{\fn'}}
\bigl(\operatorname{root}(T_{\fn'})\bigr),
\]
attach a copy \(\widetilde \fn''\) of \(\fn''\) to the new root and set
\[
v(\widetilde \fn'')=v(\fn''),\qquad
\alpha_{T'}(\widetilde \fn'')
=
\alpha_T(\fn')\,
\alpha_{T_{\fn'}}(\fn'').
\]
If \(\fn''\) is not a leaf, set
\[
i(\widetilde \fn'')=i_{T_{\fn'}}(\fn'').
\]
Below \(\widetilde \fn''\), copy the subtree of \(T_{\fn'}\) rooted at
\(\fn''\), and set

\[
v(\operatorname{root}(T'))=v,
\qquad
i(\operatorname{root}(T'))=1.
\]

We verify that:
\[
\sum_{\fn' \in C_0} \alpha_T(\fn') v(\fn')  + \sum_{\fn' \in C_1}\sum_{\fn''} \alpha_T(\fn') \alpha_{T_{\fn'}}(\fn'')v(\fn'') 
= 
\sum_{\fn' \in C_0} \alpha_T(\fn') v(\fn') +\sum_{\fn' \in C_1} \alpha_T(\fn') v(\fn') = v,
\]
and
\[
\sum_{\fn' \in C_0} \alpha_T(\fn')  + \sum_{\fn' \in C_1}\sum_{\fn''} \alpha_T(\fn') \alpha_{T_{\fn'}}(\fn'')
= 
\sum_{\fn' \in C_0} \alpha_T(\fn')  +\sum_{\fn' \in C_1} \alpha_T(\fn') = 1.
\]

It is easy to check that the tree $T'$ constructed above is a $(\Pi,\cG)$-gridded witness tree for $v$. However, it might still violate the immediacy 
property at the root. 
We now apply Lemma~\ref{lem:root-immediacy-repair} to obtain 
a $(\Pi,\cG)$-gridded immediate witness tree for $v$. This completes
Case~\ref{itemlabel:proof:prop:gridded-witness:1.a.i}.

            \item 
            \label{itemlabel:proof:prop:gridded-witness:1.a.ii}
                Case 2, $i = i(\mathfrak{n}) > \ell $.
Let
\[
        N:=\mathrm{children}_T(\mathfrak{n}), \qquad a_{\mathfrak{m}}:=\alpha_T(\mathfrak{m}),\quad \mathfrak{m} \in N.
\]
Since \(i(\mathfrak{n})=i\), the witness-tree projection condition implies that
\[
        \pi_q(v(\mathfrak{m}))=\pi_q(v)
        \qquad\text{for every }\mathfrak{m} \in N\text{ and every }q\neq i.
\]
Since \(\cI_{\cG}(v)=\{1,\ldots,\ell\}\) and \(i>\ell\), it follows that
\[
        \cI_{\cG}(v(\mathfrak{m}))\in
        \bigl\{\{1,\ldots,\ell\},\{1,\ldots,\ell,i\}\bigr\}.
\]
For each \(\mathfrak{m} \in N\), choose, using the induction hypothesis, a witness tree \(T_{\mathfrak{m}}\) for
\(v(\mathfrak{m})\) as follows:
\[
\begin{cases}
T_{\mathfrak{m}} \text{ is a }((1,\ldots,\ell),\cG)\text{-gridded immediate witness tree},
&\text{if } \cI_{\cG}(v(\mathfrak{m}))=\{1,\ldots,\ell\},\\[2mm]
T_{\mathfrak{m}} \text{ is a }((1,\ldots,\ell,i),\cG)\text{-gridded immediate witness tree},
&\text{if } \cI_{\cG}(v(\mathfrak{m}))=\{1,\ldots,\ell,i\}.
\end{cases}
\]
This is allowed because \(t_{v(\mathfrak{m})}<t_v\) for every root child \(\mathfrak{m}\).

For \(q=1,\ldots,\ell\), write
\[
        \imm_q(\pi_q(v))=\{w_q^{(1)},w_q^{(2)}\},
\]
and let \(\lambda_q^{(1)},\lambda_q^{(2)}>0\) be the unique coefficients such that
\[
        \pi_q(v)
        =
        \lambda_q^{(1)} w_q^{(1)}
        +
        \lambda_q^{(2)} w_q^{(2)},
        \qquad
        \lambda_q^{(1)}+\lambda_q^{(2)}=1.
\]
These coefficients are independent of \(\mathfrak{m}\in N\), because all \(v(\mathfrak{m})\) have the same
\(W_q\)-coordinate as \(v\) for \(q\neq i\).

We now construct a new tree \(\widetilde{T} \). For a word
\[
        \eta=(h_1,\ldots,h_d)\in\{1,2\}^d,\qquad 0\leq d\leq \ell,
\]
define \(\mathfrak{m}_\eta\) to be the node of \(T_{\mathfrak{m}}\) obtained by following, successively, the
\(h_q\)-th immediate child in coordinate \(q\), for \(q=1,\ldots,d\).  Thus
\(\mathfrak{m}_\emptyset=\mathrm{root}(T_{\mathfrak{m}})\), and
\[
        \pi_q(v(\mathfrak{m}_{h_1,\ldots,h_q}))=w_q^{(h_q)}.
\]
This notation is well-defined because each \(T_{\mathfrak{m}}\) is immediate in the first \(\ell\)
coordinates.

For every such word \(\eta\), introduce a new node \(\widetilde{\mathfrak{m}}_\eta\) and set
\[
        v(\widetilde{\mathfrak{m}}_\eta)
        :=
        \sum_{\mathfrak{m} \in N} a_{\mathfrak{m}}\, v(\mathfrak{m}_\eta).
\]
The root of \(\widetilde{T}\) is \(\widetilde{\mathfrak{m}}_\emptyset\).  Since \(\mathfrak{m}_\emptyset=\mathrm{root}(T_{\mathfrak{m}})\) and
\(v(\mathrm{root}(T_{\mathfrak{m}}))=v(\mathfrak{m})\), we have
\[
        v(\widetilde{\mathfrak{m}}_\emptyset)=\sum_{\mathfrak{m}\in N}a_{\mathfrak{m}} v(\mathfrak{m})=v.
\]

For \(0\leq d<\ell\), declare \(\widetilde{\mathfrak{m}}_\eta\), \(|\eta|=d\), to be an internal node with
\[
        i(\widetilde{\mathfrak{m}}_\eta)=d+1,
\]
and with children
\[
        \widetilde{\mathfrak{m}}_{\eta,1},\ \widetilde{\mathfrak{m}}_{\eta,2}.
\]
Set
\[
        \alpha(\widetilde{\mathfrak{m}}_{\eta,h})=\lambda_{d+1}^{(h)},\qquad h=1,2.
\]
We verify that this gives a valid witness-tree split. For every \(\mathfrak{m}\in N\), the old tree
\(T_{\mathfrak{m}}\) has the immediate split
\[
        v(\mathfrak{m}_\eta)
        =
        \sum_{h=1}^2
        \lambda_{d+1}^{(h)} \cdot v(\mathfrak{m}_{\eta,h}),
\]
because the \(W_{d+1}\)-coordinate of \(\mathfrak{m}_\eta\) is the common value
\(\pi_{d+1}(v)\).  Therefore
\[
\begin{aligned}
        \sum_{h=1}^2 \alpha(\widetilde{\mathfrak{m}}_{\eta,h})\cdot v(\widetilde{\mathfrak{m}}_{\eta,h})
        &=
        \sum_{h=1}^2
        \lambda_{d+1}^{(h)} \cdot
        \left(
        \sum_{\mathfrak{m}\in N}a_{\mathfrak{m}} v(\mathfrak{m}_{\eta,h}) 
        \right)\\
        &=
        \sum_{\mathfrak{m}\in N}a_{\mathfrak{m}} \cdot 
        \left(
        \sum_{h=1}^2
        \lambda_{d+1}^{(h)}v(\mathfrak{m}_{\eta,h}) 
        \right)\\
        &=
        \sum_{\mathfrak{m}\in N}a_{\mathfrak{m}} \cdot v(\mathfrak{m}_\eta) \\
        &=
        v(\widetilde{\mathfrak{m}}_\eta).
\end{aligned}
\]
Also, if \(r\neq d+1\), then the old split
\(\mathfrak{m}_\eta\to \mathfrak{m}_{\eta,h}\) is in coordinate \(d+1\), so
\[
        \pi_r(v(\mathfrak{m}_{\eta,h}))=\pi_r(v(\mathfrak{m}_\eta)).
\]
Averaging over \(\mathfrak{m}\) gives
\[
        \pi_r(v(\widetilde{\mathfrak{m}}_{\eta,h}))=\pi_r(v(\widetilde{\mathfrak{m}}_\eta)).
\]
Thus \(\widetilde{\mathfrak{m}}_\eta\to \widetilde{\mathfrak{m}}_{\eta,h}\) is a valid witness-tree split in direction \(d+1\).

Moreover,
\[
        \pi_{d+1}(v(\widetilde{\mathfrak{m}}_{\eta,h}))=w_{d+1}^{(h)},
\]
so the map
\[
        \mathrm{children}(\widetilde{\mathfrak{m}}_\eta)\longrightarrow
        \imm_{d+1}(\pi_{d+1}(v(\widetilde{\mathfrak{m}}_\eta))),
        \qquad
        \widetilde{\mathfrak{m}}_{\eta,h}
        \longmapsto \pi_{d+1}(v(\widetilde{\mathfrak{m}}_{\eta,h}))
\]
is bijective. Hence every node at depth \(d<\ell\) is \((d+1)\)-immediate.

It remains to attach subtrees below the nodes \(\widetilde{\mathfrak{m}}_\eta\) with \(|\eta|=\ell\).  Fix such
an \(\eta\).  We first observe that \(\widetilde{\mathfrak{m}}_\eta\) has all \(W\)-coordinates in the grid.  Indeed,
coordinates \(1,\ldots,\ell\) are the chosen grid endpoints \(w_q^{(h_q)}\); coordinates
outside \(\{1,\ldots,\ell,i\}\) were already grid coordinates in the original root split; and
for the old root direction \(i\),
\[
        \pi_i(v(\widetilde{\mathfrak{m}}_\eta))
        =
        \sum_{\mathfrak{m}\in N}a_{\mathfrak{m}} \pi_i(v(\mathfrak{m}_\eta))
        =
        \sum_{\mathfrak{m}\in N}a_{\mathfrak{m}} \pi_i(v(\mathfrak{m}))
        =
        \pi_i(v)\in \cG^{(i)}.
\]
Here we used that the first \(\ell\) splits in \(T_\mathfrak{m}\) do not change the \(W_i\)-coordinate,
and that the original root split of \(T\) had direction \(i\).

Now define the children of \(\widetilde{\mathfrak{m}}_\eta\) as follows.  Split \(N\) into
\[
        N_0:=\{\mathfrak{m}\in N: I_{\mathcal G}(v(\mathfrak{m}))=\{1,\ldots,\ell\}\},
        \qquad
        N_1:=\{\mathfrak{m}\in N: I_{\mathcal G}(v(\mathfrak{m}))=\{1,\ldots,\ell,i\}\}.
\]
For \(\mathfrak{m}\in N_0\), the node \(\mathfrak{m}_\eta\) has all coordinates in the grid.  We include one
child 
\(\widetilde{\fp}_{\eta,\mathfrak{m}}\) of \(\widetilde{\mathfrak{m}}_\eta\), set
\[
        v(\widetilde{\fp}_{\eta,\mathfrak{m}})=v(\mathfrak{m}_\eta),
        \qquad
        \alpha(\widetilde{\fp}_{\eta,\mathfrak{m}})=a_{\mathfrak{m}},
\]
and below \(\widetilde{\fp}_{\eta,\mathfrak{m}}\) copy the subtree of \(T_{\mathfrak{m}}\) rooted at \(\mathfrak{m}_\eta\), changing only
the incoming weight at the copied root.

For \(\mathfrak{m}\in N_1\), the tree \(T_{\mathfrak{m}}\) is
\(((1,\ldots,\ell,i),\cG)\)-gridded immediate.  Thus the node \(m_\eta\), at depth
\(\ell\) in \(T_{\mathfrak{m}}\), is \(i\)-immediate.  Let
\[
        \mathrm{children}_{T_{\mathfrak{m}}}(\mathfrak{m}_\eta)=\{\mathfrak{m}_{\eta,f}: f\in \imm_i(\pi_i(v(\mathfrak{m}_\eta)))\},
\]
where
\[
        \pi_i(v(\mathfrak{m}_{\eta,f}))=f.
\]
Write
\[
        v(\mathfrak{m}_\eta)
        =
        \sum_{f\in \imm_i(\pi_i(v(\mathfrak{m}_\eta)))}
        \gamma_{\mathfrak{m},f}v(\mathfrak{m}_{\eta,f}),
        \qquad
        \gamma_{\mathfrak{m},f}:=\alpha_{T_\mathfrak{m}}(\mathfrak{m}_{\eta,f}).
\]
For each such \(f\), include one child \(\widetilde{\fp}_{\eta,\mathfrak{m},f}\) of \(\widetilde{\mathfrak{m}}_\eta\), set
\[
        v(\widetilde{\fp}_{\eta,\mathfrak{m},f})=v(\mathfrak{m}_{\eta,f}),
        \qquad
        \alpha(\widetilde{\fp}_{\eta,\mathfrak{m},f})=a_{\mathfrak{m}}\gamma_{\mathfrak{m},f},
\]
and below \(\widetilde{\fp}_{\eta,\mathfrak{m},f}\) copy the subtree of \(T_{\mathfrak{m}}\) rooted at \(\mathfrak{m}_{\eta,f}\), again
changing only the incoming weight at the copied root.

Finally declare
\[
        i(\widetilde{\mathfrak{m}}_\eta)=i
\]
for each \(|\eta|=\ell\).  This final split is not part of the prescribed tuple
\((1,\ldots,\ell)\); it occurs only after all prescribed levels have already been completed.

We check the witness-tree identity at \(\widetilde{\mathfrak{m}}_\eta\).  The children just defined have total
weight
\[
        \sum_{\mathfrak{m}\in N_0}a_{\mathfrak{m}}
        +
        \sum_{\mathfrak{m}\in N_1}a_{\mathfrak{m}}
        \sum_f \gamma_{\mathfrak{m},f}
        =
        \sum_{\mathfrak{m}\in N}a_{\mathfrak{m}}=1.
\]
Moreover,
\[
\begin{aligned}
        &\sum_{\mathfrak{m}\in N_0}a_{\mathfrak{m}} v(\widetilde{\fp}_{\eta,\mathfrak{m}})
        +
        \sum_{\mathfrak{m}\in N_1}\sum_f a_{\mathfrak{m}}\gamma_{\mathfrak{m},f}v(\widetilde{\fp}_{\eta,\mathfrak{m},f})        \\
        &\qquad =
        \sum_{\mathfrak{m}\in N_0}a_{\mathfrak{m}} v(\mathfrak{m}_\eta)
        +
        \sum_{\mathfrak{m}\in N_1}a_{\mathfrak{m}}
        \sum_f \gamma_{\mathfrak{m},f}v(\mathfrak{m}_{\eta,f})                             \\
        &\qquad =
        \sum_{\mathfrak{m}\in N_0}a_{\mathfrak{m}} v(\mathfrak{m}_\eta)
        +
        \sum_{\mathfrak{m}\in N_1}a_{\mathfrak{m}} v(\mathfrak{m}_\eta)                                  \\
        &\qquad =
        \sum_{\mathfrak{m}\in N}a_{\mathfrak{m}} v(\mathfrak{m}_\eta)
        =
        v(\widetilde{\mathfrak{m}}_\eta).
\end{aligned}
\]
The projection condition also holds.  All children of \(\widetilde{\mathfrak{m}}_\eta\) agree with \(\widetilde{\mathfrak{m}}_\eta\) in
every coordinate except possibly \(i\): for \(\mathfrak{m}\in N_0\) this follows from the invariant that
the nodes \(\mathfrak{m}_\eta\) agree outside \(W_i\), and for \(\mathfrak{m}\in N_1\) it follows because
\(\mathfrak{m}_\eta\to \mathfrak{m}_{\eta,f}\) is an \(i\)-split in \(T_{\mathfrak{m}}\).  Thus the final split at \(\widetilde{\mathfrak{m}}_\eta\)
is a valid witness-tree split in direction \(i\).

All children introduced at this final step have all coordinates in the grid. For
\(\mathfrak{m}\in N_0\), this is because \(\mathfrak{m}_\eta\) lies at depth \(\ell\) in a
\(((1,\ldots,\ell),\cG)\)-gridded tree.  For \(\mathfrak{m}\in N_1\), this is because
\(\mathfrak{m}_{\eta,f}\) lies immediately after the \(i\)-split at depth \(\ell\) in a
\(((1,\ldots,\ell,i),\cG)\)-gridded immediate tree.  Hence all copied descendants
also have all coordinates in the grid.

We have therefore constructed a finite witness tree \(\widetilde{T}\) for \(v\). It is
\(((1,\ldots,\ell),\mathcal G)\)-gridded: at depth \(d<\ell\) the splitting direction is
\(d+1\), and all coordinates outside \(\{d+1,\ldots,\ell\}\) are already grid-valued; at
depth \(d\geq\ell\), all coordinates are grid-valued.  It is immediate at every prescribed
level \(0,\ldots,\ell-1\), by the bijectivity checks above.  Thus \(\widetilde{T}\) is a
\(((1,\ldots,\ell),\cG)\)-gridded immediate witness tree for \(v\).

This completes Case 2.
\end{enumerate}
This completes the inductive step, and the proof of the proposition is complete. 
\end{enumerate}
\end{proof}

\begin{proof}[Proof of Proposition~\ref{prop:existence-of-grid}]
   Follows immediately from Proposition~\ref{prop:gridded-witness}. 
\end{proof}

\begin{proof}[Proof of Theorem~\ref{thm:laminar}]
By Proposition~\ref{prop:existence-of-grid}, when \(\dim W_i=1\) for all \(i\), the grid
\(G=\pi_1(S)\times\cdots\times\pi_k(S)\) is sufficiently fine for \(S\).
Applying Proposition~\ref{prop:laminar} to this grid gives the semi-algebraicity of
\(G_\Lambda^{(\infty)}(S)\). The algorithmic statement follows from the
effective parts of Theorem~\ref{thm:hemihedra}, Proposition~\ref{prop:struc}, and effective quantifier elimination over the reals.
\end{proof}

%% file: arxiv-example.tex
\section{Solving Example System}
\label{sec:example}

We will find the smallest solution of the following system over arbitrary convex sets using the algorithm in \figureref{gaussian-elim}.

    \begin{align*}
        X_1 &\geq P_1 \;\;\cplus\;\; X_1 \cmult \left(\frac12\cdot X_2 + \frac12\cdot P_3\right)\\
        X_2 &\geq P_2 \;\;\cplus\;\; X_2 \cmult \left(\frac12\cdot X_1 + \frac12\cdot P_4\right)
    \end{align*}
We are going to use 
\begin{enumerate}[(a)]
    \item rearrangement as in Lemma~\ref{lem:rearrangement},
    \item cancellation as in Lemma~\ref{lem:cancel},
    \item substitution as in Lemma~\ref{lem:substitute}.
\end{enumerate}
The equation of $X_1$ does not need to be rearranged; we can proceed with cancellation. 
After canceling $X_1$, we get the following system. 

    \begin{align*}
        X_1 &\geq P_1 \cplus P_1 \cmult \left(\frac12\cdot X_2 + \frac12\cdot P_3\right)\\
        X_2 &\geq P_2 \cplus X_2 \cmult \left(\frac12\cdot X_1 + \frac12\cdot P_4\right)
    \end{align*}
Next, we aim to substitute $X_1$ with the RHS of the first inequality in $X_2$'s inequality (step 2.c. in \figureref{gaussian-elim} with $j=1$). 
Below, we will illustrate how the substituted polynomial is obtained. 
\begin{align*}
    X_2 &\geq P_2 \cplus X_2\cmult \left(\frac12\cdot X_1 + \frac12\cdot P_4\right)
                \tag{original equation of $X_2$}\\
    &\geq P_2 \cplus X_2\cmult \left(\frac12\cdot \left[P_1 \cplus P_1 \cmult \left(\frac12\cdot X_2 + \frac12\cdot P_3\right)\right] + \frac12\cdot P_4\right)
                \tag{substituting the symbol $X_1$ with the RHS of $X_1$'s inequality}\\
                \tag{Remark: this expression is {\em not} a polynomial}\\
    &= P_2 \cplus X_2\cmult \left( \left[\frac12\cdot P_1 \cplus \frac12\cdot P_1 \cmult \left(\frac14\cdot X_2 + \frac14\cdot P_3\right)\right] + \frac12\cdot P_4\right)
                \tag{scalar multiplication distributes over $\cplus$ and $\cmult$}\\
                \tag{Remark: this expression is {\em not} a polynomial}\\
    &\sim P_2 \cplus X_2\cmult\left(\frac12\cdot P_1 + \frac12\cdot P_4\right) 
            \cplus X_2\cmult\left(\frac12\cdot P_1+\frac12\cdot P_4\right)\cmult \left(\frac14\cdot X_2 + \frac14\cdot P_3 + \frac12\cdot P_4\right)
                \tag{Minkowski sum distributes over $\cplus$ and $\cmult$}
\end{align*}        
This final expression is a polynomial.
Verify that this ``derivation'' of the substituted polynomial, capturing what we intend to achieve, matches the polynomial computed using our definition of substituted polynomials in Example~\ref{eg:1} of Subsection~\ref{subsec:sub-compute}.
After substitution, we get the system:

    \begin{align*}
        X_1 &\geq P_1 \cplus P_1 \cmult \left(\frac12\cdot X_2 + \frac12\cdot P_3\right)\\
        X_2 &\geq P_2 \cplus X_2\cmult\left(\frac12\cdot P_1 + \frac12\cdot P_4\right) 
            \cplus X_2\cmult\left(\frac12\cdot P_1+\frac12\cdot P_4\right)\cmult \left(\frac14\cdot X_2 + \frac14\cdot P_3 + \frac12\cdot P_4\right)
    \end{align*}
At this point, note that $X_1$ has been eliminated from the RHS of every inequality. 
This corresponds to completing the $j=1$ loop in \figureref{gaussian-elim}.

After that, in $j=2$ loop, we begin by rearranging the inequality for $X_2$ as follows:
\begin{align*}
    && X_2 &\geq P_2 \cplus X_2\cmult\left(\frac12\cdot P_1 \cplus \frac12\cdot P_4\right) 
            \cplus X_2\cmult\left(\frac12\cdot P_1+\frac12\cdot P_4\right)\cmult \left(\frac14\cdot X_2 + \frac14\cdot P_3 + \frac12\cdot P_4\right)\\
    \iff&& X_2 &\geq P_2 \cplus X_2\cmult\left(\frac12\cdot P_1 \cplus \frac12\cdot P_4\right) 
            \cplus X_2\cmult\left(\frac12\cdot P_1+\frac12\cdot P_4\right)\cmult \left(\frac13\cdot P_3 + \frac23\cdot P_4\right)
\end{align*}
This derivation relies on the fact that $X\geq \left(\rho\cdot X + (1-\rho)\cdot A\right)\cmult B$ if (and only if) $X\geq X\cmult A\cmult B$, for arbitrary sets $X\in\convset$, $A,B\subseteq \RR^d$ and $0<\rho<1$ (see Lemma~\ref{lem:pullout-X}). 
After that, we cancel $X_2$ from this rewritten inequality to get the following system. 

    \begin{align*}
        X_1 &\geq P_1 \cplus P_1 \cmult \left(\frac12\cdot X_2 + \frac12\cdot P_3\right)\\
        X_2 &\geq P_2 \cplus P_2\cmult\left(\frac12\cdot P_1 + \frac12\cdot P_4\right) 
            \cplus P_2\cmult\left(\frac12\cdot P_1+\frac12\cdot P_4\right)\cmult \left(\frac13\cdot P_3 + \frac23\cdot P_4\right)
    \end{align*}
Next, we aim to substitute the RHS of $X_2$'s inequality into the symbol $X_2$ in $X_1$'s inequality. 
To illustrate how the substituted polynomial is defined, we elaborate on the substitution process. 
{\footnotesize\allowdisplaybreaks
\begin{align*}
    X_1 &\geq P_1 \cplus P_1 \cmult \left(\frac12\cdot X_2 + \frac12\cdot P_3\right)
                \tag{original equation}\\
    &\geq P_1 \cplus P_1 \cmult \left(\frac12\cdot \left[P_2 \cplus P_2\cmult\left(\frac12\cdot P_1 + \frac12\cdot P_4\right) 
            \cplus P_2\cmult\left(\frac12\cdot P_1+\frac12\cdot P_4\right)\cmult \left(\frac13\cdot P_3 + \frac23\cdot P_4\right)\right] + \frac12\cdot P_3\right)
                \tag{substituting the symbol $X_2$ with the RHS of $X_2$'s inequality}\\
                \tag{Remark: this expression is {\em not} a polynomial}\\
    &= P_1 \cplus P_1 \cmult \left( \left[\frac12\cdot P_2 \cplus \frac12\cdot P_2\cmult\left(\frac14\cdot P_1 + \frac14\cdot P_4\right) 
            \cplus \frac12\cdot P_2\cmult\left(\frac14\cdot P_1+\frac14\cdot P_4\right)\cmult \left(\frac16\cdot P_3 + \frac13\cdot P_4\right)\right] + \frac12\cdot P_3\right)
                \tag{scalar multiplication distributes over $\cplus$ and $\cmult$}\\
                \tag{Remark: this expression is {\em not} a polynomial}\\
    &\sim P_1 \cplus P_1\cmult\left(\frac12\cdot P_2 + \frac12\cdot P_3\right) \cplus P_1\cmult\left(\frac12\cdot P_2 + \frac12\cdot P_3\right)\cmult\left(\frac14\cdot P_1+\frac14\cdot P_4+\frac12\cdot P_3\right)\\
            &\qquad\qquad \cplus P_1\cmult\left(\frac12\cdot P_2 + \frac12\cdot P_3\right)\cmult\left(\frac14\cdot P_1+\frac14\cdot P_4+\frac12\cdot P_3\right)\cmult\left(\frac23\cdot P_3 + \frac13\cdot P_4\right)
                \tag{Minkowski sum distributes over $\cplus$ and $\cmult$}
\end{align*}%
}
This final expression is a polynomial, and it is used on the RHS of the substituted system below.
Example~\ref{eg:2} of Subsection~\ref{subsec:sub-compute} elaborates on how this polynomial is computed using our definition of substituted polynomial. 

    \begin{align*}
        X_1 &\geq P_1 \cplus P_1\cmult\left(\frac12\cdot P_2 + \frac12\cdot P_3\right) \cplus P_1\cmult\left(\frac12\cdot P_2 + \frac12\cdot P_3\right)\cmult\left(\frac14\cdot P_1+\frac14\cdot P_4+\frac12\cdot P_3\right)\\
            &\qquad\qquad \cplus P_1\cmult\left(\frac12\cdot P_2 + \frac12\cdot P_3\right)\cmult\left(\frac14\cdot P_1+\frac14\cdot P_4+\frac12\cdot P_3\right)\cmult\left(\frac23\cdot P_3 + \frac13\cdot P_4\right)\\
        X_2 &\geq P_2 \cplus P_2\cmult\left(\frac12\cdot P_1 + \frac12\cdot P_4\right) 
            \cplus P_2\cmult\left(\frac12\cdot P_1+\frac12\cdot P_4\right)\cmult \left(\frac13\cdot P_3 + \frac23\cdot P_4\right)
    \end{align*}
This system, at the end of the $j=2$ loop, has eliminated all unknowns from the inequalities. 
As a result, the smallest convex solution is straightforward to obtain; it is just the convex hull of the RHS expressions.
So, the smallest solution is:

    \begin{align*}
        X_1 &= \mathrm{conv}\left(\; P_1 \cplus P_1\cmult\left(\frac12\cdot P_2 + \frac12\cdot P_3\right) \cplus P_1\cmult\left(\frac12\cdot P_2 + \frac12\cdot P_3\right)\cmult\left(\frac14\cdot P_1+\frac14\cdot P_4+\frac12\cdot P_3\right) \right.\\
            &\qquad\qquad \left. \cplus P_1\cmult\left(\frac12\cdot P_2 + \frac12\cdot P_3\right)\cmult\left(\frac14\cdot P_1+\frac14\cdot P_4+\frac12\cdot P_3\right)\cmult\left(\frac23\cdot P_3 + \frac13\cdot P_4\right) \;\right)\\
        X_2 &=\conv{\; P_2 \cplus P_2\cmult\left(\frac12\cdot P_1 + \frac12\cdot P_4\right) 
            \cplus P_2\cmult\left(\frac12\cdot P_1+\frac12\cdot P_4\right)\cmult \left(\frac13\cdot P_3 + \frac23\cdot P_4\right) \;}
    \end{align*}
Note that the smallest solution characterized here for $X_2$ is identical to the predicted solution $\ssol(I;\bP)_2$ in 
\eqref{eqn:example:ss:2}.
The expression for $X_1$ appears different from that in 
\eqref{eqn:example:ss:1}; however, it describes the same set (for any constant assignment $\bP$). 
The extra expression $\frac14\cdot \bP_1 + \frac14\cdot \bP_4 + \frac12\cdot \bP_3$ is redundant in the expression. 
It is a convex linear combination of the sets $\bP_1$ and $\frac23\cdot \bP_3 + \frac13\cdot \bP_4$. 
After accounting for this geometric property, it turns out to be identical to the solution $\ssol(I;\bP)_1$ 
in \eqref{eqn:example:ss:1}.

    \begin{align*}
        X_1 &= \conv{ P_1 \cplus P_1\cmult\left(\frac12\cdot P_2 + \frac12\cdot P_3\right) \cplus P_1\cmult\left(\frac12\cdot P_2 + \frac12\cdot P_3\right)\cmult\left(\frac23\cdot P_3 + \frac13\cdot P_4\right) \;}\\
        X_2 &=\conv{\; P_2 \cplus P_2\cmult\left(\frac12\cdot P_1 + \frac12\cdot P_4\right) 
            \cplus P_2\cmult\left(\frac12\cdot P_1+\frac12\cdot P_4\right)\cmult \left(\frac13\cdot P_3 + \frac23\cdot P_4\right) \;}
    \end{align*}
This optimization in representing the sets is not the focus of our current work, so we do not pursue this optimization.
Additionally, if we eliminated $X_2$ first and $X_1$ next, our expressions would have a similar redundancy in the $\ssol(I;\bP)_2$ expression. 
If $P_1, P_2, P_3, P_4$ are assigned convex sets, then the expressions within the ``$\conv{\cdot}$'' are already convex; this may not hold in general. 

In Subsection~\ref{subsec:fig-smallest-sol-eg} we will illustrate the solution for a specific assignment.

\subsection{Figure of the Smallest Solution for an Assignment}
\label{subsec:fig-smallest-sol-eg}

Suppose $P_1, P_2, P_3, P_4$ are assigned singleton sets in $\RR^2$. 
For that constant assignment, \figureref{our-sol} presents the smallest solution to our example system from Subsection~\ref{subsec:op-real}. 

\input{arxiv-fig-our-sol}

\subsection{Examples of Substitution}
\label{subsec:sub-compute}

\begin{example}
\label{eg:1}    

We will show the computation of $\varphi\substitute{X_1}{\varphi_{X_1}}$ where
    $$ \varphi = P_2 \cplus X_2\cmult \left(\frac12\cdot X_1  + \frac12\cdot P_4\right)\text{ and }\varphi_{X_1} = P_1 \cplus P_1\cmult \left(\frac12\cdot X_2 + \frac12\cdot P_3\right).$$
Using \eqref{eqn:poly-substitute-def}, we have 
    $$\varphi\substitute{X_1}{\varphi_{X_1}} = P_2\substitute{X_1}{\varphi_{X_1}} \cplus X_2\cmult \left(\frac12\cdot X_1  + \frac12\cdot P_4\right)\substitute{X_1}{\varphi_{X_1}}.$$
Let us demonstrate the computation of the two substitutions on the RHS expression above. 
\begin{align*}
    \text{Part 1.} && P_2\substitute{X_1}{\varphi_{X_1}} 
        &= P_2\substitute{X_1}{P_1} \cplus P_2\substitute{X_1}{P_1\cmult \left(\frac12\cdot X_2 + \frac12\cdot P_3\right)}
            \tag{first step in the step-wise application of \eqref{eqn:monomial-substitute-def}}\\
        &&&= P_2\substitute{X_1}{P_1} \cplus P_2\substitute{X_1}{P_1}\cmult P_2\substitute{X_1}{\frac12\cdot X_2 + \frac12\cdot P_3}
            \tag{final step in the step-wise application of \eqref{eqn:monomial-substitute-def}}\\
        &&&= P_2 \cplus P_2 \cmult P_2
            \tag{using \eqref{eqn:element-substitute-def}}\\
        &&&\sim P_2 \tag{using idempotence laws}.
\end{align*}
Idempotence laws are applied only for brevity; our proposed algorithms do not perform this optimization.

In the substitution computation below, we will need all $\elem(M)\to\mono(\varphi_{X_1})$ functions, where $M=X_2\cmult\left(\frac12\cdot X_1  + \frac12\cdot P_4\right)$.
That is, functions of the following form. 
    $$ \left\{ X_2, \frac12\cdot X_1 + \frac12\cdot P_4\right\} \to \left\{ P_1, P_1\cmult\left(\frac12\cdot X_2 + \frac12\cdot P_3\right) \right\}.$$
{\footnotesize\allowdisplaybreaks
\begin{align*}
    \text{Part 2.} && X_2\cmult&\left(\frac12\cdot X_1  + \frac12\cdot P_4\right)\substitute{X_1}{\varphi_{X_1}}\\
        &&&= X_2\substitute{X_1}{P_1} \cmult \left(\frac12\cdot X_1  + \frac12\cdot P_4\right)\substitute{X_1}{P_1}\\
        &&&\qquad \cplus X_2\substitute{X_1}{P_1} \cmult \left(\frac12\cdot X_1  + \frac12\cdot P_4\right)\substitute{X_1}{P_1\cmult\left(\frac12\cdot X_2 + \frac12\cdot P_3\right)}\\
        &&&\qquad \cplus X_2\substitute{X_1}{P_1\cmult\left(\frac12\cdot X_2 + \frac12\cdot P_3\right)} \cmult \left(\frac12\cdot X_1  + \frac12\cdot P_4\right)\substitute{X_1}{P_1}\\
        &&&\qquad \cplus X_2\substitute{X_1}{P_1\cmult\left(\frac12\cdot X_2 + \frac12\cdot P_3\right)} \cmult \left(\frac12\cdot X_1  + \frac12\cdot P_4\right)\substitute{X_1}{P_1\cmult\left(\frac12\cdot X_2 + \frac12\cdot P_3\right)}
            \tag{first step in the step-wise application of \eqref{eqn:monomial-substitute-def}}\\
        &&&= X_2\substitute{X_1}{P_1} \cmult \left(\frac12\cdot X_1  + \frac12\cdot P_4\right)\substitute{X_1}{P_1}\\
        &&&\qquad \cplus X_2\substitute{X_1}{P_1} \cmult \left(\frac12\cdot X_1  + \frac12\cdot P_4\right)\substitute{X_1}{P_1} \cmult \left(\frac12\cdot X_1  + \frac12\cdot P_4\right)\substitute{X_1}{\frac12\cdot X_2 + \frac12\cdot P_3}\\
        &&&\qquad \cplus X_2\substitute{X_1}{P_1} \cmult X_2\substitute{X_1}{\frac12\cdot X_2 + \frac12\cdot P_3} \cmult\left(\frac12\cdot X_1  + \frac12\cdot P_4\right)\substitute{X_1}{P_1}\\
        &&&\qquad \cplus X_2\substitute{X_1}{P_1} \cmult X_2\substitute{X_1}{\frac12\cdot X_2 + \frac12\cdot P_3} \cmult \left(\frac12\cdot X_1  + \frac12\cdot P_4\right)\substitute{X_1}{P_1} \\
        &&& \qquad\qquad\qquad\qquad\qquad\qquad\qquad\qquad\qquad\qquad\qquad\qquad \cmult \left(\frac12\cdot X_1  + \frac12\cdot P_4\right)\substitute{X_1}{\frac12\cdot X_2 + \frac12\cdot P_3}
            \tag{final step in the step-wise application of \eqref{eqn:monomial-substitute-def}}\\
        &&&= X_2\cmult \left(\frac12\cdot P_1 + \frac12\cdot P_4\right)\\
        &&&\qquad \cplus X_2\cmult \left(\frac12\cdot P_1+\frac12\cdot P_4\right)\cmult\left(\frac14\cdot X_2 + \frac14\cdot P_3 + \frac12\cdot P_4\right)\\
        &&&\qquad \cplus X_2\cmult X_2 \cmult \left(\frac12\cdot P_1+ \frac12\cdot P_4\right)\\
        &&&\qquad \cplus X_2\cmult X_2 \cmult \left(\frac12\cdot P_1+\frac12\cdot P_4\right)\cmult\left(\frac14\cdot X_2 + \frac14\cdot P_3 + \frac12\cdot P_4\right)
            \tag{using \eqref{eqn:element-substitute-def}}\\
        &&&\sim X_2\cmult \left(\frac12\cdot P_1 + \frac12\cdot P_4\right) \cplus X_2\cmult \left(\frac12\cdot P_1+\frac12\cdot P_4\right)\cmult\left(\frac14\cdot X_2 + \frac14\cdot P_3 + \frac12\cdot P_4\right)
            \tag{using idempotence laws}.
\end{align*}
}
We want to emphasize that every step of the derivation above is a polynomial. 
To conclude, putting these two derivations together, we have:
    $$ \varphi\substitute{X_1}{\varphi_{X_1}} \sim P_2 \cplus  X_2\cmult \left(\frac12\cdot P_1 + \frac12\cdot P_4\right) \cplus X_2\cmult \left(\frac12\cdot P_1+\frac12\cdot P_4\right)\cmult\left(\frac14\cdot X_2 + \frac14\cdot P_3 + \frac12\cdot P_4\right).$$
\end{example}

\begin{example}
\label{eg:2}
We will show the computation of $\varphi\substitute{X_2}{\varphi_{X_2}}$ where
\begin{align*}
    \varphi &= P_1 \cplus P_1\cmult\left(\frac12\cdot X_2 + \frac12\cdot P_3\right) \\ 
    \varphi_{X_2} &= P_2 \cplus P_2\cmult\left(\frac12\cdot P_1 + \frac12\cdot P_4\right) \cplus P_2\cmult\left(\frac12\cdot P_1 + \frac12\cdot P_4\right)\cmult\left(\frac13\cdot P_3+\frac23\cdot P_4\right)
\end{align*}
By \eqref{eqn:poly-substitute-def}, we have 
    $$\varphi\substitute{X_2}{\varphi_{X_2}} = P_1\substitute{X_2}{\varphi_{X_2}} \cplus P_1\cmult \left(\frac12\cdot X_2  + \frac12\cdot P_3\right)\substitute{X_2}{\varphi_{X_2}}.$$
Next, we compute the substituted polynomials (short-circuiting the trivial substitutions).
{
\begin{align*}
    \varphi\substitute{X_2}{\varphi_{X_2}} &= P_1 \cplus P_1\cmult \left(\frac12\cdot X_2  + \frac12\cdot P_3\right)\substitute{X_2}{P_2} \\
    &\qquad \cplus P_1\cmult \left(\frac12\cdot X_2  + \frac12\cdot P_3\right)\substitute{X_2}{P_2\cmult\left(\frac12\cdot P_1 + \frac12\cdot P_4\right)} \\
    &\qquad \cplus P_1\cmult \left(\frac12\cdot X_2  + \frac12\cdot P_3\right)\substitute{X_2}{P_2\cmult\left(\frac12\cdot P_1 + \frac12\cdot P_4\right)\cmult\left(\frac13\cdot P_3+\frac23\cdot P_4\right)} \\
    &= P_1 \cplus P_1\cmult \left(\frac12\cdot X_2  + \frac12\cdot P_3\right)\substitute{X_2}{P_2} \\
    &\qquad \cplus P_1\cmult \left(\frac12\cdot X_2  + \frac12\cdot P_3\right)\substitute{X_2}{P_2}\cmult \left(\frac12\cdot X_2  + \frac12\cdot P_3\right)\substitute{X_2}{\frac12\cdot P_1 + \frac12\cdot P_4} \\
    &\qquad \cplus P_1\cmult \left(\frac12\cdot X_2  + \frac12\cdot P_3\right)\substitute{X_2}{P_2} \cmult \left(\frac12\cdot X_2  + \frac12\cdot P_3\right)\substitute{X_2}{\frac12\cdot P_1 + \frac12\cdot P_4} \\
    &\qquad\qquad\qquad\qquad\qquad\qquad\qquad\qquad\qquad\ \cmult \left(\frac12\cdot X_2  + \frac12\cdot P_3\right)\substitute{X_2}{\frac13\cdot P_3+\frac23\cdot P_4}\\
    &= P_1 \cplus P_1\cmult \left(\frac12\cdot P_2  + \frac12\cdot P_3\right) \\
    &\qquad \cplus P_1\cmult \left(\frac12\cdot P_2  + \frac12\cdot P_3\right) \cmult \left(\frac14\cdot P_1 + \frac14\cdot P_4 + \frac12\cdot P_3\right) \\
    &\qquad \cplus P_1\cmult \left(\frac12\cdot P_2  + \frac12\cdot P_3\right) \cmult \left(\frac14\cdot P_1 + \frac14\cdot P_4 + \frac12\cdot P_3\right) \cmult \left(\frac23\cdot P_3 + \frac13\cdot P_4\right)
\end{align*}
}
This concludes the derivation of the substituted polynomial. 
\end{example}

\subsection{Iterated Solution Evolution for an Assignment}
\label{subsec:itr-sol-fig}

Suppose $P_1, P_2, P_3, P_4$ are assigned singleton sets in $\RR^2$. 
\figureref{itr} illustrates the evolution of the iterated solutions $\bX\p i$, for $i\in\{0,1,\dotsc\}$ introduced in Subsection~\ref{subsec:op-real}, corresponding to our example system.

\input{arxiv-fig-itr}

%% file: arxiv-fig-our-sol.tex
\begin{figure}[htp]
    \centering\footnotesize

    \pgfdeclarelayer{background}
    \pgfdeclarelayer{face}
    \pgfdeclarelayer{lines}
    \pgfdeclarelayer{vertices}
    \pgfdeclarelayer{main}
    \pgfdeclarelayer{foreground}
    \pgfsetlayers{background, face, lines, vertices, main, foreground}

    \begin{tikzpicture}[scale=1.5, >=stealth]

        \coordinate (p1) at (0,2); 
        \coordinate (p2) at (6,2);
        \coordinate (p3) at (6,0); 
        \coordinate (p4) at (0,0); 

        \coordinate (p23) at ($(p2)!0.50!(p3)$);
        \coordinate (p14) at ($(p1)!0.50!(p4)$); 

        \coordinate (p334) at (barycentric cs:p3=2,p4=1);
        \coordinate (p344) at (barycentric cs:p3=1,p4=2);

        \foreach \i/\pos in { 1/south east, 2/south west, 3/north west, 4/north east} {
            \begin{pgfonlayer}{foreground}
                \node [anchor=\pos] at (p\i) {$\bP_{\i}$}; 
            \end{pgfonlayer}
        }
        \begin{pgfonlayer}{foreground}
            \node [anchor=west] at (p23) {$\frac12\cdot \bP_2 + \frac12\cdot \bP_3$}; 
            \node [anchor=east] at (p14) {$\frac12\cdot \bP_1 + \frac12\cdot \bP_4$};
            \node [anchor=north] at (p334) {$\frac23\cdot \bP_3 + \frac13\cdot \bP_4$};
            \node [anchor=north] at (p344) {$\frac13\cdot \bP_3 + \frac23\cdot \bP_4$};
            \node [anchor=south] at (barycentric cs:p1=1,p2=1) {$\bX_1\p*$};
        \end{pgfonlayer}

        \begin{pgfonlayer}{face}
            \fill[fill=gray] (p1) -- (p23) -- (p334) -- cycle; 
        \end{pgfonlayer} 

        \begin{pgfonlayer}{lines}
            \path[draw, line width=8pt, white] (p1) -- (p23) -- (p334) -- cycle; 
            \path[draw, gray] (p1) -- (p23); 
        \end{pgfonlayer}

        \begin{pgfonlayer}{vertices}
            \foreach \i/\c in { 1/gray, 2/white, 3/white, 4/white, 23/white, 14/white, 334/white, 344/white} {
                \draw[fill=\c, preaction={draw, line width=8pt, white}] (p\i) circle (0.05);
            }
        \end{pgfonlayer}

        \coordinate (p1) at (0,-2); 
        \coordinate (p2) at (6,-2);
        \coordinate (p3) at (6,-4); 
        \coordinate (p4) at (0,-4); 

        \coordinate (p23) at ($(p2)!0.50!(p3)$);
        \coordinate (p14) at ($(p1)!0.50!(p4)$); 

        \coordinate (p334) at (barycentric cs:p3=2,p4=1);
        \coordinate (p344) at (barycentric cs:p3=1,p4=2);

        \foreach \i/\pos in { 1/south east, 2/south west, 3/north west, 4/north east} {
            \begin{pgfonlayer}{foreground}
                \node [anchor=\pos] at (p\i) {$\bP_{\i}$};
            \end{pgfonlayer}
        }
        \begin{pgfonlayer}{foreground}
            \node [anchor=west] at (p23) {$\frac12\cdot \bP_2 + \frac12\cdot \bP_3$};
            \node [anchor=east] at (p14) {$\frac12\cdot \bP_1 + \frac12\cdot \bP_4$};
            \node [anchor=north] at (p334) {$\frac23\cdot \bP_3 + \frac13\cdot \bP_4$};
            \node [anchor=north] at (p344) {$\frac13\cdot \bP_3 + \frac23\cdot \bP_4$};
            \node [anchor=south] at (barycentric cs:p1=1,p2=1) {$\bX_2\p*$};
        \end{pgfonlayer}

        \begin{pgfonlayer}{face}
            \fill[fill=gray] (p2) -- (p14) -- (p344) -- cycle; 
        \end{pgfonlayer} 

        \begin{pgfonlayer}{lines}
            \path[draw, line width=8pt, white] (p2) -- (p14) -- (p344) -- cycle; 
            \path[draw, gray] (p2) -- (p14); 
        \end{pgfonlayer}

        \begin{pgfonlayer}{vertices}
            \foreach \i/\c in { 1/white, 2/gray, 3/white, 4/white, 23/white, 14/white, 334/white, 344/white} {
                \draw[fill=\c, preaction={draw, line width=8pt, white}] (p\i) circle (0.05);
            }
        \end{pgfonlayer}

    \end{tikzpicture}
    \caption{The smallest convex solutions $\left(\bX_1\p*, \bX_2\p*\right)$ of the example system from \sectionref{system-algebra}. 
    }
    \label{fig:our-sol}
\end{figure}

%% file: arxiv-fig-itr.tex
\begin{figure}[htp]
    \centering\footnotesize

    \pgfdeclarelayer{background}
    \pgfdeclarelayer{face}
    \pgfdeclarelayer{lines}
    \pgfdeclarelayer{vertices}
    \pgfdeclarelayer{main}
    \pgfdeclarelayer{foreground}
    \pgfsetlayers{background, face, lines, vertices, main, foreground}

    \begin{tikzpicture}[scale=1.5, >=stealth]
        \coordinate (p1) at (0,2); 
        \coordinate (p2) at (6,2);
        \coordinate (p3) at (6,0); 
        \coordinate (p4) at (0,0); 

        \coordinate (p23) at ($(p2)!0.50!(p3)$);
        \coordinate (p14) at ($(p1)!0.50!(p4)$); 

        \coordinate (p334) at (barycentric cs:p3=2,p4=1);
        \coordinate (p344) at (barycentric cs:p3=1,p4=2);

        \coordinate (x) at (6,0); 
        \coordinate (y) at (0,2);

        \foreach \col/\L in {
                    red/{ {1/16}/{17/32}, {9/64}/{57/128}, {15/64}/{47/128}, {43/128}/{81/256}, {57/128}/{95/256}, {9/16}/{15/32}, {23/32}/{41/64}, {1}/{1}}, 
                    blue/{ {0}/{1}, {9/32}/{41/64}, {7/16}/{15/32}, {71/128}/{95/256}, {85/128}/{81/256}, {49/64}/{47/128}, {55/64}/{57/128}, {15/16}/{17/32}}} { 
            \edef\points{}
            \foreach \i/\j [count=\n] in \L {
                \def\this{q\n}
                \coordinate (\this) at ($\i*(x)+\j*(y)$); 
                \fill (\this) circle (0.05); 
                \xdef\points{(\this) \points}
            }
            \fill[\col, draw=black, fill opacity=0.5] plot coordinates \points -- cycle;
        }

        \foreach \i/\pos in { 1/south east, 2/south west, 3/north west, 4/north east} {
            \begin{pgfonlayer}{foreground}
                \node [anchor=\pos] at (p\i) {$P\p{\i}$};
            \end{pgfonlayer}
        }
        \begin{pgfonlayer}{foreground}
            \node [anchor=west] at (p23) {$\frac12\cdot P\p2 + \frac12\cdot P\p3$};
            \node [anchor=east] at (p14) {$\frac12\cdot P\p1 + \frac12\cdot P\p4$};
            \node [anchor=north] at (p334) {$\frac23\cdot P\p3 + \frac13\cdot P\p4$};
            \node [anchor=north] at (p344) {$\frac13\cdot P\p3 + \frac23\cdot P\p4$};
            \node [anchor=south] at ($0.2*(x)+0.75*(y)$) {$\bX_1\p5$};
            \node [anchor=south] at ($0.8*(x)+0.75*(y)$) {$\bX_2\p5$};
        \end{pgfonlayer}

        \begin{pgfonlayer}{vertices}
            \foreach \i/\c in { 1/black, 2/black, 3/white, 4/white, 23/white, 14/white, 334/white, 344/white} {
                \draw[fill=\c, preaction={draw, line width=8pt, white}] (p\i) circle (0.05);
            }
        \end{pgfonlayer}

        \coordinate (p1) at (0,-2); 
        \coordinate (p2) at (6,-2);
        \coordinate (p3) at (6,-4); 
        \coordinate (p4) at (0,-4); 

        \coordinate (p23) at ($(p2)!0.50!(p3)$);
        \coordinate (p14) at ($(p1)!0.50!(p4)$); 

        \coordinate (p334) at (barycentric cs:p3=2,p4=1);
        \coordinate (p344) at (barycentric cs:p3=1,p4=2);

        \coordinate (x) at (6,0); 
        \coordinate (y) at (0,2);

        \foreach \col/\L in {
                    red/{ {1/64}/{65/128}, {21/256}/{213/512}, {39/256}/{167/512}, {221/1024}/{525/2048}, {281/1024}/{425/2048}, {683/2048}/{729/4096}, {805/2048}/{851/4096}, {29/64}/{33/128}, {267/512}/{341/1024}, {39/64}/{57/128}, {95/128}/{161/256}, {1}/{1}}, 
                    blue/{ {0}/{1}, {33/128}/{161/256}, {25/64}/{57/128}, {245/512}/{341/1024}, {35/64}/{33/128}, {1243/2048}/{851/4096}, {1365/2048}/{729/4096}, {743/1024}/{425/2048}, {803/1024}/{525/2048}, {217/256}/{167/512}, {235/256}/{213/512}, {63/64}/{65/128}}} { 
            \edef\points{}
            \foreach \i/\j [count=\n] in \L {
                \def\this{q\n}
                \coordinate (\this) at ($\i*(x)+\j*(y)+(0,-4)$); 
                \fill (\this) circle (0.05); 
                \xdef\points{(\this) \points}
            }
            \fill[\col, draw=black, fill opacity=0.5] plot coordinates \points -- cycle;
        }

        \foreach \i/\pos in { 1/south east, 2/south west, 3/north west, 4/north east} {
            \begin{pgfonlayer}{foreground}
                \node [anchor=\pos] at (p\i) {$P\p{\i}$};
            \end{pgfonlayer}
        }
        \begin{pgfonlayer}{foreground}
            \node [anchor=west] at (p23) {$\frac12\cdot P\p2 + \frac12\cdot P\p3$};
            \node [anchor=east] at (p14) {$\frac12\cdot P\p1 + \frac12\cdot P\p4$};
            \node [anchor=north] at (p334) {$\frac23\cdot P\p3 + \frac13\cdot P\p4$};
            \node [anchor=north] at (p344) {$\frac13\cdot P\p3 + \frac23\cdot P\p4$};
            \node [anchor=south] at ($0.2*(x)+0.75*(y)+(0,-4)$) {$\bX_1\p7$};
            \node [anchor=south] at ($0.8*(x)+0.75*(y)+(0,-4)$) {$\bX_2\p7$};
        \end{pgfonlayer}

        \begin{pgfonlayer}{vertices}
            \foreach \i/\c in { 1/black, 2/black, 3/white, 4/white, 23/white, 14/white, 334/white, 344/white} {
                \draw[fill=\c, preaction={draw, line width=8pt, white}] (p\i) circle (0.05);
            }
        \end{pgfonlayer}

        \coordinate (p1) at (0,-6); 
        \coordinate (p2) at (6,-6);
        \coordinate (p3) at (6,-8); 
        \coordinate (p4) at (0,-8); 

        \coordinate (p23) at ($(p2)!0.50!(p3)$);
        \coordinate (p14) at ($(p1)!0.50!(p4)$); 

        \coordinate (p334) at (barycentric cs:p3=2,p4=1);
        \coordinate (p344) at (barycentric cs:p3=1,p4=2);

        \coordinate (x) at (6,0); 
        \coordinate (y) at (0,2);

        \foreach \col/\L in {
                    red/{ {0.00390625}/{0.501953125}, {0.0673828125}/{0.40869140625}, {0.1318359375}/{0.31591796875}, {0.185791015625}/{0.2413330078125}, {0.229736328125}/{0.1851806640625}, {0.26666259765625}/{0.144561767578125}, {0.29998779296875}/{0.116790771484375}, {0.333343505859375}/{0.1001129150390625}, {0.366729736328125}/{0.1168060302734375}, {0.400146484375}/{0.1446533203125}, {0.4373779296875}/{0.18560791015625}, {0.482421875}/{0.2431640625}, {0.54052734375}/{0.323486328125}, {0.62109375}/{0.439453125}, {0.748046875}/{0.6259765625}, {1.0}/{1.0}}, 
                    blue/{ {0.0}/{1.0}, {0.251953125}/{0.6259765625}, {0.37890625}/{0.439453125}, {0.45947265625}/{0.323486328125}, {0.517578125}/{0.2431640625}, {0.5626220703125}/{0.18560791015625}, {0.599853515625}/{0.1446533203125}, {0.633270263671875}/{0.1168060302734375}, {0.666656494140625}/{0.1001129150390625}, {0.70001220703125}/{0.116790771484375}, {0.73333740234375}/{0.144561767578125}, {0.770263671875}/{0.1851806640625}, {0.814208984375}/{0.2413330078125}, {0.8681640625}/{0.31591796875}, {0.9326171875}/{0.40869140625}, {0.99609375}/{0.501953125}}} { 
            \edef\points{}
            \foreach \i/\j [count=\n] in \L {
                \def\this{q\n}
                \coordinate (\this) at ($\i*(x)+\j*(y)+(0,-8)$); 
                \fill (\this) circle (0.05); 
                \xdef\points{(\this) \points}
            }
            \fill[\col, draw=black, fill opacity=0.5] plot coordinates \points -- cycle;
        }

        \foreach \i/\pos in { 1/south east, 2/south west, 3/north west, 4/north east} {
            \begin{pgfonlayer}{foreground}
                \node [anchor=\pos] at (p\i) {$P\p{\i}$};
            \end{pgfonlayer}
        }
        \begin{pgfonlayer}{foreground}
            \node [anchor=west] at (p23) {$\frac12\cdot P\p2 + \frac12\cdot P\p3$};
            \node [anchor=east] at (p14) {$\frac12\cdot P\p1 + \frac12\cdot P\p4$};
            \node [anchor=north] at (p334) {$\frac23\cdot P\p3 + \frac13\cdot P\p4$};
            \node [anchor=north] at (p344) {$\frac13\cdot P\p3 + \frac23\cdot P\p4$};
            \node [anchor=south] at ($0.2*(x)+0.75*(y)+(0,-8)$) {$\bX_1\p9$};
            \node [anchor=south] at ($0.8*(x)+0.75*(y)+(0,-8)$) {$\bX_2\p9$};
        \end{pgfonlayer}

        \begin{pgfonlayer}{vertices}
            \foreach \i/\c in { 1/black, 2/black, 3/white, 4/white, 23/white, 14/white, 334/white, 344/white} {
                \draw[fill=\c, preaction={draw, line width=8pt, white}] (p\i) circle (0.05);
            }
        \end{pgfonlayer}

    \end{tikzpicture}
    \caption{Illustration of the iterated convex sets $\left\{\bX\p i\right\}_{i\geq0}$ in $\RR^2$ 
    for the system in Section~\ref{sec:system-algebra}, when $i\in\{5,7,9\}$. 
    }
    \label{fig:itr}
\end{figure}

%% file: arxiv-ai.tex
\section*{Declaration of generative AI and AI-assisted technologies in the writing process}

During the preparation of this work, the authors used OpenAI's ChatGPT, accessed through a ChatGPT Pro subscription, and Anthropic's Claude, accessed through a Claude Max subscription, solely to assist
with proofreading and the supplementary verification of mathematical arguments. 
The authors independently reviewed and verified all suggestions and outputs produced by these tools, revised the manuscript as appropriate, and take full responsibility for the content and correctness of the work.